\newtheorem{thm}{Theorem}[section]
\newtheorem{lemm}{Lemma}[section]
\newtheorem{corr}{Corollary}[section]
\newtheorem{ex}{Example}[section]
\newtheorem{prop}{Proposition}[section]
\newtheorem{rem}{Remark}[section]
\def\M{{\mathcal M}}
\def\f{{\frac{n}{2}}}
\def\fm{{\frac{n\!-\!1}{2}}}
\def\fp{{\frac{n\!+\!1}{2}}}
\def\r{{\mathbb R}}        
\def\H{{\mathcal H}}
\def\D{{\mathcal D}}
\def\c{{\mathbb C}}        
\def\N{{\mathbb N}}        
\def\J{{\sf J}}
\def\T{{\mathcal T}}       
\def\OB{{\mathcal O}}      
\def\S{{\mathbb S}}        
\def\Rho{{\sf P}}          
\def\B{{\mathcal B}}       
\def\st{\stackrel{\text{def}}{=}}
\newcommand{\Hess}{\operatorname{Hess}}
\newcommand{\tr}{\operatorname{tr}}
\newcommand{\Ric}{\operatorname{Ric}}
\newcommand{\scal}{\operatorname{scal}}
\newcommand{\grad}{\operatorname{grad}}
\newcommand{\Res}{\operatorname{Res}}
\newcommand{\id}{\operatorname{id}}
\numberwithin{equation}{section}
\title{Explicit formulas for GJMS-operators and $Q$-curvatures}
\author{Andreas Juhl}
\address{Humboldt-Universit\"at, Institut f\"ur Mathematik,
Unter den Linden, D-10099 Berlin}
\email{ajuhl@math.hu-berlin.de}
\address{University Uppsala, Department of Mathematics,
P.O. Box 480, S-75106 Uppsala}
\email{andreasj@math.uu.se}
\begin{document}

\begin{abstract} We describe GJMS-operators as linear combinations of
compositions of natural second-order differential operators. These
are defined in terms of Poincar\'e-Einstein metrics and renormalized
volume coefficients. As special cases, we derive explicit formulas
for conformally covariant third and fourth powers of the Laplacian.
Moreover, we prove related formulas for all Branson's
$Q$-curvatures. The results settle and refine conjectural statements
in earlier works. The proofs rest on the theory of residue families
introduced in \cite{juhl-book}.
\end{abstract}

\subjclass[2010]{Primary 05A19 35J30 53A30 53B20; Secondary 35Q76
53A55 53C25 58J50}

\maketitle

\centerline \today

\footnotetext{The work was supported by grant 621--2003--5240 of the
Swedish Research Council VR and SFB 647 ``Space-Time-Matter'' at
Humboldt-University Berlin.}

\tableofcontents



\section{Introduction and formulation of the main results}\label{intro-sum}

In the present work, we discuss the structure of certain geometric
differential operators of high order on Riemannian manifolds $M$
which are known as GJMS-operators. These operators are high-order
generalizations of the Yamabe-operator\footnote{Here and throughout
we use the convention that $-\Delta = \delta d$ is non-negative.}
\begin{equation}\label{Y}
P_2(g) \st \Delta_g \!-\! \left(\f\!-\!1\right) \J_g, \quad \J_g =
\frac{\scal(g)}{2(n\!-\!1)}.
\end{equation}
All GJMS-operators $P_{2N}(g)$ are natural differential operators of
the form
\begin{equation}\label{power}
\Delta_g^N + \mbox{lower-order terms}
\end{equation}
which are conformally covariant in the sense that
\begin{equation}\label{covar}
e^{(\f+N)\varphi} P_{2N}(e^{2\varphi}g)(u) = P_{2N}(g)
(e^{(\f-N)\varphi} u), \; u \in C^\infty(M)
\end{equation}
for all $\varphi \in C^\infty(M)$. The operators $P_{2N}(g)$ were
constructed by Graham, Jenne, Mason and Sparling in the seminal work
\cite{GJMS}. They are induced, in a certain sense, by the powers of
the Laplacian of the Fefferman-Graham ambient metric \cite{FG-final}
on a space of {\em two} higher dimensions.

Operators which satisfy the conditions \eqref{power} and
\eqref{covar} are often referred to as conformally covariant powers
of the Laplacian. Since these two conditions do not uniquely
determine such operators, the GJMS-operators are to be considered as
specific constructions of conformally covariant powers of the
Laplacian.

The second operator in the sequence $P_2, P_4, P_6, \dots$ of
GJMS-operators is known as the Paneitz-operator \cite{pan}. It is
given by
\begin{equation}\label{Pan}
P_4 = \Delta^2 + \delta ((n\!-\!2) \J - 4 \Rho) d +
\left(\f-2\right)\left(\f \J^2 - 2 |\Rho|^2 - \Delta \J\right),
\end{equation}
where $\Rho$ denotes the Schouten tensor, i.e., $(n-2) \Rho = \Ric -
\J g$. In \eqref{Pan}, the tensor $\Rho$ is regarded as an
endomorphism on one-forms using $g$ and $|\Rho|^2 = \Rho_{ij}
\Rho^{ij}$.

On manifolds of odd dimension $n$, there are GJMS-operators $P_{2N}$
of any order $2N \ge 2$. But on manifolds of even dimension $n$, the
construction in \cite{GJMS} is obstructed at order $n$, i.e., does
not give a conformally covariant power of Laplacian of order $>n$
for general metrics. In fact, by \cite{G-non} (for $n=4$ and $N=3$)
and \cite{GH} (in general) no such operator exists for general
metrics. However, for locally conformally flat metrics and
conformally Einstein metrics, the construction in \cite{GJMS} yields
conformally covariant powers of the Laplacian of any even order. In
particular, we have an infinite sequence of such operators for all
round spheres $\S^n$.

The GJMS-operators play a central role in conformal differential
geometry and geometric analysis. More information on these aspects
can be found in the recent monographs \cite{DGH-book},
\cite{juhl-book}, \cite{BJ} and the references therein.

The complexity of lower-order terms in \eqref{power} grows
exponentially with $N$. Therefore, finding an explicit description
of these terms seems to be an almost impossible task. An additional
problem is that the lower-order terms can be written in many
different ways. Nevertheless, in the present work we shall solve
this problem by providing a description of the GJMS-operators (and
of the related Branson's $Q$-curvatures) which is both almost
explicit and esthetically appealing. The main insight is that the
enormous complexity of these operators can be described in terms of
a sequence of natural {\em second-order} operators.

We continue with the formulation of the main results.

For this purpose, we need to introduce some more notation.

First, we shall use the following combinatorial conventions. A
sequence $I=(I_1,\dots,I_r)$ of integers $I_j \ge 1$ will be
regarded as a composition of the natural number
$|I|=I_1+I_2+\cdots+I_r$. In other words, compositions are
partitions in which the order of the summands is considered. $|I|$
will be called the size of $I$. To any composition
$I=(I_1,\dots,I_r)$, we associate the numbers
\begin{equation*}
m_I = - (-1)^r |I|! \, (|I|\!-\!1)! \prod_{j=1}^r \frac{1}{I_j! \,
(I_j\!-\!1)!} \prod_{j=1}^{r-1} \frac{1}{I_j \!+\! I_{j+1}}
\end{equation*}
and
\begin{equation*}
n_I = \prod_{j=1}^r \binom{\sum_{k \le j} I_k -1}{I_j-1}
\binom{\sum_{k \ge j} I_k -1}{I_j-1}.
\end{equation*}
Note that $m_{(N)} = n_{(N)} = 1$ for all $N \ge 1$ and
$n_{(1,\dots,1)} = 1$. Moreover, for any composition $I$, we define
the operator
$$
P_{2I} = P_{2I_1} \circ \cdots \circ P_{2I_r}.
$$
Then we set
\begin{equation}\label{M-defin}
\M_{2N} = \sum_{|I|=N} m_I P_{2I} = P_{2N} + \mbox{compositions of
lower-order GJMS-operators}.
\end{equation}
These sums contain $2^{N-1}$ terms. Similarly, we define
$$
\M_{2I} = \M_{2I_1} \circ \cdots \circ \M_{2I_r}.
$$

Second, the present theory rests on the notion of
Poincar\'e-Einstein metrics
$$
g_+ = r^{-2}(dr^2 + g_r), \; g_0 = g
$$
in the sense of Fefferman and Graham (see \cite{cartan} and
\cite{FG-final}). Here $g_r$ is a family of metrics on $M$ so that
$g_+$ is Einstein on the space $X = M \times (0,\varepsilon)$ in the
sense that
\begin{equation}\label{Ein}
\Ric(g_+) + n g_+ = 0.
\end{equation}
In particular, $g_+$ has negative scalar curvature $-n(n+1)$. This
concept will be recalled in Section \ref{RF}. The volume form of a
Poincar\'e-Einstein metric $g_+$ relative to $g$ takes the form
$$
dvol(g_+) = r^{-n-1} v(r) dvol(g) dr
$$
with
$$
v(r) = 1 + v_2 r^2 + v_4 r^4 + \cdots + v_n r^n + \cdots;
$$
here $v_n =0$ if $n$ is odd. In odd dimensions $n$, all coefficients
$v_2, v_4, \dots$ are scalar-valued curvature invariants of $g$.
Similarly, in even dimensions $n$, the coefficients $v_2, \dots,
v_n$ are uniquely determined by $g$. The functionals $g \mapsto
v_{2j}(g)$ are known as renormalized volume coefficients (see
\cite{G-vol}, \cite{G-ext}) or holographic coefficients (see
\cite{juhl-book}). In even dimensions $n$, the quantity $v_n$ is the
(infinitesimal) conformal anomaly of the renormalized volume of
$g_+$ (see \cite{G-vol}).\footnote{Sometimes $v_n$ is called the
{\em holographic} anomaly.} Now we set
$$
w(r) = \sqrt{v(r)} = 1 + w_2 r^2 + w_4 r^4 + \cdots + w_n r^n +
\cdots
$$
with $w_n = 0$ for odd $n$. The coefficients $w_2,w_4,w_6,\dots$ are
polynomials in $v_2,v_4,v_6,\dots$. In particular,
$$
2 w_2 = v_2, \quad 8 w_4 = 4 v_4 - v_2^2 \quad \mbox{and} \quad 16
w_6 = 8 v_6 - 4 v_4 v_2 + v_2^3.
$$

Finally, we introduce the generating function
\begin{equation}\label{def-H}
\H(r) \st \sum_{N \ge 1} \M_{2N} \frac{1}{(N\!-\!1)!^2}
\left(\frac{r^2}{4}\right)^{N-1}
\end{equation}
of the sequence $\M_2,\M_4,\dots$. Here we use the following
convention. For even $n$, the sum in \eqref{def-H} is defined to run
only up to $r^{n-2}$. For odd $n$, GJMS-operators are well-defined
for any order $2N \ge 2$, and the sum in \eqref{def-H} is regarded
as a formal infinite power series.

The first main result gives formulas for all GJMS-operators.

\begin{thm}\label{main-P} All GJMS-operators on Riemannian manifolds
$(M,g)$ of dimension $n \ge 3$ can be written in the form
\begin{equation}\label{main-deco}
P_{2N} = \sum_{|I|=N} n_I \M_{2I} = \M_2^N + \mbox{compositions with
fewer factors}.
\end{equation}
Moreover, all operators $\M_{2N}(g)$ are (at most) second-order and
are given by the formula
\begin{equation}\label{GF-form}
\H(g)(r) = - \delta (g_r^{-1} d) - r^{-2} \left( \Delta_{g_+}(\log
w) - |d \log w|^2_{g_+} \right).
\end{equation}
Here $\delta$ is defined with respect to $g$, and we regard $g_r$ as
an endomorphism on one-forms using $g$. Finally, $w$ is regarded as
a function on $X$.
\end{thm}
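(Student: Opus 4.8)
The plan is to establish the two displayed assertions of Theorem~\ref{main-P} in sequence: first the combinatorial inversion \eqref{main-deco}, then the closed formula \eqref{GF-form} for the generating function $\H$.

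\medskip
\noindent\textbf{Step 1: The combinatorial inversion.}
By definition \eqref{M-defin}, the operators $\M_{2N}$ are obtained from the GJMS-operators $P_{2N}$ by a triangular transformation indexed by compositions. I would encode this at the level of the (noncommutative, composition-graded) generating algebra: writing $P = \sum_N P_{2N} t^N$ and similarly $\M = \sum_N \M_{2N} t^N$, the relation $\M_{2N} = \sum_{|I|=N} m_I P_{2I}$ together with the multiplicativity $\M_{2I} = \M_{2I_1}\circ\cdots\circ\M_{2I_r}$ says that $\M$ is a specific ``composition-substitution'' of $P$. The claim $P_{2N} = \sum_{|I|=N} n_I \M_{2I}$ is then exactly the statement that the substitution governed by the coefficients $m_I$ is inverted by the one governed by $n_I$. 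Concretely, I would prove the scalar identity
\begin{equation*}
\sum_{I \text{ refines the composition structure}} m_{I_1}\cdots \;=\; \text{(inverse)}
\end{equation*}
by reducing to an identity of ordinary generating functions in one variable: if $m(x) = \sum_{N\ge1} m_{(N)}^{\flat} x^N$ and $n(x) = \sum_{N\ge1} n_{(N)}^{\flat} x^N$ are the single-block generating functions (with the block weights $|I|!(|I|-1)!/(I!(I-1)!)$ and the binomial weights respectively), then the non-overlapping ``gluing factors'' $1/(I_j+I_{j+1})$ on the $m$-side and $\binom{\cdot}{\cdot}\binom{\cdot}{\cdot}$ on the $n$-side must be compositionally inverse. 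The cleanest route is to recognize $m$ and $n$ as coming from the Taylor coefficients of $\arcsin$ and $\sin$ (or $\sinh/\mathrm{arcsinh}$), which are functional inverses; the factor $1/(N-1)!^2 (r^2/4)^{N-1}$ in \eqref{def-H} is precisely the normalization that makes the Bessel-type series for these functions appear. I expect this to be bookkeeping once the right generating-function dictionary is fixed.

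\medskip
\noindent\textbf{Step 2: Identifying $\H$ via residue families.}
For the formula \eqref{GF-form} I would invoke the theory of residue families of \cite{juhl-book}. The residue family $D^{\mathrm{res}}_{2N}(g;\lambda)$ is a one-parameter family of differential operators on $M$, built from the Poincar\'e-Einstein metric $g_+$ and the volume coefficients $v_{2j}$, whose value at the critical parameter recovers $P_{2N}$ up to an explicit constant, and which satisfies a factorization identity relating $D^{\mathrm{res}}_{2N}$ at shifted parameters to compositions $D^{\mathrm{res}}_{2a}\circ D^{\mathrm{res}}_{2b}$. The operators $\M_{2N}$ are designed so that the generating function $\H(r)$ is the ``second-order part'' extracted from the residue-family generating function after the combinatorial inversion of Step 1 has collapsed all the higher compositions. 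I would therefore: (i) write down the generating function $\sum_N D^{\mathrm{res}}_{2N}(\lambda) \frac{1}{(N-1)!^2}(r^2/4)^{N-1}$ in closed form using the defining integral/recursion of the residue families against the weight $w(r)=\sqrt{v(r)}$; (ii) observe that this closed form is a conjugation of the Laplacian $\Delta_{g_+}$ of the Poincar\'e-Einstein metric by the power $w^{1/2}$ or similar, which produces exactly the ``potential'' term $-r^{-2}(\Delta_{g_+}\log w - |d\log w|^2_{g_+})$ via the standard identity $w^{-1}\Delta_{g_+}(w\,\cdot\,) = \Delta_{g_+} + (\text{first order}) + (\Delta_{g_+}w)/w$ and completing the square; (iii) match the leading symbol to $-\delta(g_r^{-1}d)$ by unwinding $\Delta_{g_+} = r^2\Delta_{g_r} + \cdots$ from $g_+ = r^{-2}(dr^2+g_r)$.

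\medskip
\noindent\textbf{Main obstacle.}
The hard part will be Step 2(i)--(ii): showing that after summing the residue families against the Bessel-normalized weights and performing the combinatorial inversion, everything beyond second order cancels and what remains is precisely the conjugated Laplacian. This requires the precise form of the residue-family recursion (the ``factorization identities'' of \cite{juhl-book}) to interlock exactly with the coefficients $m_I, n_I$ — in effect, the whole point of the theorem is that these two a priori unrelated combinatorial structures coincide. I would handle this by checking that both sides satisfy the same first-order ODE in $r$ (with respect to the flow generated by $r\partial_r$), which both the residue-family generating function and the right-hand side of \eqref{GF-form} do because of the Einstein equation \eqref{Ein}, and then comparing initial conditions at $r=0$, where $\H(0) = \M_2 = P_2$ reduces to \eqref{Y} and the right-hand side reduces to $-\delta(g^{-1}d) = \Delta_g$ plus the curvature term coming from $w_2 = v_2/2$. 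A residual subtlety in even dimensions $n$ is the truncation of the series at $r^{n-2}$ and the role of the anomaly $v_n$; I expect the ODE argument to be insensitive to this because the truncation is consistent with the flow up to the relevant order, but this point deserves explicit attention.
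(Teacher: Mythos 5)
There are two genuine gaps.

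\textbf{Step 1 will not close as stated.} You propose to recognize the inversion $\sum_{|I|=N} n_I \M_{2I} = P_{2N}$ as a one-variable functional inversion, guessing $\arcsin$/$\sin$. That cannot work here for two reasons. First, $m_{(N)} = n_{(N)} = 1$ for all $N$, so neither single-block sequence is an $\arcsin$ or $\sin$ coefficient sequence. Second, and more fundamentally, the coefficient $m_I$ contains the \emph{coupling} factor $\prod_{j=1}^{r-1}(I_j+I_{j+1})^{-1}$ linking neighboring blocks; functional inversion of a single power series never produces a weight that depends on pairs of adjacent parts, so the transformation $P \mapsto \M$ is not a substitution into a one-variable series. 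The paper's actual argument (Theorem~\ref{duality}, proof due to Krattenthaler) is an honest combinatorial induction: one expands $\sum_{|I|=N} n_I \M_{2I}$ in terms of $P_{2K}$, reduces the coefficient of a fixed $P_{2K}$ to the alternating sum \eqref{eq:3}, and kills that sum by the auxiliary two-parameter identity of Lemma~\ref{lem:1}. The introduction of the deformation variables $X,Y$ is precisely what makes the induction on the number of blocks go through; it has no analogue in your plan.

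\textbf{Step 2 omits the mechanism that makes $\M_{2N}$ second-order.} Your ``conjugation plus first-order ODE'' sketch does not supply a proof. The identity you'd want to use, $\bigl(\partial_r^2 - (n\!-\!1)r^{-1}\partial_r + \H(r)\bigr)(w(r))=0$ (Eq.~\eqref{key}), is a scalar relation that pins down only the zeroth-order term of $\H(r)$ once the principal part is known; it cannot by itself show that the compositions $\sum_{|I|=N} m_I P_{2I}$, which are a priori of order $2N$, collapse to second order. The paper extracts that collapse from something you never invoke: the fact that the residue family $D_{2N}^{\mathrm{res}}(\lambda)$ has polynomial degree $N$, while Theorem~\ref{D-rep} expresses it as a degree-$(2N\!-\!1)$ expression in $\M$'s and $\bar{\M}$'s. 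The vanishing of the top two coefficients (Theorem~\ref{AB}) yields the restriction property $\M_{2N}i^* = i^*\bar{\M}_{2N}$ and the commutator relations $\M_{2N}i^* = (2N\!-\!2)\bigl(i^*[\bar{\M}_2,\bar{\M}_{2N-2}]-[\M_2,\M_{2N-2}]i^*\bigr)$, and \emph{these} force $\M_{2N}$ to be second-order. To convert the commutator relations into the explicit principal symbol $-\delta(g_r^{-1}d)$, the paper also needs Theorem~\ref{double} (the closed formula for the Poincar\'e--Einstein metric of $\bar g = dr^2 + g_r$ via polar coordinates), which is another ingredient absent from your plan. Finally, for the zeroth-order term, the paper uses the universal recursive formula \eqref{URQ} for $Q$-curvature from \cite{Q-recursive} to establish Theorems~\ref{basic} and~\ref{basic-2}; your ``completing the square with $w^{1/2}$-conjugation'' does not substitute for that, since it produces a candidate potential but does not identify it with $\sum_{|I|=N} m_I P_{2I}(1)$. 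In short, the ``hard part'' you flag at the end is indeed the whole theorem, and the proposal does not contain the devices (factorization of residue families, degree-drop, commutator relations, $g_{++}$-construction, $Q$-recursion) that the paper uses to surmount it.
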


The relation \eqref{GF-form} is to be understood as an identity of
formal power series asserting that the coefficients on both sides
coincide.

Theorem \ref{main-P} states that all GJMS-operators of $(M,g)$ are
generated by the Taylor coefficients of the one-parameter family $r
\mapsto \D(g)(r)$ of second-order differential operators on $M$
defined by the right-hand side of \eqref{GF-form}. In view of
$\D(g)(0) = P_2$, this family can be viewed as a (geometrically
defined) perturbation of the Yamabe operator.\footnote{It is also
tempting to consider it as an operator $C^\infty(M) \to C^\infty (M
\times [0,\varepsilon))$.}

The formulation of Theorem \ref{main-P} does not require explicit
formulas for the Poincar\'e-Einstein metric $g_+$, i.e., does not
require to solve the Einstein equation \eqref{Ein}. But, of course,
any such knowledge makes \eqref{GF-form} more explicit.

In particular, for a {\em locally conformally flat} metric $g$, we
have
$$
(g_r)_{ij}=g_{ij}-r^2 \Rho_{ij}+\frac{r^4}{4} \Rho_{ik}\Rho^k_j,
$$
and the construction of GJMS-operators is {\em not} obstructed for
orders exceeding the dimension of the underlying manifold
\cite{FG-final}. Since $g_r$ takes the form $(1-r^2/2\Rho)^2$, when
regarded as a linear operator on one-forms using $g$, we find
$$
g_r^{-1} = (1-r^2/2 \Rho)^{-2} = \sum_{N \ge 1} N \Rho^{N-1}
\left(\frac{r^2}{2}\right)^{N-1}
$$
and
$$
w(r) = \sqrt{\det(1-r^2/2 \Rho)}.
$$
In this special case, the assertions in Theorem \ref{main-P} extend
to {\em all} orders. \eqref{GF-form} shows that the operator
$\M_{2N}$ has the principal part
\begin{equation}\label{principal-flat}
-(N\!-\!1)! N! 2^{N-1} \delta(\Rho^{N-1}d)
\end{equation}
and that the zeroth-order term of $\M_{2N}$ equals the sum
\begin{equation}\label{CT-flat}
-(N\!-\!1)!^2 2^{N-1} \left( \left(\f\!-\!N\right) \tr (\Rho^N) +
\frac{1}{2} \sum_{a=1}^{N-1} \tr (\Rho^a) \tr (\Rho^{N-a})\right)
\end{equation}
(modulo contributions involving derivatives). In combination with
\eqref{main-deco}, these results yield explicit formulas for all
GJMS-operators in terms of $g$ and $\Rho$. In the special case of
the round sphere $\S^n$, we have
$$
g_r^{-1} = (1\!-\!r^2/4)^{-2} \id \quad \mbox{and} \quad w(r) =
(1\!-\!r^2/4)^{\f},
$$
and \eqref{GF-form} simplifies to
$$
\H(r) = (1\!-\!r^2/4)^{-2} \left( \Delta - \f \left(\f\!-\!1
\right)\right) = \left( \sum_{N \ge 1} N (r^2/4)^{N-1} \right) P_2.
$$
In other words, the second part of Theorem \ref{main-P} states that
$$
\M_{2N} = N!(N\!-\!1)! P_2  \quad \mbox{for $N \ge 1$.}
$$
These summation formulas were proved in \cite{juhl-power} by using
the product formula \eqref{Einstein} for the GJMS-operators of
$\S^n$ (see \cite{branson}, \cite{G-stereo}).

For general metrics, the Taylor coefficients of $w(r)$ can be
expressed in terms of renormalized volume coefficients which, in
turn, can be written in terms of the Schouten tensor $\Rho$ and
Graham's extended obstruction tensors $\Omega^{(k)}$ (see Section 2
of \cite{G-ext}). Note that $\Omega^{(1)} = \B/(4-n)$, where $\B$ is
the Bach tensor. In low-order cases, the description of $P_{2N}$ in
Theorem \ref{main-P} can be made more explicit. The corresponding
results for $P_6$ and $P_8$ are stated in Section \ref{low}.

Theorem \ref{main-P} describes any GJMS-operator in terms of
second-order operators. Alternatively, the second part of Theorem
\ref{main-P} can be rephrased by stating that any GJMS-operator
$P_{2N}$ is the sum of the linear combination
$$
-\sum_{|I| = N, \, I \ne (N)} m_I P_{2I}
$$
of compositions of lower-order GJMS-operators and the second-order
operator given by the coefficient of
$$
\frac{1}{(N\!-\!1)!^2} \left(\frac{r^2}{4}\right)^{N-1}
$$
in the Taylor series of $\D(g)(r)$. We shall refer to such
identities as to {\em recursive} formulas for GJMS-operators.

Finally, we emphasize the following fact. The definition
\eqref{M-defin} shows that, for any $N \ge 1$, the operators
$\M_{2N}$ are of order $\le 2N$. The result that the operators
$\M_{2N}$ are actually only of second order can be regarded as a
remarkable {\em cancellation} effect.

The second main result of the present paper is a related universal
formula for all Branson's $Q$-curvatures. We recall that, for even
$n$ and $2N < n$, the GJMS-operator $P_{2N}$ gives rise to a scalar
curvature quantity $Q_{2N}$ via the formula
\begin{equation}\label{Q-def}
P_{2N}(g)(1) = (-1)^N \left(\f-N\right) Q_{2N}(g).
\end{equation}
The critical $Q$-curvatures $Q_n$ (for even $n$) can be defined
through the non-critical $Q$-curvatures $Q_{2N}$ ($2N < n$) by a
limiting process. For odd $n$, the quantities $Q_{2N}$ are defined
for all $N \ge 1$ by \eqref{Q-def}. For more information and a
discussion of the significance of $Q$-curvature we refer to
\cite{branson}, \cite{br-last}, \cite{origin}, \cite{DGH-book},
\cite{juhl-book} and \cite{BJ}.

\begin{thm}\label{main-Q} For even $n$ and $2N \le n$ and for odd $n$
and all $N \ge 1$,
\begin{equation}\label{magic-Q}
(-1)^N Q_{2N} = \sum_{|I|+a=N} n_{(I,a)} a! (a\!-\!1)! 2^{2a}
\M_{2I}(w_{2a}).
\end{equation}
\end{thm}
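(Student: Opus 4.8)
The plan is to derive \eqref{magic-Q} from the GJMS-operator formulas in Theorem \ref{main-P} by evaluating everything on the constant function $1$. The starting point is the relation \eqref{Q-def}, which expresses $Q_{2N}$ in terms of $P_{2N}(1)$; combined with the decomposition $P_{2N} = \sum_{|I|=N} n_I \M_{2I}$ from \eqref{main-deco}, this reduces the problem to understanding $\M_{2I}(1)$ for compositions $I$. Since the $\M_{2J}$ are the second-order operators whose generating function is given by \eqref{GF-form}, the first step is to compute $\H(g)(r)$ applied to $1$. When acting on constants, the term $-\delta(g_r^{-1}d)$ dies, so $\H(r)(1) = -r^{-2}\big(\Delta_{g_+}(\log w) - |d\log w|^2_{g_+}\big)$, which one rewrites using $\Delta_{g_+}(\log w) - |d\log w|^2_{g_+} = w^{-1}\Delta_{g_+}w$. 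The crucial point is that, by the definition of $w = \sqrt{v}$ and the structure of the Poincar\'e–Einstein metric, the function $w$ on $X$ satisfies a distinguished equation; in fact one expects $\Delta_{g_+}w$ to be expressible back in terms of the Taylor coefficients $w_{2a}$ themselves. I would extract the coefficient of $\frac{1}{(N-1)!^2}(r^2/4)^{N-1}$ on both sides to read off a clean formula for $\M_{2N}(1)$, i.e. for $Q_{2N}$ when $I=(N)$, matching the $I=\emptyset$, $a=N$ term $n_{(a)}a!(a-1)!2^{2a}w_{2a}$ on the right of \eqref{magic-Q} (here $\M_{2\emptyset}$ is the identity and $n_{(N)}=1$).

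For general compositions one proceeds inductively. Writing $I=(I_1,\dots,I_r)$, we have $\M_{2I}(1) = \M_{2I_1}\circ\cdots\circ\M_{2I_{r-1}}\big(\M_{2I_r}(1)\big)$, and $\M_{2I_r}(1)$ is a specific polynomial in the $w_{2a}$ computed in the previous paragraph — a function, not a constant. So we then need to understand $\M_{2J}(f)$ for $J$ a composition and $f$ one of the holographic-type scalars $w_{2a}$. The key structural input here is again \eqref{GF-form}: acting on a function $f$, the operator is $-\delta(g_r^{-1}df) - r^{-2}(w^{-1}\Delta_{g_+}w)f$, and one wants to organize the resulting bookkeeping so that the contributions assemble into the claimed sum $\sum_{|I|+a=N} n_{(I,a)} a!(a-1)!2^{2a}\M_{2I}(w_{2a})$. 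I would set up a generating-function identity: multiply \eqref{magic-Q} by the appropriate powers of $r$, sum over $N$, and show the two sides agree as formal power series using \eqref{GF-form} together with the product structure $\M_{2I} = \M_{2I_1}\circ\cdots\circ\M_{2I_r}$ and the multiplicativity built into $n_{(I,a)}$.

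The combinatorial heart of the argument is reconciling the coefficients $n_{(I,a)}$, $a!(a-1)!2^{2a}$, and $\frac{1}{(N-1)!^2}(1/4)^{N-1}$. I would prove the needed binomial identity separately: that the coefficient of $\M_{2I}(w_{2a})$ produced by iterating \eqref{GF-form} and appealing to \eqref{main-deco} equals precisely $n_{(I,a)} a!(a-1)!2^{2a}$. This should follow from the definition $n_I = \prod_j \binom{\sum_{k\le j} I_k-1}{I_j-1}\binom{\sum_{k\ge j}I_k-1}{I_j-1}$ once the contribution of appending a final block $a$ is isolated; the factor $a!(a-1)!2^{2a}$ is exactly what converts the ``raw'' Taylor coefficient of $w$ in $\H(r)(1)$ (which carries $\frac{1}{(a-1)!^2}(1/4)^{a-1}$) into the normalization used in \eqref{magic-Q}, and the $\binom{N-1}{a-1}^2$-type factor absorbed into $n_{(I,a)}/n_I$ accounts for the mismatch between $(N-1)!^{-2}$ and $(a-1)!^{-2}(|I|-1)!^{-2}$ after composing.

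I expect the main obstacle to be this last combinatorial step — pinning down exactly how the product-of-binomials weights $n_{(I,a)}$ emerge from iterated compositions of the second-order operators, especially handling the ``gluing'' factors $\frac{1}{I_j+I_{j+1}}$ implicit in the relation between $m_I$ and $n_I$ via \eqref{main-deco}. A secondary technical point is the limiting argument for the critical case $2N=n$ in even dimensions, where $P_n(1)=0$ forces one to define $Q_n$ by continuation; here I would either pass to the analytic continuation in the dimension parameter (treating $n$ as a formal variable, as the residue-family formalism of \cite{juhl-book} permits) or observe that \eqref{magic-Q} is polynomial in $n$ and both sides extend, so the identity for $2N<n$ propagates to $2N=n$ by continuity.
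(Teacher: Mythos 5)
Your proposal has the right skeleton---start from $P_{2N}(1)=(-1)^N\left(\tfrac{n}{2}-N\right)Q_{2N}$, apply the inversion formula $P_{2N}=\sum_{|I|=N}n_I\M_{2I}$, and feed in information about $\mu_{2a}=\M_{2a}(1)$ extracted from the generating-function description of $\H(r)$. This is the route taken in the paper. But the middle step is materially off, and the combinatorial core of the argument is left undone.

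Applying $\H(r)$ to $1$ does not give a ``clean formula for $\M_{2N}(1)$'' in terms of the $w_{2a}$. The zeroth-order term $-r^{-2}\left(\Delta_{g_+}\log w - |d\log w|^2_{g_+}\right)$ still contains $-\delta(g_r^{-1}dw)/w$, which is not itself a polynomial in the holographic coefficients. The distinguished equation you are reaching for is \eqref{key}, namely $\H(r)(w(r))+\left(\partial^2/\partial r^2 - (n\!-\!1)r^{-1}\partial/\partial r\right)(w(r))=0$, whose Taylor-coefficient form is Theorem \ref{basic-2}:
$\sum_{k=0}^{N-1}\left(2^k\tfrac{(N-1)!}{(N-1-k)!}\right)^2\M_{2N-2k}(w_{2k})=\left(\tfrac{n}{2}-N\right)(N-1)!\,N!\,2^{2N}w_{2N}$.
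This expresses $\mu_{2N}$ \emph{recursively} through the quantities $\M_{2N-2k}(w_{2k})$ for $k\ge 1$, not as a closed formula in the $w_{2a}$ alone; and establishing it is itself nontrivial (the paper deduces it from the universal recursive $Q$-curvature formula \eqref{URQ} through Theorem \ref{basic}, an inductive argument with its own factorization lemma). Consequently, your identification of the coefficient of $(r^2/4)^{N-1}/(N-1)!^2$ in $\H(r)(1)$ with the ``$I=\emptyset$, $a=N$ term of \eqref{magic-Q}'' conflates $\mu_{2N}$ with $(-1)^N Q_{2N}$; these differ already for $N=2$.

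The second gap is combinatorial. After substituting the recursive relation for $\mu_{2a}$ into $\sum_{|I|+a=N}n_{(I,a)}\M_{2I}(\mu_{2a})$, the coefficient of $\M_{2J}\M_{2b}(w_{2a})$ (with $|J|+b+a=N$) becomes
$n_{(J,b,a)}\left(\tfrac{n}{2}-a\right)a!(a-1)!\,2^{2a}-n_{(J,a+b)}\left(\tfrac{(a+b-1)!}{(b-1)!}\right)^2 2^{2a}$,
and to pull out the common factor $\left(\tfrac{n}{2}-N\right)$ and match \eqref{magic-Q} you must prove the exact identity $n_{(J,b,a)}(N-a)\,a!(a-1)!=n_{(J,a+b)}\left(\tfrac{(a+b-1)!}{(b-1)!}\right)^2$, which by \eqref{n-form} reduces to a concrete binomial relation. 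Your proposal asserts that this ``should follow from the definition of $n_I$'' without carrying it out. That verification, together with the recursive relation for $\mu_{2N}$, is precisely where the proof lives; until both are supplied the argument is not complete.
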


Here we use the convention that the sum on the right-hand side of
\eqref{magic-Q} contains a contribution by $w_{2N}$ (for the trivial
composition $I$). Note that the number of terms in that sum is
$2^{N-1}$, i.e., it grows exponentially with $N$.

The formula \eqref{magic-Q} follows by combining the first part of
Theorem \ref{main-P} with structural results for the zeroth-order
terms $\M_{2N}(1)$ of the operators $\M_{2N}$ (Theorem \ref{basic}
and Theorem \ref{basic-2}). In turn, the proofs of these results
utilize the universal recursive formula
\begin{equation}\label{URQ}
\sum_{|I|+a=N} m_{(I,a)} (-1)^a P_{2I}(Q_{2a}) = N! (N\!-\!1)!
2^{2N} w_{2N}
\end{equation}
for $Q$-curvature proved in \cite{Q-recursive}.

For locally conformally flat metrics, Theorem \ref{main-Q} extends
to all $Q$-curvatures $Q_{2N}$.

Of course, Theorem \ref{main-Q} shows its full power only when it is
combined with the explicit formula for the operators $\M_{2N}$ in
the second part of Theorem \ref{main-P}. It is interesting to
compare \eqref{magic-Q} and \eqref{URQ}. First of all, we note that
both formulas are {\em universal} in the dimension $n$. Next, the
formula \eqref{magic-Q} actually should be regarded as a resolution
of the recursive structure of $Q$-curvatures described in
\eqref{URQ}. If we split off the contributions of the trivial
composition, then \eqref{magic-Q} and \eqref{URQ} take the
respective forms
$$
(-1)^N Q_{2N} = \sum_{|I|+a=N  \atop  a \le N-1} n_{(I,a)}
a!(a\!-\!1)!2^{2a} \M_{2I}(w_{2a}) + N!(N\!-\!1)! 2^{2N} w_{2N}
$$
and
$$
(-1)^N Q_{2N} + \sum_{|I|+a=N \atop a \le N-1} m_{(I,a)} (-1)^a
P_{2I}(Q_{2a}) = N!(N\!-\!1)! 2^{2N} w_{2N}.
$$
In both formulas, we have collected terms of the same nature on the
same sides. The latter identities clearly show that \eqref{magic-Q}
and \eqref{URQ} express the differences
$$
(-1)^N Q_{2N} - N!(N\!-\!1)! 2^{2N} w_{2N}
$$
in two conceptually different ways.\footnote{These formulas simplify
even a bit further if the coefficients $w_{2N}$ are defined as the
coefficients of $w(r)$ as a power series in $r/2$. However, we
prefer to use the same notation as in \cite{Q-recursive}.} For more
details concerning the low-order cases $N \le 4$ we refer to Section
\ref{Q-curv-ex}.

Note that for $a=1$ and the composition $I=(1,\dots,1)$ of size
$N\!-\!1$ the right-hand side of \eqref{magic-Q} contains the
contribution $-\Delta^{N-1} \J$. This reproduces the well-known fact
\cite{branson} that the contribution to $Q_{2N}$ which contains the
maximal number of derivatives is $(-1)^{N-1} \Delta^{N-1} \J$.

Finally, the relations \eqref{magic-Q} and \eqref{URQ} should be
compared with the holographic formulas of \cite{holo} and
\cite{holo-II} for the differences
$$
(-1)^N Q_{2N} - N!(N\!-\!1)! 2^{2N-1} v_{2N}.
$$
In the critical case $2N=n$, the holographic formula shows that the
latter difference is an exact divergence. Hence for closed manifolds
the global conformal invariants
\begin{equation*}
\int_M Q_n dvol \quad \mbox{and} \quad \int_M v_n dvol
\end{equation*}
are proportional \cite{GZ}. This fact plays a basic role, for
instance, in \cite{GrH}. A major difference between the holographic
formulas and the formulas \eqref{magic-Q}, \eqref{URQ} is that the
metric variations of the Laplacian which contribute to the former
relations (through the asymptotic expansions of eigenfunctions) do
{\em not} appear in the latter ones.

Theorem \ref{main-P} and Theorem \ref{main-Q} yield {\em explicit}
formulas for GJMS-operators and $Q$-curvatures in the sense that
both objects are described in terms of (the Taylor coefficients of)
associated Poincar\'e-Einstein metrics. In order to derive fully
explicit expressions for GJMS-operators and $Q$-curvatures in terms
of the metrics, it suffices to combine them with corresponding
formulas for the Poincar\'e-Einstein metrics. The natural building
blocks of such formulas are Graham's extended obstruction tensors
\cite{G-ext}.

Finally, we note that in \cite{G-P} Gover and Peterson developed an
alternative approach to GJMS-operators and $Q$-curvatures using
tractor calculus. A remarkable feature of their method is that it
yields explicit formulas in terms of the metric without solving the
Einstein equation \eqref{Ein}.

The structure of the paper is as follows. Section \ref{IF} contains
the proof of the first assertion of Theorem \ref{main-P}. In the
following three sections we recall the notion of residue families
introduced in \cite{juhl-book}, prove their factorization relations
in full generality (Theorem \ref{Factor-A} and Theorem
\ref{Factor-B}), and derive from these a representation formula for
these families in terms of GJMS-operators (Theorem \ref{D-rep}). It
is this representation formula which explains the role of the
operators $\M_{2N}$. Only by reasons of degree, Theorem \ref{D-rep}
implies two basic properties of the sums $\M_{2N}$ which we refer to
as the {\em restriction property} (Theorem \ref{rest-prop}) and the
{\em commutator relations} (Theorem \ref{base-CR}). The commutator
relations are the main device to prove that the operators $\M_{2N}$
are second-order and to derive the formula for their principal parts
(Theorem \ref{base-c}). An important tool in the proof of this
formula is Theorem \ref{double}. The latter result might be also of
independent interest. In Section \ref{structure}, we determine the
the zeroth-order terms of the operators $\M_{2N}$. We find (Theorem
\ref{basic} and Theorem \ref{basic-2}) that these curvature
quantities satisfy recursive relations. These relations are closely
related to the recursive structure of $Q$-curvatures. The results
lead to the proof of the second part of Theorem \ref{main-P}, when
combined with the description of the principal part of $\M_{2N}$ in
Theorem \ref{base-c}. Since these arguments do not suffice to cover
also the super-critical GJMS-operators for locally conformally flat
metrics, Theorem \ref{mu-second} provides an alternative proof of
Theorem \ref{basic-2} (for such metrics) which does extend to the
super-critical cases. It does not refer to $Q$-curvature. Instead,
it rests on an evaluation of the restriction property and the
commutator relations, i.e., is similar in spirit to the proof of the
formula for the leading terms of $\M_{2N}$. Finally, Theorem
\ref{CT-f} derives the contributions described in \eqref{CT-flat}.
Section \ref{Q-curv-ex} is devoted to the proof of Theorem
\ref{main-Q}. As illustrations of the general theory, and for the
convenience of the reader interested mainly in low-order special
cases, we make explicit in Section \ref{low}, the results in the
special cases of the conformally covariant third and fourth power of
the Laplacian. In Section \ref{open}, we comment on a number of
scattered issues, sketch some further results and point out a few
open problems. The paper finishes with an appendix which contains
direct proofs of some low-order special cases of Theorem
\ref{double} and a description of a resulting algorithm which
reproduces Graham's formulas for Poincar\'e-Einstein metrics in
terms of the Schouten tensor and the extended obstruction tensors.

Some early steps of the work were influenced by discussions with
Vladimir Sou\u{c}ek and Petr Somberg during a visit at Charles
University (Prague) in spring 2010. Parts of the results were
presented at the workshop on {\em Parabolic Geometries and Related
Topics} of the University of Tokyo in November 2010, at Toronto
University in April 2011 and at the workshop {\em Cartan
Connections, Geometry of Homogeneous Spaces, and Dynamics} at the
Erwin Schr\"{o}dinger Institute (ESI) in Vienna in July 2011. We
gratefully acknowledge the support by these institutions. The proof
of the inversion formula in Section \ref{IF} is due to Christian
Krattenthaler (Vienna). It is a pleasure to thank him for allowing
to include his proof here. Many stages of the work involved computer
experiments. In particular, we derived structural insight from
numerical data and used the computer to test predictions. The
corresponding calculations were performed using Mathematica with the
NCAlgebra-package\footnote{\url{http://math.ucsd.edu/~ncalg/}}. We
thank Carten Falk (Humboldt Universit\"{a}t, Berlin) for his invaluable
efforts to create the numerous programs. Finally, we are grateful to
Robin Graham for a discussion of subtle questions concerning
super-critical GJMS-operators.

\section{The inversion formula}\label{IF}

We first recall some basic definitions from Section \ref{intro-sum}.
For any composition $I=(I_1,\dots,I_r)$, we set
\begin{equation}\label{m-form}
m_I = -(-1)^r |I|!\,(|I|\!-\!1)! \prod_{j=1}^r \frac{1}{I_j! \,
(I_j\!-\!1)!} \prod_{j=1}^{r-1} \frac{1}{I_j \!+\! I_{j+1}}
\end{equation}
and
\begin{equation}\label{n-form}
n_I = \prod_{j=1}^r \binom{\sum_{k \le j} I_k -1}{I_j-1}
\binom{\sum_{k \ge j} I_k -1}{I_j-1} \in \N.
\end{equation}
Then $m_{(N)} = n_{(N)} = 1$ for all $N \ge 1$ and
$$
m_{(I_1,I_2)} = - \binom{N-1}{I_1} \binom{N-1}{I_2} = -
n_{(I_1,I_2)}
$$
for all compositions $I=(I_1,I_2)$ of size $N \ge 2$. In the
following, we shall also work with an alternative expression for
$n_I$ with reduced fractions. In fact, an easy calculation shows
that
\begin{equation}\label{eq:n_I}
n_I = (|I|\!-\!1)!^2 \prod_{j=1}^{r} \frac{1}{(I_j\!-\!1)!^2}
\prod_{j=1}^{r-1} \frac{1}{\left(\sum_{k=1}^j I_k \right)
\left(\sum_{k = j+1}^r I_k\right)}.
\end{equation}
Note also that
\begin{equation}\label{inverse-c}
m_{I^{-1}} = m_I \quad \mbox{and} \quad n_{I^{-1}} = n_I,
\end{equation}
where $I^{-1} = (I_r,\cdots,I_1)$ is the inverse composition of $I =
(I_1,\cdots,I_r)$.

Moreover, we define
\begin{equation}
P_{2I} = P_{2I_1} \circ \cdots \circ P_{2I_r}
\end{equation}
and set
\begin{equation}\label{M-def}
\M_{2N} = \sum_{|I|=N} m_I P_{2I}.
\end{equation}
The latter sum contains $2^{N-1}$ terms. Note that $\M_2 = P_2$.
Since all GJMS-operators are formally self-adjoint \cite{GZ},
Eq.~\eqref{inverse-c} implies that the operators $\M_{2N}$ are
formally self-adjoint, too. Finally, we set
\begin{equation}
\M_{2I} = \M_{2I_1} \circ \cdots \circ \M_{2I_r}.
\end{equation}
By \eqref{inverse-c}, we also have $\M_{2I} = \M_{2I}^*$.

The following result states in which sense the operators $\M_{2N}$
can be regarded as the building blocks of the GJMS-operators.

\begin{thm}{\bf (Inversion Formula)}\label{duality} For $N \ge 1$,
we have
\begin{equation}\label{dual}
P_{2N} = \sum_{|I|=N} n_I \M_{2I}.
\end{equation}
\end{thm}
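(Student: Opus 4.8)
The identity \eqref{dual} involves only composition and $\mathbb{Q}$-linear combinations of the symbols $P_2,P_4,P_6,\dots$, and it uses no property of the GJMS-operators beyond their formal existence (not even formal self-adjointness, and in even dimension only generators of index $\le N$ occur). Hence it suffices to establish it in the free associative $\mathbb{Q}$-algebra on generators $P_2,P_4,\dots$. Since $m_{(N)}=1$, the definition \eqref{M-def} shows $\M_{2N}=P_{2N}+(\text{products of }P_{2k}\text{ with }k<N)$, so the $\M_{2N}$ again freely generate this algebra and each $P_{2N}$ has a unique expression as a $\mathbb{Q}$-linear combination of the products $\M_{2I}$, $|I|=N$; the content of \eqref{dual} is that the coefficients are the integers $n_I$. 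The plan is therefore to substitute \eqref{M-def} into $\sum_{|I|=N} n_I \M_{2I}$, collect the coefficient of each monomial $P_{2K}$, and check that it is $1$ for $K=(N)$ and $0$ otherwise.

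Carrying out the substitution, a monomial $P_{2K}$ with $K=(K_1,\dots,K_\ell)$ is produced by a composition $J=(J_1,\dots,J_s)$ of $N$ together with a splitting of the sequence $(K_1,\dots,K_\ell)$ into consecutive nonempty blocks $B_1,\dots,B_s$ with $|B_i|=J_i$, and it comes with weight $n_J\prod_{i=1}^s m_{B_i}$. Thus the coefficient of $P_{2K}$ equals
\begin{equation}\label{prop-combid}
\sum_{K=B_1\cdots B_s} n_{(|B_1|,\dots,|B_s|)}\prod_{i=1}^s m_{B_i},
\end{equation}
the sum being over all ways to cut $K$ into consecutive blocks. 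For $K=(N)$ the only term is $n_{(N)}m_{(N)}=1$, so the theorem is equivalent to the vanishing of \eqref{prop-combid} whenever $\ell\ge 2$. This is a purely combinatorial identity, and it is here that the actual work lies.

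To prove it I would pass to generating functions. Absorbing the factorials by setting $\hat P_{2N}=P_{2N}/(N!(N\!-\!1)!)$ (and $\hat\M_{2N}$ accordingly), the weights simplify considerably: $\hat\M_{2N}=\sum_{|I|=N}(-1)^{r-1}\big(\prod_{j=1}^{r-1}\frac{1}{I_j+I_{j+1}}\big)\hat P_{2I_1}\cdots\hat P_{2I_r}$, and, by \eqref{eq:n_I}, the coefficient $n_I$ becomes a product of reciprocals of running partial sums of $I$. One then linearizes the rational factors using $\frac{1}{a+b}=\int_0^1 s^{a+b-1}\,ds$ and the partial fraction $\frac{1}{S(N-S)}=\frac1N\big(\frac1S+\frac1{N-S}\big)$; this rewrites the generating function $\sum_N\hat\M_{2N}x^N$ as an iterated ``chain'' integral of $A(x)=\sum_N\hat P_{2N}x^N$ and turns \eqref{dual} into a functional identity relating $A(x)$ to this transform, which can be verified (for instance by Lagrange inversion, or by induction on $N$ after clearing denominators). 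The non-commutativity is inert throughout, since every weight is a scalar and factors out of the products; the substance is the resulting scalar identity. An alternative bookkeeping is to work in the incidence algebra of the poset of compositions of $N$ under refinement, which is the Boolean lattice on $N\!-\!1$ elements: the arrays $(m_I)$ and $(n_I)$ extend to multiplicative incidence functions, \eqref{dual} asserts that they are mutually inverse, and multiplicativity reduces this to one identity for each composition $D$ of each $M$, whose simplest instance $D=(1^M)$ reads $\sum_{|I|=M}m_I=\delta_{M,1}$.

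I expect the main obstacle to be precisely the combinatorial identity \eqref{prop-combid}: a sum of $2^{\ell-1}$ terms carrying the intricate rational weights $m_I$ and $n_I$, with no single visible telescoping. Matching the running-partial-sum denominators of $n_I$ against the adjacent-pair denominators of $m_I$ — i.e. getting the generating-function (or incidence-algebra) bookkeeping exactly right — is the crux; the reduction to the free algebra, the triangularity, and the extraction of the coefficient of $P_{2K}$ are all routine.
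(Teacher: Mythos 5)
Your reduction is correct and is in fact exactly the paper's: you substitute the definition of $\M_{2N}$ into $\sum_{|I|=N} n_I \M_{2I}$, observe that the whole computation takes place in the free associative algebra on $P_2,P_4,\dots$ with scalar weights, and extract the coefficient of a fixed word $P_{2K}$. Your sum over block decompositions of $K$ is precisely the paper's display for the coefficient of $P_{2K}$, and the claim that it vanishes for $K$ with two or more parts is the content of the theorem. The triangularity remark, the passage to the free algebra, and the hatted normalization are all fine.

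The gap is that you do not prove this vanishing, and you say so yourself. The sketched routes do not close it. The incidence-algebra observation is correct bookkeeping but buys nothing: multiplicativity of $\tilde m$ and $\tilde n$ reduces $(\tilde m*\tilde n)([I,K])=\delta$ to the case $K=(M)$ trivial, but $(\tilde m*\tilde n)([I,(M)])=\delta_{I,(M)}$ for \emph{every} composition $I$ of $M$ is precisely the identity you started with, not a simpler instance of it; in particular nothing reduces the general $I$ to $I=(1^M)$, and the interval $[I,(M)]$ never factors as a product of smaller intervals when the top is a single block. The generating-function sketch (integral representation of $1/(a+b)$, partial fractions in the running sums, Lagrange inversion) names plausible tools but stops well short of a functional identity that could be verified, and the cross-terms between the adjacent-pair denominators $I_j+I_{j+1}$ in $m_I$ and the running-sum denominators $S_j(N-S_j)$ in $n_I$ — which you correctly flag as the crux — are never matched. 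The paper handles exactly this point by proving a \emph{two-parameter strengthening} (Krattenthaler's Lemma, the identity with free parameters $X,Y$ inserted into the last block and the last adjacent-pair denominator), for which an induction on the number of parts closes after splitting the sum over subsets $A\subseteq[s-1]$ according to the largest element of $A$. That strengthening is the missing idea: the bare identity $\sum n_J\prod m_{B_i}=0$ does not induct on its own, and none of your proposed frameworks supplies a substitute.
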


We refer to Theorem \ref{duality} as an inversion formula since it
can be regarded as a description of the inverse of the
transformation
\begin{equation}\label{integral}
\left\{ P_2,P_4,P_6,\cdots \right\} \to \left\{\M_2,\M_4,\M_6,\cdots
\right\}
\end{equation}
defined by \eqref{M-def}. The inversion formula is an algebraic fact
which relates two sets of operators. Therefore, it is justified to
omit here the assumptions which guarantee the existence of the
GJMS-operators.

Note that the sums on the right-hand sides of \eqref{dual} contain
$2^{N-1}$ terms, respectively.

Note also that, in view of \eqref{eq:n_I}, the inversion formula can
be written in the alternative form
\begin{equation}
\frac{P_{2N}}{(N\!-\!1)!^2} = \sum_{|I|=N} \frac{1}{q_I}
\frac{\M_{2I}}{\prod_{j=1}^r (I_j\!-\!1)!^2}
\end{equation}
with
$$
q_I = \prod_{j=1}^{r-1} \left(\sum_{k=1}^j I_k \right) \left(\sum_{k
= j+1}^r I_k\right).
$$

\begin{proof}{(\cite{K})} We substitute \eqref{M-def} in the right-hand
side of \eqref{dual} to obtain
\begin{equation} \label{eq:2}
\sum _{\vert I \vert=N}^{} n_I \sum_{\vert J_1 \vert = I_1, \dots,
\vert J_r \vert=I_r}^{} \bigg( \prod_{k=1}^{r} m_{J_k} \bigg)
P_{2J_1} \circ P_{2J_2} \circ \cdots \circ P_{2J_r}.
\end{equation}
Let $K=(K_1,K_2,\dots,K_s)$ be a fixed composition of $N$. Then the
coefficient of $P_{2K}$ in \eqref{eq:2} is given by
\begin{multline}\label{n-m}
\sum_{A\subseteq[s-1]}^{} n_{(K_1 + \dots + K_{a_1},
K_{a_1+1}+\dots+K_{a_2},\dots ,K_{a_{\ell-1}+1}+\dots+K_{a_\ell},
K_{a_{\ell}+1}+\dots+K_{s})} \\
\cdot m_{(K_1,\dots,K_{a_1})} m_{(K_{a_1+1},\dots,K_{a_2})}\cdots
m_{(K_{a_{\ell-1}+1},\dots,K_{a_\ell})}
m_{(K_{a_{\ell}+1},\dots,K_{s})}.
\end{multline}
The sum in \eqref{n-m} runs over all subsets $A =
\{a_1,a_2,\dots,a_\ell\} \subseteq \{1,2,\dots,s-1\}$ (including the
empty set) whereby we use the convention that
$a_1<a_2<\dots<a_\ell$. Using \eqref{m-form} and \eqref{eq:n_I},
after some simplification this expression becomes
\begin{multline}\label{eq:3}
(-1)^{s+1} (N\!-\!1)!^2 \prod_{j=1}^{s} \frac{1}{K_j! (K_j\!-\!1)!}
\prod_{j=1}^{s-1} \frac{1}{(K_j + K_{j+1})} \\
\times \sum_{A \subseteq[s-1]}^{} (-1)^{\vert A\vert}
(K_1+\dots+K_{a_1}) (K_{a_1+1}+\dots+K_{a_2}) \cdots (K_{a_{\ell}+1}+\dots+K_{s}) \\
\cdot \frac {\prod_{a \in A}^{} (K_a+K_{a+1})}{\prod_{a\in A}^{}
(K_1+\dots+K_a)(K_{a+1}+\dots+K_s)},
\end{multline}
where $|A|$ stands for the cardinality of the set $A$, as usual. We
have to show that the sum in \eqref{eq:3} vanishes for $s>1$. That
it equals $1$ for $s=1$ is obvious. The following lemma implies our
claim (by setting $X=Y=0$).

\begin{lemm}\label{lem:1} For $s>1$, we have
\begin{multline} \label{eq:4}
\sum_{A\subseteq[s-1]}^{} (-1)^{\vert A\vert} (K_1+\dots+K_{a_1})
(K_{a_1+1}+\dots+K_{a_2}) \cdots (K_{a_{\ell}+1}+\dots+K_{s}+X) \\
\cdot \frac{\prod_{a\in A}^{} (K_a+K_{a+1}+Y\cdot \chi(a=s-1))} {
\prod_{a\in A}^{} (K_1+\dots+K_a)(K_{a+1}+\dots+K_s)}\\
= - \frac{X (K_1+\dots+K_{s-1}) +Y (K_s+X)}{K_2+\dots+K_s},
\end{multline}
where $\chi(\mathcal S)=1$ if $\mathcal S$ is true and
$\chi(\mathcal S)=0$ otherwise.
\end{lemm}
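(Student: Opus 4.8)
The plan is to prove Lemma~\ref{lem:1} by induction on $s$, peeling off the rightmost block of the composition.

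First I would set up notation to make the recursion transparent. Write $S_j = K_1 + \dots + K_j$ for the partial sums, so $S_s = N$, and for a subset $A = \{a_1 < \dots < a_\ell\} \subseteq [s-1]$ the product of ``block sums'' appearing on the first line of \eqref{eq:4} is
\begin{equation*}
S_{a_1}\,(S_{a_2}-S_{a_1})\cdots (S_{a_\ell}-S_{a_{\ell-1}})\,(S_{s-1}-S_{a_\ell}+X),
\end{equation*}
while the second line is a product over $a \in A$ of the factors $(K_a+K_{a+1}+Y\chi(a=s-1))/\big(S_a (S_s - S_a)\big)$. The key structural observation is that the factor $Y$ only ever multiplies a term when $s-1 \in A$, and the variable $X$ sits in the last block-sum regardless of $A$. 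So I would split the sum over $A$ according to whether $s-1 \in A$ or not. When $s-1 \notin A$, the element $a_\ell$ of $A$ lies in $[s-2]$, the last block-sum is $S_{s-1} - S_{a_\ell} + X = (K_{a_\ell+1}+\dots+K_{s-1}) + X$, and none of the $Y$-factors appear; this piece is, up to relabeling $K_{s-1} \rightsquigarrow K_{s-1}+X$ and dropping $K_s$, an instance of the lemma for $s-1$ — except that the denominators $S_s - S_a = K_{a+1}+\dots+K_s$ still carry the old $K_s$. When $s-1 \in A$, I factor out the $a = s-1$ contribution: the block-sum telescopes so that the last block is just $(K_s + X)$, the $Y$-factor contributes $(K_{s-1}+K_s+Y)$, the denominator contributes $S_{s-1}(S_s - S_{s-1}) = S_{s-1} K_s$, and the remaining sum is over $A' = A\setminus\{s-1\} \subseteq [s-2]$ with last block-sum $S_{s-1} - S_{a_{\ell-1}}$ (no $X$, no $Y$), which is the $s-1$ case of the lemma with $X = Y = 0$.

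Carrying out this split, the ``$s-1 \notin A$'' part and the ``$s-1 \in A$'' part can each be rewritten using the inductive hypothesis. The subtlety — and the step I expect to be the main obstacle — is that the two halves are instances of the $(s-1)$-version of \eqref{eq:4} with \emph{different} parameter specializations (one has an altered last variable and a mismatched denominator involving $K_s$; the other is the plain $X=Y=0$ case but on the truncated composition $(K_1,\dots,K_{s-1})$). One must massage the $K_s$-dependence out of the denominators of the first half — most cleanly by a partial-fraction identity of the form $1/\big((S_s-S_a)\big) = 1/(S_{s-1}-S_a) - K_s/\big((S_{s-1}-S_a)(S_s-S_a)\big)$ applied to each factor, or equivalently by first proving a slightly more general version of the lemma carrying an extra free denominator parameter — and then combine the resulting closed forms. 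After this reduction, what remains is to check that
\begin{equation*}
\text{(contribution with }s-1\notin A) + \text{(contribution with }s-1\in A) = -\frac{X\,S_{s-1} + Y(K_s+X)}{S_s - K_1},
\end{equation*}
which by the inductive formulas becomes a rational-function identity in $K_1,\dots,K_s,X,Y$ with denominators supported on $S_s-K_1 = K_2+\dots+K_s$, $K_s$, and $S_{s-1}$; clearing denominators reduces it to a polynomial identity that can be verified directly. The base case $s=2$ is the single computation
\begin{equation*}
(K_1+X) - \frac{(K_1+K_2+Y)(K_2+X)}{K_2} = -\frac{X K_1 + Y(K_2+X)}{K_2},
\end{equation*}
which holds by inspection, so the induction is anchored.

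One remark on strategy: rather than guessing the partial-fraction bookkeeping, it is cleaner to strengthen the statement before inducting — replace the denominators $S_s - S_a = K_{a+1}+\dots+K_s$ by $z + K_{a+1}+\dots+K_{s-1}$ for an indeterminate $z$ (so the original is $z=K_s$), prove the generalized identity by the same peeling argument, and specialize at the end. With the free parameter in place, the ``$s-1\notin A$'' half is \emph{exactly} the generalized $(s-1)$-case with $X$ replaced by $X$ and $z$ replaced by $z$, and no mismatch survives; this is what turns the main obstacle into routine algebra.
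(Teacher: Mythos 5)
Your high-level plan -- induction on $s$ with a \emph{binary} split by whether $s-1 \in A$ -- is sound and is in fact a slightly coarser decomposition than the paper's, which peels off the \emph{largest} element $t$ of $A$ and gets a sum over $t=1,\dots,s$ rather than just two pieces. The binary split does lead to a clean proof. But the step you flag as ``the main obstacle'' is handled incorrectly, and the two workarounds you propose do not work. First, a small but consequential slip: the last block is $S_s - S_{a_\ell}+X$, not $S_{s-1}-S_{a_\ell}+X$, and the relabeling you suggest ($K_{s-1}\rightsquigarrow K_{s-1}+X$, ``dropping $K_s$'') does not match anything. The correct move, and the one the paper itself uses (just adapted to its own finer split, where it takes $K_t\to K_t+\dots+K_{s+1}$), is the substitution $K_{s-1}\to K_{s-1}+K_s$ together with shifted parameters: the ``$s-1\notin A$'' piece is exactly $f(s-1;X,-K_s)\big|_{K_{s-1}\to K_{s-1}+K_s}$ and the ``$s-1\in A$'' piece factors as $-\frac{(K_s+X)(K_{s-1}+K_s+Y)}{(K_1+\dots+K_{s-1})K_s}\,f(s-1;-K_s,-K_s)\big|_{K_{s-1}\to K_{s-1}+K_s}$. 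Plugging the induction hypothesis into both and simplifying gives $-\frac{X(K_1+\dots+K_{s-1})+Y(K_s+X)}{K_2+\dots+K_s}$ in a few lines, with no partial fractions. This is genuinely different bookkeeping from the paper (two applications of the IH with a single substitution, versus the paper's sum over $t$ with $t$-dependent substitutions) but it is the same key idea; your proposal misses that idea.

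The two replacements you offer for the substitution are gaps, not just detours. The partial-fraction expansion $1/(S_s-S_a)=1/(S_{s-1}-S_a)-K_s/((S_{s-1}-S_a)(S_s-S_a))$ applied to each $a\in A$ produces $2^{|A|}$ terms per subset and there is no evident resummation; this does not ``massage out'' the $K_s$-dependence. The free-$z$ strengthening is not well-posed as stated: if you replace only the denominators $K_{a+1}+\dots+K_s$ by $z+K_{a+1}+\dots+K_{s-1}$ while the last block, the $Y$-factor, and the $A=\emptyset$ term keep their $K_s$'s, there is no clean closed form for the result. Already at $s=2$ the candidate generalized left side is $K_1+K_2+X-\frac{(K_2+X)(K_1+K_2+Y)}{z}$, which does not equal $-\frac{XK_1+Y(K_2+X)}{z}$ (that would force $z=K_2$). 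If instead you replace $K_s$ by $z$ everywhere, you have simply renamed $K_s$ and the ``generalization'' is vacuous -- which is precisely why the substitution trick is the right tool: the lemma is a rational identity in all of $K_1,\dots,K_s,X,Y$, so the inductive hypothesis may be invoked with shifted arguments. Also note your matching ``$X$ replaced by $X$ and $z$ replaced by $z$'' cannot be right even in spirit: tracking the denominators forces $z\to z+K_{s-1}$ in the reduction to $s-1$.
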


\begin{proof} We prove the claim by induction on $s$. For $s \ge 1$,
let $f(s;X,Y)$ denote the left-hand side of \eqref{eq:4}. In
particular, we have
\begin{equation}\label{eq:5}
f(1;X,Y)=K_1+X.
\end{equation}

The claim \eqref{eq:4} can be readily verified for $s=2$. Now we
suppose that \eqref{eq:4} has been proved up to $s$.

We consider $f(s+1;X,Y)$. We split the defining sum according to the
largest element of the indexing set $A$, $t$ say, so that
$A=A'\cup\{t\}$. In this manner, we obtain
\begin{multline*}
f(s+1;X,Y)=(K_1+\dots+K_{s+1}+X) \\ - \sum_{t=1}^{s}
\frac{(K_{t+1}+\dots+K_{s+1}+X)(K_t+K_{t+1}+Y\cdot\chi(t=s))}
{(K_1+\dots+K_t)(K_{t+1}+\dots+K_{s+1})} \kern4cm \\
\cdot \sum_{A'\subseteq[t-1]}^{}(-1)^{\vert A'\vert}
(K_1+\dots+K_{a'_1}) \cdots (K_{a'_{\ell-1}+1}+\dots+K_{t})\\
\cdot \frac{\prod_{a\in A'}^{} (K_a+K_{a+1})} {\prod_{a \in A'}
^{}(K_1+\dots+K_a)(K_{a+1}+\dots+K_{s+1})}.
\end{multline*}
We may rewrite this as
\begin{multline*}
f(s+1;X,Y)=(K_1+\dots+K_{s+1}+X) \\
-\sum_{t=1}^{s-1} \frac {(K_{t+1}+\dots+K_{s+1}+X)(K_t+K_{t+1})}
{(K_1+\dots+K_t)(K_{t+1}+\dots+K_{s+1})} \kern4cm \\
\kern2cm \cdot f(t;-K_{t+1}-\dots-K_{s+1},-K_{t+1}-\dots-K_{s+1})
\big\vert_{K_t\to K_t+\dots+K_{s+1}} \\
- \frac {(K_{s+1}+X)(K_s+K_{s+1}+Y)}{(K_1+\dots+K_s)K_{s+1}}
f(s;-K_{s+1},-K_{s+1}) \big\vert_{K_s\to K_s+K_{s+1}}.
\end{multline*}
Now we may use the induction hypothesis, so that we arrive at
\begin{multline*}
f(s+1;X,Y)=(K_1+\dots+K_{s+1}+X) \\
- \sum_{t=1}^{s-1} \frac {(K_{t+1}+\dots+K_{s+1}+X)(K_t+K_{t+1})}
{(K_1 + \dots + K_t)(K_{t+1}+\dots+K_{s+1})} \kern4cm \\
\kern2cm \cdot \frac{(K_{t+1}+\dots+K_{s+1})(K_1+\dots+K_{t-1}+K_t)}
{K_2+\dots+K_{s+1}} \\
- \frac{(K_{s+1}+X)(K_s+K_{s+1}+Y)} {(K_1+\dots+K_s)K_{s+1}} \frac
{K_{s+1}(K_1+\dots+K_{s-1}+K_{s})} {K_2+\dots+K_{s+1}} \\
= (K_1+\dots+K_{s+1}+X) - \sum_{t=1}^{s-1} \frac
{(K_{t+1}+\dots+K_{s+1}+X)(K_t+K_{t+1})} {K_2+\dots+K_{s+1}}\\ -
\frac {(K_{s+1}+X)(K_s+K_{s+1}+Y)} {K_2+\dots+K_{s+1}}.
\end{multline*}
By a routine calculation this simplifies to
$$
f(s+1;X,Y) = - \frac{X(K_1+ \dots + K_{s}) + Y(K_{s+1} + X)}{K_2+
\dots + K_{s+1}}.
$$
This is exactly the right-hand side of \eqref{eq:4} with $s$
replaced by $s+1$. \end{proof}

This completes the proof of the Theorem \ref{duality}. \end{proof}

We finish the present section by making Theorem \ref{duality}
explicit in the four lowest-order special cases. The following
formulas are written in a way which makes the self-adjointness of
the sums obvious.

\begin{ex}\label{P4-form} The definition $\M_4 = P_4 - P_2^2$ is equivalent
to $P_4 = \M_4 + \M_2^2$.
\end{ex}

\begin{ex}\label{P6-form} $P_6 = \M_6 + 2 (\M_2 \M_4 + \M_4 \M_2) + \M_2^3$.
\end{ex}

\begin{ex}\label{P8-form} $P_8$ can be written in the form of the sum
\begin{equation*}
\M_8 + 3 (\M_2 \M_6 + \M_6 \M_2) + 9 \M_4^2  + 3 (\M_2^2 \M_4 + \M_4
\M_2^2) + 4 \M_2 \M_4 \M_2  + \M_2^4
\end{equation*}
of $8$ terms.
\end{ex}

\begin{ex}\label{P10-form} $P_{10}$ can be written in the form of the sum
\begin{multline*}
\M_{10} + 4 (\M_2 \M_8 + \M_8 \M_2) + 24 (\M_4 \M_6 + \M_6 \M_4) \\
+ 6 (\M_2^2 \M_6 + \M_6 \M_2^2) + 24 (\M_2 \M_4^2 + \M_4^2 \M_2) + 9
\M_2 \M_6 \M_2 + 16 \M_4 \M_2 \M_4 \\ + 4 (\M_2^3 \M_4 + \M_4
\M_2^3) + 6 (\M_2^2 \M_4 \M_2 + \M_2 \M_4 \M_2^2) + \M_2^5
\end{multline*}
of $16$ terms.
\end{ex}

These formulas invert the definitions of the low-order special cases
$\M_{2N}$ ($N \le 5$) displayed in Appendix \ref{averages}.

\section{Residue families}\label{RF}

In the present section, we recall from \cite{juhl-book} the notion
of residue families
$$
D_{2N}^{res}(g;\lambda): C^\infty(M \times [0,\varepsilon)) \to
C^\infty(M)
$$
(see also \cite{BJ} for an introduction) and describe their main
properties. In particular, we describe their recursive structure
which finds its natural expression in the form of two systems of
factorization identities.

The definition of $D_{2N}^{res}(g;\lambda)$ involves two
ingredients:
\begin{itemize}
\item the renormalized volume coefficients $v_2(g),\cdots,v_{2N}(g)$
and
\item the formal asymptotic expansions of eigenfunctions of the Laplacian
of a Poincar\'e-Einstein metric $g_+$ relative to $g$.
\end{itemize}
We first recall the notion of Poincar\'e-Einstein metrics in the
sense of Fefferman and Graham \cite{cartan}, \cite{FG-final}. Let
$M$ be a manifold of dimension $n$ with a given metric $g$. On the
space $M \times (0,\varepsilon)$ we consider metrics of the form
\begin{equation}\label{PE-metric}
g_+ = r^{-2} (dr^2 + g_r),
\end{equation}
where $g_r$ is a one-parameter family of metrics on $M$ so that $g_0
= g$. Moreover, we require that for {\em odd} $n$ the tensor
$$
\Ric(g_+) + n g_+
$$
vanishes to infinite order along $M$, and that for {\em even} $n \ge
4$
\begin{equation}\label{einstein}
\Ric(g_+) + n g_+ = O(r^{n-2})
\end{equation}
together with a vanishing trace condition for $\Ric(g_+) + ng_+$ to
the order $r^{n-2}$. If $g_r$ is required to be even (in $r$) and
$n$ is odd, these conditions uniquely determine the family $g_r$ for
a general metric $g$. Similarly, for even $n$, the conditions
uniquely determine the coefficients $g_{(2)}, \dots, g_{(n-2)}$,
$\tilde{g}_{(n)}$ and the trace of $g_{(n)}$ in the even power
series
$$
g_r = g + r^2 g_{(2)} + \cdots + r^{n-2} g_{(n-2)} + r^n (g_{(n)} +
\log r \tilde{g}_{(n)}) + \cdots.
$$
Moreover, we have $\tr_g (\tilde{g}_{(n)}) = 0$. Since for even $n$
(and general metrics) the constructions in the present paper will
only depend on the terms $g_{(2)},\dots,g_{(n-2)}$ and $\tr_g
(g_{(n)})$ (which are uniquely determined by $g$), it will be
convenient to {\em define} $g_r$ in this case by the finite sum
$$
g + r^2 g_{(2)} + \cdots + r^{n-2} g_{(n-2)} + r^n g_{(n)}.
$$
For a Poincar\'e-Einstein metric $g_+$ of $g$, we consider the
quotient
$$
v(r) = \frac{vol(g_r)}{vol(g)} \in C^\infty(M)
$$
of volume forms on $M$. Then
$$
dvol (g_+) = r^{-n-1} v(r) dvol(g) dr.
$$
In particular, for even $n$, we have the even expansion
\begin{equation}\label{v-coeff}
v(r) = 1 + r^2 v_2 + r^4 v_4 + \cdots + r^n v_n + \cdots.
\end{equation}
The coefficients $v_{2j} \in C^\infty(M)$ in this expansion are
called renormalized volume (or holographic) coefficients. The
coefficients $v_{2j}$ are intriguing scalar-valued Riemannian
curvature invariants (see \cite{G-ext}, \cite{juhl-book} and the
references therein). We also define
\begin{equation}\label{w-coeff}
w(r) = \sqrt{v(r)} = 1 + r^2 w_2 + r^4 w_4 + \cdots + r^n w_n +
\cdots.
\end{equation}

Now let $i: M \hookrightarrow M \times [0,\varepsilon)$ denote the
embedding $i(m)=(m,0)$. We consider formal solutions
\begin{equation}\label{EF}
u (\cdot,r) \sim \sum_{j \ge 0} r^{\lambda+2j}
\T_{2j}(g;\lambda)(f)(\cdot), \; \T_0 (f) = f \in C^\infty(M)
\end{equation}
of the eigen-equation
$$
-\Delta_{g_+} u = \lambda(n\!-\!\lambda) u, \; \lambda \in \c.
$$
The coefficients in the expansion \eqref{EF} are given by rational
families $\T_{2j}(g;\lambda)$ (in $\lambda$) of differential
operators acting on the ``boundary value'' $f$ of $u$. For example,
the first two families are
$$
\T_2(\lambda) = \frac{1}{2(n\!-\!2\!-\!2\lambda)} (\Delta \!-\!
\lambda \J)
$$
and
\begin{multline*}
\T_4(\lambda) =
\frac{1}{8(n\!-\!2\!-\!2\lambda)(n\!-\!4\!-\!2\lambda)} \big[
(\Delta \!-\! (\lambda\!+\!2)\J) (\Delta \!-\! \lambda \J) \\ +
\lambda(2\lambda\!-\!n\!+\!2) |\Rho|^2 + 2(2\lambda\!-\!n\!+\!2)
\delta (\Rho d) + (2\lambda\!-\!n\!+\!2) (d\J,d) \big].
\end{multline*}
Now for general $g$, even $n$ and $2N \le n$, we define
\begin{equation}\label{res-fam}
D_{2N}^{res}(g;\lambda) = 2^{2N} N! \left[
\left(-\f\!-\!\lambda\!+\!2N\!-\!1\right) \cdots
\left(-\f\!-\!\lambda\!+\!N\right) \right]
\delta_{2N}(g;\lambda\!+\!n\!-\!2N)
\end{equation}
with
\begin{equation}\label{res-fam-2}
\delta_{2N}(g;\lambda) = \sum_{j=0}^N \frac{1}{(2N\!-\!2j)!} \left[
\T_{2j}^*(g;\lambda) v_0 + \cdots + \T_0^*(g;\lambda) v_{2j} \right]
i^* (\partial/\partial r)^{2N-2j}.
\end{equation}
Here the holographic coefficients act as multiplication operators,
and $\T_{2j}^*(g;\lambda)$ denotes the formal adjoint of the
differential operator $\T_{2j}(g;\lambda)$ on $C^\infty(M)$ with
respect to the metric $g$. Note that $D_0^{res}(g;\lambda) = i^*$.
The main role of the polynomial pre-factor in \eqref{res-fam} is to
remove the poles (see also \eqref{detail-left}). Similarly, we
define $D_{2N}^{res}(g;\lambda)$, $N \ge 1$ for general $g$ and odd
$n$. It is obvious that the definition of $D_{2N}^{res}(\lambda)$
only requires to solve the eigen-equation approximately up to the
order $r^{\lambda+2N}$.

We also recall that, for a locally conformally flat metric $g$, the
metric
$$
r^{-2} (dr^2 + g - r^2 \Rho + r^4/4 \Rho^2)
$$
is a Poincar\'e-Einstein metric relative to $g$ (see Chapter 7 of
\cite{FG-final} or Section 6.14 of \cite{juhl-book}). In that case,
residue families are well-defined for any $N \ge 1$.

The residue families $D_{2N}^{res}(g;\lambda)$ admit an
interpretation as obstructions to the extension of eigenfunctions of
$\Delta_{g_+}$ through the boundary $r=0$. These obstructions appear
as residues of associated meromorphic families of distributions. It
is the latter relation which motivates the name. For later use, we
need to recall the precise statement. Let $u \in C^\infty(X)$, $X =
(0,\varepsilon) \times M$ be an eigenfunction
$$
-\Delta_{g_+} u = \mu(n\!-\!\mu) u
$$
with boundary value $f \in C^\infty(M)$. Let $\varphi \in
C_0^\infty(\bar{X})$ be a test function. We consider the integral
$$
\int_X r^\lambda u \varphi d vol(r^2 g_+).
$$
It is holomorphic for $\lambda$ with sufficiently large real part
and admits a meromorphic continuation with a simple pole at
$\lambda=-\mu\!-\!1\!-\!2N$. Then
\begin{equation}\label{residue}
\Res_{\lambda = -\mu-1-2N} \left( \int_X r^\lambda u \varphi
dvol(r^2 g_+) \right) = \int_M f
\delta_{2N}(\lambda\!+\!n\!-\!2N)(\varphi) dvol(g).
\end{equation}
For full details we refer to \cite{juhl-book} and \cite{BJ}.

Now residue families have the following basic properties.
\begin{itemize}\label{properties}
\item $D_{2N}^{res}(g;\lambda)$ is a polynomial in $\lambda$ of
degree $N$.
\item $D_{2N}^{res}(g;-n/2+N) = P_{2N}(g) i^*$.
\item $D_{2N}^{res}(g;\lambda)$ satisfies a certain conformal covariance
law under $g \mapsto e^{2\varphi} g$.
\end{itemize}

The second of these properties is contained in a system of $N$
factorization identities which are satisfied by the family
$D_{2N}^{res}(g;\lambda)$. There are actually two such systems. The
{\em first} system is given by the following result.

\begin{thm}\label{Factor-A} Assume that $2N \le n$ for even $n$ and
$N \ge 1$ for odd $n$. Then the factorization relations
\begin{equation}\label{Fact-A}
D_{2N}^{res}\left(g;-\f\!+\!2N\!-\!j\right) = P_{2j}(g) \circ
D_{2N-2j}^{res}\left(g;-\f\!+\!2N\!-\!j\right)
\end{equation}
for $j=1,\dots,N$ hold true for any metric $g$.
\end{thm}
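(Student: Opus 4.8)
The plan is to prove the factorization relations \eqref{Fact-A} by reducing them to properties of the families $\T_{2j}(g;\lambda)$ that enter the definition \eqref{res-fam}–\eqref{res-fam-2} of residue families. First I would recall the key algebraic fact about the coefficients in the asymptotic expansion \eqref{EF}: the operators $\T_{2j}(g;\lambda)$ satisfy factorization identities at special values of $\lambda$, reflecting that when $\lambda(n-\lambda)$ coincides with the eigenvalue $\mu(n-\mu)$ for $\mu = \lambda + 2k$ the formal solution truncates or matches a lower solution. Concretely, one has relations of the form $\T_{2j}(g;\lambda) = \T_{2j-2k}(g;\lambda+2k)\circ(\text{GJMS-type operator in }\lambda)$ when $\lambda+n-2N$ hits the value $-n/2+2N-j$. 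Dualizing (taking formal adjoints with respect to $g$) and plugging into \eqref{res-fam-2} should produce the claimed composition $P_{2j}(g)\circ\delta_{2N-2j}$, up to matching the polynomial prefactors in \eqref{res-fam}.

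The key steps, in order, are: (1) Specialize $\lambda$ in \eqref{res-fam} so that the spectral parameter of the eigen-equation becomes the one associated with $P_{2j}$; here $D_{2N}^{res}(g;-n/2+2N-j)$ corresponds to $\delta_{2N}(g;\lambda)$ with $\lambda+n-2N = -n/2+2N-j$, i.e. the residue is taken at the pole where the eigenfunction $u$ with boundary value $f$ can equivalently be viewed as having boundary value $P_{2j}(g)f$ shifted by $2j$ in the expansion exponent. (2) Use the residue characterization \eqref{residue}: the left side $D_{2N}^{res}(g;-n/2+2N-j)(f)$ equals (a normalization of) the residue at $\lambda = -\mu-1-2N$ of $\int_X r^\lambda u\,\varphi\,dvol$; at the special value of $\mu$, the leading coefficients $\T_0 f,\dots,\T_{2j-2}f$ of $u$ become a genuine smooth solution near $r=0$ and the first ``obstruction'' is $P_{2j}(g)f$, so the residue factors through $\delta_{2N-2j}$ applied after the eigenfunction with boundary value $P_{2j}(g)f$. (3) Translate this back to the operator identity \eqref{Fact-A} and verify that the polynomial prefactors in \eqref{res-fam} — the products $\prod(-n/2-\lambda+k)$ — split correctly as the prefactor for $D_{2N}^{res}$ at argument $-n/2+2N-j$ times the prefactor for $D_{2N-2j}^{res}$ at the same argument, together with the constant relating $P_{2j}$ to $\T_{2j}$ at its resonant value. (4) Handle the odd-$n$ case identically (no obstruction, all $\T_{2k}$ well-defined) and the even-$n$ case for $2N \le n$, where all $v_{2k}$ and $\T_{2k}$ occurring are still canonically defined.

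The main obstacle I expect is step (2)–(3): making the ``the first obstruction is $P_{2j}(g)f$'' statement precise at the level of the {\em adjoint} operators $\T_{2j}^*(g;\lambda)$ appearing in \eqref{res-fam-2}, and bookkeeping the renormalized volume coefficients $v_{2k}$ that are interleaved in that sum. One must check that the partial sum $\sum_{i=0}^{N-j}\frac{1}{(2N-2j-2i)!}[\T_{2i}^*v_0+\cdots+\T_0^*v_{2i}]\,i^*(\partial_r)^{2N-2j-2i}$, after precomposition by $P_{2j}(g)$, reproduces exactly the tail $\sum_{i=j}^{N}$ of the sum defining $\delta_{2N}$ at the shifted parameter — this is where the resonant factorization of the $\T$'s must be used term by term, and where the rational poles of individual $\T_{2k}$ conspire with the polynomial prefactor to stay regular. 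A clean way to organize this is to work directly with the meromorphic distribution $\int_X r^\lambda u\,\varphi\,dvol(r^2 g_+)$ and its pole structure, so that the factorization is read off from the fact that multiplying the eigenfunction's spectral parameter into resonance merely shifts which residue one picks, rather than manipulating the operator expansion coefficients by hand; the conformal covariance listed among the basic properties is not needed here, only the spectral/residue definition.
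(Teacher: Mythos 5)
Your operator-level plan — factorization identities for the $\T_{2j}(g;\lambda)$ at the resonant parameter, dualization, substitution into \eqref{res-fam-2}, and matching of the polynomial prefactors — is exactly the paper's proof of Theorem~\ref{Factor-A}: the key lemma is $P_{2k+2j}^*(\f-j)=P_{2j}^*P_{2k}^*(\f+j)$ (Eq.~\eqref{factor-sol}), derived from the scattering-operator pole-cancellation argument you indicate, and the bookkeeping is handled by an index shift after observing that the prefactor in \eqref{res-fam} annihilates the terms with $k<j$. One caution: the ``clean way'' you favor in the last paragraph — reading the factorization directly off residues of $\int_X r^\lambda u\,\varphi\,dvol$ rather than manipulating expansion coefficients — is in fact the mechanism the paper uses for Theorem~\ref{Factor-B}, where the \emph{bulk} operator $\bar P_{2j}$ can be moved onto $\varphi$ by self-adjointness; for the \emph{boundary} operator $P_{2j}(g)$ in Theorem~\ref{Factor-A} that trick is unavailable, and the paper does manipulate the coefficients by hand.
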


Next, we denote by
\begin{equation}\label{compact}
\bar{P}_{2N}(g) = P_{2N}(dr^2\!+\!g_r)
\end{equation}
the GJMS-operator of order $2N$ for the conformal compactification
$\bar{g} = dr^2\!+\!g_r$ of $g_+$ on $X = M \times [0,\varepsilon)$.
In these terms, the {\em second} system of factorization identities
is given by the following result.

\begin{thm}\label{Factor-B} Assume that $2N \le n$ for even $n$
and $N \ge 1$ for odd $n$. Then the factorization relations
\begin{equation}\label{Fact-B}
D_{2N}^{res}\left(g;-\frac{n\!+\!1}{2}\!+\!j\right) =
D_{2N-2j}^{res}\left(g;-\frac{n\!+\!1}{2}\!-\!j\right) \circ
\bar{P}_{2j}(g)
\end{equation}
for $j=1,\dots,N$ hold true for any metric $g$.
\end{thm}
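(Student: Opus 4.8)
The plan is to establish \eqref{Fact-B} by working directly with the defining formula \eqref{res-fam}--\eqref{res-fam-2} for $D_{2N}^{res}(g;\lambda)$ in terms of the adjoint operators $\T_{2j}^*(g;\mu)$ and the holographic coefficients $v_{2k}$, evaluated at the two special spectral parameters appearing on the left- and right-hand sides. The key observation is that, on the compactification $\bar g = dr^2+g_r$ of $X$, the operator $r^{2}\Delta_{g_+}$ is (up to lower-order terms in a conformally covariant sense) the Yamabe-type operator attached to $\bar g$; more precisely, the conformal covariance of the GJMS-operators \eqref{covar}, applied to the conformal factor $r$ relating $\bar g$ and $r^2 g_+$, yields an intertwining relation between $\bar P_{2j}(g)=P_{2j}(\bar g)$ and powers of $\Delta_{g_+}$ acting on $r^{\mu}$-weighted functions. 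Concretely, for $u$ a formal eigenfunction of $-\Delta_{g_+}$ with eigenvalue $\mu(n-\mu)$, the function $\bar P_{2j}(g)$ applied to the boundary data shifts $\mu \mapsto \mu \pm j$ in the expansion \eqref{EF}, which is exactly the shift recorded in the arguments $-\tfrac{n+1}{2}+j$ versus $-\tfrac{n+1}{2}-j$ in \eqref{Fact-B}.

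The steps I would carry out, in order, are: (i) recall the residue characterization \eqref{residue} of $\delta_{2N}(g;\lambda)$, so that proving \eqref{Fact-B} reduces to identifying two meromorphic families of distributions on $X$, paired against test functions $\varphi$; (ii) use the conformal transformation law \eqref{covar} for $\bar P_{2j}(g)=P_{2j}(dr^2+g_r)$ under the rescaling $\bar g = r^2 g_+$ to rewrite $\bar P_{2j}(g)$ acting inside the integral $\int_X r^\lambda u\,\varphi\,dvol(r^2g_+)$ as a power-of-Laplacian operator $\prod(\Delta_{g_+}+c_k)$ acting on $u$, which changes the eigenvalue parameter and hence the location of the pole; (iii) read off that the residue at $\lambda=-\tfrac{n+1}{2}+j$ of the modified integral equals the residue at the shifted parameter $\lambda=-\tfrac{n+1}{2}-j$ of the original integral, composed with $\bar P_{2j}(g)$; (iv) translate this identity of residues back, via \eqref{residue} and the definition \eqref{res-fam}, into the operator identity \eqref{Fact-B}, checking that the polynomial pre-factors in \eqref{res-fam} match the shift $j\mapsto -j$ in the spectral parameter — this is a finite product manipulation. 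One should also keep track of the case distinction between odd $n$ (all $N$) and even $n$ ($2N\le n$), but the argument is uniform since in both ranges the relevant Taylor coefficients $v_{2k}$, $k\le N$, are intrinsically defined.

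The main obstacle I anticipate is step (ii): making precise the sense in which $P_{2j}(dr^2+g_r)$ — a GJMS-operator on the $(n{+}1)$-dimensional manifold $X$ — acts on boundary data and shifts the asymptotic exponent $\mu$ by exactly $\pm j$. This requires care because $\bar g$ is only an asymptotic (formal) expansion, the GJMS-construction on $X$ has its own dimensional shift ($\tfrac{n+1}{2}$ rather than $\tfrac n2$), and the covariance \eqref{covar} for the conformal factor $r$ must be applied despite $r$ vanishing on the boundary. The cleanest route is probably to bypass the operators $\bar P_{2j}$ themselves and instead use their own factorization/covariance properties as a black box, reducing everything to the elementary fact that $-\Delta_{g_+}(r^{\mu}\phi + \cdots) = \mu(n-\mu)(r^{\mu}\phi+\cdots)$ forces the leading coefficient operators $\T_{2k}(g;\mu)$ to satisfy the recursion from which both sides of \eqref{Fact-B} are built; then \eqref{Fact-B} becomes a bookkeeping identity among these $\T$'s and the $v$'s, provable by the same adjointness and pole-cancellation arguments that underlie Theorem \ref{Factor-A}. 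I would present it in parallel with the proof of Theorem \ref{Factor-A}, emphasizing that the only change is replacing the ``interior'' factorization of $\Delta_{g_+}$ by the ``boundary'' one encoded in $\bar P_{2j}$.
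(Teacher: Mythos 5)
Your plan (i)--(iv) is essentially the paper's proof, so the approach is right; but the obstacle you anticipate at step (ii) is not a gap --- it is resolved exactly by the Einstein product formula. Since $g_+$ is Einstein with scalar curvature $-n(n+1)$, Gover's formula gives
$P_{2M}(g_+)=\prod_{j=\frac{n+1}{2}}^{\frac{n+1}{2}+M-1}(\Delta_{g_+}+j(n\!-\!j))$,
so on the approximate eigenfunction $u$ one has $P_{2M}(g_+)u=\kappa u$ with the explicit nonzero scalar $\kappa=\prod_{j=0}^{2M-1}(2N\!-\!j)$; conformal covariance of $P_{2M}$ applied to the conformal factor $r$ then rewrites $r^\lambda u = r^{\lambda+\frac{n+1}{2}+M}\kappa^{-1}\bar P_{2M}(g)\bigl(r^{-\frac{n+1}{2}+M}u\bigr)$, self-adjointness of $\bar P_{2M}$ with respect to $dvol(\bar g)$ moves it onto the test function $\varphi$ (boundary terms being holomorphic in $\lambda$ and hence residue-free), the commutator $\bar P_{2M}\bigl(r^{\lambda+\frac{n+1}{2}+M}\varphi\bigr)-r^{\lambda+\frac{n+1}{2}+M}\bar P_{2M}(\varphi)$ vanishes at the critical $\lambda=-\frac{n+1}{2}-M$, and the quotient of the polynomial pre-factors in \eqref{res-fam} matches $\kappa$ exactly --- so the shift $\mu\mapsto\mu\pm j$ you worry about is really the cancellation of $\kappa$ against the pre-factor quotient. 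Your fallback at the end (bypassing $\bar P_{2j}$ and reducing everything to $\T$-bookkeeping as in Theorem \ref{Factor-A}) is \emph{not} the paper's route for Factor-B and is not obviously easier: the $\T$-bookkeeping in the proof of Factor-A relies on the factorization $P_{2N}\bigl(\f\!-\!k\bigr)=P_{2N-2k}\bigl(\f\!+\!k\bigr)P_{2k}$ for the families on $M$, and no analogous identity tying the $\T$-families directly to the boundary operators $\bar P_{2j}$ on $X$ is available without essentially redoing the residue argument, so the residue/conformal-covariance route is the one to commit to.
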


We recall that, for general metrics on even-dimensional manifolds,
the existence of associated Poincar\'e-Einstein metrics is
obstructed. As a consequence, GJMS-operators for general metrics are
only defined for orders which do not exceed the dimension of the
underlying space. This is the reason for the condition $2N \le n$ in
both theorems. However, in odd dimensions there are no obstructions.

For odd $n$, the factorizations \eqref{Fact-B} involve the
GJMS-operators of all orders $2N \ge 2$ on the space $X = M \times
[0,\varepsilon)$ of {\em even} dimension $n+1$ for the metric
$\bar{g} = dr^2 + g_r$. Nevertheless, these operators are
well-defined. Indeed, Theorem \ref{double} yields a formula for a
Poincar\'e-Einstein metric for $\bar{g}$ to any order. Applying the
construction of \cite{GJMS} defines the desired operators
$\bar{P}_{2N}$ for all $N \ge 1$. These are conjugate to the
GJMS-operators for the Einstein metric $r^{-2} \bar{g}$.

Note also that for locally conformally flat metrics, Theorems
\ref{Factor-A}--\eqref{Factor-B} extend to all $N \ge 1$.

The systems \eqref{Fact-A} and \eqref{Fact-B} contain the relations
$$
D_{2N}^{res}\left(g,-\f\!+\!N\right) = P_{2N}(g) i^* \quad
\mbox{and} \quad D_{2N}^{res}\left(g;-\frac{n\!+\!1}{2}\!+\!N\right)
= i^* \bar{P}_{2N}(g)
$$
as the respective special cases $j=N$.

In \cite{juhl-book}, we established the factorizations in the system
\eqref{Fact-A} for all metrics (if $2N \le n$ for even $n$) and the
factorizations in the system \eqref{Fact-B} for locally conformally
flat metrics. In \cite{juhl-book}, we also confirmed the
factorizations in the system \eqref{Fact-B} for $N=1,2$ and the
factorization $j=1$ in the system \eqref{Fact-B} for $N=3$ by direct
calculations. For locally conformally flat metrics, {\em both}
systems of identities actually follow from their versions in the
flat case by the conformal covariance of the families (see Theorem
6.6.3 in \cite{juhl-book} and Theorem 1.5.3 in \cite{BJ}).
Alternatively, the system \eqref{Fact-A} is a consequence of the
identification of GJMS-operators in the asymptotic expansions of
eigenfunctions of the Laplacian of Poincar\'e-Einstein metrics
\cite{GZ}. The latter argument extends to general metrics. We
continue with the description of the details of that proof.

\begin{proof}[{\bf Proof of Theorem \ref{Factor-A}}] We use the fact
that
\begin{equation}\label{solution}
\T_{2k}(\lambda) = \frac{1}{2^{2k} k! (\f\!-\!\lambda\!-\!1) \cdots
(\f\!-\!\lambda\!-\!k)} P_{2k}(\lambda)
\end{equation}
with a holomorphic family $P_{2k}(\lambda)$ of the form $\Delta^k +
\cdots$ (see \cite{GZ}, \cite{juhl-book}, \cite{BJ}). Hence we can
write the family $D_{2N}^{res}(\lambda)$ in the form
\begin{multline}\label{detail-left}
\sum_{l=0}^{N-k} \sum_{k=0}^N \frac{2^{2N}N!}{2^{2k}k!}
\frac{1}{(2N\!-\!2k\!-\!2l)!} \\
\times \left(-\f\!-\!\lambda\!+\!2N\!-\!k\!-\!1\right)
\cdots\left(-\f\!-\!\lambda\!+\!N\right)
P_{2k}^*(\lambda\!+\!n\!-\!2N) v_{2l} i^*
\left(\frac{\partial^2}{\partial r^2}\right)^{N-k-l}.
\end{multline}
Since the families $P_{2k}(\lambda)$ are polynomials of degree $k$
in $\lambda$ (which easily can be proved by induction),
\eqref{detail-left} shows that $D_{2N}^{res}(\lambda)$ is a
polynomial of degree $N$ in $\lambda$. Similarly, for the family
$D_{2N-2j}^{res}(\lambda)$, we find the formula
\begin{multline}\label{detail-right}
\sum_{l=0}^{N-j-k} \sum_{k=0}^{N-j}
\frac{2^{2N-2j}(N\!-\!j)!}{2^{2k}k!}
\frac{1}{(2N\!-\!2j\!-\!2k\!-\!2l)!}
\\ \times \left(-\f\!-\!\lambda\!+\!2N\!-\!2j\!-\!k\!-\!1\right)
\cdots \left(-\f\!-\!\lambda\!+\!N\!-\!j\right)
\\ \times P_{2k}^*(\lambda\!+\!n\!-\!2(N\!-\!j)) v_{2l} i^*
\left(\frac{\partial^2}{\partial r^2}\right)^{N-j-k-l}.
\end{multline}
But for the value $\lambda=-\f\!+\!2N\!-\!j$, the non-trivial
contributions in \eqref{detail-left} satisfy $N \ge k \ge j$. Hence
by an index shift the sum simplifies to
\begin{multline}\label{d-r-simple}
\sum_{l=0}^{N-k-j} \sum_{k=0}^{N-j}
\frac{2^{2N}N!}{2^{2k+2j}(k\!+\!j)!}
\frac{1}{(2N\!-\!2k\!-\!2j\!-\!2l)!} \frac{(N\!-\!j)!}{k!} (-1)^{N-k-j} \\
\times P_{2k+2j}^*\left(\f\!-\!j\right) v_{2l} i^*
\left(\frac{\partial^2}{\partial r^2}\right)^{N-k-j-l}.
\end{multline}
On the other hand, for $\lambda=-\f\!+\!2N\!-\!j$, the sum
\eqref{detail-right} reduces to
\begin{multline}\label{d-l-simple}
\sum_{l=0}^{N-j-k} \sum_{k=0}^{N-j}
\frac{2^{2N-2j}(N\!-\!j)!}{2^{2k}k!}
\frac{1}{(2N\!-\!2j\!-\!2k\!-\!2l)!} \frac{N!}{(k\!+\!j)!}
(-1)^{N-k-j} \\ \times P_{2k}^*\left(\f\!+\!j\right) v_{2l} i^*
\left(\frac{\partial^2}{\partial r^2}\right)^{N-j-k-l}.
\end{multline}
Now we use the fact that the families $P_{2N}(\lambda)$ obey the
factorization relations
\begin{equation}\label{solution-factor}
P_{2N}\left(\f\!-\!k\right) = P_{2N-2k}\left(\f\!+\!k\right) P_{2k}
\end{equation}
for $N \ge k$ (\cite{juhl-book}, Theorem 6.11.18). For $k=N$, these
relations state that
\begin{equation}\label{spectral-GJMS}
P_{2N}\left(\f\!-\!N\right) = P_{2N}.
\end{equation}
Eq.~\eqref{spectral-GJMS} is the fundamental connection between
GJMS-operators and formal asymptotic expansions of eigenfunctions of
$\Delta_{g_+}$ (see \cite{GZ}). In order to prove
\eqref{solution-factor}, we recall the structure of the asymptotic
expansions of generalized eigenfunctions of $-\Delta_{g_+}$ for
generic eigenvalues. These have the form
$$
\sum_{j \ge 0} \T_{2j}(n\!-\!\lambda)(f) r^{n-\lambda+2j} + \sum_{j
\ge 0} \T_{2j}(\lambda) \mathcal{S}(\lambda)(f) r^{\lambda+2j},
$$
where $\mathcal{S}(\lambda)$ is the scattering operator; here we use
the conventions of \cite{juhl-book}. Now the contributions
$$
\T_{2N}(n\!-\!\lambda)(f) r^{n-\lambda+2N} \quad \mbox{and} \quad
\T_{2N-2k}(\lambda) \mathcal{S}(\lambda)(f) r^{\lambda+2N-2k}
$$
both have a simple pole at $\lambda = \f+k$; note that
$\T_{2N-2k}(\lambda)$ is regular at $\lambda=\f+k$. The cancellation
of poles in the sum implies the relation
\begin{equation}
\Res_{\f-k}(\T_{2N}(\lambda)) r^{\f+2N-k} +
\T_{2N-2k}\left(\f\!+\!k\right) \Res_{\f+k}(\mathcal{S}(\lambda))
r^{\f+2N-k} =0.
\end{equation}
But since $\Res_{\f+k}(\mathcal{S}(\lambda))$ is proportional to
$P_{2k}$ (\cite{GZ}), we have proved that both sides of
\eqref{solution-factor} are proportional. Since both sides are of
the form $\Delta^N + \cdots$, this proves the equality. In
particular, by taking adjoints, \eqref{solution-factor} shows that
\begin{equation}\label{factor-sol}
P_{2k+2j}^*\left(\f\!-\!j\right) = P_{2j}^*
P_{2k}^*\left(\f\!+\!j\right)
\end{equation}
for $0 \le k \le N\!-\!j$. Therefore, the self-adjointness $P_{2j}^*
= P_{2j}$ implies that the sum \eqref{d-r-simple} coincides with the
product of $P_{2j}$ and \eqref{d-l-simple}. The proof is complete.
\end{proof}

We continue with the

\begin{proof}[{\bf Proof of Theorem \ref{Factor-B}}] The proof rests
on the interpretation of residue families as residues as in
\eqref{residue}. We set
\begin{equation}\label{param}
\mu = \frac{n\!-\!1}{2} \!+\! M \!-\! 2N \quad \mbox{with} \quad
M=1,\dots,N.
\end{equation}
We choose an arbitrary $f \in C^\infty(M)$. We consider formal
approximate eigenfunctions of the Laplacian $-\Delta_{g_+}$ for the
eigenvalue $\mu(n-\mu)$ and with boundary value $f$. More precisely,
for odd $n$ and $N \ge 1$ as well as for even $n$ and $N \le \f$, we
let $u$ be the sum $\sum_{j=0}^N r^{\mu+2j} a_{2j}(\mu)$ with $a_0 =
f$ so that
$$
-\Delta_{g_+} u - \mu(n\!-\!\mu) u = O(r^{\mu+2N+2}).
$$
Then the coefficients $a_{2j}(\mu)$ ($j=1,\dots,N$) are uniquely
determined and are given by the differential operators
$\T_{2j}(\mu)$ acting on $f$. Eq.~\eqref{solution} shows that the
regularity of the families $\T_{2j}(\lambda)$ at $\lambda = \mu$ is
guaranteed by \eqref{param}. In order to simplify the following
arguments, we shall suppress the minor modifications caused by the
fact that $u$ is only an approximate eigenfunction. Now
\begin{equation}\label{scalar}
P_{2M}(g_+) (u) = \left( \prod_{j=0}^{2M-1} (2N\!-\!j) \right) u \st
\kappa u.
\end{equation}
In fact, since $g_+$ is Einstein with scalar curvature $-n(n+1)$,
the product formula (see \cite{G-ein} and \cite{BJ}, Theorem 1.3.7)
$$
P_{2M}(g_+) = \prod_{j=\frac{n+1}{2}}^{\frac{n+1}{2}+M-1}
(\Delta_{g_+} \!+\! j(n\!-\!j))
$$
implies that $P_{2M}(g_+)$ acts on $u$ by the scalar
\begin{align*}
& \prod_{j=\frac{n+1}{2}}^{\frac{n+1}{2}+M-1} (-\mu(n\!-\!\mu) + j(n\!-\!j)) \\
& = \prod_{k=0}^{M-1}
\left(-\left(\fm\!+\!M\!-\!2N\right)\left(\fp\!-\!M\!+\!2N\right)
+ \left(\fp\!+\!k\right)\left(\fm\!-\!k\right) \right) \\
& = \prod_{k=0}^{M-1} (M\!-\!2N\!+\!k)(M\!-\!2N\!-\!1\!-\!k) \\
& = \prod_{j=0}^{2M-1} (2N\!-\!j).
\end{align*}
This proves \eqref{scalar}. Note that $\kappa \ne 0$. Next, the
definition \eqref{res-fam} gives
\begin{multline}\label{D-one}
D_{2N}^{res}\left(g;-\fp\!+\!M\right) \\[-2mm] = 2^{2N} N! \left[\left(
-\frac{1}{2}\!+\!2N\!-\!M\right) \cdots \left(
\frac{1}{2}\!+\!N\!-\!M \right) \right] \delta_{2N}(g;\mu)
\end{multline}
and
\begin{multline}\label{D-two}
D_{2N-2M}^{res}\left(g;-\fp\!-\!M\right) \\[-2mm] = 2^{2N-2M} (N\!-\!M)!
\left[ \left(-\frac{1}{2}\!+\!2N\!-\!M\right) \cdots
\left(\frac{1}{2}\!+\!N \right) \right] \delta_{2N-2M}(g;\mu).
\end{multline}
We observe that the quotient of the overall factors on the
right-hand sides of \eqref{D-one} and \eqref{D-two} equals
\begin{equation}\label{kappa-rel}
2^{2M} \frac{N!}{(N\!-\!M)!} \left(-\frac{1}{2} \!+\! N \right)
\cdots \left(\frac{1}{2} \!+\! N \!-\! M \right) = \kappa
\end{equation}
with $\kappa$ as defined in \eqref{scalar}. Now \eqref{scalar}
implies
$$
r^\lambda u = r^{\lambda+\frac{n+1}{2}+M} (r^{-\frac{n+1}{2}-M} u) =
r^{\lambda+\frac{n+1}{2}+M} \kappa^{-1} \left(r^{-\frac{n+1}{2}-M}
P_{2M}(g_+) (u) \right).
$$
Therefore, by the conformal covariance of $P_{2M}$, we have
\begin{equation}\label{subst}
r^\lambda u = r^{\lambda+\frac{n+1}{2}+M} \kappa^{-1}
\bar{P}_{2M}(g) (r^{-\frac{n+1}{2}+M} u).
\end{equation}
Now let $\varphi \in C_0^\infty(X)$. We apply the identity
\eqref{subst} to rewrite the right-hand side of
$$
\int_M f \delta_{2N}(g;\mu) (\varphi) dvol (g) =
\Res_{\lambda=-\mu-1-2N} \left( \int_X r^\lambda u \varphi \,
dvol(\bar{g}) \right)
$$
(see Eq.~\eqref{residue}) in the form
\begin{equation}\label{trick}
\kappa^{-1} \Res_{\lambda=-\mu-1-2N} \left( \int_X \bar{P}_{2M}(g)
(r^{-\frac{n+1}{2}+M} u) \, r^{\lambda+\frac{n+1}{2}+M} \varphi \,
dvol(\bar{g}) \right).
\end{equation}
Now we use that $\bar{P}_{2M}(g) = P_{2M}(\bar{g})$ is self-adjoint
with respect to the volume form of $\bar{g}$ and apply partial
integration. Since boundary terms are holomorphic (in $\lambda$),
they do not contribute to the residue. Hence \eqref{trick} equals
$$
\kappa^{-1} \Res_{\lambda=-\mu-1-2N} \left( \int_X
r^{-\frac{n+1}{2}+M} u  \bar{P}_{2M}(g)
\left(r^{\lambda+\frac{n+1}{2}+M} \varphi \right) dvol(\bar{g})
\right).
$$
But the function
$$
\lambda \mapsto \bar{P}_{2M}(g) \left(r^{\lambda+\frac{n+1}{2}+M}
\varphi \right) - r^{\lambda+\frac{n+1}{2}+M}
\bar{P}_{2M}(g)(\varphi)
$$
vanishes at $\lambda = -\mu\!-\!1\!-\!2N = -\frac{n+1}{2}\!-\!M$. It
follows that the latter residue coincides with
$$
\kappa^{-1} \Res_{\lambda=-\mu-1-2N} \left( \int_X r^{\lambda+2M} u
\bar{P}_{2M}(g) (\varphi) dvol(\bar{g}) \right).
$$
Thus, we conclude that
$$
\int_M f \delta_{2N}(g;\mu) (\varphi) \, dvol(g)
$$
equals
$$
\kappa^{-1} \Res_{\lambda=-\mu-1-(2N-2M)} \left( \int_X r^\lambda u
\bar{P}_{2M}(g) (\varphi) dvol(\bar{g}) \right).
$$
By \eqref{residue}, the latter expression can be written in the form
$$
\kappa^{-1} \int_M f \delta_{2N-2M}(g;\mu)(\bar{P}_{2M}(g)
(\varphi)) dvol(g).
$$
But since $f \in C^\infty(M)$ is arbitrary, we have proved that
$$
\kappa \, \delta_{2N}(g;\mu) = \delta_{2N-2M}(g;\mu)
\bar{P}_{2M}(g).
$$
By \eqref{kappa-rel}, this identity is equivalent to the assertion.
\end{proof}

We finish this section with two remarks. The proofs of the
factorization identities utilize the condition that $g_+$ is
Einstein in {\em two} ways. In fact, the Einstein condition enters
into the proof of Theorem \ref{Factor-A} through the connection
between asymptotic expansions of eigenfunctions of the Laplacian on
$X = M \times (0,\varepsilon)$ and GJMS-operators on $M$. On the
other hand, the proof of Theorem \ref{Factor-B} rests on the fact
that all GJMS-operators for Einstein metrics on $X$ act by scalars
on eigenfunctions of the Laplacian (which is a consequence of the
product formula for these operators). Finally, the residue families
$D_{2N}^{res}(\lambda)$ should be regarded as curved analogs of
(differential) intertwining operators. From that perspective, the
factorization identities appear as curved analogs of the fact that,
in multiplicity-free decompositions, spaces of intertwining
operators are one-dimensional.

\section{Residue families in terms of GJMS-operators}\label{cc}

In the present section, we establish a fundamental representation
formula for residue families in terms of GJMS-operators. Let
\begin{equation}\label{poly}
\pi_{2N}(x) \st x(x\!-\!1) \cdots (x\!-\!N\!+\!1)
\left(x\!-\!\frac{1}{2}\!+\!N\right) \cdots
\left(x\!+\!\frac{1}{2}\right).
\end{equation}
Then
$$
\pi_{2N}^{-1}(0) = \{0,1,\dots,N\!-\!1\} \cup
\left\{-\frac{1}{2},\dots,-N\!+\!\frac{1}{2} \right\}.
$$
We also set $m_{\bar{I}} = - m_I$. In these terms, the main result
can be stated as follows.

\begin{thm}\label{D-rep} The residue family $D_{2N}^{res}(\lambda)$
coincides with the sum of
\begin{equation}\label{P}
-(-1)^N  \frac{2^{2N-1}}{(2N\!-\!1)!} \sum_{|I|=N}
\frac{\pi_{2N}(\lambda\!+\!\f\!-\!N)}{(\lambda\!+\!\f\!-\!2N\!+\!I_l)}
m_I P_{2I},
\end{equation}
\begin{equation}\label{P-bar}
-(-1)^N  \frac{2^{2N-1}}{(2N\!-\!1)!} \sum_{|J|=N}
\frac{\pi_{2N}(\lambda\!+\!\f\!-\!N)}
{(\lambda\!+\!\frac{n+1}{2}\!-\!J_r)} m_{\bar{J}} \bar{P}_{2J}
\end{equation}
and
\begin{multline}\label{mixed}
(-1)^N  \frac{2^{2N-1}}{(2N\!-\!1)!} \\ \times \sum_{|I|+|J|=N}
\frac{\pi_{2N}(\lambda\!+\!\f\!-\!N)}
{(\lambda\!+\!\f\!-\!2N\!+\!I_l)(\lambda\!+\!\frac{n+1}{2}\!-\!J_r)}
\frac{N!(N\!-\!1)!}{|I|!(|I|\!-\!1)! |J|!(|J|\!-\!1)!} m_I
m_{\bar{J}} P_{2I} \bar{P}_{2J}.
\end{multline}
\end{thm}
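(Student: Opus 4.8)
The plan is to compute the residue family $D_{2N}^{res}(\lambda)$ by first expressing it entirely in terms of GJMS-operators on $M$ and $\bar{P}_{2M}(g)$ on $X$, via the two systems of factorization identities, and then identifying the result with the stated rational combination. The starting point is that $D_{2N}^{res}(\lambda)$ is a polynomial in $\lambda$ of degree $N$ (property established after \eqref{detail-left}), so it is uniquely determined by its values at any $N+1$ points. The polynomial $\pi_{2N}(\lambda+\tfrac{n}{2}-N)$ vanishes precisely at the $2N$ points where $\lambda+\tfrac{n}{2}-N\in\{0,1,\dots,N-1\}\cup\{-\tfrac12,\dots,-N+\tfrac12\}$, i.e. at $\lambda = -\tfrac{n}{2}+2N-j$ for $j=1,\dots,N$ and at $\lambda = -\tfrac{n+1}{2}+j$ for $j=1,\dots,N$. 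These are exactly the $2N$ special values appearing in Theorem \ref{Factor-A} and Theorem \ref{Factor-B}. So the first key step is to observe that each summand in \eqref{P}, \eqref{P-bar}, \eqref{mixed} is a polynomial of degree $\le N$ in $\lambda$ (a factor in the denominator always cancels one linear factor of $\pi_{2N}$), hence the whole right-hand side $R(\lambda)$ is a polynomial of degree $\le N$; and the left-hand side is known to be a polynomial of degree exactly $N$.

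The second step is to match $R(\lambda)$ and $D_{2N}^{res}(\lambda)$ at the $2N$ zeros of $\pi_{2N}(\lambda+\tfrac{n}{2}-N)$ together with one further value (for instance $\lambda\to\infty$, comparing leading coefficients). At $\lambda = -\tfrac{n}{2}+2N-j$: in \eqref{P} only those compositions $I=(I_1,\dots,I_l)$ with $I_l = j$ survive (the denominator $\lambda+\tfrac{n}{2}-2N+I_l$ hits the cancelled factor), and using $\M_{2N}=\sum_{|I|=N}m_I P_{2I}$ together with the Inversion Formula (Theorem \ref{duality}) one regroups the surviving terms into $P_{2j}\circ(\text{something})$; the mixed and barred terms vanish at this value because their $\pi_{2N}$-factor does not get cancelled. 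One then checks that "something" is precisely $-(-1)^{N-j}\frac{2^{2(N-j)-1}}{(2(N-j)-1)!}\sum_{|I'|=N-j}\frac{\pi_{2(N-j)}(\cdots)}{\cdots}m_{I'}P_{2I'}$ evaluated at the matching argument — i.e. that $R(\lambda)$ satisfies the same factorization $R(-\tfrac{n}{2}+2N-j) = P_{2j}\circ R_{2N-2j}(-\tfrac{n}{2}+2N-j)$ as $D_{2N}^{res}$, where $R_{2N-2j}$ is the analogous expression one order down. Since by induction on $N$ we may assume $R_{2N-2j} = D_{2N-2j}^{res}$, Theorem \ref{Factor-A} then gives agreement at these $N$ points. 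Symmetrically, at $\lambda = -\tfrac{n+1}{2}+j$ only the compositions $J$ with $J_r = j$ survive in \eqref{P-bar} and \eqref{mixed}, and using $m_{\bar J}=-m_J$ one factors out $\bar{P}_{2j}(g)$ on the right, matching Theorem \ref{Factor-B} by the induction hypothesis.

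The main obstacle — and the bulk of the work — is the combinatorial bookkeeping in this factorization check: verifying that the coefficients $\frac{N!(N-1)!}{|I|!(|I|-1)!|J|!(|J|-1)!}m_I m_{\bar J}$ in the mixed term \eqref{mixed} are exactly what is needed so that, upon setting $\lambda$ to one of the special values, the three sums \eqref{P}, \eqref{P-bar}, \eqref{mixed} collapse correctly onto $P_{2j}\circ D^{res}_{2N-2j}$ (resp. $D^{res}_{2N-2j}\circ \bar P_{2j}$). This requires manipulating $m_I$ under concatenation of compositions, $m_{(I,a)}$ versus $m_I m_{(a)}$ with the "gluing factor" $\frac{1}{I_l + a}$, and tracking the ratios of $\pi_{2N}$-polynomials of different orders; the identity \eqref{URQ} and the structure of $\delta_{2N}(g;\lambda)$ in \eqref{res-fam-2}, together with the expansion \eqref{detail-left} of $D_{2N}^{res}$ in terms of the holomorphic families $P_{2k}(\lambda)$, provide the raw material. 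Finally, the leading-coefficient comparison is easy: as $\lambda\to\infty$ the dominant contribution to $D_{2N}^{res}$ comes (from \eqref{detail-left}) from $k=l=0$, giving $\frac{2^{2N}N!}{(2N)!}(-1)^N(\text{lead})\,i^*(\partial_r^2)^N$ — actually the $\lambda^N$-coefficient is a scalar times $i^*(\partial/\partial r)^{2N}$ — and on the right-hand side the $\lambda^N$-term likewise comes only from the trivial compositions $I=(N)$ in \eqref{P} and $J=(N)$ in \eqref{P-bar}, where $m_{(N)}=1$, and from the $|I|=|J|=$ contributions in \eqref{mixed}; a short count using $\pi_{2N}(x)\sim x^{2N}$ and $P_{2N}(g) = \Delta^N+\cdots$, $\bar P_{2N}(g) = \Delta_{\bar g}^N+\cdots$ fixes the normalization constant $\frac{2^{2N-1}}{(2N-1)!}$.
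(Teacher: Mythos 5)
Your overall strategy—verify the factorization identities for the proposed right-hand side and then invoke uniqueness of polynomials agreeing at the $2N$ special values—is essentially the route taken in the paper (Section \ref{factor-D}), so the shape of the argument is right. However, there are two concrete problems with the sketch.

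The more serious one is your claim that at $\lambda = -\frac{n}{2}+2N-j$ ``the mixed and barred terms vanish at this value because their $\pi_{2N}$-factor does not get cancelled.'' This is false for the mixed terms: in \eqref{mixed}, the denominator $\lambda+\frac{n}{2}-2N+I_l$ becomes $I_l - j$, so any mixed term with $I_l = j$ has a $0/0$ that resolves to a nonzero multiple of $\pi_{2N}'(N\!-\!j)$—exactly as for the pure $P$-terms you retain. These surviving mixed contributions are not an accident but are forced: by the inductive hypothesis, $D^{res}_{2N-2j}\bigl(-\frac{n}{2}+2N-j\bigr)$ contains pure $\bar{P}$-terms and mixed terms, so $P_{2j}\circ D^{res}_{2N-2j}$ contains terms of the form $P_{2j}\bar{P}_{2J}$ and $P_{2j}P_{2I'}\bar{P}_{2J}$. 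Since the pure $\bar{P}$-sum \eqref{P-bar} does genuinely vanish at these $\lambda$, those $P_{2j}\bar{P}_{2J}$ contributions must come from the mixed sum \eqref{mixed} with $I=(j)$ (and the $P_{2j}P_{2I'}\bar{P}_{2J}$ from $I=(j,I')$). Your plan to ``regroup the surviving terms into $P_{2j}\circ(\text{pure }P_{2I'}\text{-terms})$'' therefore does not close the induction; the bulk of the paper's Section \ref{factor-D} is precisely the verification of the mixed-term matching you dismiss. (You state the analogous survival correctly on the Factor-B side, so this reads like a slip, but left as is it derails the proof.)

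Secondly, the assertion that ``each summand in \eqref{P}, \eqref{P-bar}, \eqref{mixed} is a polynomial of degree $\le N$'' is wrong: $\pi_{2N}$ has degree $2N$, and cancelling one (resp.\ two) linear factors gives degree $2N\!-\!1$ (resp.\ $2N\!-\!2$). The fact that the \emph{total} has degree $N$ is a nontrivial cancellation, not visible term by term, and is indeed one of the main payoffs of Theorem~\ref{D-rep} (it is what produces the restriction property and the commutator relations). This error is not fatal to the logic, since both sides are of degree $\le 2N-1$ and you propose matching at $2N$ points, which suffices; but the stated reason for the degree bound, and the role you assign the leading-coefficient comparison, should be corrected.
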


Some comments are in order.

We have slightly simplified the formulation of the representation
formula by omitting all compositions with the pull-back $i^*$. In
particular, $P_{2I} \bar{P}_{2J}$ means $P_{2I} i^* \bar{P}_{2J}$.
We shall use this convention throughout from now on.

$I_l$ and $J_r$ denote the respective most left and most right
entries of the compositions $I$ and $J$. Note also that the formulas
are valid for any $\lambda \in \c$ since all fractions actually are
polynomials in $\lambda$. In fact,
$$
\lambda\!+\!\f\!-\!2N\!+\!I_l = \left(\lambda\!+\!\f\!-\!N\right) -
(N\!-\!I_l)
$$
with $(N\!-\!I_l) \in \{0,1,\dots,N\!-\!1\} \subset
\pi_{2N}^{-1}(0)$ and
$$
\lambda\!+\!\frac{n\!+\!1}{2}\!-\!J_r =
\left(\lambda\!+\!\f\!-\!N\right) +
\left(N\!+\!\frac{1}{2}\!-\!J_r\right)
$$
with $-(N\!+\!\frac{1}{2}\!-\!J_r) \in
\{-\frac{1}{2},\dots,-N\!+\!\frac{1}{2}\} \subset \pi_{2N}^{-1}(0)$.
The sums \eqref{P} and \eqref{P-bar} can be viewed as degenerate
special cases of \eqref{mixed}. The formula in Theorem \ref{D-rep}
also reflects a certain {\em symmetry} of the families
$D_{2N}^{res}(\lambda)$. In fact, the polynomials $\pi_{2N}$ satisfy
the symmetry relations
\begin{equation*}\label{symm-p}
\pi_{2N}\left(\lambda\right) =
\pi_{2N}\left(-\lambda\!-\!\frac{1}{2}\right).
\end{equation*}
Now using Theorem \ref{D-rep} these symmetry relations imply that
\begin{equation}\label{symm-D}
\sigma D_{2N}^{res}\left(\lambda\!-\!\f\!+\!N\right) =
D_{2N}^{res}\left(-\lambda\!-\!\frac{n\!+\!1}{2}\!+\!N\right),
\end{equation}
where $\sigma$ maps $P_{2a} \bar{P}_{2b}$ into $P_{2b}
\bar{P}_{2a}$. But the relation \eqref{symm-D} directly follows from
the factorization relations for residue families using an induction
on $N$.

The proof of Theorem \ref{D-rep} will be given in Section
\ref{factor-D}. The idea of the proof is the following. For any $N
\ge 1$, we define a family $D_{2N}(\lambda)$ by their Taylor
expansions in terms of GJMS-operators. We prove that these families
coincide with the sum of \eqref{P}, \eqref{P-bar} and \eqref{mixed},
{\em and} satisfy the {\em same} systems of factorization identities
as the residue families $D_{2N}^{res}(\lambda)$ (see Section
\ref{RF}).

We continue with the definition of the families
\begin{equation}\label{def-d}
D_{2N}(\lambda) \st \sum_{k=1}^{2N} d_{2N}^{(k)}
\left(\lambda\!+\!\f\!-\!N\right)^{2N-k}.
\end{equation}
The coefficients $d_{2N}^{(k)}$ are certain linear combinations of
compositions of GJMS-operators $P_{2M}$ and $\bar{P}_{2M}$ for $1
\le M \le N$. In order to define these coefficients, we distinguish
between
\begin{itemize}
\item compositions of GJMS-operators for $g$,
\item compositions of GJMS-operators for $\bar{g}$ and
\item mixed compositions of GJMS-operators for $g$ and $\bar{g}$.
\end{itemize}
The respective multiplicities of these compositions will be defined
and studied in the following three subsections.

\subsection{The pure $P$-terms}\label{pure-P}

Let
$$
m^{(k)}_{(a,I)}, \; a+|I|=N, a \ge 1
$$
be the coefficient of $P_{2a} P_{2I}$ in the coefficient
$d_{2N}^{(k)}$. Here $I$ can be trivial, i.e., $a=N$. Then
\begin{equation}
m_{(a,I)}^{(k)} \st \left(\sum_{j=0}^{k-1} \eta_{2N}^{(2N-j-1)}
|I|^{k-1-j} \right) m^{(1)}_{(a,I)},
\end{equation}
where
$$
\eta_{2N}(x) \st \frac{\pi_{2N}(x)}{x} = \sum_{k=0}^{2N-1}
\eta_{2N}^{(k)} x^k.
$$
In particular, we find   
$$
m_{(a,I)}^{(2N)} = \left( \sum_{j=0}^{2N-1} \eta_{2N}^{(2N-j-1)}
|I|^{2N-1-j} \right) m_{(a,I)}^{(1)} = \eta_{2N}(|I|)
m_{(a,I)}^{(1)}.
$$
Hence
\begin{equation}\label{ct}
m_{(a,I)}^{(2N)} = \begin{cases} 0 & \mbox{for } 1 \le |I| \le
N\!-\!1 \\ \eta_{2N}(0) m_{(N)}^{(1)} & \mbox{for } |I|=0.
\end{cases}
\end{equation}
A calculation shows that
\begin{equation}\label{eta-zero}
\eta_{2N}(0) = -(-1)^N 2^{-(2N-1)} (2N\!-\!1)! = \pi_{2N}'(0).
\end{equation}
Finally, we have
\begin{equation}\label{higher-m}
m^{(1)}_I \st -(-1)^N \frac{2^{2N-1}}{(2N\!-\!1)!} m_I
\end{equation}
for all compositions $I$ of size $N$.

The relations \eqref{ct}, \eqref{eta-zero} and \eqref{higher-m} show
that the constant term of $D_{2N}(\lambda)$ equals $P_{2N}$. Hence
the multiplicity $[P_{2N}:D_{2N}(\lambda)]$ of the total
contribution of $P_{2N}$ to $D_{2N}(\lambda)$ is given by
\begin{equation}\label{single}
\left(\sum_{k=1}^{2N} \left(\lambda\!+\!\f\!-\!N\right)^{2N-k}
\eta_{2N}^{(2N-k)} \right) m_{(N)}^{(1)} = -(-1)^N
\frac{2^{2N-1}}{(2N\!-\!1)!}
\eta_{2N}\left(\lambda\!+\!\f\!-\!N\right).
\end{equation}

More generally, for non-trivial $I$, the multiplicity $[P_{2a}
P_{2I}:D_{2N}(\lambda)]$ of the total contribution of $P_{2a}
P_{2I}$ to $D_{2N}(\lambda)$ is given by
$$
\left( \sum_{k=1}^{2N-1} \left(\lambda\!+\!\f\!-\!N\right)^{2N-k}
\left( \sum_{i=0}^{k-1} \eta_{2N}^{(2N-i-1)} |I|^{k-1-i} \right)
\right) m_{(a,I)}^{(1)}.
$$
The latter double sum can be written in the form
$$
\left(\sum_{1 \le i+k \le 2N-1 \atop 0 \le i \le 2N-2, \, 1 \le k
\le 2N-1} \eta_{2N}^{(i+k)} |I|^i
\left(\lambda\!+\!\f\!-\!N\right)^k \right) m_{(a,I)}^{(1)}.
$$
In order to determine that sum, we apply the following result.

\begin{lemm}\label{sum-gen} For $x \ne y$,
\begin{equation}\label{sum-generic}
\sum_{1 \le a + b \le 2N-1 \atop 1 \le a \le 2N-1, \; 0 \le b \le
2N-2} \eta_{2N}^{(a+b)} x^a y^b = x \left(\frac{\eta_{2N}(x) -
\eta_{2N}(y)}{x-y}\right).
\end{equation}
Moreover,
\begin{equation}\label{sum-diag}
\sum_{1 \le a + b \le 2N-1 \atop 1 \le a \le 2N-1, \; 0 \le b \le
2N-2} \eta_{2N}^{(a+b)} M^{a+b-1} = \eta_{2N}'(M)
\end{equation}
for $M=1,\dots,N-1$.
\end{lemm}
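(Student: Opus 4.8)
The plan is to prove both identities in Lemma~\ref{sum-gen} by recognizing the left-hand sums as the coefficient extractions from the two-variable expansion of $\eta_{2N}$, rather than by manipulating the explicit polynomial $\pi_{2N}$. First I would observe that since $\eta_{2N}(x) = \sum_{c=0}^{2N-1} \eta_{2N}^{(c)} x^c$, the double sum in \eqref{sum-generic} is obtained by writing each monomial $\eta_{2N}^{(c)} x^c$ with the index split as $c = a+b$, $a \ge 1$, $b \ge 0$, which is precisely the coefficients appearing in the standard divided-difference expansion
$$
\frac{x^c - y^c}{x-y} = \sum_{a=1}^{c} x^a y^{c-a} \cdot \frac{1}{\text{(nothing)}} = \sum_{\substack{a+b=c \\ a \ge 1}} x^{a-1} y^{b}.
$$
Multiplying by $x$ and summing over $c$ with weights $\eta_{2N}^{(c)}$ then gives exactly $x \sum_{c} \eta_{2N}^{(c)} \frac{x^c - y^c}{x-y} = x \bigl( \eta_{2N}(x) - \eta_{2N}(y)\bigr)/(x-y)$, which is \eqref{sum-generic}. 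The bookkeeping on the index ranges ($1 \le a \le 2N-1$, $0 \le b \le 2N-2$, $1 \le a+b \le 2N-1$) matches because $\deg \eta_{2N} = 2N-1$ and the constant term $\eta_{2N}^{(0)}$ contributes nothing once we require $a \ge 1$.

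For the diagonal identity \eqref{sum-diag}, the natural route is to take the limit $y \to x$ in \eqref{sum-generic}. The right-hand side $x(\eta_{2N}(x)-\eta_{2N}(y))/(x-y) \to x\, \eta_{2N}'(x)$, while the left-hand side $\sum \eta_{2N}^{(a+b)} x^a y^b \to \sum \eta_{2N}^{(a+b)} x^{a+b}$; setting $x = M$ gives $\sum \eta_{2N}^{(a+b)} M^{a+b} = M\, \eta_{2N}'(M)$. This differs from the claimed \eqref{sum-diag} by a factor of $M$: the stated sum is $\sum \eta_{2N}^{(a+b)} M^{a+b-1}$, so indeed $\sum \eta_{2N}^{(a+b)} M^{a+b-1} = \eta_{2N}'(M)$, matching the claim exactly. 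The restriction $M = 1,\dots,N-1$ is presumably included because that is the range in which the lemma is applied (and where $M \ne 0$ ensures the power $M^{a+b-1}$ is unambiguous), but the algebraic identity in fact holds for any nonzero $M$; I would simply state it for the range needed. Alternatively, one can verify \eqref{sum-diag} directly by differentiating $\eta_{2N}(x) = \sum_c \eta_{2N}^{(c)} x^c$ to get $\eta_{2N}'(x) = \sum_{c \ge 1} c\, \eta_{2N}^{(c)} x^{c-1}$ and checking that $\sum_{a+b = c, a \ge 1} 1 = c$ for each fixed $c \ge 1$, so the left side of \eqref{sum-diag} collapses to $\sum_{c \ge 1} c\, \eta_{2N}^{(c)} M^{c-1} = \eta_{2N}'(M)$.

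I do not anticipate a genuine obstacle here — the lemma is a purely formal statement about coefficients of a one-variable polynomial disguised as a two-variable sum. The only point requiring a little care is matching the summation ranges: one must check that extending the index set to all $(a,b)$ with $a \ge 1$, $b \ge 0$, $a + b \le 2N-1$ introduces no spurious terms (it does not, since $\eta_{2N}^{(c)} = 0$ for $c > 2N-1$) and omits none of the needed ones. I would therefore present the argument as: expand $\eta_{2N}$, reindex via the divided difference $\frac{x^c-y^c}{x-y}$ to get \eqref{sum-generic}, then either pass to the limit $y \to x$ or differentiate termwise to obtain \eqref{sum-diag}. This keeps the proof to a few lines and avoids touching the factored form of $\pi_{2N}$ altogether.
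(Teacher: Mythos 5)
Your proof is correct and is essentially the same argument as the paper's: expand $\eta_{2N}$ into monomials, reindex the inner sum via the geometric‐sum (divided-difference) identity, and obtain the diagonal case by letting $y \to x$. Your reindexing (pulling out the factor $x$ to hit $\frac{x^k-y^k}{x-y}$ directly) is marginally tidier than the paper's version, which instead writes $\sum_{a=1}^k x^a y^{k-a} = \frac{x^{k+1}-y^{k+1}}{x-y} - y^k$ and cancels afterward, but the underlying idea is identical.
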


\begin{proof} Let $x \ne y$. The left-hand side of
\eqref{sum-generic} equals
\begin{align*}
\sum_{k=1}^{2N-1} \eta_{2N}^{(k)} \left( \sum_{a=1}^k x^a y^{k-a}
\right) & = \sum_{k=1}^{2N-1} \eta_{2N}^{(k)} \left( \sum_{a=0}^k
x^a y^{k-a} - y^k \right) \\
& = \sum_{k=1}^{2N-1} \eta_{2N}^{(k)} \left(
\frac{x^{k+1}\!-\!y^{k+1}}{x\!-\!y} - y^k \right).
\end{align*}
Using the definition of $\eta_{2N}$, the latter sum simplifies to
$$
\frac{1}{x-y} \left( x (\eta_{2N}(x) - \eta_{2N}^{(0)}) - y (\eta_{2
N}(y) - \eta_{2N}^{(0)})\right) - (\eta_{2N}(y) - \eta_{2N}^{(0)}) .
$$
Now the first assertion follows by simplification. For
$y=1,\dots,N-1$, the second assertion follows by taking the limit $x
\to y$.
\end{proof}

Lemma \ref{sum-gen} and $\eta_{2N}(|I|) = 0$ for $1 \le |I| \le N-1$
imply that the multiplicity $[P_{2a}P_{2I}:D_{2N}(\lambda)]$ of the
total contribution of $P_{2a} P_{2I}$ to $D_{2N}(\lambda)$ is given
by
\begin{equation}\label{c-pure-generic} -(-1)^N
\frac{2^{2N-1}}{(2N\!-\!1)!}
\frac{\pi_{2N}(\lambda\!+\!\f\!-\!N)}{(\lambda\!+\!\f\!-\!N\!-\!|I|)}
m_{(a,I)},
\end{equation}
if $\lambda\!+\!\f\!-\!N \ne |I|$, and
\begin{equation}\label{c-pure-deg}
-(-1)^N  \frac{2^{2N-1}}{(2N\!-\!1)!} \pi_{2N}'(|I|) m_{(a,I)}
\end{equation}
if $\lambda\!+\!\f\!-\!N = |I|$. Note that for trivial $I$
\eqref{c-pure-generic} specializes to \eqref{single}. These results
yield the sum in \eqref{P}.

\subsection{The pure $\bar{P}$-terms}\label{pure-P-bar}

Let
$$
m^{(k)}_{(\bar{I},\bar{b})}, \; |I|+b=N
$$
be the coefficient of $\bar{P}_{2I} \bar{P}_{2b}$ in the coefficient
$d_{2N}^{(k)}$. Here $I$ can be trivial, i.e., $b=N$. Then
\begin{multline}
m_{(\bar{I},\bar{b})}^{(k)} \\
\st \left( \sum_{j=0}^{k-1} \tau_{2N}^{(2N-j-1)}
\left(-|I|\!-\!\frac{1}{2}\right)^{k-1-j} + \frac{1}{2}
\sum_{j=0}^{k-2} \tau_{2N}^{(2N-j-1)}
\left(-|I|\!-\!\frac{1}{2}\right)^{k-2-j} \right)
m^{(1)}_{(\bar{I},\bar{b})},
\end{multline}
where
\begin{equation}
\tau_{2N}(x) \st \frac{\pi_{2N}(x)}{x\!+\!\frac{1}{2}} =
\sum_{k=0}^{2N-1} \tau_{2N}^{(k)} x^k.
\end{equation}
Finally, we set $m^{(1)}_{\bar{I}} = - m^{(1)}_I$. In particular, we
find
\begin{multline*}
m_{(\bar{I},\bar{b})}^{(2N)}/m^{(1)}_{(\bar{I},\bar{b})} \\
= \left( \sum_{j=0}^{2N-1} \tau_{2N}^{2N-j-1}
\left(-|I|\!-\!\frac{1}{2}\right)^{2N-1-j} + \frac{1}{2} \left(
\sum_{j=0}^{2N-2} \tau_{2N}^{2N-j-1}
\left(-|I|\!-\!\frac{1}{2}\right)^{2N-2-j} \right) \right).
\end{multline*}
Now the latter sum simplifies to
$$
- \frac{\pi_{2N}(-|I|\!-\!\frac{1}{2})}{|I|+\frac{1}{2}}.
$$
It follows that the contribution of $\bar{P}_{2N}$ to the constant
term of $D_{2N}(\lambda)$ vanishes. Hence for all $I$ (including
trivial $I$), the multiplicity $[\bar{P}_{2I}\bar{P}_{2b} :
D_{2N}(\lambda)]$ of the total contribution of $\bar{P}_{2I}
\bar{P}_{2b}$ to $D_{2N}(\lambda)$ is given by the product of
\begin{multline}\label{bar-inter}
\sum_{k=1}^{2N-1} \left(\lambda\!+\!\f\!-\!N\right)^{2N-k} \\
\times \left( \sum_{j=0}^{k-1} \tau_{2N}^{(2N-j-1)}
\left(-|I|\!-\!\frac{1}{2}\right)^{k-1-j} + \frac{1}{2}
\sum_{j=0}^{k-2} \tau_{2N}^{(2N-j-1)}
\left(-|I|\!-\!\frac{1}{2}\right)^{k-2-j} \right)
\end{multline}
and
$$
m^{(1)}_{(\bar{I},\bar{b})} = (-1)^N \frac{2^{2N-1}}{(2N\!-\!1)!}
m_{(I,b)}.
$$
Now the sum \eqref{bar-inter} can be written in the form
\begin{multline*}
\sum_{1 \le a+b \le 2N-1 \atop 1 \le a \le 2N-1, \, 0 \le b \le
2N-2} \tau_{2N}^{(a+b)} \left(\lambda\!+\!\f\!-\!N\right)^a
\left(-|I|\!-\!\frac{1}{2}\right)^b \\
+ \frac{1}{2} \sum_{2 \le a+b \le 2N-1 \atop 1 \le a \le 2N-1, \, 1
\le b \le 2N-2} \tau_{2N}^{(a+b)}
\left(\lambda\!+\!\f\!-\!N\right)^a
\left(-|I|\!-\!\frac{1}{2}\right)^{b-1}.
\end{multline*}
In order to determine that sum, we apply the following result.

\begin{lemm}\label{sum-bar} For $x \ne y$,
\begin{equation*}
\sum_{1 \le a+b \le 2N-1 \atop 1 \le a \le 2N-1, \, 0 \le b \le
2N-2} \tau_{2N}^{(a+b)} x^a y^b + \frac{1}{2} \sum_{2 \le a+b \le
2N-1 \atop 1 \le a \le 2N-1, \, 1 \le b \le 2N-2} \tau_{2N}^{(a+b)}
x^a y^{b-1} = \frac{y \pi_{2N}(x) - x \pi_{2N}(y)}{(x-y)y}.
\end{equation*}
\end{lemm}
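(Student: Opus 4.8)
\emph{Proof strategy.} I would follow exactly the pattern of the proof of Lemma~\ref{sum-gen}: in each of the two sums on the left-hand side of the asserted identity, collect the monomials according to the value of $k=a+b$, evaluate the resulting inner sums over $a$ as truncated geometric series, and then reassemble the outcome by means of the generating identity $\sum_{k=0}^{2N-1}\tau_{2N}^{(k)}x^k=\tau_{2N}(x)$ together with the relation $\pi_{2N}(x)=\left(x+\frac{1}{2}\right)\tau_{2N}(x)$.

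The single structural input I would record at the outset is that
\[
\tau_{2N}^{(0)}=\tau_{2N}(0)=2\,\pi_{2N}(0)=0,
\]
since $x$ is a factor of $\pi_{2N}$ by \eqref{poly}. For the first sum, writing $k=a+b$ and noting that for each $k\in\{1,\dots,2N-1\}$ the condition $b\ge 0$ restricts $a$ to $1\le a\le k$ (the remaining inequalities being automatic), one has $\sum_{a=1}^{k}x^a y^{k-a}=x(x^k-y^k)/(x-y)$, so summing against $\tau_{2N}^{(k)}$ and using $\tau_{2N}^{(0)}=0$ yields $\frac{x}{x-y}\left(\tau_{2N}(x)-\tau_{2N}(y)\right)$. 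For the second sum, the condition $b\ge 1$ restricts $a$ to $1\le a\le k-1$ for $k\in\{2,\dots,2N-1\}$; since the $k=1$ term vanishes identically it may be adjoined, the inner sum equals $x(x^{k-1}-y^{k-1})/(x-y)$, and $\sum_{k=1}^{2N-1}\tau_{2N}^{(k)}x^{k-1}=\tau_{2N}(x)/x$ (again because $\tau_{2N}^{(0)}=0$), so this contribution is $\frac{x}{2(x-y)}\left(\frac{\tau_{2N}(x)}{x}-\frac{\tau_{2N}(y)}{y}\right)$.

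Adding the two contributions and using $\left(x+\frac{1}{2}\right)\tau_{2N}(x)=\pi_{2N}(x)$, the left-hand side of the asserted identity becomes
\[
\frac{x}{x-y}\left[\left(1+\frac{1}{2x}\right)\tau_{2N}(x)-\left(1+\frac{1}{2y}\right)\tau_{2N}(y)\right]=\frac{x}{x-y}\left(\frac{\pi_{2N}(x)}{x}-\frac{\pi_{2N}(y)}{y}\right)=\frac{y\pi_{2N}(x)-x\pi_{2N}(y)}{(x-y)y},
\]
which is the claim. I do not anticipate a genuine obstacle here: the computation is mechanically parallel to Lemma~\ref{sum-gen}, and the only point requiring attention is the vanishing $\tau_{2N}^{(0)}=0$, which is exactly what prevents spurious $x^{-1}$ and $y^{-1}$ terms from appearing when the summation index is lowered in the second sum; beyond that, it is just a matter of keeping the summation ranges straight.
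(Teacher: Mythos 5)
Your proof is correct and follows essentially the same route as the paper: collect terms by $k=a+b$, evaluate the inner geometric sums, identify the results via $\tau_{2N}(x)=\sum_k\tau_{2N}^{(k)}x^k$, and finish with $\pi_{2N}(x)=\left(x+\tfrac{1}{2}\right)\tau_{2N}(x)$. The one point of difference is cosmetic but welcome: you explicitly record and invoke $\tau_{2N}^{(0)}=0$, whereas the paper uses this silently in evaluating the second sum (the first sum, as in Lemma~\ref{sum-gen}, does not actually need it, since the $\tau_{2N}^{(0)}$ terms cancel in the difference $\tau_{2N}(x)-\tau_{2N}(y)$); making that input explicit is exactly the right hygiene, and your observation that it is what prevents $x^{-1}$, $y^{-1}$ terms after lowering the index is accurate.
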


\begin{proof} By an analog of Lemma \ref{sum-gen}, the first sum
equals
$$
x \left(\frac{\tau_{2N}(x)-\tau_{2N}(y)}{x-y}\right).
$$
For the second sum, we find
\begin{align*}
\sum_{k=2}^{2N-1} \tau_{2N}^{(k)} \left( \sum_{a=1}^{k-1} x^a
y^{k-a-1} \right)
& = \sum_{k=2}^{2N-1} \left( \frac{x^k-y^k}{x-y} - y^{k-1}\right)\\
& = \frac{1}{x-y} \left( \sum_{k=2}^{2N-1} \tau_{2N}^{(k)} x^k -
\tau_{2N}^{(k)} y^k \right) - \sum_{k=2}^{2N-1} \tau_{2N}^{(k)}
y^{k-1} \\ & = \frac{\tau_{2N}(x) - \tau_{2N}(y)}{x-y} -
\frac{\tau_{2N}(y)}{y}.
\end{align*}
Hence the total sum equals
\begin{align*}
& x \frac{\tau_{2N}(x)-\tau_{2N}(y)}{x-y} + \frac{1}{2}
\frac{\tau_{2N}(x) - \tau_{2N}(y)}{x-y} - \frac{1}{2}
\frac{\tau_{2N}(y)}{y} \\
& = \frac{y (x+\frac{1}{2}) \tau_{2N}(x) - x (y+\frac{1}{2})
\tau_{2N}(y)}{(x-y) y} \\
& = \frac{y \pi_{2N}(x) - x \pi_{2N}(y)}{(x-y)y}.
\end{align*}
The proof is complete.
\end{proof}

Lemma \ref{sum-bar} and $\pi_{2N}(-|I|\!-\!\frac{1}{2}) = 0$ for $0
\le |I| \le N-1$ imply that the multiplicity $[\bar{P}_{2I}
\bar{P}_{2j} : D_{2N}(\lambda)]$ of the total contribution of
$\bar{P}_{2I} \bar{P}_{2j}$ to $D_{2N}(\lambda)$ equals
\begin{equation}\label{c-pure-bar-generic}
-(-1)^N \frac{2^{2N-1}}{(2N\!-\!1)!}
\frac{\pi_{2N}(\lambda\!+\!\f\!-\!N)}
{\lambda\!+\!\frac{n+1}{2}\!-\!N\!+\!|I|} m_{(\bar{I},\bar{b})}
\end{equation}
if $\lambda\!+\!\frac{n+1}{2}\!-\!N\!+\!|I| \ne 0$, and
\begin{equation}\label{c-pure-bar-deg}
-(-1)^N \frac{2^{2N-1}}{(2N\!-\!1)!} \pi_{2N}'\left(-|I|\!-\!
\frac{1}{2}\right) m_{(\bar{I},\bar{b})}
\end{equation}
if $\lambda\!+\!\frac{n+1}{2}\!-\!N\!+\!|I| = 0$. In particular,
$\bar{P}_{2N}$ contributes to $D_{2N}(\lambda)$ by
\begin{equation}
-(-1)^N  \frac{2^{2N-1}}{(2N\!-\!1)!}
\frac{\pi_{2N}(\lambda\!+\!\f\!-\!N)}{\lambda\!+\!\frac{n+1}{2}\!-\!N}.
\end{equation}
These results yield the sum in \eqref{P-bar}.

\subsection{The mixed terms}\label{mixed-terms}

Let
$$
m_{(I,\bar{J})}^{(k)}, \; |I|+|J|=N
$$
be the coefficient of $P_{2I} \bar{P}_{2J}$ in the coefficient
$d_{2N}^{(k)}$. Here both $I$ and $J$ are non-trivial. Then this
coefficient is given by
\begin{equation}
m^{(k)}_{(I,\bar{J})} \st \mu_{(I_l,J_r)}^{(2N-k)}(N)
m^{(2)}_{(I,\bar{J})},
\end{equation}
where
$$
\frac{\pi_{2N}(x)}{(x\!-\!N\!+\!a)(x\!+\!N\!-\!b\!+\!\frac{1}{2})} =
\sum_{k=0}^{2N-2} \mu_{(a,b)}^{(k)}(N) x^k.
$$
Finally, we have
\begin{equation}\label{mixed-base}
m^{(2)}_{(I,\bar{J})} \st -(-1)^N 2^{2N-1}
\frac{N!(N\!-\!1)!}{(2N\!-\!1)!} \frac{m_I m_J}{|I|!(|I|\!-\!1)!
|J|!(|J|\!-\!1)!}.
\end{equation}
Hence the mixed contributions to $D_{2N}(\lambda)$ are given by the
sum
$$
\sum_{|I|+|J|=N} \frac{\pi_{2N}(\lambda\!+\!\f\!-\!N)}{
(\lambda\!+\!\f\!-\!2N\!+\!I_l)(\lambda\!+\!\frac{n+1}{2}\!-\!J_r)}
m^{(2)}_{(I,\bar{J})} P_{2I} \bar{P}_{2J}
$$
or, equivalently,
\begin{multline}\label{mixed-total}
-(-1)^N 2^{2N-1} \frac{N!(N\!-\!1)!}{(2N\!-\!1)!} \\ \times
\sum_{|I|+|J|=N} \frac{\pi_{2N}(\lambda\!+\!\f\!-\!N)}
{(\lambda\!+\!\f\!-\!2N\!+\!I_l)(\lambda\!+\!\frac{n+1}{2}\!-\!J_r)}
\frac{m_I m_J}{|I|!(|I|\!-\!1)! |J|!(|J|\!-\!1)!} P_{2I}
\bar{P}_{2J}.
\end{multline}
Here $I_l$ and $J_r$ denote the most left and most right entries of
the compositions $I$ and $J$, respectively. These formulas are valid
for all $\lambda$ by reducing the fractions if necessary. These
results yield the sum in \eqref{mixed}.

\section{Factorization relations for $D_{2N}(\lambda)$}\label{factor-D}

For the proof of Theorem \ref{D-rep}, it suffices to prove that the
families $D_{2N}(\lambda)$ satisfy the factorization relations
\begin{equation}\label{fact-1}
D_{2N}\left(-\f\!+\!2N\!-\!j\right) = P_{2j}
D_{2N-2j}\left(-\f\!+\!2N\!-\!j\right), \; j=1,\dots,N
\end{equation}
and
\begin{equation}\label{fact-2}
D_{2N}\left(-\frac{n\!+\!1}{2}\!+\!j\right) =
D_{2N-2j}\left(-\frac{n\!+\!1}{2}\!-\!j\right) \bar{P}_{2j}, \;
j=1,\dots,N.
\end{equation}
In fact, the relations in the systems \eqref{fact-1} and
\eqref{fact-2} are analogs of the relations \eqref{Fact-A} and
\eqref{Fact-B}. But since both families $D_{2N}(\lambda)$ and
$D_{2N}^{res}(\lambda)$ are polynomials of degree $\le 2N\!-\!1$ in
$\lambda$, they are characterized by the respective systems of
factorization identities.

A central role in the following arguments will be played by the
identity
\begin{multline}\label{final-1}
\left(2N\!-\!j\!-\!\frac{1}{2}\right) \cdots
\left(N\!-\!j\!+\!\frac{1}{2}\right) \frac{(N\!-\!1)!}{(2N\!-\!1)!} \\
= \left(2N\!-\!j\!-\!\frac{1}{2}\right) \cdots
\left(N\!+\!\frac{1}{2}\right) 2^{-2j}
\frac{(N\!-\!j\!-\!1)!}{(2N\!-\!2j\!-\!1)!}.
\end{multline}
It is straightforward to verify this relation.

Now, in order to prove the factorizations \eqref{fact-1} and
\eqref{fact-2}, we compare the contributions of all possible types
of products of GJMS-operators on both sides.

We start by proving that the contributions of the compositions
$$
P_{2j} P_{2I} \bar{P}_{2J}, \quad  j\!+\!|I|\!+\!|J| = N
$$
with non-trivial $I$ and $J$ on both sides of \eqref{fact-1}
coincide. By \eqref{mixed-total}, the assertion is equivalent to the
identity
\begin{multline*}
\frac{\pi'_{2N}(N\!-\!j)}{(2N\!+\!\frac{1}{2}\!-\!j\!-\!J_r)}
\frac{2^{2N-1}}{(2N\!-\!1)!} \frac{N!(N\!-\!1)!}{(|I|\!+\!j)!
(|I|\!+\!j\!-\!1)! |J|! (|J|\!-\!1)!} m_{(j,I)} m_J \\
= (-1)^j \frac{\pi_{2N-2j}(N)}
{(j+I_l)(2N\!+\!\frac{1}{2}\!-\!j\!-\!J_r)}
\frac{2^{2N-2j-1}}{(2N\!-\!2j\!-\!1)!}\frac{(N\!-\!j)!(N\!-\!j\!-\!1)!}
{|I|!(|I|\!-\!1)!|J|!(|J|\!-\!1)!} m_I m_J.
\end{multline*}
Now the identities
\begin{equation}\label{pi-a}
\pi'_{2N}(N\!-\!j) = -(-1)^j(N\!-\!j)! (j\!-\!1)!
\left(2N\!-\!j\!-\!\frac{1}{2}\right) \cdots
\left(N\!-\!j\!+\!\frac{1}{2}\right)
\end{equation}
and
\begin{equation}\label{pi-b}
\pi_{2N-2j}(N) = \frac{N!}{j!}\left(2N\!-\!j\!-\!\frac{1}{2}\right)
\cdots \left(N\!+\!\frac{1}{2}\right)
\end{equation}
simplify the assertion to
\begin{multline*}
(j\!-\!1)! \left(2N\!-\!j\!-\!\frac{1}{2}\right) \cdots
\left(N\!-\!j\!+\!\frac{1}{2}\right) \frac{1}{(2N\!-\!1)!}
\frac{N!(N\!-\!1)!}{(|I|\!+\!j)!(|I|\!+\!j\!-\!1)!} m_{(j,I)} \\
= - \frac{N!}{j!} \left(2N\!-\!j\!-\!\frac{1}{2}\right) \cdots
\left(N\!+\!\frac{1}{2}\right) \frac{1}{j+I_l} 2^{-2j}
\frac{(N\!-\!j\!-\!1)!}{(2N\!-\!2j\!-\!1)!|I|!(|I|\!-\!1)!} m_I.
\end{multline*}
But the definition of the multiplicities $m_I$ implies the relation
$$
m_{(j,I)} = -\frac{1}{j\!+\!I_l} \binom{j\!+\!|I|}{j}^2
\frac{j|I|}{j\!+\!|I|} m_I = -\frac{1}{j\!+\!I_l} \frac{(j\!+\!|I|)!
(j\!+\!|I|\!-\!1)!}{j!(j\!-\!1)! |I|!(|I|\!-\!1)!} m_I.
$$
It further simplifies the assertion to \eqref{final-1}.

Next, we prove that the contributions of the compositions
$$
P_{2I} \bar{P}_{2J} \bar{P}_{2j}, \quad |I|\!+\!|J|\!+\!j = N
$$
with non-trivial $I$ and $J$ on both sides of \eqref{fact-2}
coincide. By \eqref{mixed-total}, the assertion is equivalent to
\begin{multline*}
\frac{\pi'_{2N}\left(-\frac{1}{2}\!+\!j\!-\!N\right)}
{(-\!\frac{1}{2}\!-\!2N\!+\!j\!+\!I_l)} \frac{2^{2N-1}}{(2N\!-\!1)!}
\frac{N!(N\!-\!1)!}{(|I|)!(|I|\!-\!1)! (|J|\!+\!j)!
(|J|\!+\!j\!-\!1)!} m_{I} m_{(J,j)} \\ = (-1)^j
\frac{\pi_{2N-2j}\left(-\frac{1}{2}\!-\!N\right)}
{(-J_r\!-\!j)(-\frac{1}{2}\!+\!j\!-\!2N\!+\!I_l)}
\frac{2^{2N-2j-1}}{(2N\!-\!2j\!-\!1)!}
\frac{(N\!-\!j)!(N\!-\!j\!-\!1)!}{|I|!(|I|\!-\!1)!|J|!(|J|\!-\!1)!}
m_I m_J.
\end{multline*}
Now the identities
\begin{equation}\label{pi-c}
\pi'_{2N}\left(-\frac{1}{2}\!+\!j\!-\!N\right) = (-1)^j (N\!-\!j)!
(j\!-\!1)! \left(2N\!-\!j\!-\!\frac{1}{2}\right) \cdots
\left(N\!-\!j\!+\!\frac{1}{2}\right)
\end{equation}
and
\begin{equation}\label{pi-d}
\pi_{2N-2j} \left(-\frac{1}{2}\!-\!N\right) =
\frac{N!}{j!}\left(2N\!-\!j\!-\!\frac{1}{2}\right) \cdots
\left(N\!+\!\frac{1}{2}\right)
\end{equation}
simplify the assertion to
\begin{multline*}
-(j\!-\!1)! \left(2N\!-\!j\!-\!\frac{1}{2}\right) \cdots
\left(N\!-\!j\!+\!\frac{1}{2}\right) \frac{1}{(2N\!-\!1)!}
\frac{N!(N\!-\!1)!}{(|J|\!+\!j)!(|J|\!+\!j\!-\!1)!} m_{(J,j)} \\
= \frac{N!}{j!} \left(2N\!-\!j\!-\!\frac{1}{2}\right) \cdots
\left(N\!+\!\frac{1}{2}\right) \frac{1}{J_r\!+\!j} 2^{-2j}
\frac{(N\!-\!j\!-\!1)!}{(2N\!-\!2j\!-\!1)!|J|!(|J|\!-\!1)!} m_J.
\end{multline*}
But the definition of the multiplicities $m_I$ implies the relation
$$
m_{(J,j)} = -\frac{1}{J_r\!+\!j} \binom{j\!+\!|J|}{j}^2
\frac{j|J|}{j\!+\!|J|} m_J = -\frac{1}{J_r\!+\!j} \frac{(j\!+\!|J|)!
(j\!+\!|J|\!-\!1)!}{j!(j\!-\!1)! |J|!(|J|\!-\!1)!} m_J
$$
which simplifies the assertion to \eqref{final-1}.

Next, we prove that the contributions of
$$
P_{2j} P_{2I}, \quad j\!+\!|I|=N
$$
on both sides of \eqref{fact-1} coincide. On the one hand,
\eqref{c-pure-deg} shows that $P_{2j} P_{2I}$ contributes to
$$
D_{2N}\left(-\f\!+\!2N\!-\!j\right)
$$
with the coefficient
$$
-(-1)^N \frac{2^{2N-1}}{(2N\!-\!1)!} \pi_{2N}'(N\!-\!j) m_{(j,I)}.
$$
By \eqref{pi-a}, the latter term equals
$$
(-1)^{N-j} 2^{2N-1} \frac{(N\!-\!j)!(j\!-\!1)!}{(2N\!-\!1)!}
\left(2N\!-\!j\!-\!\frac{1}{2}\right) \cdots
\left(N\!-\!j\!+\!\frac{1}{2}\right) m_{(j,I)}.
$$
We compare this coefficient with the coefficient of the contribution
of $P_{2I}$ to
$$
D_{2N-2j}\left(-\f\!+\!2N\!-\!j\right).
$$
For this purpose, we write $I$ in the form $(I_l,K)$ with a possibly
trivial $K$. Let $K$ be non-trivial. Then $|K|=N\!-\!I_l\!-\!j$. Now
by \eqref{c-pure-generic}, the composition $P_{2I_l} P_{2K}$
contributes with the coefficient
\begin{equation*}
\frac{\pi_{2N-2j}\left(\lambda\!+\!\f\!-\!(N\!-\!j)\right)}
{\lambda\!+\!\f\!-\!(N\!-\!j)-|K|} m_{(I_l,K)}^{(1)} =
\frac{\pi_{2N-2j}(N)}{I_l\!+\!j} m_{(I_l,K)}^{(1)}.
\end{equation*}
By \eqref{pi-b}, it equals
\begin{equation*}
-(-1)^{N-j} 2^{2(N-j)-1} \frac{N!}{(2N\!-\!2j\!-\!1)!j!}
\frac{1}{I_l\!+\!j}\left(2N\!-\!j\!-\!\frac{1}{2}\right) \cdots
\left(N\!+\!\frac{1}{2}\right) m_{(I_l,K)}.
\end{equation*}
We claim that the latter coefficient coincides with
$$
(-1)^{N-j} 2^{2N-1} \frac{(N\!-\!j)!(j\!-\!1)!}{(2N\!-\!1)!}
\left(2N\!-\!j\!-\!\frac{1}{2}\right) \cdots
\left(N\!-\!j\!+\!\frac{1}{2}\right) m_{(j,I_l,K)}.
$$
The assertion is equivalent to the identity
\begin{multline}
-2^{-2j} \frac{N!}{(2N\!-\!2j\!-\!1)!j!}
\left(2N\!-\!j\!-\!\frac{1}{2}\right)\cdots\left(N\!+\!\frac{1}{2}\right)
\frac{1}{j+I_l} m_{(I_l,K)} \\ =
\frac{(N\!-\!j)!(j\!-\!1)!}{(2N\!-\!1)!}
\left(2N\!-\!j\!-\!\frac{1}{2}\right) \cdots
\left(N\!-\!j\!+\!\frac{1}{2}\right) m_{(j,I_l,K)}.
\end{multline}
But by the definition of the multiplicities $m_I$,
$$
m_{(j,I_l,K)} = -\frac{1}{j+I_l}
\frac{N!(N\!-\!1)!}{j!(j\!-\!1)!(N\!-\!j)!(N\!-\!j\!-\!1)!}
m_{(I_l,K)}.
$$
The latter relation shows that the assertion is equivalent to
\begin{multline*}
2^{-2j} \frac{1}{(2N\!-\!2j\!-\!1)!}
\left(2N\!-\!j\!-\!\frac{1}{2}\right)\cdots\left(N\!+\!\frac{1}{2}\right) \\
= \frac{(N\!-\!1)!}{(2N\!-\!1)!}
\left(2N\!-\!j\!-\!\frac{1}{2}\right) \cdots
\left(N\!-\!j\!+\!\frac{1}{2}\right) \frac{1}{(N\!-\!j\!-\!1)!}.
\end{multline*}
But this identity is equivalent to \eqref{final-1}. This completes
the proof for non-trivial $K$. The proof immediately extends to
trivial $K$.

Next, we prove that the contributions of
$$
P_{2j} \bar{P}_{2J}, \quad j\!+\!|J|=N
$$
on both sides of \eqref{fact-1} coincide. On the one hand,
\eqref{mixed-total} shows that $P_{2j} \bar{P}_{2J}$ contributes to
$$
D_{2N}\left(-\f\!+\!2N\!-\!j\right)
$$
with the coefficient
$$
-(-1)^N 2^{2N-1} \frac{N!(N\!-\!1)!}{(2N\!-\!1)!}
\frac{\pi_{2N}'(N\!-\!j)}{(2N\!-\!j\!-\!J_r\!+\frac{1}{2})}
\frac{1}{j!(j\!-\!1)!(N\!-\!j)!(N\!-\!j\!-\!1)!} m_J.
$$
By \eqref{pi-a}, the latter term equals
\begin{multline}\label{j-bar-l}
-(-1)^N 2^{2N-1} \frac{N!(N\!-\!1)!}{(2N\!-\!1)!}
\frac{1}{j!(N\!-\!j\!-\!1)!} \\
\times \frac{1}{(2N\!-\!j\!-\!J_r\!+\frac{1}{2})}
\left(2N\!-\!j\!-\!\frac{1}{2}\right) \cdots
\left(N\!-\!j\!+\!\frac{1}{2}\right) m_J.
\end{multline}
We compare this coefficient with the coefficient of the contribution
of $\bar{P}_{2J}$ to
$$
D_{2N-2j}\left(-\f\!+\!2N\!-\!j\right).
$$
For this purpose, we write $J = (K,J_r)$ with a possibly trivial
$K$. Let $K$ be non-trivial. Then $|K|=N\!-\!J_r\!-\!j$. Now by
\eqref{c-pure-bar-generic}, the composition $\bar{P}_{2K}
\bar{P}_{2J_r}$ contributes with the coefficient
\begin{equation*}
(-1)^{N-j} \frac{2^{2N-2j-1}}{(2N\!-\!2j\!-\!1)!}
\frac{\pi_{2N-2j}(\lambda\!+\!\f\!-\!(N\!-\!j))}{(\lambda\!+\!\frac{n+1}{2}\!-\!(N\!-\!j)\!+\!|K|)}
m_J,
\end{equation*}
i.e., with
\begin{equation*}
(-1)^{N-j} \frac{2^{2N-2j-1}}{(2N\!-\!2j\!-\!1)!}
\frac{\pi_{2N-2j}(N)}{(2N\!-\!j\!-\!J_r\!+\!\frac{1}{2})} m_J.
\end{equation*}
By \eqref{pi-b}, it equals
\begin{equation}\label{j-bar-r}
(-1)^{N-j} \frac{2^{2N-2j-1}}{(2N\!-\!2j\!-\!1)!} \frac{N!}{j!}
\frac{1}{(2N\!-\!j\!-\!J_r\!+\!\frac{1}{2})}
\left(2N\!-\!j\!-\!\frac{1}{2}\right) \cdots
\left(N\!+\!\frac{1}{2}\right) m_J.
\end{equation}
It is straightforward to verify that the coincidence of
\eqref{j-bar-l} and \eqref{j-bar-r} is equivalent to
\eqref{final-1}. This completes the proof for non-trivial $K$. The
proof immediately extends to trivial $K$.

Next, we prove that the contributions of
$$
P_{2I} \bar{P}_{2j}, \quad |I|\!+\!j=N
$$
on both sides of \eqref{fact-2} coincide. On the one hand, we
determine the coefficient of the contribution of $P_{2I}$ to
$$
D_{2N-2j}\left(-\frac{n\!+\!1}{2}\!-\!j\right).
$$
For this purpose, we write $I=(I_l,K)$ with a possibly trivial $K$.
Let $K$ be non-trivial. By \eqref{c-pure-generic}, $P_{2I}$
contributes with the coefficient
$$
\frac{\pi_{2N-2j}(\lambda\!+\!\f\!-\!(N\!-\!j))}{(\lambda\!+\!\f\!-\!(N\!-\!j)-|K|)}
m_I^{(1)}.
$$
Using $|K|=N\!-\!j\!-\!I_l$ and \eqref{pi-d}, the latter expression
simplifies to
\begin{equation}\label{left}
(-1)^{N-j} \frac{2^{2N-2j-1}}{(2N\!-\!2j\!-\!1)!} \frac{N!}{j!}
\frac{1}{(2N\!+\!\frac{1}{2}\!-\!j\!-\!I_l)}
\left(2N\!-\!j-\!\frac{1}{2}\right) \cdots
\left(N\!+\!\frac{1}{2}\right) m_I.
\end{equation}
On the other hand, we determine the coefficient of the contribution
of the operator $P_{2I} \bar{P}_{2j}$ to
$$
D_{2N}\left(-\frac{n\!+\!1}{2}\!+\!j\right).
$$
Eq.~\eqref{mixed-total} yields
$$
-(-1)^N 2^{2N-1} \frac{N!(N\!-\!1)!}{(2N\!-\!1)!} \frac{\pi_{2N}'
(-\frac{1}{2}\!+\!j\!-\!N)}{(-\frac{1}{2}\!+\!j\!-\!2N\!+\!I_l)}
\frac{1}{(N\!-\!j)!(N\!-\!j\!-\!1)!j!(j\!-\!1)!} m_I.
$$
By \eqref{pi-c}, the latter formula simplifies to
\begin{equation}\label{right}
(-1)^{N-j} 2^{2N-1} \frac{N!(N\!-\!1)!}{(2N\!-\!1)!}
\frac{(2N\!-\!j\!-\!\frac{1}{2}) \cdots
(N\!-\!j\!+\!\frac{1}{2})}{(2N\!+\!\frac{1}{2}\!-\!j\!+\!I_l)(N\!-\!j\!-\!1)!
j!} m_I.
\end{equation}
It is straightforward to verify that the equality of \eqref{left}
and \eqref{right} is equivalent to the relation \eqref{final-1}.
This completes the proof for non-trivial $K$. The proof immediately
extends to trivial $K$.

Next, we prove that the contributions of
$$
\bar{P}_{2J} \bar{P}_{2j}, \quad |J|\!+\!j=N
$$
on both sides of \eqref{fact-2} coincide. On the one hand,
Eq.~\eqref{c-pure-bar-generic} shows that these terms contribute to
$$
D_{2N}\left(-\frac{n\!+\!1}{2}\!+\!j\right)
$$
with the coefficient
$$
\pi_{2N}' \left(-\frac{1}{2}\!+\!j\!-\!N\right) m_{(J,j)}^{(1)},
$$
i.e., with
\begin{equation}\label{bar-left}
(-1)^j (N\!-\!j)! (j\!-\!1)! \left(2N\!-\!j\!-\!\frac{1}{2}\right)
\cdots \left(N\!-\!j\!+\!\frac{1}{2}\right) m_{(J,j)}^{(1)}.
\end{equation}
We compare this coefficient with the coefficient of the contribution
of $\bar{P}_{2J}$ to
$$
D_{2N-2j}\left(-\frac{n\!+\!1}{2}\!-\!j\right).
$$
For this purpose, we write $J$ in the form $(K,J_r)$ with a possibly
trivial $K$. Let $K$ be non-trivial. Now by
\eqref{c-pure-bar-generic} $\bar{P}_{2K} \bar{P}_{2J_r}$ contributes
with the coefficient
$$
-\frac{\pi_{2N-2j}(\lambda\!+\!\f\!-\!(N\!-\!j))}
{\lambda\!+\!\f\!-\!(N\!-\!j)\!+\!|K|\!+\!\frac{1}{2}}
m_{(K,J_r)}^{(1)},
$$
i.e., with
\begin{equation}\label{bar-right}
-\frac{\pi_{2N-2j}(-N\!-\!\frac{1}{2})}{J_r\!+\!j}
m_{(K,J_r)}^{(1)}.
\end{equation}
Now we claim that the latter coefficient coincides with
\eqref{bar-left} for $J=(K,J_r)$, i.e., with
\begin{equation}\label{bar-left-2}
(-1)^j (N\!-\!j)! (j\!-\!1)! \left(2N\!-\!j\!-\!\frac{1}{2}\right)
\cdots \left(N\!-\!j\!+\!\frac{1}{2}\right) m_{(K,J_r,j)}^{(1)}.
\end{equation}
By \eqref{pi-d}, the assertion is equivalent to the identity
\begin{multline*}\label{bar-rel}
- \frac{N!}{(J_r\!+\!j) j!} \left(N\!+\!\frac{1}{2}\right) \cdots
\left(2N\!-\!j\!-\!\frac{1}{2}\right) m_{(K,J_r)}^{(1)}\\
= (-1)^j (N\!-\!j)! (j\!-\!1)! \left(2N\!-\!j\!-\!\frac{1}{2}\right)
\cdots \left(N\!-\!j\!+\!\frac{1}{2}\right) m_{(K,J_r,j)}^{(1)}.
\end{multline*}
But
$$
m_{(K,J_r,j)}^{(1)} = - (-1)^j \frac{1}{J_r\!+\!j}
\frac{N!(N\!-\!1)!}{j!(j\!-\!1)!(N\!-\!j)!(N\!-\!j\!-\!1)!} 2^{2j}
\frac{(2N\!-\!2j\!-\!1)!}{(2N\!-\!1)!} m_{(K,J_r)}^{(1)}
$$
shows that the latter identity is equivalent to \eqref{final-1}.
This completes the proof for non-trivial $K$. The proof immediately
extends to trivial $K$.

Next, we prove that all contributions of
$$
P_{2I} \;\; \mbox{with $I_l \ne j$}, \quad P_{2I} \bar{P}_{2J} \;\;
\mbox{with} \;I_l \ne j \quad \mbox{and} \quad \bar{P}_{2J}
$$
to
$$
D_{2N}\left(-\f\!+\!2N\!-\!j\right), \; j=1,\dots,N
$$
vanish. Let
$$
\lambda = -\f\!+\!2N\!-\!j.
$$
We recall that
\begin{equation}\label{pi-van}
\pi_{2N}\left(\lambda\!+\!\f\!-\!N\right) = \pi_{2N}(N\!-\!j) = 0
\quad \mbox{for $j=1,\dots,N$.}
\end{equation}

We first consider $P_{2I}$. We write $I=(I_l,J)$ with a possibly
trivial $J$. Let $J$ be non-trivial. Then we have
$$
\lambda\!+\!\f\!-\!N\!-\!|J| = J_l\!-\!j \ne 0,
$$
and the relations \eqref{c-pure-generic} and \eqref{pi-van} show
that the contribution of $P_{2I}$ vanishes. It remains to prove that
$P_{2N}$ does not contribute for $j \ne N$. But by \eqref{single},
the corresponding multiplicity is a multiple of
$$
\eta_{2N}\left(\lambda\!+\!\f\!-\!N\right) = \eta_{2N}(N\!-\!j) = 0.
$$

Next, we consider the mixed contributions. Then we have
$$
\lambda\!+\!\f\!-\!2N\!+\!I_l = I_l\!-\!j \ne 0,
$$
and the relations \eqref{mixed-total} and \eqref{pi-van} show that
the contribution of $P_{2I} \bar{P}_{2J}$ vanishes.

Finally, we write $J=(K,J_r)$ with a possibly trivial $K$. Then we
have
$$
\lambda\!+\!\frac{n\!+\!1}{2}\!-\!N\!+\!|K| =
\frac{1}{2}\!-\!j\!+\!2N\!-\!J_r\! \ne 0,
$$
and the relations \eqref{c-pure-bar-generic} and \eqref{pi-van} show
that the contribution of $\bar{P}_{2J}$ vanishes.

Finally, we prove that all contributions of
$$
P_{2I}, \quad P_{2I} \bar{P}_{2J} \;\; \mbox{with $J_r \ne j$} \quad
\mbox{and} \quad \bar{P}_{2J} \;\; \mbox{with $J_r \ne j$}
$$
to
$$
D_{2N}\left(-\frac{n\!+\!1}{2}\!+\!j\right)
$$
vanish. Let
$$
\lambda = -\frac{n\!+\!1}{2}\!+\!j.
$$
We recall that
\begin{equation}\label{pi-van-bar}
\pi_{2N}\left(\lambda\!+\!\f\!-\!N\right) =
\pi_{2N}\left(j\!-\!N\!-\!\frac{1}{2}\right) = 0 \quad \mbox{for
$j=1,\dots,N$.}
\end{equation}

We first consider $P_{2I}$. We write $I=(I_l,J)$ with a possibly
trivial $J$. Let $J$ be non-trivial. Then we have
$$
\lambda\!+\!\f\!-\!N\!-\!(N\!-\!I_l)=-\frac{1}{2}\!+\!j\!-\!2N\!+\!I_l
\ne 0,
$$
and the relations \eqref{c-pure-generic} and \eqref{pi-van-bar} show
that the contribution of $P_{2I}$ vanishes.

Next, we consider the mixed contributions. Then we have
$$
\lambda\!+\!\frac{n+1}{2}\!-\!J_r = j\!-\!J_r \ne 0,
$$
and the relations \eqref{mixed-total} and \eqref{pi-van-bar} show
that the contribution $P_{2I} \bar{P}_{2J}$ vanishes.

Finally, we write $J=(K,J_r)$ with a possibly trivial $K$. Then we
have
$$
\lambda\!+\!\frac{n\!+\!1}{2}\!-\!N\!+\!|K| = j- J_r \ne 0,
$$
and the relations \eqref{c-pure-bar-generic} and \eqref{pi-van-bar}
show that the contribution $\bar{P}_{2J}$ vanishes.

\section{The restriction property and the commutator relations}
\label{RP-and-CR}

In the present section, we derive two consequences of Theorem
\ref{D-rep} which will play a central role in what
follows.\footnote{In even dimensions and for general metrics the
usual restriction $2N \le n$ will be in place without mentioning it.
Moreover, for locally conformally flat metrics the statements extend
to all $N \ge 1$.}

\begin{thm}\label{AB} For $N \ge 1$, let
\begin{equation*}
D_{2N}^{res}(\lambda) = A_{2N}
\left(\lambda\!+\!\f\!-\!N\right)^{2N-1} + B_{2N}
\left(\lambda\!+\!\f\!-\!N\right)^{2N-2} + \cdots + P_{2N}.
\end{equation*}
Then the leading coefficient $A_{2N}$ is given by the formula
\begin{equation}\label{leading}
(-1)^{N-1} 2^{-(2N-1)} (2N\!-\!1)! A_{2N} =
\M_{2N}\!-\!\bar{\M}_{2N}.
\end{equation}
Moreover, the coefficient
$$
(-1)^{N-1} 2^{-(2N-2)} (2N\!-\!1)! B_{2N}
$$
coincides with the sum of
$$
\M_{2N} + (N\!-\!1) (\M_{2N} \!-\! \bar{\M}_{2N})
$$
and
\begin{equation}\label{comm}
2 \sum_{a=1}^{N-1} a \binom{N\!-\!1}{a}^2 \left[ (\M_{2N-2a} \!-\!
\bar{\M}_{2N-2a}) \bar{\M}_{2a} \!-\! \M_{2a} (\M_{2N-2a} \!-\!
\bar{\M}_{2N-2a})\right].
\end{equation}
Here we used the convention to simplify formulas by omitting the
restriction operators $i^*$.
\end{thm}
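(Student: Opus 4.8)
The strategy is to extract the two top coefficients of $D^{res}_{2N}(\lambda)$ directly from the representation in Theorem \ref{D-rep}. Put $x=\lambda+\f-N$, so that $D^{res}_{2N}=A_{2N}x^{2N-1}+B_{2N}x^{2N-2}+\cdots+P_{2N}$, and recall that each summand of \eqref{P}, \eqref{P-bar}, \eqref{mixed} is, up to a scalar, a product of GJMS-operators times a rational function of the shape $\pi_{2N}(x)/(x-c)$ or $\pi_{2N}(x)/((x-c_1)(x-c_2))$ with $c,c_1,c_2\in\pi_{2N}^{-1}(0)$; each such rational function is in fact a polynomial, of degree $2N-1$ in the first case and $2N-2$ in the second. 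Since $\pi_{2N}$ is monic of degree $2N$ and its roots $\{0,\dots,N-1\}\cup\{-\tfrac12,\dots,-N+\tfrac12\}$ sum to $-\tfrac N2$, one has $\pi_{2N}(x)/(x-c)=x^{2N-1}+(\tfrac N2+c)\,x^{2N-2}+\cdots$ and $\pi_{2N}(x)/((x-c_1)(x-c_2))=x^{2N-2}+\cdots$. Reading off the coefficients of $x^{2N-1}$ and $x^{2N-2}$ of the three sums is then bookkeeping, using only $\M_{2N}=\sum_{|I|=N}m_IP_{2I}$ (see \eqref{M-def}) and the analogous definition of $\bar{\M}_{2N}$.

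For $A_{2N}$ the mixed sum \eqref{mixed} has no $x^{2N-1}$-term, and in \eqref{P}, \eqref{P-bar} every summand contributes with the leading coefficient $1$ of its rational factor; summing $m_IP_{2I}$ over $|I|=N$ gives $\M_{2N}$ and summing $m_{\bar I}\bar P_{2I}=-m_I\bar P_{2I}$ gives $-\bar{\M}_{2N}$, so collecting the prefactors yields $A_{2N}=(-1)^{N-1}\tfrac{2^{2N-1}}{(2N-1)!}(\M_{2N}-\bar{\M}_{2N})$, which is \eqref{leading}. For $B_{2N}$ a $P$-summand of \eqref{P} enters with weight $\tfrac N2+(N-I_l)$, a $\bar P$-summand of \eqref{P-bar} with weight $\tfrac N2+(J_r-N-\tfrac12)$, and a mixed summand of \eqref{mixed} with weight $1$. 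Splitting the first two weights into their constant parts and the remainders $N-I_l$, resp.\ $-(N-J_r)$, and normalising as in the statement, the constant parts produce $N\M_{2N}$ and $-(N-1)\bar{\M}_{2N}$, i.e.\ the asserted summand $\M_{2N}+(N-1)(\M_{2N}-\bar{\M}_{2N})$; the mixed sum, after summing $m_I$ over $|I|=b$ and $m_J$ over $|J|=a$, collapses to $2\,N!(N-1)!\sum_{a+b=N,\,a,b\ge1}\frac{1}{b!(b-1)!\,a!(a-1)!}\M_{2b}\bar{\M}_{2a}$.

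It remains to identify the composition-dependent remainders of the pure sums. I claim
\begin{equation*}
\sum_{|K|=N}(N-K_l)\,m_K\,P_{2K}=-\sum_{a=1}^{N-1}a\binom{N-1}{a}^{2}\M_{2a}\M_{2N-2a},
\end{equation*}
and, replacing $K$ by $K^{-1}$ and using $m_{K^{-1}}=m_K$, $\sum_{|K|=N}(N-K_r)\,m_K\,\bar P_{2K}=-\sum_{a=1}^{N-1}a\binom{N-1}{a}^{2}\bar{\M}_{2N-2a}\bar{\M}_{2a}$. To prove the displayed identity I would compare the coefficient of a fixed $P_{2K}$, $K=(K_1,\dots,K_r)$: on the left it is $(N-K_1)m_K$, on the right $-\sum_{j=1}^{r-1}A_j\binom{N-1}{A_j}^2 m_{(K_1,\dots,K_j)}\,m_{(K_{j+1},\dots,K_r)}$ with $A_j=K_1+\dots+K_j$. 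Inserting the multiplicative factorisation $m_{(K_1,\dots,K_j)}m_{(K_{j+1},\dots,K_r)}=-\frac{A_j!(A_j-1)!\,B_j!(B_j-1)!}{N!(N-1)!}(K_j+K_{j+1})\,m_K$ (with $B_j=K_{j+1}+\dots+K_r$, the same identity exploited in Section \ref{factor-D}), the binomials cancel and the right-hand coefficient becomes $\frac{m_K}{N}\sum_{j=1}^{r-1}B_j(K_j+K_{j+1})$; writing $K_j+K_{j+1}=B_{j-1}-B_{j+1}$ this sum telescopes to $B_0B_1\,m_K/N=(N-K_1)m_K$. This telescoping is the heart of the computation; the rest is routine, and is the step I expect to require the most care in bookkeeping the various signs and normalisations.

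Finally one assembles the three contributions to the normalised $B_{2N}$. The pure-$P$ remainder $-2\sum_a a\binom{N-1}{a}^2\M_{2a}\M_{2N-2a}$ and the pure-$\bar P$ remainder $-2\sum_a a\binom{N-1}{a}^2\bar{\M}_{2N-2a}\bar{\M}_{2a}$ are precisely the $\M\M$- and $\bar{\M}\bar{\M}$-parts of \eqref{comm}, and the mixed contribution supplies the $\M\bar{\M}$-parts: matching its coefficient $N!(N-1)!/(b!(b-1)!\,a!(a-1)!)$ of $\M_{2b}\bar{\M}_{2a}$ (with $a+b=N$) against the two contributions $a\binom{N-1}{a}^2$ coming from the two summation indices of \eqref{comm} reduces, via $\binom{N-1}{c}=\tfrac dc\binom{N-1}{d}$ for $c+d=N$, to the elementary identity $\tfrac{dN}{c}\binom{N-1}{d}^2=d\binom{N-1}{d}^2+c\binom{N-1}{c}^2$. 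A useful check at every stage is the symmetry \eqref{symm-D}, which forces $\sigma(A_{2N})=-A_{2N}$ and $\sigma(B_{2N})=B_{2N}-\tfrac{2N-1}{2}A_{2N}$ with $\sigma$ interchanging $\M$ and $\bar{\M}$, and which is manifestly respected by the asserted formulas.
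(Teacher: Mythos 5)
Your proof is correct and takes essentially the same route as the paper: extract the top two coefficients of $D_{2N}^{res}(\lambda)$ from the representation in Theorem~\ref{D-rep}, then reduce the assertion to the combinatorial identity $\sum_{j}A_j\binom{N-1}{A_j}^2 m_{(K_1,\dots,K_j)}m_{(K_{j+1},\dots,K_r)}=-(N-K_1)m_K$ (the paper's \eqref{comb-first}) together with the elementary binomial identity for the mixed terms. The only deviations are presentational: you bypass the interpretation via the conformal variational formula (Theorem~\ref{c-cv}) and go straight to the combinatorics, and your telescoping computation is the same argument the paper compresses into the remark that $(I_2+\cdots+I_r)(I_1+I_2)+\cdots+I_r(I_{r-1}+I_r)=(I_1+\cdots+I_r)(I_2+\cdots+I_r)$ ``follows by induction on the number of entries.''
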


\begin{proof} The formula for $A_{2N}$ follows from the definitions
in Sections \ref{pure-P}--\ref{pure-P-bar}. In order to prove the
assertion for the sub-leading coefficient, we first make that
coefficient explicit by using the corresponding formulas in Sections
\ref{pure-P}--\ref{mixed-terms}. For the pure $P$-terms we find the
formula
\begin{align}\label{sub-pure-P}
m_I^{(2)} P_{2I} & = \left( (N\!-\!I_l) \eta_{2N}^{(2N-1)} +
\eta_{2N}^{(2N-2)} \right) m_I^{(1)} P_{2I} \nonumber \\
& = (-1)^{N-1} \frac{2^{2N-1}}{(2N\!-\!1)!}
\left( (N\!-\!I_l) + \frac{N}{2} \right)  m_I P_{2I} \nonumber \\
& = (-1)^{N-1} \frac{2^{2N-2}}{(2N\!-\!1)!} (3N\!-\!2I_l)  m_I
P_{2I}
\end{align}
using
$$
\eta_{2N}^{(2N-1)} = 1 \quad \mbox{and} \quad \eta_{2N}^{(2N-2)} =
\frac{N}{2}.
$$
Similarly, for the pure $\bar{P}$-terms we find
\begin{align}\label{sub-pure-P-bar}
m_{\bar{J}}^{(2)} \bar{P}_{2J} & = \left(
\left(-(N\!-\!J_r)\!-\!\frac{1}{2}\right) \tau_{2N}^{(2N-1)} +
\tau_{2N}^{(2N-2)} + \frac{1}{2} \tau_{2N}^{(2N-1)} \right)
m_{\bar{J}}^{(1)} \bar{P}_{2J} \nonumber \\
& = (-1)^{N-1} \frac{2^{2N-2}}{(2N\!-\!1)!} (N\!-\!2J_r\!+\!1)  m_J
\bar{P}_{2J}
\end{align}
using
$$
\tau_{2N}^{(2N-1)} = 1 \quad \mbox{and} \quad \tau_{2N}^{(2N-2)} =
\frac{N\!-\!1}{2}.
$$
Finally, for the mixed terms, \eqref{mixed-base} states
$$
m_{(I,\bar{J})}^{(2)} = (-1)^{N-1} 2^{2N-1}
\frac{N!(N\!-\!1)!}{(2N\!-\!1)!} \frac{m_I m_J}{|I|!(|I|\!-\!1)!
|J|!(|J|\!-\!1)!}.
$$
Now we prove that these contributions coincide with those given in
the theorem. In fact, for the pure $P$-terms and the pure
$\bar{P}$-terms, the assertions are
\begin{equation}\label{quad}
N m_I - 2 \sum_{a=1}^{N-1} a \binom{N\!-\!1}{a}^2 [P_{2I}: \M_{2a}
\M_{2N-2a}] = (3N\!-\!2I_l) m_I
\end{equation}
and
\begin{equation}\label{quad-bar}
-(N\!-\!1) m_J - 2 \sum_{a=1}^{N-1} a \binom{N\!-\!1}{a}^2 [
\bar{P}_{2J} : \bar{\M}_{2N-2a} \bar{\M}_{2a} ] = (N\!-\!2J_r\!+\!1)
m_J.
\end{equation}
Moreover, for the mixed terms the assertion is
\begin{multline*}
\sum_{a=1}^{N-1} a \binom{N\!-\!1}{a}^2 [P_{2I}\bar{P}_{2J}:
(\M_{2N-2a} \bar{\M}_{2a} \!+\! \M_{2a} \bar{\M}_{2N-2a})] \\[-4mm]
= \frac{N!(N\!-\!1)!}{|I|!(|I|\!-\!1)! |J|!(|J|\!-\!1)!} m_I m_J,
\end{multline*}
i.e.,
$$
|J| \binom{N\!-\!1}{|J|}^2 + |I| \binom{N\!-\!1}{|I|}^2 =
\frac{N!(N\!-\!1)!}{|I|!(|I|\!-\!1)! |J|!(|J|\!-\!1)!}.
$$
The latter relation easily follows from $|I|+|J|=N$. The remaining
relations \eqref{quad} and \eqref{quad-bar} are consequences of the
following conformal variational formula.

\begin{thm}[\cite{juhl-power}]\label{c-cv} On manifolds $M$ of dimension
$n$, we have
\begin{multline}\label{c-cv-gen}
-(d/dt) \big|_0 \left( e^{(\f+N)t\varphi} \M_{2N}(e^{2t\varphi}g)
e^{-(\f-N)t\varphi} \right) \\ = \sum_{a=1}^{N-1} a
\binom{N\!-\!1}{a}^2 \left[\M_{2N-2a}(g),[\M_{2a}(g),\varphi]\right]
\end{multline}
for all $\varphi \in C^\infty(M)$. On the right-hand side of
\eqref{c-cv-gen}, $\varphi$ is regarded as a multiplication
operator.
\end{thm}

In fact, by the conformal covariance of the GJMS-operators, one
finds that the equality of the respective contributions of the term
$\varphi P_{2I}$ for the composition $I=(I_1,\dots,I_r)$ to both
sides of \eqref{c-cv-gen} is equivalent to the relation
\begin{multline}\label{comb-first}
-(N\!-\!I_1) m_I = \binom{N\!-\!1}{I_1}^2 I_1 \;
m_{(I_1)} m_{(I_2,I_2,\dots,I_{r})} \\
+ \cdots + \binom{N\!-\!1}{I_1\!+\dots+\!I_{r-1}}^2
(I_1\!+\dots+\!I_{r-1}) \; m_{(I_1,\dots,I_{r-1})} m_{(I_r)}.
\end{multline}
Similarly, the equality of the respective contributions of the term
$P_{2J} \varphi$ for the composition $J=(J_1,\dots,J_r)$ to both
sides of \eqref{c-cv-gen} is equivalent to the relation
\begin{multline}\label{comb-last}
-(N\!-\!J_r) m_J = \binom{N\!-\!1}{J_2\!+\dots+\!J_r}^2
(J_2\!+\dots+\!J_r) \; m_{(J_1)} m_{(J_2,\dots,J_r)} \\
+ \cdots + \binom{N\!-\!1}{J_r}^2 J_r \; m_{(J_1,J_2,\dots,J_{r-1})}
m_{(J_r)}.
\end{multline}
In order to prove \eqref{comb-first}, we use the explicit formula
\eqref{m-form} for the coefficients $m_I$ to write the terms on the
right-hand side as multiples of $m_I$. We find
\begin{multline}\label{comb-red}
-\frac{1}{N} \Big[ (I_2+\cdots+I_r)(I_1+I_2) + (I_3+\cdots+I_r)(I_2+I_3) \\
+ \cdots + I_r (I_{r-1}+I_r)\Big] m_I.
\end{multline}
Now the relation
\begin{equation*}
(I_2+\cdots+I_r)(I_1+I_2) + \cdots + I_r (I_{r-1}+I_r) =
(I_1+\cdots+I_r)(I_2+\cdots+I_r)
\end{equation*}
(which follows by induction on the number of entries) implies that
in \eqref{comb-red} the sum in parentheses equals $N(N-I_1)$ if
$|I|=N$. Thus, \eqref{comb-red} equals $-(N-I_1)m_I$. This proves
the assertion and \eqref{quad}.

Similarly, the identity \eqref{comb-last} follows by applying
\eqref{comb-first} to the inverse composition $I^{-1}$ of $I$ using
the relations $m_{I^{-1}} = m_I$. This proves \eqref{quad-bar}.
\end{proof}

As a consequence of Theorem \ref{AB}, we obtain the restriction
property of $\M_{2N}$.

\begin{thm}[{\bf Restriction property}]\label{rest-prop} For $N \ge 2$,
we have
\begin{equation}\label{RP}
\M_{2N} i^* = i^* \bar{\M}_{2N}.
\end{equation}
\end{thm}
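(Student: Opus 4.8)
The plan is to derive \eqref{RP} as an immediate consequence of the leading–coefficient formula \eqref{leading} of Theorem \ref{AB}, the point being that the left–hand side of \eqref{leading} vanishes for $N\ge 2$ purely for degree reasons. By construction, $A_{2N}$ is the coefficient of $\left(\lambda\!+\!\f\!-\!N\right)^{2N-1}$ in $D_{2N}^{res}(\lambda)$, i.e.\ the coefficient $d_{2N}^{(1)}$ in the expansion \eqref{def-d}. On the other hand, $D_{2N}^{res}(\lambda)$ is a polynomial in $\lambda$ of degree at most $N$: this is one of the basic properties recalled in Section \ref{RF}, and it is read off directly from the rewriting \eqref{detail-left} of the definition \eqref{res-fam}--\eqref{res-fam-2}, in which each summand is a product of $P_{2k}^*(\lambda\!+\!n\!-\!2N)$ (a polynomial of degree $k$) with a polynomial of degree $N\!-\!k$ in $\lambda$. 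Since $2N\!-\!1>N$ whenever $N\ge 2$, the coefficient of $\left(\lambda\!+\!\f\!-\!N\right)^{2N-1}$ must then vanish; hence $A_{2N}=0$ for all $N\ge 2$.

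\textbf{Conclusion.} Feeding $A_{2N}=0$ into \eqref{leading}, and recalling the convention that the restriction operators $i^*$ have been suppressed, the identity \eqref{leading} reads
\[
(-1)^{N-1}\,2^{-(2N-1)}\,(2N\!-\!1)!\,A_{2N}=\M_{2N}\,i^*-i^*\,\bar{\M}_{2N}
\]
as operators $C^\infty(M\times[0,\varepsilon))\to C^\infty(M)$. Therefore $\M_{2N}\,i^*=i^*\,\bar{\M}_{2N}$ for $N\ge 2$, which is exactly \eqref{RP}. Note that the hypothesis $N\ge 2$ is the precise range in which the degree argument bites: for $N=1$ the power $2N\!-\!1$ coincides with the degree $N$ of the family, so $A_2$ is the (non-vanishing) leading coefficient and no restriction property is obtained, consistently with the fact that $P_2 i^*\neq i^*\bar P_2$ in general.

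\textbf{Main difficulty.} Essentially all of the real work in this approach is carried out beforehand, in Theorem \ref{D-rep} (which underlies Theorem \ref{AB}) and in the conformal variational identity of Theorem \ref{c-cv}; given these, the present step is formal. The one point that requires care is the bookkeeping of degrees — one must be certain that $A_{2N}$ really is the coefficient of the power $2N\!-\!1$ in \eqref{def-d}, so that the gap between this a priori degree $2N\!-\!1$ and the actual degree $N$ of the residue family is what forces the cancellation. The same mechanism, applied to the next coefficient $B_{2N}$ (which for $N\ge 3$ is likewise annihilated for degree reasons), will in turn yield the commutator relations, but that is not needed here. As usual, for general metrics in even dimension $n$ the reasoning is subject to the restriction $2N\le n$, while for odd $n$ and for locally conformally flat metrics it applies to all $N\ge 2$.
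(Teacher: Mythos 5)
Your proof is correct and follows exactly the paper's argument: the residue family $D_{2N}^{res}(\lambda)$ has degree $N$ in $\lambda$, so its coefficient $A_{2N}$ at degree $2N-1$ vanishes for $N\ge 2$, and \eqref{leading} then forces $\M_{2N}i^*=i^*\bar{\M}_{2N}$. The remarks about the $N=1$ borderline and about applying the same mechanism to $B_{2N}$ to get the commutator relations are accurate elaborations of what the paper does next.
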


The special case $\M_{4} i^* = i^* \bar{\M}_{4}$ of the restriction
property \eqref{RP} was found and applied in \cite{juhl-book} (see
Lemma 6.11.6).

\begin{proof} We recall that the degree of the polynomial
$\lambda \mapsto D_{2N}^{res}(\lambda)$ is $N$. But since $N < 2N-1$
for $N \ge 2$, the coefficient $A_{2N}$ vanishes. Thus, the
assertion follows from \eqref{leading}.
\end{proof}

Similarly, for $N \ge 3$, the sub-leading coefficient of the degree
$N$ polynomial $D_{2N}^{res}(\lambda)$ vanishes. Hence the second
part of Theorem \ref{AB} and Theorem \ref{rest-prop} imply that
$$
\M_{2N} + (2N\!-\!2) ( (\M_{2} \!-\! \bar{\M}_{2}) \bar{\M}_{2N-2}
\!-\! \M_{2N-2} (\M_{2} \!-\! \bar{\M}_{2} )) = 0.
$$
Now adding a multiple of the relation
$$
\M_{2} (\M_{2N-2} - \bar{\M}_{2N-2}) - (\M_{2N-2} - \bar{\M}_{2N-2})
\bar{\M}_2 = 0
$$
cancels the mixed terms $\M_2 \bar{\M}_{2N-2}$ and $\M_{2N-2}
\bar{\M}_2$, and proves

\begin{thm}[{\bf Commutator relations}]\label{base-CR} For $N \ge 3$,
the commutator relations
\begin{equation}\label{CR}
\M_{2N} i^* = (2N\!-\!2) (i^*[\bar{\M}_2,\bar{\M}_{2N-2}] -
[\M_2,\M_{2N-2}]i^*)
\end{equation}
hold true.
\end{thm}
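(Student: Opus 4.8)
The plan is to obtain \eqref{CR} from the second part of Theorem \ref{AB} by a degree count, in exact parallel to the way the restriction property (Theorem \ref{rest-prop}) was derived from the leading coefficient. Since $\lambda \mapsto D_{2N}^{res}(\lambda)$ is a polynomial of degree $N$, and $2N-2 > N$ precisely when $N \ge 3$, the sub-leading coefficient $B_{2N}$ (the coefficient of $(\lambda+\f-N)^{2N-2}$) vanishes for $N \ge 3$. Feeding $B_{2N}=0$ into the formula of Theorem \ref{AB} (the nonzero prefactor being irrelevant) then yields the operator identity
\[
\M_{2N} + (N\!-\!1)(\M_{2N}-\bar{\M}_{2N}) + 2\sum_{a=1}^{N-1} a\binom{N\!-\!1}{a}^2\big[(\M_{2N-2a}-\bar{\M}_{2N-2a})\bar{\M}_{2a} - \M_{2a}(\M_{2N-2a}-\bar{\M}_{2N-2a})\big] = 0,
\]
understood as an identity of operators $C^\infty(M\times[0,\varepsilon))\to C^\infty(M)$ with the pull-backs $i^*$ suppressed.

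The next step is to collapse the sum using the restriction property at lower orders. By Theorem \ref{rest-prop}, the ``restriction defect'' $\M_{2m}i^* - i^*\bar{\M}_{2m}$ vanishes for every $m \ge 2$ and can only be nontrivial at $m=1$. Hence the term $(N\!-\!1)(\M_{2N}-\bar{\M}_{2N})$ is zero, and in the sum over $a$ the factor $(\M_{2N-2a}-\bar{\M}_{2N-2a})$ --- which is the restriction defect at order $2(N-a)$ --- vanishes unless $N-a=1$. Only the summand $a=N-1$ survives, with coefficient $2(N\!-\!1)\binom{N-1}{N-1}^2 = 2N-2$, leaving
\[
\M_{2N} + (2N\!-\!2)\big[(\M_2-\bar{\M}_2)\bar{\M}_{2N-2} - \M_{2N-2}(\M_2-\bar{\M}_2)\big] = 0.
\]

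Finally I would convert the bracket into the difference of commutators appearing in \eqref{CR}. Expanding it gives $\M_2\bar{\M}_{2N-2} - \bar{\M}_2\bar{\M}_{2N-2} - \M_{2N-2}\M_2 + \M_{2N-2}\bar{\M}_2$; adding $(2N\!-\!2)$ times the (trivially valid, since $\M_{2N-2}-\bar{\M}_{2N-2}=0$ for $N\ge 3$) identity $\M_2(\M_{2N-2}-\bar{\M}_{2N-2}) - (\M_{2N-2}-\bar{\M}_{2N-2})\bar{\M}_2 = 0$ cancels the two mixed compositions $\M_2\bar{\M}_{2N-2}$ and $\M_{2N-2}\bar{\M}_2$, and the bracket becomes $[\M_2,\M_{2N-2}] - [\bar{\M}_2,\bar{\M}_{2N-2}]$ (again modulo $i^*$). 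Solving for $\M_{2N}i^*$ gives exactly \eqref{CR}. The argument carries no serious analytic difficulty --- everything is powered by Theorems \ref{D-rep}, \ref{AB}, and \ref{rest-prop} --- so the only thing that needs genuine care is the bookkeeping of where each $i^*$ sits, i.e.\ not identifying $\M_2 i^*$ with $i^*\bar{\M}_2$: that is precisely the identification that fails at order $2$, and whose failure the commutator relations \eqref{CR} quantify.
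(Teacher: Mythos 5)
Your proof is correct and follows essentially the same route as the paper: you extract the vanishing of the sub-leading coefficient $B_{2N}$ of the degree-$N$ polynomial $D_{2N}^{res}(\lambda)$ (valid precisely for $N\ge 3$ since $2N-2>N$), feed it into the formula of Theorem~\ref{AB}, collapse the $a$-sum to the single $a=N-1$ term via the restriction property, and then add $(2N-2)$ times the (trivially valid) identity $\M_2(\M_{2N-2}-\bar{\M}_{2N-2})-(\M_{2N-2}-\bar{\M}_{2N-2})\bar{\M}_2=0$ to repackage the result as a difference of commutators. The degree count, the surviving coefficient $2(N-1)\binom{N-1}{N-1}^2=2N-2$, and the final algebraic rearrangement all agree with the paper's argument.
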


The special case $N=3$ of the commutator relations \eqref{CR}
already played an important role in \cite{juhl-book} (see Theorem
6.11.17).

We also make the lowest-order cases of Theorem \ref{AB} fully
explicit.

\begin{ex}\label{AB-4} $-3! A_4 = 8 (\M_4 \!-\! \bar{\M}_4)$ and
\begin{align*}
-\frac{3!}{4} B_4 & = \M_4 + (\M_4 \!-\! \bar{\M}_4) + 2 [(\M_2
\!-\! \bar{\M}_2) \bar{\M}_2 - \M_2 (\M_2 \!-\! \bar{\M}_2)].
\end{align*}
\end{ex}

\begin{ex}\label{AB-6} $5! A_6 = 32 (\M_6 \!-\! \bar{\M}_6)$ and
\begin{align*}
\frac{5!}{16} B_6 & = \M_6 + 2 (\M_6 \!-\! \bar{\M}_6) \\
& + 8\,[(\M_4 \!-\! \bar{\M}_4) \bar{\M}_2 - \M_2 (\M_4 \!-\! \bar{\M}_4)] \\
& + 4\,[(\M_2\!-\!\bar{\M}_2)\bar{\M}_4-\M_4(\M_2\!-\!\bar{\M}_2)].
\end{align*}
In particular, the restriction properties $\M_{2N} = \bar{\M}_{2N}$
for $N=2,3$ imply the commutator relation for $\M_6$.
\end{ex}

\begin{ex}\label{AB-8} $-7! A_8 = 128 (\M_8\!-\!\bar{\M}_8)$ and
\begin{align*}
-\frac{7!}{64} B_8 & = \M_8 + 3 (\M_8\!-\!\bar{\M}_8) \\
& + 18 \, [(\M_6\!-\!\bar{\M}_6) \bar{\M_2}-\M_2 (\M_6\!-\!
\bar{\M}_6)] \\
& + 36 \, [(\M_4 \!-\! \bar{\M}_4) \bar{\M}_4 - \M_4 ( \M_4 \!-\! \bar{\M}_4 )] \\
& + 6 \, [(\M_2 \!-\! \bar{\M}_2) \bar{\M}_6 - \M_6 ( \M_2 \!-\!
\bar{\M}_2)].
\end{align*}
In particular, the restriction properties $\M_{2N} = \bar{\M}_{2N}$
for $N=2,3,4$ imply the commutator relation for $\M_8$.
\end{ex}

It is natural and will be convenient later on to reformulate the
commutator relations \eqref{CR} in terms of generating functions.
Let
$$
\H(s) \quad \mbox{and} \quad \bar{\H}(s)
$$
be the respective generating functions \eqref{def-H} of the
sequences
$$
\M_2,\M_4,\cdots \quad \mbox{and} \quad
\bar{\M}_2,\bar{\M}_4,\cdots.
$$
Then the system of relations \eqref{CR} is equivalent to the
identity
\begin{equation}\label{CR-G}
\frac{1}{s} \frac{\partial \H}{\partial s}(s)i^* = \frac{1}{2} \M_4
i^* + i^* [\bar{P}_2,\bar{\H}(s)] - [P_2,\H(s)] i^*.
\end{equation}
Indeed, the equivalence of \eqref{CR} and \eqref{CR-G} follows from
the calculation
\begin{align*}
& \frac{1}{s} \frac{\partial \H}{\partial s} (s) i^* = \frac{1}{2}
\sum_{N \ge 2} \M_{2N} i^* \frac{1}{(N\!-\!2)!(N\!-\!1)!}
\left( \frac{s^2}{4} \right)^{N-2} \\
& = \frac{1}{2} \M_4 i^* + \sum_{N \ge 3} \left( i^*
[\bar{P}_2,\bar{\M}_{2N-2}] - [P_2,\M_{2N-2}] i^* \right)
\frac{(N\!-\!1)}{(N\!-\!2)!(N\!-\!1)!} \left( \frac{s^2}{4}
\right)^{N-2} \\
& = \frac{1}{2} \M_4 i^* + \left( i^* [\bar{P}_2,\bar{\H}(s)] -
[P_2,\H(s)] i^* \right).
\end{align*}

A first consequence of the commutator relations \eqref{CR} is that
the operators $\M_{2N}$ are only of order two. This is obvious for
$\M_2$ and easy to see for $\M_4$. The general case follows by
induction. In fact, if we assume that for any metric $\M_{2N}$ is a
second-order operator, then $\bar{\M}_{2N}$ is second-order, too. It
follows that the commutators on the right-hand side of the relation
$$
\M_{2N+2} i^* = 2N ( i^* [\bar{P}_2, \bar{\M}_{2N}] -
[P_2,\M_{2N}]i^*)
$$
are of third order. Moreover, this relation also states that its
right-hand side is a tangential operator. But, by the
self-adjointness of $\M_{2N+2}$, this tangential operator is only of
order two. This completes the induction. In the following section,
we shall substantially refine this argument.

\section{The principal part of $\M_{2N}$}\label{main-c}

In this section we prove the formula for the principal part of the
operators $\M_{2N}$ stated in Theorem \ref{main-P}. The main tool in
the proof will be Theorem \ref{base-CR}.

\subsection{Description of the principal part}\label{principal-part}

The following result restates a part of Theorem \ref{main-P}.

\begin{thm}\label{base-c} On any Riemannian manifold $(M,g)$ of
dimension $n \ge 3$,
\begin{equation}\label{principal-M}
\sum_{N \ge 1} \M_{2N}^0(g) \frac{1}{(N\!-\!1)!^2}
\left(\frac{r^2}{4}\right)^{N-1} = - \delta (g_r^{-1} d).
\end{equation}
\end{thm}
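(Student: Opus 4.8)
The plan is to prove \eqref{principal-M} by induction on $N$, with the commutator relations of Theorem \ref{base-CR} as the engine and Theorem \ref{double} used to control the operators $\bar{\M}_{2N}$ on $X$. Since each $\M_{2N}$ is a natural, formally self-adjoint second-order operator (established above), it can be written uniquely as $\M_{2N} = -\delta(S_{2N}d) + \M_{2N}(1)$ with $S_{2N}=S_{2N}(g)$ a symmetric endomorphism on one-forms and $\M_{2N}(1)$ a function, so that $\M_{2N}^0 = -\delta(S_{2N}d)$ and $S_2 = \id$. Setting $\Sigma(g)(r) = \sum_{N \ge 1} S_{2N}(g)\,\frac{1}{(N-1)!^2}(r^2/4)^{N-1}$, the assertion \eqref{principal-M} is equivalent to the identity of formal power series $\Sigma(g)(r) = g_r^{-1}$, where $g_r$ is regarded as an endomorphism on one-forms using $g$. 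The base cases are $N=1$, which is $S_2 = \id = g_0^{-1}$, and $N=2$: since $\M_4 = P_4 - P_2^2$, the Paneitz formula \eqref{Pan} together with a short symbol computation gives $\M_4^0 = -4\,\delta(\Rho\, d)$, i.e.\ $S_4 = 4\Rho$, which matches $g_r^{-1} = \id + r^2\Rho + \cdots$ because $g_{(2)} = -\Rho$.

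For the inductive step $N \ge 3$, I would start from the commutator relation
$$\M_{2N}\, i^* = (2N\!-\!2)\big( i^*[\bar{\M}_2,\bar{\M}_{2N-2}] - [\M_2,\M_{2N-2}]\, i^* \big)$$
and extract its tangential second-order part. Two reductions make this tractable. First, the commutator of a second-order operator with a multiplication operator is of first order, so $[\M_2,\M_{2N-2}]$ agrees with $[\Delta, \M_{2N-2}^0]$ modulo operators of order $\le 1$, and similarly on $X$. Second, $\bar{\M}_{2N-2}$ is the operator $\M_{2N-2}$ of the metric $\bar{g} = dr^2 + g_r$ on $X = M \times [0,\varepsilon)$, of order $2N-2 < 2N$; the induction hypothesis applies, so $\bar{\M}_{2N-2}^0 = -\bar\delta(\bar S_{2N-2}\,\bar d)$ with $\bar S_{2N-2}$ the relevant Taylor coefficient of the inverse of the Poincar\'e--Einstein family of $\bar{g}$, and Theorem \ref{double} supplies that Poincar\'e--Einstein metric explicitly, hence expresses $\bar S_{2N-2}$ and its normal jets along $M$ in terms of the family $g_r$. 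Substituting this, together with the restriction property $\M_{2N-2}i^* = i^*\bar{\M}_{2N-2}$, and using that $\M_{2N}$ is of second order so that the a priori third-order contributions on the right must cancel, one reads off a recursion expressing $S_{2N}$ through $S_2$, $S_{2N-2}$ and the jets of $g_r$.

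The concluding step is to identify this recursion with the one obeyed by the Taylor coefficients of $g_r^{-1}$: differentiating $g_r\, g_r^{-1} = \id$ and inserting, as needed, the second-order equation for $g_r$ equivalent to the Einstein condition \eqref{Ein} should reproduce it exactly, with matching initial data $S_2 = \id$; uniqueness of the solution then gives $\Sigma(g)(r) = g_r^{-1}$ and finishes the induction. The main obstacle I expect lies in the second paragraph — the precise principal-symbol bookkeeping of the commutator, in particular organizing the Theorem \ref{double} data and the interplay of the bar-operators with $i^*$ and the restriction property so that the cancellation of the third-order terms is transparent and the surviving second-order remainder lands precisely on the Einstein-equation recursion for $g_r^{-1}$.
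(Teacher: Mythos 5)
Your plan is essentially the argument the paper gives: the commutator relations of Theorem \ref{base-CR} and the double metric of Theorem \ref{double} are precisely the two pillars, and the induction on $N$ runs as you outline. The two technical points you defer are resolved in the paper in a way that is worth spelling out, because they are not quite what your final paragraph suggests. First, after writing $\Delta_{\bar g}=\partial_r^2+\tfrac12\tr\!\left(\dot g_r/g_r\right)\partial_r+\Delta_{g_r}$, Lemma \ref{PE-rest} together with $i^*\Delta_{g_r}=\Delta_g i^*$ forces the tangential-Laplacian commutator $i^*[\Delta_{g_r},\bar\H^0(s)]-[\Delta_g,\H^0(s)]i^*$ to vanish \emph{identically}, not merely up to lower order; this is the observation that isolates the $\partial_r^2$ commutator and reduces the bookkeeping to a single normal derivative. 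Second, the resulting operator $-\delta_g(T(s)d)$ is matched with $-\delta_g\!\left((s^{-1}\partial_s E(g)(s)^{-1})d\right)$ not by inserting the second-order Einstein equation for $g_r$ into the differentiated identity $g_r g_r^{-1}=\id$, but by the purely algebraic consequence of Theorem \ref{double}, namely $g_{(2k,2l)}=\binom{k+l}{k}g_{(2k+2l)}$ (Corollary \ref{basic-symm}); the special case $g_{(2,2k-2)}=kg_{(2k)}$ is exactly what is needed and is packaged as Lemma \ref{base-ident}, with Lemma \ref{PE-d2} translating it into the statement about $E(\cdot)^{-1}$. Trying to use the raw Einstein ODE for $g_r$ at this stage would be substantially heavier; the binomial structure of the double Taylor coefficients is the clean substitute and should be the step you nail down.
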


Some comments are in order.

Here we use the following notational conventions. We define $L^0(u)
\st L(u) - L(1)u$ for any linear operator $L$ on $C^\infty(M)$,
i.e., $L^0$ is obtained by removing the zeroth-order term $L(1)$ of
$L$. We regard $g_r$ as an endomorphism on one-forms on $M$ using
$g$, and denote the adjoint of $d$ (negative divergence) with
respect to $g$ by $\delta$. Moreover, we recall that according to
the conventions in Section \ref{intro-sum}, in even dimension $n$
and for general metrics, the sum in \eqref{principal-M} only runs up
to $r^{n-2}$.

For a few special metrics, Theorem \ref{base-c} can be given a {\em
direct} proof. In fact, for round spheres $\S^n$ and even for their
conformally flat pseudo-Riemannian analogs $\S^{(p,q)}$, the
assertion follows from summation formulas established in
\cite{juhl-power} and \cite{JK}. These proofs utilize the fact that,
by the relation to representation theory, explicit formulas for
GJMS-operators are available in these cases.

In the low-order cases $N=1,2,3$, Theorem \ref{base-c} for general
metrics can be confirmed by explicit formulas. In fact, the relation
$\M_2^0 = -\delta d = \Delta$ is obvious, and\footnote{The subscript
$_{(2N)}$ indicates the coefficient of $r^{2N}$ in the Taylor
expansion.}
$$
\frac{1}{4} \M_4^0 = - \delta ((g_r)^{-1}_{(2)} d)
$$
is equivalent to
$$
\M_4^0 = - 4 \delta (\Rho d)
$$
which follows from \eqref{Y} and \eqref{Pan} by a direct
calculation. Finally, for $n \ge 6$, we have (by Section 6.12 of
\cite{juhl-book}) the explicit formula
\begin{equation}\label{P6}
\M_6^0 = - 48 \delta (\Rho^2 d) - \frac{16}{n\!-\!4} \delta (\B d)
\end{equation}
(for more details see also Section \ref{P6-recursive}). But
$$
g_r = g - \Rho r^2 + \frac{1}{4} \left(\Rho^2 \!-\!
\frac{\B}{n\!-\!4}\right) r^4 + \cdots
$$
implies
$$
g_r^{-1} = g + \Rho r^2 + \frac{1}{4} \left( 3 \Rho^2 +
\frac{\B}{n\!-\!4}\right) r^4 + \cdots,
$$
and the above formula for $\M_6$ can be written in the form
$$
\M_6^0 = - 2!2!2^4 \delta ((g_r^{-1})_{(4)} d).
$$

Theorem \ref{base-c} confirms Conjecture 11.1 in \cite{juhl-power}.

\subsection{Proof of Theorem \ref{base-c}}\label{C-CR}

In this section, we prove that the commutator relations \eqref{CR}
imply the precise form of the principal part of $\M_{2N}$ claimed in
Theorem \ref{base-c}.

As a preparation, we establish a result which might be also of
independent interest.

\begin{thm}\label{double} Let $\bar{g}(r) = dr^2 + g(r)$ (with $g_r =
g(r)$). Then the metric
\begin{equation}\label{aston}
g_{++} = s^{-2} (ds^2 + g(r,s)) = s^{-2} (ds^2 + dr^2 + \sum_{k \ge
0} (s^2 \!+\! r^2)^k g_{(2k)})
\end{equation}
satisfies
$$
\Ric(g_{++}) + (n\!+\!1) g_{++} = 0 \quad \mbox{and} \quad g(r,0) =
\bar{g}(r),
$$
i.e., is a Poincar\'e-Einstein metric relative to $\bar{g}(r)$.
\end{thm}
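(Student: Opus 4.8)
The plan is to reduce the assertion to the elementary fact that, for a metric $h$ with $\Ric(h)=-n h$ on a manifold of dimension $n\!+\!1$, the warped product $dt^2+\cosh^2(t)\,h$ is Einstein with $\Ric=-(n\!+\!1)(\cdot)$. This is the curved analogue of the description of $\mathbb{H}^{n+2}$ in Fermi coordinates around a totally geodesic copy of $\mathbb{H}^{n+1}$. The only real work is a change of variables identifying $g_{++}$ with such a warped product built from the given Poincar\'e--Einstein metric $g_+$.

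First I would pass to polar coordinates in the $(r,s)$-plane. Put $\rho=\sqrt{r^2+s^2}$ and write $r=\rho\cos\theta$, $s=\rho\sin\theta$ with $\theta\in(0,\pi)$ (so $s>0$). Then $dr^2+ds^2=d\rho^2+\rho^2 d\theta^2$, and since
$$\sum_{k\ge 0}(s^2\!+\!r^2)^k g_{(2k)}=\sum_{k\ge 0}\rho^{2k}g_{(2k)}=g_\rho$$
is precisely the original defining family evaluated at $\rho$, one gets
$$g_{++}=\frac{1}{\rho^2\sin^2\theta}\left(d\rho^2+\rho^2 d\theta^2+g_\rho\right)=\frac{1}{\sin^2\theta}\left(d\theta^2+\frac{d\rho^2+g_\rho}{\rho^2}\right)=\frac{1}{\sin^2\theta}\left(d\theta^2+g_+\right),$$
where on the right $g_+$ is the given Poincar\'e--Einstein metric on $X=M\times(0,\varepsilon)$ with $\rho$ in the role of the defining function. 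A second substitution $t=\log\tan(\theta/2)$, for which $\tan(\theta/2)=e^t$, $\sin\theta=1/\cosh t$ and $d\theta/\sin\theta=dt$, turns this into the warped product $g_{++}=dt^2+\cosh^2(t)\,g_+$ over $(X,g_+)$.

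Next I would invoke the standard Ricci identities for a warped product $G=dt^2+f(t)^2 h$, where $(X^m,h)$ is $m$-dimensional: one has $\Ric_G(\partial_t,\partial_t)=-m f''/f$, $\Ric_G(\partial_t,\cdot)=0$ on $X$, and $\Ric_G=\Ric_h-\big(ff''+(m\!-\!1)(f')^2\big)h$ on $X$. Here $m=n\!+\!1$, $h=g_+$ with $\Ric(g_+)=-n g_+$, and $f=\cosh t$, so $f''/f=1$, $ff''=\cosh^2 t$ and $(f')^2=\sinh^2 t=\cosh^2 t-1$, hence $ff''+(m\!-\!1)(f')^2=(n\!+\!1)\cosh^2 t-n$. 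Using $G|_X=\cosh^2(t)\,g_+$ one finds, on $X$,
$$\Ric_G=-n g_+-\big((n\!+\!1)\cosh^2 t-n\big)g_+=-(n\!+\!1)\cosh^2(t)\,g_+=-(n\!+\!1)\,G|_X,$$
together with $\Ric_G(\partial_t,\partial_t)=-(n\!+\!1)=-(n\!+\!1)G(\partial_t,\partial_t)$ and the vanishing mixed component; thus $\Ric(g_{++})=-(n\!+\!1)g_{++}$. Finally, $s=0$ corresponds to $\theta\in\{0,\pi\}$, where $g(r,0)=dr^2+\sum_{k\ge 0}r^{2k}g_{(2k)}=dr^2+g_r=\bar{g}(r)$, and $g_{++}$ is by construction of the form $s^{-2}(ds^2+g(r,s))$ with $g(r,s)$ even in $s$; so it is a Poincar\'e--Einstein metric relative to $\bar{g}(r)$, and by the uniqueness part of the Fefferman--Graham normal form it is \emph{the} one.

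The computation is routine once the substitution $\rho^2=r^2+s^2$ has been spotted --- this is really the only idea. The one point requiring care is the exact sense of the Einstein equation: for odd $n$ the given $g_+$ is Einstein to infinite order along $M$, so $\Ric(g_{++})+(n\!+\!1)g_{++}=0$ holds in that same asymptotic sense and $\sum_k(s^2+r^2)^k g_{(2k)}$ is read as a formal power series; for even $n$ one works modulo the obstruction order. I do not expect any genuine obstacle beyond tracking these conventions and the standard warped-product curvature formulas.
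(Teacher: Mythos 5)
Your argument is correct, and it shares with the paper the single non-obvious idea: passing to polar coordinates $\rho^2=r^2+s^2$ to recognize $g_{++}=\sin^{-2}\theta\,(d\theta^2+g_+)$, a conformal rescaling of the Riemannian product $g_+ + d\theta^2$ on $X\times(0,\pi)$. Where you diverge is in how you finish from there. The paper computes $\Ric(g_{++})$ directly from that conformal factor $e^{2\varphi}$ with $\varphi=-\log\sin\theta$, using the standard conformal transformation law for Ricci applied to the product metric; the Einstein equation for $g_+$ feeds in at the last step. You instead perform the extra change of variable $t=\log\tan(\theta/2)$, turning the expression into the hyperbolic warped product $dt^2+\cosh^2(t)\,g_+$, and then quote the warped-product Ricci formulas. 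The two finishes are equivalent in content (the conformal identity and the warped-product identity are, in this situation, two ways of packaging the same curvature computation), but yours is arguably more conceptual: it isolates the statement that $dt^2+\cosh^2(t)\,h$ is Einstein whenever $h$ is, with eigenvalue shifted by $1$, which is a clean, reusable lemma (the Fermi-coordinate model of $\mathbb{H}^{n+2}$ around a copy of $\mathbb{H}^{n+1}$, as you say). The paper's route stays closer to the conformal calculus already in use throughout and avoids the additional coordinate change. Both handle the formal/asymptotic sense of the Einstein equation at the same level of informality, which is standard in this context. One small remark: the closing appeal to uniqueness of the Fefferman--Graham normal form is unnecessary for the theorem as stated (it asserts only that $g_{++}$ \emph{is} a Poincar\'e--Einstein metric for $\bar g$), though it is of course true and is implicitly used in Corollary \ref{basic-symm}.
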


\begin{proof} We calculate the Ricci curvature of \eqref{aston}. For
this purpose, we use polar coordinates $r = R \cos \theta$ and $s =
R \sin \theta$ and write the metric in the form
\begin{equation}\label{polar}
g_{++} = \sin^{-2} \theta \left( R^{-2} (dR^2 + \sum_{k \ge 0}
R^{2k} g_{(2k)}) + d\theta^2 \right) = \sin^{-2} \theta (g_+ +
d\theta^2)
\end{equation}
on $M \times (0,\varepsilon) \times (0,\pi)$. We recall the
conformal transformation law
\begin{multline}\label{Ricci}
\Ric (\hat{g})(X,Y) = \Ric(g)(X,Y) - (n\!-\!2) g(\nabla^g_X \grad
\varphi,Y) - g(X,Y) \Delta_g (\varphi) \\ - (n\!-\!2) |d \varphi|^2
g(X,Y) + (n\!-\!2) \langle d\varphi,X \rangle) \langle d\varphi,Y
\rangle,
\end{multline}
where $\hat{g} = e^{2\varphi} g$. For $\varphi = - \log \sin
\theta$, we have
$$
d \varphi = - \cot \theta d\theta \quad \mbox{and} \quad \Delta
\varphi = \sin^{-2}\theta.
$$
Thus, \eqref{polar} and \eqref{Ricci} imply that on the
$n+2$-dimensional space $M \times (0,\varepsilon) \times (0,\pi)$
\begin{align*}
\Ric(g_{++}) & = \Ric(g_+ \!+\! d\theta^2) - \sin^{-2} \theta \, g_+
- n \cot^2 \theta \, g_+ \\
& = \Ric(g_+) - \sin^{-2}\theta \, g_+ - n \cot^2\theta \, g_+ \\
& = -n g_+ - \sin^{-2}\theta \, g_+ - n \cot^2\theta \, g_+ \\
& = -(n\!+\!1) \sin^{-2}\theta \, g_+ \\
& = -(n\!+\!1) g_{++}
\end{align*}
on tangential vectors with a vanishing $\partial_\theta$-component.
Similarly, on tangential vectors $X=Y=\partial_\theta$, we find
\begin{align*}
\Ric(g_{++}) & = - n \partial(-\cot \theta)/\partial \theta
- \sin^{-2} \theta - n \cot^2 \theta + n \cot^2 \theta \\
& = - (n\!+\!1) \sin^{-2} \theta \\
& = -(n\!+\!1) g_{++}.
\end{align*}
The remaining components of $\Ric(g_{++})$ vanish. Thus, we obtain
$$
\Ric(g_{++}) + (n\!+\!1) g_{++} = 0.
$$
The proof is complete.
\end{proof}

\begin{rem} For odd $n$, $\bar{g}$ is uniquely determined by $g$ to any
order. Although $\bar{g}$ lives on a space of even dimension $n+1$,
Theorem \ref{double} tells that the Poincar\'e-Einstein metric
$g_{++}$ relative to $\bar{g}$ is well-defined to any
order.\footnote{An explicit formula for the diffeomorphism which
relates the Poincar\'e-Einstein metrics relative to $\bar{g}$ and
the Einstein metric $r^{-2}\bar{g}$ is given in Section 4 of
\cite{GW}.}
\end{rem}

As an immediate consequence of Theorem \ref{double}, we obtain

\begin{corr}\label{basic-symm} The coefficients $g_{(2k,2l)}$ in the
Poincar\'e-Einstein metric
$$
g_{++} = s^{-2}(ds^2 + dr^2 + \sum_{k,l \ge 0} r^{2k} s^{2l}
g_{(2k,2l)})
$$
are given by
$$
g_{(2k,2l)} = \binom{k+l}{k} g_{(2k+2l)}.
$$
In particular, they satisfy the symmetry relations
\begin{equation}\label{sy}
g_{(2k,2l)} = g_{(2l,2k)}.
\end{equation}
\end{corr}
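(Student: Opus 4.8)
The plan is to read off the assertion directly from the explicit description of $g_{++}$ furnished by Theorem \ref{double}. First I would recall that Theorem \ref{double} identifies the Poincar\'e--Einstein metric relative to $\bar{g}(r) = dr^2 + g_r$ as
\[
g_{++} = s^{-2}\left(ds^2 + dr^2 + \sum_{m \ge 0} (s^2\!+\!r^2)^m g_{(2m)}\right),
\]
so that, comparing with the a priori expansion $g_{++} = s^{-2}(ds^2 + dr^2 + \sum_{k,l \ge 0} r^{2k} s^{2l} g_{(2k,2l)})$, the claim becomes an identity of formal power series in the two variables $r^2$ and $s^2$: the coefficients $g_{(2k,2l)}$ are uniquely determined by this comparison.

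Next I would expand each summand on the right by the binomial theorem, $(s^2\!+\!r^2)^m = \sum_{j=0}^{m} \binom{m}{j} r^{2(m-j)} s^{2j}$, and collect the coefficient of the monomial $r^{2k} s^{2l}$. Only the term with $m = k+l$ contributes, and it contributes $\binom{k+l}{l} g_{(2k+2l)} = \binom{k+l}{k} g_{(2k+2l)}$. Matching coefficients therefore yields $g_{(2k,2l)} = \binom{k+l}{k} g_{(2k+2l)}$, and the symmetry relation \eqref{sy} follows at once from $\binom{k+l}{k} = \binom{k+l}{l}$. As a consistency check one notes that $k = l = 0$ gives $g_{(0,0)} = g$ and that $l = 0$ reproduces $g_{(2k,0)} = g_{(2k)}$, in agreement with $g(r,0) = \bar{g}(r)$.

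Since every step is a routine power-series manipulation once Theorem \ref{double} is available, there is essentially no obstacle here. The only point worth a remark is the precise meaning of the expansions in even dimension $n$: following the conventions of Section \ref{RF}, the family $g_r$, and hence $g(r,s)$, is taken to be the truncation at order $r^n$, so the identity is to be understood modulo the corresponding higher-order terms; for odd $n$ it holds to all orders, as observed in the Remark following Theorem \ref{double}.
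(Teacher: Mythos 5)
Your argument is correct and coincides with the paper's implicit reasoning: the paper labels the statement ``an immediate consequence of Theorem~\ref{double}'' and gives no further proof, and the binomial expansion of $(s^2+r^2)^m$ with coefficient matching is exactly that immediate consequence. Your closing remark about the truncation conventions in even dimensions is a reasonable extra caution but not something the paper finds necessary to spell out.
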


For the following discussion, it will be convenient to introduce
some additional notational conventions. We shall write
Poincar\'e-Einstein metrics also in the form $r^{-2} (dr^2 +
E(g)(r))$ with $E(g)(0)=g$. This notation enables us to emphasize
the dependence on the metric $g$. As usual, we shall view a bilinear
form $B$ also as an endomorphism on one-forms. The latter acts on
the one-form $\omega = \sum_i \omega_i dx^i$ by
$$
\omega \mapsto \sum_{i,j} B_j^i \omega_i dx^j,
$$
where $B_j^i = \sum_k g^{ik} B_{kj}$. In particular, $E(g)(r)$ will
denote a one-parameter family of metrics with $E(g)(0)=g$ and a
one-parameter family of linear operators on one-forms with $E(g)(0)
= 1$.

Theorem \ref{double} immediately yields the following restriction
property. For similar results in a more general setting see Section
6 of \cite{AE}.

\begin{lemm}\label{PE-rest} As families of endomorphisms on
one-forms, we have
\begin{equation*}
E(dr^2\!+\!E(g)(r))(s)|_{r=0} = \begin{pmatrix} 1 & 0 \\ 0 & E(g)(s)
\end{pmatrix}.
\end{equation*}
\end{lemm}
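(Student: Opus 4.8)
The plan is to deduce this directly from Theorem \ref{double}, of which it is essentially a reformulation. First I would apply Theorem \ref{double} to the metric family $\bar{g}(r) = dr^2 + E(g)(r)$ on $X = M \times [0,\varepsilon)$. Writing the expansion $E(g)(r) = g_r = \sum_{k \ge 0} r^{2k} g_{(2k)}$, the theorem produces the explicit Poincar\'e-Einstein metric $g_{++} = s^{-2}(ds^2 + g(r,s))$ relative to $\bar{g}(r)$, where
$$
g(r,s) = dr^2 + \sum_{k \ge 0} (s^2\!+\!r^2)^k g_{(2k)}.
$$
By the very definition of the operator $E$ this means $E(\bar{g}(r))(s) = g(r,s)$.

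Next I would specialize to $r=0$. Since $\bar{g}(0) = dr^2 + g$ and $\sum_{k \ge 0} s^{2k} g_{(2k)} = g_s = E(g)(s)$, this yields
$$
E\!\left(dr^2\!+\!E(g)(r)\right)(s)\big|_{r=0} = g(0,s) = dr^2 + E(g)(s)
$$
as an identity of one-parameter families of metrics (a formal power series in $s$; in even dimension $n$ only the orders $\le n$ are canonically determined, and these are exactly the ones used throughout, the remaining terms being fixed by the convention of Section \ref{RF}, while for odd $n$ the metric $g_{++}$ is well-defined to all orders by Theorem \ref{double} and the subsequent remark).

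Finally I would translate this into the asserted identity of families of endomorphisms on one-forms, which are formed using the metric $\bar{g}(0) = dr^2 + g$. The metric $dr^2 + E(g)(s)$ has the same $dr$-component as $\bar{g}(0)$ and no mixed $dr$--$T^*M$ components, so the associated endomorphism respects the splitting of one-forms into $\langle dr \rangle \oplus T^*M$, acts as the identity on $\langle dr \rangle$, and restricts on $T^*M$ to $E(g)(s)$ regarded as an endomorphism using $g$. This is precisely the block matrix in the statement.

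I expect no genuine obstacle here: the substantive content is entirely contained in Theorem \ref{double}. The only points requiring a word of care are the bookkeeping of which base metric governs the endomorphism interpretation -- namely $\bar{g}(0) = dr^2 + g$ rather than $g$ itself -- and, in even dimensions, the finite order to which the coefficients $g_{(2k)}$ are canonically determined; both are routine.
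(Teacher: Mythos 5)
Your proof is correct and follows essentially the same route as the paper: apply Theorem \ref{double} to $\bar{g}(r)=dr^2+E(g)(r)$, restrict the resulting $g(r,s)$ to $r=0$, and read off the block-diagonal endomorphism with respect to the base metric $dr^2+g$. The additional remarks about even-dimensional truncation and the choice of background metric are sound but not strictly needed beyond what the paper already assumes.
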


\begin{proof} By Theorem \ref{double}, the restriction to $r=0$
of the metric
$$
E(dr^2\!+\!E(g)(r))(s)
$$
is given by
$$
dr^2 + \sum_{l \ge 0} s^{2l} g_{(2l)}.
$$
This shows that the corresponding operator (with respect to the
background metric $(dr^2\!+\!E(g)(r))|_{r=0} = dr^2 \!+\! g$) equals
$$
\begin{pmatrix} 1 & 0 \\ 0 & E(g)(s) \end{pmatrix}.
$$
The proof is complete.
\end{proof}

Next, we apply Lemma \ref{PE-rest} to prove the following technical
result.

\begin{lemm}\label{PE-d2} The following identities of endomorphisms
on one-forms are equivalent:
\begin{multline}\label{CR-GF-2}
\begin{pmatrix}
0 & 0 \\ 0 & - s^{-1}(\partial/\partial s) (E(g)(s)^{-1}) + 2
E(g)(s)^{-1} \Rho
\end{pmatrix} \\
= - (\partial^2/\partial r^2) |_{r=0} (E(dr^2\!+\!E(g)(r))(s)^{-1})
\end{multline}
and
\begin{multline}\label{CR-GF}
\begin{pmatrix}
0 & 0 \\ 0 & s^{-1} (\partial/\partial s)(E(g)(s)) + 2 \Rho E(g)(s)
\end{pmatrix} = (\partial^2/\partial r^2) |_{r=0}
(E(dr^2\!+\!E(g)(r))(s)).
\end{multline}
\end{lemm}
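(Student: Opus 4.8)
The plan is to exhibit the two sides of \eqref{CR-GF-2} as the images, under one and the same invertible linear transformation, of the two sides of \eqref{CR-GF}, so that the asserted equivalence reduces to bijectivity of that transformation. Throughout, abbreviate $A(r,s) = E(dr^2\!+\!E(g)(r))(s)$, viewed as a one-parameter family (in $s$) of endomorphisms on one-forms over $M \times (0,\varepsilon)_r$ with respect to the background metric $dr^2\!+\!g$, and write $B = B(s) = E(g)(s)$, a one-parameter family of endomorphisms on one-forms over $M$ with $B(0) = 1$.

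First I would collect two structural inputs. By Corollary \ref{basic-symm} the metric $dr^2 + \sum_{k,l\ge 0} r^{2k}s^{2l}g_{(2k,2l)}$ that represents $A$ is even in $r$, and so is its restriction $\bar g(r) = dr^2 + \sum_{k\ge 0} r^{2k}g_{(2k)}$ to $s=0$; hence $A(r,s)$ is even in $r$, so $\left.\partial_r A\right|_{r=0} = 0$. By Lemma \ref{PE-rest}, $A(0,s)$ is the block-diagonal endomorphism with blocks $1$ and $B(s)$, hence $A(0,s)^{-1}$ is block-diagonal with blocks $1$ and $B(s)^{-1}$.

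Next, differentiating $AA^{-1}=\id$ twice in $r$ and evaluating at $r=0$, where the cross term vanishes since $\left.\partial_r A\right|_{r=0}=0$, I obtain
\begin{equation*}
-\left.\partial_r^2 A^{-1}\right|_{r=0} \;=\; A(0,s)^{-1}\bigl(\left.\partial_r^2 A\right|_{r=0}\bigr)A(0,s)^{-1}.
\end{equation*}
Writing $\Phi(X) = A(0,s)^{-1}XA(0,s)^{-1}$, this says that the right-hand side of \eqref{CR-GF-2} is $\Phi$ applied to the right-hand side of \eqref{CR-GF}. It remains to check that $\Phi$ sends the left-hand side of \eqref{CR-GF} to that of \eqref{CR-GF-2}: since the former is block-diagonal with blocks $0$ and $s^{-1}\partial_s B + 2\Rho B$, applying $\Phi$ multiplies the nontrivial block by $B^{-1}$ on each side, producing $s^{-1}B^{-1}(\partial_s B)B^{-1} + 2B^{-1}\Rho$, and the identity $B^{-1}(\partial_s B)B^{-1} = -\partial_s(B^{-1})$ turns this into $-s^{-1}\partial_s(B^{-1}) + 2B^{-1}\Rho$, which is precisely the nontrivial block of the left-hand side of \eqref{CR-GF-2}. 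Since $\Phi$ is a linear bijection (conjugation by the invertible endomorphism $A(0,s)^{-1}$), the two sides of \eqref{CR-GF} agree if and only if the two sides of \eqref{CR-GF-2} do.

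All of this is routine linear algebra once the setup is in place; the one point that genuinely requires justification — and the step I would flag as the crux — is the vanishing $\left.\partial_r A\right|_{r=0}=0$, which is exactly what lets the second $r$-derivative of $A^{-1}$ be written as a clean conjugate of that of $A$. This rests on the parity in $r$ recorded in Corollary \ref{basic-symm}, itself an immediate consequence of the explicit form of the doubled Poincar\'e--Einstein metric in Theorem \ref{double}. Note also that $\Rho$ enters only as a fixed endomorphism, so no further geometric information about the Schouten tensor is needed for the equivalence.
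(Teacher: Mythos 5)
Your proof is correct and takes essentially the same route as the paper's: conjugate \eqref{CR-GF-2} by the block-diagonal operator $A(0,s)$ (equivalently, apply $\Phi$ to \eqref{CR-GF}), use $\partial_s(B^{-1}) = -B^{-1}(\partial_s B)B^{-1}$ on the nontrivial block, and use the corresponding second-derivative identity for $\partial_r^2 A^{-1}|_{r=0}$ to match the right-hand sides. The one place you are more explicit than the paper is the justification that the cross term $2\,\partial_r(A^{-1})\,\partial_r A$ drops out at $r=0$: the paper simply writes down the two-term identity as ``similar'' to the first-derivative one, whereas you correctly flag that it rests on $\partial_r A|_{r=0}=0$, which follows from the evenness in $r$ supplied by Theorem \ref{double} (Corollary \ref{basic-symm}). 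That is a worthwhile clarification of a step the paper leaves implicit, but it does not change the underlying argument.
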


\begin{proof} Composing \eqref{CR-GF-2} from left
and right with $\begin{pmatrix} 1 & 0 \\ 0 & E(g)(s)\end{pmatrix}$
shows that \eqref{CR-GF-2} is equivalent to the identity
\begin{multline*}
\begin{pmatrix}
0 & 0 \\
0 & - s^{-1} E(g)(s) \circ (\partial/\partial s) (E(g)(s)^{-1})
\circ E(g)(s) + 2 \Rho E(g)(s) \end{pmatrix} \\ = -
\begin{pmatrix} 1 & 0 \\ 0 & E(g)(s) \end{pmatrix} \circ
(\partial^2/\partial r^2) |_{r=0}(E(dr^2\!+\!E(g)(r))(s)^{-1}) \circ
\begin{pmatrix} 1 & 0 \\ 0 & E(g)(s) \end{pmatrix}.
\end{multline*}
Now $E(g)(s)^{-1} E(g)(s) = 1$ implies
$$
(\partial/\partial s) (E(g)(s)^{-1}) = - E(g)(s)^{-1} \circ
(\partial/\partial s) (E(g)(s)) \circ E(g)(s)^{-1}.
$$
Similarly, we have
\begin{multline*}
(\partial^2/\partial r^2)|_{r=0} (E(dr^2\!+\!E(g)(r))(s)^{-1})
\circ E(dr^2\!+\!E(g)(r))(s)|_{r=0} \\
+ E(dr^2\!+\!E(g)(r))(s)^{-1}|_{r=0} \circ (\partial^2/\partial r^2)
|_{r=0} (E(dr^2\!+\!E(g)(r))(s)) = 0.
\end{multline*}
Combining these observations with Lemma \ref{PE-rest} proves the
assertion. \end{proof}

Next, we use Theorem \ref{double} to establish the identity
\eqref{CR-GF}.

\begin{lemm}\label{base-ident} The identity \eqref{CR-GF} holds true.
\end{lemm}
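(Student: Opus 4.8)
The plan is to compute both sides of \eqref{CR-GF} explicitly. For the right-hand side I would read off the Poincar\'e-Einstein metric relative to $\bar g(r) = dr^2\!+\!g_r$ from Theorem \ref{double} (equivalently Corollary \ref{basic-symm}): writing this metric as $s^{-2}(ds^2 + g(r,s))$, one has
$$
g(r,s) = dr^2 + \sum_{k\ge0}(s^2\!+\!r^2)^k g_{(2k)}.
$$
Thus, as a bilinear form on $M \times (0,\varepsilon)$, $E(dr^2\!+\!E(g)(r))(s)$ is block diagonal with $dr^2$-block $1$ and $M$-block $\sum_{k\ge0}(s^2\!+\!r^2)^k g_{(2k)}$; as an endomorphism on one-forms it is this bilinear form with one index raised by the background metric $\bar g(r)$, i.e. composed on the left with $\bar g(r)^{-1}$, which is block diagonal with blocks $1$ and $(g_r)^{-1}$.

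The key observation is that all first $r$-derivatives vanish at $r=0$: since $\partial_r(s^2\!+\!r^2)^k|_{r=0}=0$ we get $\partial_r|_{r=0} g(r,s) = 0$, and evenness of $g_r$ in $r$ gives $\partial_r|_{r=0}(g_r)^{-1}=0$. Hence Leibniz applied to the product $\bar g(r)^{-1}g(r,s)$ has no cross term, and
$$
\partial_r^2|_{r=0}\, E(dr^2\!+\!E(g)(r))(s) = \big(\partial_r^2|_{r=0}\bar g(r)^{-1}\big)\,g(0,s) + \bar g(0)^{-1}\,\big(\partial_r^2|_{r=0} g(r,s)\big).
$$
I would then evaluate the two terms separately. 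Using $\partial_r^2(s^2\!+\!r^2)^k|_{r=0}=2ks^{2k-2}$, the second term equals, on the $M$-block, $g^{-1}\sum_{k\ge1}2ks^{2k-2}g_{(2k)} = g^{-1}s^{-1}\partial_s(g_s) = s^{-1}\partial_s E(g)(s)$, and it vanishes on all other blocks. For the first term I would expand $(g_r)^{-1} = g^{-1} - r^2 g^{-1}g_{(2)}g^{-1} + O(r^4)$ and use the standard identity $g_{(2)} = -\Rho$ to get $\partial_r^2|_{r=0}(g_r)^{-1} = 2g^{-1}\Rho g^{-1}$; composing with $g(0,s)$, whose $M$-block is $g_s$, this produces the endomorphism $2\Rho E(g)(s)$. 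Adding the two contributions yields the $M$-block $s^{-1}\partial_s E(g)(s) + 2\Rho E(g)(s)$ and zero elsewhere, which is exactly the right-hand side of \eqref{CR-GF}.

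I expect the only delicate point to be the bookkeeping forced by the endomorphism convention established in the proof of Lemma \ref{PE-rest}: it is essential that the index is raised with the \emph{$r$-dependent} background $\bar g(r) = dr^2 + g_r$ rather than with a fixed metric, because the Schouten term $2\Rho E(g)(s)$ arises entirely from differentiating $(g_r)^{-1}$ — the bilinear form $g(r,s)$ by itself contributes only the piece $s^{-1}\partial_s E(g)(s)$. One should also keep the usual caveat that in even dimension $n$ the series are truncated at the obstruction order, which affects none of the coefficients entering \eqref{CR-GF}; and $g_{(2)} = -\Rho$ is the single external input, but it is the well-known lowest-order consequence of the Einstein equation \eqref{Ein} already used implicitly throughout the paper.
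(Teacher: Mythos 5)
Your proof is correct and follows essentially the same route as the paper's: both Taylor-expand the right-hand side of \eqref{CR-GF} at $r=0$ via Leibniz (the cross term vanishing by evenness), compute $\partial_r^2|_{r=0}(g_r)^{-1}$ using $g_{(2)}=-\Rho$, and read the remaining piece directly from Theorem \ref{double}'s form of $g(r,s)$. The paper states the second ingredient as the coefficient identity $g_{(2,2k-2)}=k\,g_{(2k)}$ while you fold it into the generating function $(s^2+r^2)^k$, but the computation is identical.
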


\begin{proof} Obviously, it suffices to prove that
\begin{multline}\label{deq}
\frac{1}{s} \frac{\partial}{\partial s} \left( g^{ia} \left( \sum_{k
\ge 0} s^{2k} g_{(2k)} \right)_{aj}  \right) + 2 \Rho^i_b g^{bc}
\left( \sum_{k \ge 0} s^{2k} g_{(2k)} \right)_{cj} \\
= \frac{\partial^2}{\partial r^2}\big|_{r=0} \left( \left(\sum_{k\ge
0} r^{2k} g_{(2k)} \right)^{-1}_{ia} \left( \sum_{l,m\ge 0} r^{2l}
s^{2m} g_{(2l,2m)}\right)_{aj} \right).
\end{multline}
In order to prove \eqref{deq}, we compare the coefficients of
$s^{2k-2}$ for $k \ge 1$ on both sides. On the left-hand side, this
coefficient is given by
\begin{equation}\label{left-deq}
2k g^{ia} (g_{(2k)})_{aj} + 2 \Rho^{ic} (g_{(2k-2)})_{cj}.
\end{equation}
Since the right-hand side of \eqref{deq} equals
$$
\frac{\partial^2}{\partial r^2}\big|_{r=0} \left( \left( g + r^2
g_{(2)} + \cdots \right)^{-1}_{ia} \right) \sum_{m \ge 0} s^{2m}
(g_{(0,2m)})_{aj} + 2 g^{ia} \sum_{m \ge 0} s^{2m}
(g_{(2,2m)})_{aj},
$$
it follows that $s^{2k-2}$ contributes with the coefficient
\begin{equation}\label{right-deq}
-2 g^{ia}_{(2)} (g_{(2k-2)})_{aj} + 2 g^{ia} (g_{(2,2k-2)})_{aj}.
\end{equation}
But $g_{(2)} = -\Rho$ and
$$
g_{(2,2k-2)} = k g_{(2k)} \quad \mbox{(by Theorem \ref{double})}
$$
show that \eqref{left-deq} and \eqref{right-deq} coincide.
\end{proof}

Now we are ready to prove the following result.

\begin{thm}\label{reduce} The relation \eqref{CR-G} implies Theorem \ref{base-c}.
\end{thm}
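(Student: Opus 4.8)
The goal is to deduce Theorem \ref{base-c} — i.e. the formula $\sum_{N\ge1}\M_{2N}^0(g)\,\frac{1}{(N-1)!^2}(r^2/4)^{N-1}=-\delta(g_r^{-1}d)$ — from the commutator generating-function identity \eqref{CR-G} (equivalently \eqref{CR}). The strategy is to introduce the generating function $\G(g)(r)\st -\delta(g_r^{-1}d) = -\delta(E(g)(r)^{-1}d)$, the conjectured right-hand side, and to prove that the ``principal-part generating function'' $\H^0(g)(s)\st \sum_{N\ge1}\M_{2N}^0(g)\frac{1}{(N-1)!^2}(s^2/4)^{N-1}$ satisfies the \emph{same} recursion. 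Since both families reduce to $\M_2^0=\Delta=-\delta d$ at order $s^0$ and the coefficients are determined order by order, matching the recursions forces $\H^0=\G$.

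First I would extract from \eqref{CR-G}, after stripping off zeroth-order terms (the operation $L\mapsto L^0$), the recursion satisfied by the principal parts $\M_{2N}^0$. Passing to generating functions, \eqref{CR-G} becomes an identity expressing $\frac{1}{s}\partial_s\H^0(s)\,i^*$ in terms of $\M_4^0$, $i^*[\bar P_2,\bar\H^0(s)]$ and $[P_2,\H^0(s)]i^*$; here the key point is that the commutators $[\bar P_2,\bar\H^0(s)]$ and $[P_2,\H^0(s)]$ with the second-order operators $\bar\H^0,\H^0$ are \emph{first-order} operators, so the identity is an identity of operators of order $\le 2$ (in fact it determines a vector field). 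Next, I would compute the corresponding recursion for the candidate $\G(g)(s)=-\delta(E(g)(s)^{-1}d)$. The crucial input is Lemma \ref{base-ident} (the identity \eqref{CR-GF}), which, via Lemma \ref{PE-d2}, is equivalent to \eqref{CR-GF-2}: it describes $\partial_r^2|_{r=0}$ of the Poincar\'e–Einstein operator $E(dr^2+E(g)(r))(s)^{-1}$ attached to the compactified metric $\bar g=dr^2+g_r$, precisely in terms of $s^{-1}\partial_s(E(g)(s)^{-1})$ and $E(g)(s)^{-1}\Rho$. Since $\bar P_2 = \Delta_{\bar g} - \big(\frac{n+1}{2}-1\big)\J_{\bar g}$ and $\bar g$ has block form $dr^2+g_r$ with the Schouten tensor controlled by $\Rho$ along $r=0$, one computes $i^*[\bar P_2,\bar\G(s)]-[\,\cdot\,]$ and checks it produces $\frac{1}{s}\partial_s\G(s)i^*$ up to the $\M_4^0$-term; the $\partial_r^2$ appearing here is exactly what Lemma \ref{base-ident} evaluates.

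Concretely, the bookkeeping I expect is: the commutator $[P_2,\G(s)]$ equals $-[\delta d,\delta(E(g)(s)^{-1}d)]$, which after the standard Weitzenböck/commutator manipulation contributes curvature terms proportional to $\delta(\Rho\,E(g)(s)^{-1}d)$ (plus zeroth order, which we discard), and similarly on the $\bar g$-side, where the extra ``$r$-direction'' derivatives are converted by Lemma \ref{PE-rest} and Corollary \ref{basic-symm} into the $\partial_r^2|_{r=0}$ whose value is supplied by \eqref{CR-GF}. Matching term by term, the identity \eqref{CR-GF} is exactly the statement that $\frac{1}{s}\partial_s\big(-\delta(E(g)(s)^{-1}d)\big)i^* = \frac12\M_4^0 i^* + i^*[\bar P_2,\overline{(-\delta(E(\cdot)(s)^{-1}d))}] - [P_2,-\delta(E(g)(s)^{-1}d)]i^*$ modulo zeroth order; so $\G(s)$ solves the same ODE in $s$ as $\H^0(s)$, with the same initial value $\G(0)=-\delta d=\M_2^0=\H^0(0)$. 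Uniqueness of the solution (the recursion determines the coefficient of $s^{2N-2}$ from lower ones) then gives $\H^0(s)=\G(s)$, which is Theorem \ref{base-c}.

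The main obstacle I anticipate is the careful identification of the $\bar g$-side commutator $i^*[\bar P_2,\bar\H^0(s)]$ with the expression involving $\partial_r^2|_{r=0}$: one must track how differentiation in the $r$-variable of the ambient compactification interacts with the pull-back $i^*$ (only even powers of $r$ survive, by Corollary \ref{basic-symm}, so $i^*\partial_r(\cdot)=0$ and $i^*\partial_r^2(\cdot)$ is the relevant ``normal'' contribution), and must verify that the Schouten tensor of $\bar g=dr^2+g_r$ restricted to $M$ contributes the term $2E(g)(s)^{-1}\Rho$ exactly as in \eqref{CR-GF-2} — this uses $g_{(2)}=-\Rho$ and the block structure of $\bar g$. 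Once this dictionary between the operator-commutator language of \eqref{CR-G} and the tensorial identity \eqref{CR-GF} is set up precisely, the remainder is a routine comparison of power-series coefficients.
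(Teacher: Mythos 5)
Your proposal follows the same route as the paper: introduce the candidate generating function $\G(s)=-\delta(g_r^{-1}d)$, verify that it satisfies the recursion encoded in \eqref{CR-G} via Lemma \ref{PE-rest}, Lemma \ref{PE-d2} and the key identity \eqref{CR-GF} of Lemma \ref{base-ident}, and conclude by an order-by-order (inductive) comparison of Taylor coefficients with $\H^0(s)$, using that both start at $-\delta d$. The only small imprecision is that in the paper the tangential commutators $i^*[\Delta_{g_r},\bar\H^0(s)]-[\Delta_g,\H^0(s)]i^*$ cancel outright, and the curvature block $2E(g)(s)^{-1}\Rho$ in $T(s)$ arises from the $\partial_r^2$-part of $\Delta_{\bar g}$ in the decomposition \eqref{Laplace-bar}, not from a Weitzenb\"ock manipulation of $[\delta d,\,\G(s)]$ — but this does not change the structure of the argument.
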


\begin{proof} We start by determining the principal part
of the right-hand side of \eqref{CR-G} using Theorem \ref{base-c}.
In fact, the principal part of the right-hand side of \eqref{CR-G}
coincides with the principal part of
$$
-2 \delta (\Rho d) s^0 + i^* [\Delta_{\bar{g}},\bar{\H}^0(s)] -
[\Delta_g,\H^0(s)] i^*.
$$
Now we have
\begin{equation}\label{Laplace-bar}
\Delta_{\bar{g}} = \frac{\partial^2}{\partial r^2} + \frac{1}{2} \tr
\left(\frac{\partial g_r/\partial r}{g_r}\right)
\frac{\partial}{\partial r} + \Delta_{g_r}
\end{equation}
and
\begin{align*}
\bar{\H}^0(s) & = - \delta_{dr^2+g_r} (E(dr^2\!+\!E(g)(r))(s)^{-1}d)
\end{align*}
by Theorem \ref{base-c}. Lemma \ref{PE-rest} shows that
$$
i^* \delta_{dr^2+g_r} (E(dr^2\!+\!E(g)(r))(s)^{-1}d) = \delta_g
(E(g)(s)^{-1} d) i^*.
$$
Hence using $i^* \Delta_{g_r} = \Delta_g i^*$ it follows that
$$
i^* [\Delta_{g_r},\bar{\H}^0(s)] - [\Delta_g,\H^0(s)]i^*
$$
vanishes. Thus, it suffices to determine the principal part of
$$
i^* [(\partial^2/\partial r^2),
-\delta_{dr^2+g_r}(E(dr^2\!+\!E(g)(r))(s)^{-1}d)].
$$
Since partial derivatives commute, it can be written in the form
$$
-\delta_g (T(s) d)
$$
with
$$
T(s) = \begin{pmatrix} 0 & 0 \\ 0 & 2 E(g)(s)^{-1} \Rho
\end{pmatrix} + (\partial^2/\partial r^2) |_{r=0} (E(dr^2\!+\!E(g)(r))(s)^{-1}).
$$
But by Lemma \ref{PE-d2} and Lemma \ref{base-ident}, the operator
$T(s)$ coincides with
$$
\begin{pmatrix}
0 & 0 \\
0 & s^{-1} (\partial/\partial s) (E(g)(s)^{-1})
\end{pmatrix}.
$$
Now we use induction on $N$. Let $E_{2N}(g)(s)^{-1}$ denote the
coefficient of $s^{2N}$ in the Taylor expansion of $E(g)(s)^{-1}$.
Assume that the relation
$$
\M_{2N-2}^0(g) \frac{1}{(N\!-\!2)!^2} \frac{1}{2^{2N-4}} = -
\delta_g( E_{2N-4}(g)(s)^{-1} d), \; N \ge 3
$$
has been proved for all metrics $g$. Then the above arguments also
show that the principal part of
$$
(i^* [\bar{P}_2,\bar{\M}_{2N-2}] - [P_2,\M_{2N-2}] i^*)
\frac{1}{(N\!-\!2)!^2} \left( \frac{s^2}{4} \right)^{N-2}, \; N \ge
3
$$
is given by
$$
-(2N\!-\!2) \delta_g (E_{2N-2}(g)(s)^{-1} d) s^{2N-4}.
$$
Now \eqref{CR} implies that the operator
$$
\frac{1}{(N\!-\!1)!^2} \frac{1}{2^{2N-2}} \M_{2N}(g)
$$
has the principal part
\begin{multline*}
- \frac{1}{(N\!-\!1)!^2} \frac{1}{2^{2N-2}} (2N\!-\!2) (N\!-\!2)!^2
2^{2N-4} (2N\!-\!2) \, \delta_g (E_{2N-2}(g)(s)^{-1} d) \\ =
-\delta_g (E_{2N-2}(g)(s)^{-1} d).
\end{multline*}
This completes the induction. \end{proof}

Finally, we note that for locally conformally flat metrics, the
above discussion can be made completely explicit. In fact, in that
case, we have
\begin{equation}\label{c-flat}
E(s) = (1- s^2/2 \Rho)^2: \Omega^{1}(M) \to \Omega^1(M)
\end{equation}
and
\begin{equation}\label{CF-double}
E(dr^2\!+\!E(g)(r))(s) = \begin{pmatrix} 1 & 0 \\ 0 & (1-s^2/2 \Rho
(1-r^2/2 \Rho)^{-1})^2 \end{pmatrix}.
\end{equation}
In order to prove \eqref{CF-double}, we apply \eqref{c-flat} to the
metric $\bar{g}$. In fact, we recall that $\bar{g}$ is locally
conformally flat, too. We find
$$
E(dr^2\!+\!E(g)(r))(s) = (1-s^2/2 \bar{\Rho})^2.
$$
But the relation
\begin{equation}\label{H2}
\bar{\Rho} = \Rho(dr^2\!+\!g_r) = - \frac{1}{2r} (\partial/\partial
r)(g_r)
\end{equation}
(see \cite{juhl-book}, Lemma 6.11.2 and Appendix \ref{more-double},
Lemma \ref{schouten-bar}) implies
$$
\bar{\Rho} = \Rho (1-r^2/2 \Rho)^{-1}.
$$
This yields \eqref{CF-double}. Now by \eqref{c-flat}, the left-hand
side of \eqref{CR-GF} equals
$$
-2 \Rho(1-s^2/2\Rho) + 2 \Rho (1-s^2/2 \Rho)^2 = -s^2 \Rho^2
(1-s^2/2 \Rho).
$$
But an easy calculation using \eqref{CF-double} shows that this
result coincides with the right-hand side of \eqref{CR-GF}.

\section{The zeroth-order term of $\M_{2N}$}\label{structure}

The inversion formula in Theorem \ref{duality} proves the first part
of Theorem \ref{main-P}. In the present section, we complete the
proof of the second part of Theorem \ref{main-P}.

First, we recall the definition \eqref{def-H} of the generating
function $\H(r)$ and restate the second part of Theorem \ref{main-P}
in the following form.

\begin{thm}\label{GF} We have,
\begin{equation}\label{GF-mu}
\H(g)(r) = -\delta (g_r^{-1} d) - \frac{\left(\partial^2/\partial
r^2 - (n\!-\!1) r^{-1} \partial/\partial r - \delta (g_r^{-1}d)
\right) (w(r))}{w(r)}.
\end{equation}
\end{thm}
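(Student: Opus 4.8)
The plan is to reduce \eqref{GF-mu} to a statement about the zeroth-order terms $\M_{2N}(1)$ alone, and then to prove that statement by a recursion built on the $Q$-curvature recursion \eqref{URQ}. First I would observe that \eqref{GF-mu} is equivalent to \eqref{GF-form}: both right-hand sides have $-\delta(g_r^{-1}d)$ as their second-order (principal) part, which by Theorem \ref{base-c} is also the principal part of $\H(g)(r)$, and the equality of the two remaining zeroth-order (multiplication) parts is the pointwise identity
\[
\frac{\left(\partial_r^2 - (n\!-\!1)\,r^{-1}\partial_r - \delta(g_r^{-1}d)\right)(w)}{w} = r^{-2}\left(\Delta_{g_+}(\log w) - |d\log w|_{g_+}^2\right),
\]
which follows by inserting the coordinate expression $\Delta_{g_+} = r^2(\partial_r^2 - (n\!-\!1)r^{-1}\partial_r + (v'(r)/v(r))\,\partial_r + \Delta_{g_r})$ (from the conformal transformation law for the Laplacian) and combining it with the elementary relations $-\delta_g(g_r^{-1}d_M(\cdot)) = \Delta_{g_r}(\cdot) - \langle d_M\log v, d_M(\cdot)\rangle_{g_r}$, $v = w^2$ and $\Delta\log f = f^{-1}\Delta f - |d\log f|^2$. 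Hence it suffices to prove \eqref{GF-form}; and, stripping off the principal part via Theorem \ref{base-c}, this reduces to identifying the generating function $\H(g)(r)(1) = \sum_{N\ge1}\M_{2N}(1)\frac{1}{(N\!-\!1)!^2}(r^2/4)^{N-1}$ of the zeroth-order terms. The form I would actually work with is the equivalent "wave equation"
\[
\left(\partial_r^2 - (n\!-\!1)\,r^{-1}\partial_r + \H(g)(r)\right)(w) = 0
\]
as an identity of formal power series in $r$ (with the usual truncation conventions in even dimensions).

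Expanding this last equation and reading off the coefficient of $r^{2N-2}$, it becomes (the $L=0$ term producing $\M_{2N}(1)$) the recursion
\[
\M_{2N}(1) = -(N\!-\!1)!^2\,2^{2N-2}\left[\,2N(2N\!-\!n)\,w_{2N} + \sum_{\substack{M+L=N\\ M,\,L\ge 1}}\frac{\M_{2M}(w_{2L})}{(M\!-\!1)!^2\,2^{2M-2}}\,\right],
\]
which I would prove by induction on $N$. The essential ingredients are: the defining relation $P_{2a}(1) = (-1)^a(\f\!-\!a)Q_{2a}$ from \eqref{Q-def}; the inversion formula $P_{2N} = \sum_{|I|=N}n_I\M_{2I}$ of Theorem \ref{duality}; and, as the nontrivial arithmetic input, the universal $Q$-curvature recursion \eqref{URQ}. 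Concretely, one expands $\M_{2N}(1) = \sum_{|I|=N}m_I\,P_{2I}(1)$, splits off the last entry $a = I_r$ of each composition, rewrites $P_{2I}(1) = (-1)^a(\f\!-\!a)\,P_{2(I_1,\dots,I_{r-1})}(Q_{2a})$, and reorganizes the resulting $m$-weighted composition sums so that \eqref{URQ} applies termwise. In practice this is carried out by first establishing structural recursive formulas for the curvature quantities $\M_{2N}(1)$ themselves — this is the role of Theorem \ref{basic} and Theorem \ref{basic-2} — and then combining them with the principal-part description of Theorem \ref{base-c}.

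I expect the main obstacle to be precisely this combinatorial reorganization: converting composition sums weighted by the $m_I$ (which govern $\M_{2N}(1)$ through $\M_{2N} = \sum_{|I|=N}m_IP_{2I}$) into the recursion above, while keeping track of the extraneous factors $\f\!-\!a$ produced by \eqref{Q-def} and of the interplay with the inversion formula. The identities needed are of the same flavour as Lemma \ref{lem:1} and as the combinatorial identities \eqref{comb-first}--\eqref{comb-last} used in the proof of Theorem \ref{AB}, but they have to be marshalled so that the $Q$-curvature weights $m_{(I,a)}$ of \eqref{URQ} convert into the $\M$-language with the correct numerical factors $(N\!-\!1)!^2\,2^{2N-2}$ and the combinatorial coefficients $n_{(I,a)}$ of Theorem \ref{main-Q}.

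Finally, the $Q$-curvature argument does not reach the supercritical orders $2N > n$ for locally conformally flat metrics, for which \eqref{GF-form} is also claimed. In that case I would argue exactly as in the proof of Theorem \ref{base-c}: evaluate the restriction property (Theorem \ref{rest-prop}) and the commutator relations (Theorem \ref{base-CR}) — equivalently the generating-function identity \eqref{CR-G} — directly on $w = \sqrt{v}$, using the explicit flat data $E(s) = (1\!-\!s^2/2\,\Rho)^2$, the doubling formula \eqref{CF-double}, and the relation $\bar\Rho = \Rho(1\!-\!r^2/2\,\Rho)^{-1}$ from \eqref{H2} to set up and solve the induction. This alternative route — the content of Theorem \ref{mu-second} — makes no reference to $Q$-curvature and is in the same spirit as the derivation of the leading terms of $\M_{2N}$.
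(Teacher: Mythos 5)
Your proposal reproduces the paper's own argument essentially step by step: the reduction of \eqref{GF-mu} to the wave-type identity \eqref{key} via Lemma \ref{ct-simpel} and Theorem \ref{base-c}, the reformulation of \eqref{key} as the recursion for $\M_{2N}(1)$ that is precisely Theorem \ref{basic-2}, its derivation from the universal $Q$-curvature recursion \eqref{URQ} together with the inversion formula (the combinatorial content of Theorem \ref{basic}), and the separate treatment of the supercritical locally conformally flat case via Theorem \ref{mu-second}. The recursion you extract by reading off the coefficient of $r^{2N-2}$ is the correct one, and the route you outline is the route the paper actually takes.
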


The equivalence of the second part of Theorem \ref{main-P} and
Theorem \ref{GF} is a consequence of the following result.

\begin{lemm}\label{ct-simpel}
\begin{multline}\label{ct-simplify}
(\partial^2/\partial r^2 - (n\!-\!1) r^{-1}
\partial/\partial r - \delta(g_r^{-1}d))(w(r))/w(r) \\
= r^{-2} \left( \Delta_{g_+}(\log w) - |d \log w|^2_{g_+} \right).
\end{multline}
\end{lemm}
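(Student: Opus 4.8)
The plan is to identify the second-order operator
\[
\mathcal{L} \st \partial^2/\partial r^2 - (n\!-\!1) r^{-1}\,\partial/\partial r - \delta(g_r^{-1}d)
\]
from the left-hand side of \eqref{ct-simplify} as a first-order perturbation of the Laplacian $\Delta_{g_+}$ of $g_+ = r^{-2}(dr^2\!+\!g_r)$; concretely, I would prove the operator identity
\begin{equation}\label{key-L}
r^2\,\mathcal{L}(u) = \Delta_{g_+}(u) - 2\langle d\log w, du\rangle_{g_+}
\end{equation}
on functions on $X = M \times (0,\varepsilon)$, where $d$ on the right is the full differential on $X$. Granting \eqref{key-L}, the lemma follows by setting $u=w$: from $dw = w\,d\log w$ we get $\langle d\log w, dw\rangle_{g_+} = w\,|d\log w|^2_{g_+}$, and from $\Delta_{g_+}(e^f)=e^f(\Delta_{g_+}(f)+|df|^2_{g_+})$ with $f=\log w$ we get $\Delta_{g_+}(w)=w(\Delta_{g_+}(\log w)+|d\log w|^2_{g_+})$; substituting both into \eqref{key-L} and dividing by $w$ yields $\mathcal{L}(w)/w = r^{-2}(\Delta_{g_+}(\log w)-|d\log w|^2_{g_+})$, which is \eqref{ct-simplify}.

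To establish \eqref{key-L} I would start from the conformal transformation law for the Laplacian. On the $(n\!+\!1)$-dimensional space $X$ one has $g_+ = e^{2\varphi}\bar{g}$ with $\bar{g}=dr^2\!+\!g_r$ and $\varphi = -\log r$, so that
\[
\Delta_{g_+}(u) = r^2\big(\Delta_{\bar{g}}(u) + (n\!-\!1)\langle d\varphi, du\rangle_{\bar{g}}\big) = r^2\big(\Delta_{\bar{g}}(u) - (n\!-\!1)r^{-1}\,\partial u/\partial r\big).
\]
Then I would expand $\Delta_{\bar{g}}$ by \eqref{Laplace-bar} and rewrite its two non-$\partial^2_r$ terms. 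Because $v(r)=\sqrt{\det g_r/\det g}$ and $w=\sqrt{v}$, one has $\det g_r = w^4\det g$, hence $\tfrac12\tr\!\big((\partial g_r/\partial r)/g_r\big) = \tfrac12\,\partial\log\det g_r/\partial r = 2\,\partial\log w/\partial r$. Comparing the divergence operators of $g_r$ and $g$ via $dvol(g_r)=v(r)\,dvol(g)=w^2\,dvol(g)$, together with the pointwise fact that raising the index of the one-form $g_r^{-1}d_M u$ with $g$ produces the same vector field as raising the index of $d_M u$ with $g_r$, gives $\delta_{g_r}(d_M u)=\delta_g(g_r^{-1}d_M u)-2\langle d_M\log w, d_M u\rangle_{g_r}$, i.e.\ $\Delta_{g_r}(u) = -\delta_g(g_r^{-1}d_M u)+2\langle d_M\log w, d_M u\rangle_{g_r}$.

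Assembling these computations gives $r^{-2}\Delta_{g_+}(u) = \mathcal{L}(u) + 2(\partial\log w/\partial r)(\partial u/\partial r) + 2\langle d_M\log w, d_M u\rangle_{g_r}$. Since $g_+^{-1}=r^2\bar{g}^{-1}$ and $\bar{g}^{-1}$ is block-diagonal (coefficient $1$ in the $dr$-direction, equal to $g_r^{-1}$ on $T^*M$), the gradient term splits as $\langle d\log w, du\rangle_{g_+} = r^2\big((\partial\log w/\partial r)(\partial u/\partial r)+\langle d_M\log w, d_M u\rangle_{g_r}\big)$, which turns the last display into \eqref{key-L}. I do not expect a genuine obstacle here: the argument is purely local and never uses the Einstein equation, so \eqref{key-L} — and hence the lemma — holds as an identity of formal power series in $r$ (truncated at the appropriate order for even $n$). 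The only points needing care are the bookkeeping in the two volume-factor identities and the coordinate check of the index-raising claim above, both of which are routine.
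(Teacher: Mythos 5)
Your proof is correct and follows essentially the same route as the paper: both identify $-\delta_g(g_r^{-1}d)$ with $\Delta_{g_r}$ up to the gradient term $2\langle d_M\log w,\cdot\rangle_{g_r}$ by comparing the $\sqrt{\det g_r}$- and $\sqrt{\det g}$-weighted divergences, both rewrite $\Delta_{g_+}$ in terms of $\partial_r$, the volume-derivative term and $\Delta_{g_r}$, and both finish with the identity $\Delta_{g_+}(\log w)=\Delta_{g_+}(w)/w-|d\log w|^2_{g_+}$. The only cosmetic differences are that you obtain the $\Delta_{g_+}$ formula via the conformal rescaling $g_+=r^{-2}\bar g$ and you first isolate the general operator identity $r^2\mathcal{L}(u)=\Delta_{g_+}(u)-2\langle d\log w,du\rangle_{g_+}$ before specializing to $u=w$, whereas the paper works directly with $u=w$ and carries $\log v=2\log w$ rather than $\log w$ through the computation.
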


\begin{proof} On the one hand, the formulas
\begin{equation}\label{laplace}
\Delta_{g_r} = \frac{1}{\sqrt{\det g_r}} \sum_{j,k}
\frac{\partial}{\partial x_j}\left( \sqrt{\det g_r} (g_r^{-1})_{jk}
\frac{\partial }{\partial x_k}\right)
\end{equation}
and
\begin{equation}\label{principal}
-\delta_g (g_r^{-1} d) = \frac{1}{\sqrt{\det g}} \sum_{j,k}
\frac{\partial}{\partial x_j}\left( \sqrt{\det g} (g_r^{-1})_{jk}
\frac{\partial }{\partial x_k}\right)
\end{equation}
imply that
$$
\Delta_{g_r} = - \delta_g (g_r^{-1} d) + g_r(d \log v,d).
$$
Here $v$ is regarded as a function on $M$. On the other hand, a
straightforward calculation shows that the Laplacian of the metric
$g_+ = r^{-2}(dr^2 + g_r)$ is given by
$$
\Delta_{g_+} = r^2 \frac{\partial^2}{\partial r^2} - (n\!-\!1)r
\frac{\partial }{\partial r} + r^2 \frac{\partial}{\partial r} \log
v(r) \frac{\partial}{\partial r} + r^2 \Delta_{g_r}.
$$
Hence
$$
\frac{\partial^2 w}{\partial r^2} - \frac{n\!-\!1}{r} \frac{\partial
w}{\partial r} - \delta_g (g_r^{-1} d w)  = \frac{1}{r^2} \left(
\Delta_{g_+} (w) - g_+(d \log v,d w) \right),
$$
where $v$ and $w$ are regarded as functions on $X$. Therefore, we
find that the left-hand side of \eqref{ct-simplify} equals
$$
\frac{1}{r^2} \left(\frac{\Delta_{g_+}(w)}{w} - 2 \left| \frac{d
w}{w} \right|^2_{g_+}\right).
$$
Now a simple calculation shows that
$$
\Delta_{g_+}(\log w) = \frac{\Delta_{g_+}(w)}{w}-|d\log w|_{g_+}^2.
$$
This proves the assertion. \end{proof}

Theorem \ref{GF} contains the following two claims.
\begin{itemize}
\item [(1)] The principal part of $\H(r)$ is given by the self-adjoint
operator
$$
- \delta (g_r^{-1} d).
$$
This assertion was established in Theorem \ref{base-c}.
\item [(2)] The identity
\begin{equation}\label{key}
\left(\partial^2/\partial r^2 - \frac{n\!-\!1}{r} \,
\partial/\partial r + \H(r) \right)(w(r)) = 0.
\end{equation}
\end{itemize}

Conversely, by the formal self-adjointness of all $\M_{2N}$, these
two properties suffice to prove Theorem \ref{GF}. In fact, the
property (1) shows that the operator $\H(r) + \delta(g_r^{-1} d)$ is
self-adjoint (here $r$ is regarded as a parameter) of first-order.
Hence this operator is only scalar, i.e.,
$$
\H(r) = - \delta(g_r^{-1}d) + \kappa
$$
for some $\kappa \in C^\infty(M)$. But $\kappa$ is determined by the
action of $\H(r)$ on the function $w(r)$ using \eqref{key}. This
yields the assertion \eqref{GF-mu}. Thus, using Theorem
\ref{base-c}, it only remains to prove the relation \eqref{key}.

Before we present the proof of \eqref{key}, we discuss two important
special cases of Theorem \ref{GF}.

\begin{ex}\label{sphere} Let $M=\S^n$. Then
$$
w(r) = (1-r^2/4)^\f.
$$
In particular, $w(r)$ is constant on $\S^n$. A calculation shows
that
$$
\left(\frac{\partial^2}{\partial r^2} - (n\!-\!1) r^{-1}
\frac{\partial}{\partial r}\right)(w(r)) = \f \left(\f\!-\!1\right)
(1-r^2/4)^{\f-2}.
$$
Thus,
$$
\H(r)(1) = -\f \left(\f-1\right) (1-r^2/4)^{-2}.
$$
The latter formula implies
$$
\mu_{2N} = - \f \left(\f-1\right) N!(N-1)!.
$$
This fits with the summation formula
\begin{equation}\label{sphere-sum}
\M_{2N} = N!(N\!-\!1)! P_2
\end{equation}
(proved in \cite{juhl-power}, Theorem 6.1).
\end{ex}

The following example leaves the framework of Riemannian metrics. It
shows, in a special case, that the literal extension of Theorem
\ref{GF} remains valid for metrics with general signature (see the
comments in Subsection \ref{sign}).

\begin{ex}\label{pseudo} More generally, let $M=\S^{q,p}$ be the
conformally flat pseudo-spheres
$$
\S^{(q,p)} = \S^q \times \S^p, \; p \ge 1, \; q \ge 1
$$
with the metrics $g_{\S^q} - g_{\S^p}$ given by the round metrics on
the factors. Then
$$
w(r) = (1-r^2/4)^{q/2} (1+r^2/4)^{p/2}.
$$
In particular, $w(r)$ is constant on $\S^{q,p}$. Now a calculation
shows that
\begin{multline*}
\left(\frac{\partial^2}{\partial r^2} - (q\!+\!p\!-\!1) \frac{1}{r}
\frac{\partial}{\partial r}\right)(w(r))/w(r) \\ = q/2
\left(q/2\!-\!1\right) (1-r^2/4)^{-2} - p/2 \left(p/2\!-\!1\right)
(1+r^2/4)^{-2}.
\end{multline*}
The resulting formula for $\H(r)(1)$ is equivalent to
$$
-\tr \left( \begin{pmatrix} q/2-1 & 0 \\ 0 & p/2-1\end{pmatrix} \Rho
(1\!-\!r^2/2\Rho)^{-2}\right)
$$
(see Corollary 7.2 in \cite{juhl-power}). The latter result is
confirmed by the summation formula for $\M_{2N}$ established in
\cite{JK}.
\end{ex}

In the following subsection, we prove \eqref{key} for general
metrics subject to the usual conditions for the parameters $n$ and
$N$. The case of general $N \ge 1$ for locally conformally flat
metrics in even dimensions will be discussed in Subsection
\ref{proof-2}.

\subsection{General metrics}\label{proof-1}

The following proof of Theorem \ref{GF}, i.e., of the relation
\eqref{key}, rests on the fact that the zeroth-order terms $\mu_{2N}
= \M_{2N}(1)$ are determined by recursive formulas in terms of
compositions of lower-order operators $\M_{2M}$ acting on
lower-order $\mu_{2M}$. In the critical case $2N=n$, such a
recursive formulas follows from the identity
$$
P_n = \sum_{|I|=\f} n_I \M_{2I}.
$$
In fact, $P_n (1) = 0$ implies the recursive formula
$$
0 = \sum_{|I|=\f} n_I \M_{2I}(1) = \mu_n + \sum_{|I| + a = \f, \; a
\ne \f} n_{(I,a)} \M_{2I} (\mu_{2a})
$$
for the curvature quantity $\mu_n$. These recursive formulas admit
generalizations to general dimensions. The first three low-order
examples will also be useful for later explicit calculations.

\begin{ex}\label{E1} For $n \ge 2$,
$$
\mu_2 = 4 \left(\f-1\right) w_2.
$$
\end{ex}

\begin{ex}\label{E2} For $n \ge 3$,
\begin{equation}\label{E2-f}
\left(\f-1\right) \mu_4 + \M_2 (\mu_2) = 2! 2^4 \left(\f-1\right)
\left(\f-2\right) w_4.
\end{equation}
\end{ex}

\begin{proof} The definition $\M_4 = P_4 - P_2^2$ gives
$$
\mu_4 = \left(\f\!-\!2\right) Q_4 + \left(\f-1\right) P_2(Q_2).
$$
Hence the universal recursive formula
$$
Q_4 = -P_2(Q_2) + 32 w_4
$$
(see Example \ref{Q-4}) implies
\begin{align*}
\mu_4 & = -\left(\f-2\right) P_2(Q_2) + 32 \left(\f-2\right) w_4 +
\left(\f-1\right) P_2(Q_2) \\
& = P_2(Q_2) + 32 \left(\f-2\right) w_4.
\end{align*}
It follows that
$$
\left(\f-1\right) \mu_4 + \M_2 (\mu_2) = 32 \left(\f-1\right)
\left(\f-2\right) w_4
$$
using
$$
\mu_2 = - \left(\f-1\right) Q_2.
$$
The proof is complete. \end{proof}

But Example \ref{E1} shows that Eq.~\eqref{E2-f} is equivalent to
\begin{equation}\label{mu4-case}
\M_4(1) + 4 \M_2 (w_2) = 32 \left(\f-2\right) w_4.
\end{equation}

\begin{ex}\label{E3} For even $n \ge 6$ and odd $n \ge 3$,
\begin{multline}\label{mu-6}
\frac{1}{2} \left(\f-1\right)\left(\f-2\right) \mu_6 +
\left(\f-1\right) \M_2 (\mu_4) \\ + 2 \left(\f-2\right) \M_4 (\mu_2)
+ \M_2^2(\mu_2) \\ = 3! 2^6
\left(\f-1\right)\left(\f-2\right)\left(\f-3\right) w_6.
\end{multline}
\end{ex}

\begin{proof} First, we use the universal recursive formula
\begin{equation}\label{Q6}
Q_6 = -2P_2(Q_4) + 2 P_4(Q_2) - 3 P_2^2(Q_2) - 3!2!2^6 w_6
\end{equation}
to prove that
\begin{equation}\label{mu6}
\mu_6 = -2 P_2 (Q_4) + 4P_4 (Q_2) - 6P_2^2 (Q_2) + \left(\f-3\right)
3!2!2^6 w_6.
\end{equation}
In fact, the definition $\M_6 = P_6 - 2P_2P_4 - 2P_4P_2 + 3P_2^3$
gives
$$
\mu_6 = -\left(\f-3\right) Q_6 - 2 \left(\f-2\right) P_2(Q_4) + 2
\left(\f-1\right) P_4 (Q_2) - 3 \left(\f-1\right) P_2^2 (Q_2).
$$
Combining this relation with \eqref{Q6} yields \eqref{mu6}. Now
applying the definition $\M_4 = P_4 - P_2^2$ for $u=1$ shows that
$$
\mu_4 = \left(\f-2\right) Q_4 + \left(\f-1\right) P_2(Q_2).
$$
It follows that the left-hand side of \eqref{mu-6} equals the sum of
$$
3! 2^6 \left(\f-1\right)\left(\f-2\right)\left(\f-3\right) w_6
$$
and
\begin{multline*}
\frac{1}{2} \left(\f-1\right)\left(\f-2\right) (-2 P_2 (Q_4) + 4
P_4(Q_2) - 6 P_2^2 (Q_2)) \\ - 2 \left(\f - 2\right) \left(\f -
1\right) (P_4 - P_2^2) (Q_2) \\ + \left(\f - 1\right) P_2
\left(\left(\f-2\right) Q_4 + \left(\f-1\right) P_2 (Q_2)\right) -
\left(\f - 1\right) P_2^2 (Q_2).
\end{multline*}
It is easy to verify that the latter sum vanishes. The proof is
complete. \end{proof}

By Examples \ref{E1}--\ref{E2}, the identity \eqref{mu6} is
equivalent to
\begin{equation}\label{mu6-case}
\M_6(1) + 4^2 \M_4 (w_2) + 4^2 2^2 \M_2 (w_4) = 3! 2! 2^6
\left(\f-3\right) w_6.
\end{equation}

The proof of the formula in Example \ref{E3} combines a special case
of the universal recursive formula for $Q$-curvature \eqref{URQ}
with only {\em algebraic} calculations, i.e., no further properties
of GJMS-operators and $Q$-curvatures were used. Next, we shall apply
a similar method to prove the following general result.

\begin{thm}\label{basic} For $1 \le N \le \f$ (if $n$ is even) and
all $N \ge 1$ (if $n$ is odd),
\begin{multline}\label{sum}
\sum_{|I|=N} n_I \frac{(N-(I_2+\cdots+I_r)) \cdots
(N-I_r)}{(\f-(I_2+\cdots+I_r)) \cdots (\f-I_r)} \M_{2I}(1) \\[-3mm] =
\left(\f\!-\!N\right) (N\!-\!1)! N! 2^{2N} w_{2N}.
\end{multline}
\end{thm}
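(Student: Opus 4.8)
The plan is to prove \eqref{sum} by the purely algebraic method already used in Examples \ref{E1}--\ref{E3}: the only analytic input is the universal recursive formula \eqref{URQ}, everything else being combinatorics of the coefficients $m_I$ and $n_I$. First I would rewrite \eqref{URQ}. Since $Q_{2a}=(-1)^a(\f\!-\!a)^{-1}P_{2a}(1)$, and hence $(-1)^aP_{2I}(Q_{2a})=(\f\!-\!a)^{-1}P_{2(I,a)}(1)$, and since every composition $J$ of $N$ is uniquely of the form $(I,a)$ with $a=J_r$ its last entry and $I$ the (possibly trivial) composition of the remaining entries, \eqref{URQ} becomes
\[
\sum_{|J|=N}\frac{m_J}{\f\!-\!J_r}\,P_{2J}(1)=N!(N\!-\!1)!\,2^{2N}w_{2N}.
\]
In particular the right-hand side of \eqref{sum} equals $(\f\!-\!N)\sum_{|J|=N}(\f\!-\!J_r)^{-1}m_J P_{2J}(1)$. (These are identities of rational functions of $\f$; the critical case $2N=n$ results by specialisation.)

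Next I would expand the left-hand side of \eqref{sum}. Inserting $\M_{2M}=\sum_{|L|=M}m_L P_{2L}$ and composing, $\M_{2I}(1)$ becomes a linear combination of the quantities $P_{2K}(1)$, $|K|=N$. By \eqref{eq:n_I} the weight attached to $\M_{2I}$ in \eqref{sum} equals $n_I\prod_{j<r}(I_1+\dots+I_j)(\f\!-\!N\!+\!I_1+\dots+I_j)^{-1}$ for $I=(I_1,\dots,I_r)$. Fixing a composition $K=(K_1,\dots,K_s)$, the coefficient of $P_{2K}(1)$ in the left-hand side of \eqref{sum} has the same combinatorial form as \eqref{n-m}, but with $n_I$ replaced by this $\f$-dependent weight; concretely it is obtained from \eqref{n-m} by attaching, to the term indexed by the set of cut positions $A\subseteq[s-1]$, the extra factor $\prod_{a\in A}(K_1+\dots+K_a)(\f\!-\!N\!+\!K_1+\dots+K_a)^{-1}$, which depends on $A$ alone. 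Carrying this factor through the simplification of \eqref{n-m} into \eqref{eq:3}, and using that the ratio of $m_K$ to the overall prefactor of \eqref{eq:3} equals $N$, the identity \eqref{sum} reduces to the following claim: for every composition $K=(K_1,\dots,K_s)$ of $N$ and every $c$,
\[
\sum_{A\subseteq[s-1]}(-1)^{|A|}\Big({\textstyle\prod}\,(\text{block sums of }A)\Big)\frac{\prod_{a\in A}(K_a\!+\!K_{a+1})}{\prod_{a\in A}(c\!+\!K_1\!+\!\dots\!+\!K_a)(K_{a+1}\!+\!\dots\!+\!K_s)}=\frac{Nc}{\,c\!+\!K_1\!+\!\dots\!+\!K_{s-1}\,},
\]
which we use with $c=\f\!-\!N$; the left-hand side is indexed exactly as in Lemma \ref{lem:1}, the only change being that every prefix sum occurring in the denominator is shifted by $c$. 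For $c=0$ this is precisely the vanishing exploited in the proof of Theorem \ref{duality}, and for $s=1,2,3$ it reproduces Examples \ref{E1}--\ref{E3}.

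Finally I would establish the displayed combinatorial identity. It has the same shape as Lemma \ref{lem:1}, and I expect the same induction on $s$ to work: split the sum according to the largest element $t$ of $A$ (the empty set contributing the single block sum $N$), absorb the suffix $K_{t+1}\!+\!\dots\!+\!K_s$ into $K_t$ to bring the inner sum into the form of the inductive hypothesis, and collapse the result by a routine computation; as in Lemma \ref{lem:1} it is convenient to prove a slightly more general identity carrying auxiliary parameters so that the recursion closes (alternatively, both sides are rational functions of $c$ with controlled poles, so one may compare residues). Tracking the shift $c$ alongside the unshifted numerator factors $K_a+K_{a+1}$ and the unshifted block sums is the one genuinely delicate point, and this combinatorial step is the main obstacle. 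Once it is in place, \eqref{sum} follows by matching the coefficient of each $P_{2K}(1)$ on the two sides.
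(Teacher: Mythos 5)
Your route is correct but genuinely different from the paper's, and the reduction step is clean. Both proofs start from the universal recursion \eqref{URQ}, rewritten exactly as you do: $\sum_{|J|=N}m_J(\f-J_r)^{-1}P_{2J}(1)=N!(N-1)!\,2^{2N}w_{2N}$. The paper then clears denominators, introduces the polynomials $S_{2N}(T)$ and $\tilde S_{2N}(T)$ obtained by treating $\f$ as a formal variable $T$, and proves by an induction on $N$ --- which requires the auxiliary factorization relations \eqref{factor}, \eqref{tilde-factor} with the coefficients $b_J$ --- that the degree-$(N\!-\!1)$ polynomial \eqref{poly-red} has $N$ zeros and hence vanishes identically; the claim drops out at $T=\f$. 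You instead fix a composition $K$, compare the coefficient of $P_{2K}(1)$ on the two sides, and, reusing the term-by-term simplification \eqref{n-m}$\to$\eqref{eq:3} from the proof of Theorem \ref{duality} together with the extra weight $\prod_{a\in A}(K_1+\dots+K_a)/(c+K_1+\dots+K_a)$, $c=\f-N$, reduce \eqref{sum} to a one-parameter deformation of the sum in Lemma \ref{lem:1}:
\[
\sum_{A\subseteq[s-1]}(-1)^{|A|}\,(\text{block sums})\,
\frac{\prod_{a\in A}(K_a+K_{a+1})}
{\prod_{a\in A}(c+K_1+\dots+K_a)(K_{a+1}+\dots+K_s)}
=\frac{Nc}{c+K_1+\dots+K_{s-1}}.
\]
This identity is correct (I verified it for $s\le 3$) and your derivation of it from \eqref{sum} is exact. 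The one step you leave open --- and rightly flag as the delicate point --- is the induction on $s$. It does close, but, as in Lemma \ref{lem:1}, only after enlarging the statement by two auxiliary parameters; a closed form that works is, for $s\ge 2$,
\[
g(s;c,X,Y)=
\frac{c(K_2+\dots+K_s)(K_1+\dots+K_s+X)-(K_1+\dots+K_{s-1})\left(Y(K_s+X)+X(K_1+\dots+K_{s-1})\right)}
{(c+K_1+\dots+K_{s-1})(K_2+\dots+K_s)},
\]
which specializes to the right-hand side of Lemma \ref{lem:1} at $c=0$ and to $Nc/(c+K_1+\dots+K_{s-1})$ at $X=Y=0$. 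In exchange for one more rational identity of the same Krattenthaler type, your argument avoids introducing the factorization relations \eqref{factor}, \eqref{tilde-factor} and the bookkeeping around $\tilde S_{2N}(T)$, and makes Theorem \ref{basic} look like a transparent one-parameter deformation of the inversion formula. The paper's proof, by contrast, is organized around a degree/zero-locus count in $T$ that fits the residue-family framework and displays the factorization structure explicitly, which is useful elsewhere in the paper; the two proofs illuminate the result from genuinely different angles.
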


For even $n$ and $N = \f$, the relation \eqref{sum} is to be
understood as
$$
\sum_{|I|=N} n_I \M_{2I}(1) = 0.
$$
This identity is an immediate consequence of the inversion formula
(Theorem \ref{duality}) and $P_n (1) = 0$.

\begin{proof} We write any quantity $\M_{2I}(1)$ on the left-hand
side as a linear combination of GJMS-operators acting on
$Q$-curvatures by using the definition \eqref{M-def}. In particular,
we have the contribution
$$
\M_{2N}(1) = \sum_{|I|=N} m_I P_{2I}(1).
$$
Among other terms, this sum contains the term
$$
P_{2N}(1) = (-1)^N \left(\f\!-\!N\right) Q_{2N}.
$$
We apply the universal recursive formula \eqref{URQ} for
$Q$-curvature to this term and simplify the resulting formula for
$\M_{2N}(1)$. We find
\begin{equation}\label{first}
\M_{2N}(1) = D_{2N} + \left(\f\!-\!N\right) N!(N\!-\!1)! 2^{2N}
w_{2N},
\end{equation}
where
$$
D_{2N} \st \sum_{|I|+a=N} m_{(I,a)} (-1)^a (N\!-\!a) P_{2I}(Q_{2a}).
$$
The relation \eqref{first} generalizes the obvious relation
$$
\M_n(1) = \sum_{|I|+a=\f} m_{(I,a)} (-1)^a \left(\f\!-\!a\right)
P_{2I}(Q_{2a}).
$$
The formula \eqref{first} yields the right-hand side of \eqref{sum}
together with additional terms given by GJMS-operators acting on
$Q$-curvatures. It remains to prove that, in the sum on the
left-hand side of \eqref{sum}, all these terms vanish. For this
purpose, we define the polynomial
\begin{equation}\label{S-2N}
S_{2N}(T) \st \prod_{k=1}^{N-1} (T\!-\!k) \sum_{|I|=N} n_I
\frac{(N-(I_2+\cdots+I_r)) \cdots (N-I_r)}{(T-(I_2+\cdots+I_r))
\cdots (T-I_r)} \M_{2I}(1)
\end{equation}
of degree $N\!-\!1$. Then we have
$$
S_{2N}\left(\f\right) = \prod_{k=1}^{N-1} \left(\f\!-\!k\right)
\times \mbox{left-hand side of \eqref{sum}.}
$$
We also let $\tilde{S}_{2N}(T)$ be the polynomial of degree $N$
which arises from $S_{2N}(T)$ by replacing in each term the most
right factor $P_{2M}(1)$ by $(-1)^M (T\!-\!M) Q_{2M}$. Then
$\tilde{S}_{2N}(\f) = S_{2N}(\f)$. In these terms, Eq.~\eqref{first}
implies that
\begin{multline*}
S_{2N}\left(\f\right) = N!(N\!-\!1)!2^{2N} \prod_{k=1}^N
\left(\f\!-\!k\right) w_{2N} \\ + \prod_{k=1}^{N-1}
\left(\f\!-\!k\right) (D_{2N} - \M_{2N}(1)) +
\tilde{S}_{2N}\left(\f\right),
\end{multline*}
i.e.,
\begin{multline}\label{sum-simp}
S_{2N}\left(\f\right) = N!(N\!-\!1)!2^{2N} \prod_{k=1}^N
\left(\f\!-\!k\right) w_{2N} \\
+ \left(\prod_{k=1}^{N-1} \left(T\!-\!k\right) (D_{2N} -
\M_{2N}(T,1)) + \tilde{S}_{2N}(T) \right) \Big|_{T=\f},
\end{multline}
where
\begin{equation}\label{M-pol-gen}
\M_{2N}(T,1) \st \sum_{|I|+a=N} m_{(I,a)} (-1)^a (T\!-\!a)
P_{2I}(Q_{2a}).
\end{equation}
Now we consider the leading coefficient of the degree $N$ polynomial
\begin{equation}\label{poly-red}
T \mapsto \prod_{k=1}^{N-1} \left(T\!-\!k\right) (D_{2N} -
\M_{2N}(T,1)) + \tilde{S}_{2N}(T).
\end{equation}
By \eqref{S-2N} and \eqref{M-pol-gen}, this coefficient equals
$$
-\sum_{|I|+a=N} m_{(I,a)} (-1)^a P_{2I}(Q_{2a}) + \sum_{|I|+a=N}
m_{(I,a)} (-1)^a P_{2I}(Q_{2a}) = 0;
$$
note that the sum $\tilde{S}_{2N}(T)$ contributes to this
coefficient only through the trivial composition $I=(N)$, i.e.,
through $\M_{2N}(1)$. Thus, the polynomial \eqref{poly-red} has
degree $N\!-\!1$. By definition of $D_{2N}$, we have
\begin{equation}\label{zero-N}
D_{2N} - \M_{2N}(N,1) = 0.
\end{equation}
Combining this property with
\begin{equation}\label{zero-all}
\tilde{S}_{2N}(1) = \cdots = \tilde{S}_{2N}(N) = 0,
\end{equation}
we conclude that the polynomial \eqref{poly-red} has zeros at
$T=1,\dots,N$. Thus, it vanishes identically. In particular, it
vanishes at $T=\f$. Hence \eqref{sum-simp} simplifies to
$$
S_{2N}\left(\f\right) = N!(N\!-\!1)!2^{2N} \prod_{k=1}^N
\left(\f\!-\!k\right) w_{2N}.
$$
Therefore, in order to complete the proof, it only remains to prove
\eqref{zero-all}. We shall establish \eqref{zero-all} by induction
on $N$. The main observation is that the polynomials $S_{2N}(T)$
satisfy the factorization relations
\begin{equation}\label{factor}
S_{2N}(N\!-\!a) = \left( \sum_{|J|=a} b_J \, n_{(J,N-a)} \, \M_{2J}
\right) S_{2N-2a}(N\!-\!a)
\end{equation}
for $a=1,\dots,N-1$, where
$$
b_J \st (-1)^{r-a} a! \left( \frac{J_1 (J_1+J_2) \cdots
(J_1+\cdots+J_{r-1})}{(J_2+\cdots+J_r)(J_3+\cdots+J_r)\cdots J_r}
\right)
$$
for $J = (J_1,\dots,J_r)$ with $|J|=a$.

The simplest of these relations states that
$$
S_{2N}(N\!-\!1) = (N\!-\!1) \M_2 S_{2N-2}(N\!-\!1).
$$

In order to prove \eqref{factor}, it suffices to prove that, for all
compositions $J$ and $K$ with $|J| = a$ and $|K|=N-a$, the
coefficients of $\M_{2J} \M_{2K}(1)$ on both sides of \eqref{factor}
coincide. On the left-hand side, the coefficient for
$J=(J_1,\dots,J_r)$ and $K=(K_1,\dots,K_s)$ is given by the product
of
\begin{equation*}
\left(\frac{(N\!-\!(J_2+\dots+K_s)) \cdots (N\!-\!(K_1+\cdots+K_s))
\cdots (N\!-\!K_s)}{(T\!-\!(J_2+\dots+K_s)) \cdots
(T\!-\!(K_1+\cdots+K_s)) \cdots (T\!-\!K_s)} \prod_{k=1}^{N-1}
(T\!-\!k) \right)\Big|_{T=N-a}
\end{equation*}
and $n_{(J,K)}$. By $|J|=a$ and $|K|=N-a$, this product equals
\begin{multline*}
J_1 \cdots (J_1+\cdots+J_{r-1}) \, a (a+K_1) \cdots (a+K_1+\cdots+K_{s-1}) \\
\times (-1)^{r-a} \frac {(N\!-\!a\!-\!1)!(a\!-\!1)!}
{(J_2+\cdots+J_r) \cdots J_r \, K_1 \cdots (K_1 + \cdots + K_{s-1})}
\, n_{(J,K)}.
\end{multline*}
On the other hand, on the right-hand side of \eqref{factor}, the
quantity $\M_{2J} \M_{2K}(1)$ contributes with the coefficient
$$
(-1)^{r-a} a! (N\!-\!1\!-\!a)! \left( \frac{J_1 \cdots
(J_1+\cdots+J_{r-1})} {(J_2+\cdots+J_r) \cdots J_r} \right)
n_{(J,N-a)} n_K.
$$
It follows that the equality of both expressions is equivalent to
the identity
\begin{equation}\label{c-reduced}
\left(\frac{(a+K_1) \cdots (a+K_1+\cdots+K_{s-1})}{K_1 \cdots
(K_1+\cdots+K_{s-1})} \right) n_{(J,K)} = n_{(J,N-a)} n_K.
\end{equation}
But the latter relation actually follows by a straightforward
calculation using \eqref{eq:n_I}. We omit the details. This proves
\eqref{factor}. An analogous proof yields the factorization
relations
\begin{equation}\label{tilde-factor}
\tilde{S}_{2N}(N\!-\!a) = \left( \sum_{|J|=a} b_J \, n_{(J,N-a)} \,
\M_{2J} \right) \tilde{S}_{2N-2a}(N\!-\!a)
\end{equation}
for $a=1,\dots,N\!-\!1$. Now assume that we have proved
\eqref{zero-all} for the polynomials $\tilde{S}_2, \dots,
\tilde{S}_{2N-2}$. Thus, in particular, we assume that
$$
\tilde{S}_2(1) = \cdots = \tilde{S}_{2N-2}(N\!-\!1) = 0.
$$
Then the identities \eqref{tilde-factor} (for $a=1,\dots,N\!-\!1$)
imply that
$$
\tilde{S}_{2N}(1) = \cdots = \tilde{S}_{2N}(N\!-\!1) = 0.
$$
Finally, the inversion theorem (and its proof) show that in the sum
$\tilde{S}_{2N}(N)$ the only non-trivial contribution can come from
the operator $P_{2N}$. But this contribution equals
$$
(N\!-\!1)! (-1)^N (T\!-\!N) Q_{2N} |_{T=N} = 0.
$$
Hence $\tilde{S}_{2N}(N) = 0$. This proves \eqref{zero-all} for
$\tilde{S}_{2N}$. This completes the induction and the proof.
\end{proof}

\begin{rem}\label{tilde-6} The proof of Theorem \ref{basic} utilizes
the vanishing results \eqref{zero-all}. We illustrate these
results by a direct verification of the relations
\begin{equation}\label{zero-6}
\tilde{S}_6(1) = \tilde{S}_6(2) = \tilde{S}_6(3) = 0.
\end{equation}
By \eqref{S-2N}, the quadratic polynomial $S_6(T)$ is given by
\begin{multline*}
(T\!-\!1)(T\!-\!2) \\ \times \left(\M_6 (1) + \frac{4}{T\!-\!1} \M_4
\M_2 (1) + \frac{2}{T\!-\!2} \M_2 \M_4 (1) +
\frac{2}{(T\!-\!1)(T\!-\!2)} \M_2^3(1)\right).
\end{multline*}
Now the definitions of the operators $\M_2$, $\M_4$ and $\M_6$ yield
\begin{multline*}
S_6 (T) = (T\!-\!1)(T\!-\!2) \big[ P_6(1) - 2 P_4 P_2 (1) - 2 P_2
P_4 (1) + 3 P_2^3(1) \\ + \frac{4}{T\!-\!1} (P_4 \!-\! P_2^2) P_2(1)
+ \frac{2}{T\!-\!2} P_2 (P_4 \!-\! P_2^2) (1) +
\frac{2}{(T\!-\!1)(T\!-\!2)} P_2^3(1) \big].
\end{multline*}
Thus, for the cubic polynomial $\tilde{S}_6(T)$ we obtain
\begin{multline*}
\tilde{S}_6(T) = (T\!-\!1)(T\!-\!2) \big[-(T\!-\!3) Q_6 + 2
(T\!-\!1) P_4 (Q_2) - 2 (T\!-\!2) P_2 (Q_4) - 3 (T\!-\!1) P_2^2 (Q_2) \\
- 4 (P_4 \!-\! P_2^2) (Q_2) + \frac{2}{T\!-\!2} P_2 ((T\!-\!2) Q_4
\!+\! (T\!-\!1) P_2 (Q_2)) - \frac{2}{T\!-\!2} P_2^2 (Q_2) \big].
\end{multline*}
The claim \eqref{zero-6} is a direct consequence of this formula.
\end{rem}

Now Theorem \ref{basic} yields the following important formula for
the quantities $\mu_{2N} = \M_{2N}(1)$.

\begin{thm}\label{basic-2} For $1 \le N \le \f$ (if $n$ is even) and
all $N \ge 1$ (if $n$ is odd),
\begin{equation}\label{reduced-w}
\sum_{k=0}^{N-1} \left(2^k \frac{(N\!-\!1)!}{(N\!-\!1\!-\!k)!}
\right)^2 \M_{2N-2k} (w_{2k}) = \left(\f\!-\!N\right) (N\!-\!1)! N!
2^{2N} w_{2N}.
\end{equation}
\end{thm}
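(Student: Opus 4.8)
The plan is to deduce Theorem~\ref{basic-2} from Theorem~\ref{basic} by reorganizing the sum in \eqref{sum} according to the first entry of each composition. Write $c^{(N)}_I = n_I\,\frac{(N-(I_2+\cdots+I_r))\cdots(N-I_r)}{(\f-(I_2+\cdots+I_r))\cdots(\f-I_r)}$ for the coefficient of $\M_{2I}(1)$ on the left-hand side of \eqref{sum}. The trivial composition $(N)$ contributes $\M_{2N}(1)=\M_{2N}(w_0)$ with coefficient $c^{(N)}_{(N)}=1$, while every non-trivial composition has the form $(j,I')$ with $1\le j\le N-1$ and $|I'|=N-j$, so that $\M_{2(j,I')}(1)=\M_{2j}\bigl(\M_{2I'}(1)\bigr)$. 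Collecting the inner compositions and using linearity of the operator $\M_{2j}$, the left-hand side of \eqref{sum} becomes $\M_{2N}(w_0)+\sum_{j=1}^{N-1}\M_{2j}\bigl(\sum_{|I'|=N-j}c^{(N)}_{(j,I')}\M_{2I'}(1)\bigr)$.

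The technical core will be the coefficient factorization
\[
c^{(N)}_{(j,I')}=\frac{(N-1)!^2}{(j-1)!^2\,(N-j)!\,(N-j-1)!\,(\f-N+j)}\; c^{(N-j)}_{I'},
\]
valid for every composition $I'$ of $N-j$, where $c^{(M)}_{I'}$ denotes the coefficient of $\M_{2I'}(1)$ on the left of \eqref{sum} with $N$ replaced by $M=|I'|$. I would prove this using the reduced expression \eqref{eq:n_I} for $n_I$: the $I'$-dependent products of partial sums produced by $n_{(j,I')}/n_{I'}$ cancel exactly against those produced by the ratio $\prod_S(N-S)/\prod_S((N-j)-S)$, in which $S$ runs over the tail sums $I'_2+\cdots+I'_s,\dots,I'_s$ of $I'$ and one uses the elementary identity $(N-j)-(I'_m+\cdots+I'_s)=I'_1+\cdots+I'_{m-1}$, while the factors $\prod_S(\f-S)$ telescope to the single surviving factor $\f-(N-j)$. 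This is the step I expect to be the main obstacle, not for any conceptual difficulty but for the bookkeeping: the computation has to be organised so that the mutual cancellation of all the rational factors is transparent. An analogous factorization for the related polynomials $S_{2N}(T)$ and $\tilde{S}_{2N}(T)$ already appears in the proof of Theorem~\ref{basic}, and can serve as a template.

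With the factorization in hand, Theorem~\ref{basic} applied at level $N-j$ — legitimate since $1\le N-j\le N-1<N\le\f$ when $n$ is even, and unconditionally when $n$ is odd — evaluates the inner sum as $(\f-N+j)(N-j-1)!(N-j)!\,2^{2N-2j}\,w_{2N-2j}$. Substituting and using $(N-j)!=(N-j)(N-j-1)!$, the left-hand side of \eqref{sum} collapses to $\M_{2N}(w_0)+\sum_{j=1}^{N-1}\frac{(N-1)!^2\,2^{2N-2j}}{(j-1)!^2}\,\M_{2j}(w_{2N-2j})$, and the substitution $k=N-j$ rewrites this as $\sum_{k=0}^{N-1}\bigl(2^k(N-1)!/(N-1-k)!\bigr)^2\M_{2N-2k}(w_{2k})$, i.e.\ the left-hand side of \eqref{reduced-w}. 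Since the same expression equals $(\f-N)(N-1)!\,N!\,2^{2N}w_{2N}$ by Theorem~\ref{basic}, this is exactly \eqref{reduced-w}. As consistency checks one can recover the cases $N=1,2,3$ from Example~\ref{E1}, \eqref{mu4-case} and \eqref{mu6-case}; and one may note that multiplying \eqref{reduced-w} by $\tfrac{1}{(N-1)!^2}(r^2/4)^{N-1}$ and summing over $N$ yields precisely the relation \eqref{key} needed for Theorem~\ref{GF}.
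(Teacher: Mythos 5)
Your proposal is correct and follows essentially the same route as the paper's own proof. The coefficient factorization you identify as the technical core — expressing $c^{(N)}_{(j,I')}$ as $\frac{(N-1)!^2}{(j-1)!^2(N-j)!(N-j-1)!(\f-N+j)}\,c^{(N-j)}_{I'}$ — is exactly the paper's relation \eqref{red-m} (with $a\leftrightarrow j$, $J\leftrightarrow I'$), and both verifications use \eqref{eq:n_I} with the same telescoping cancellations; the only cosmetic difference is that you collapse the sum \eqref{sum} by first entry to produce the left side of \eqref{reduced-w}, whereas the paper expands both sides of \eqref{reduced-w} via Theorem~\ref{basic} and matches coefficients of $\M_{2a}\M_{2J}(1)$, which is the same argument read in the opposite direction.
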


\begin{proof} The left-hand side of \eqref{reduced-w} equals
$$
\sum_{a=1}^N \left(2^{N-a} \frac{(N\!-\!1)!}{(a\!-\!1)!} \right)^2
\M_{2a} (w_{2N-2a}).
$$
Now we use Theorem \ref{basic} (for $w_2,\dots,w_{2N-2}$) to rewrite
this sum as
\begin{multline}\label{start}
\M_{2N}(1) + \sum_{a=1}^{N-1} \left(2^{N-a}
\frac{(N\!-\!1)!}{(a\!-\!1)!} \right)^2
\frac{1}{(\f\!-\!(N\!-\!a)) (N\!-\!a\!-\!1)!(N\!-\!a)! 2^{2N-2a}} \\
\times \sum_{|J|=N-a} n_J
\frac{(N\!-\!a\!-\!(J_2\!+\!\cdots\!+\!J_{r-1}))\cdots(N\!-\!a\!-\!J_{r-1})}
{(\f\!-\!(J_2\!+\!\cdots\!+\!J_{r-1}))\cdots(\f\!-\!J_{r-1})}
\M_{2a} \M_{2J}(1).
\end{multline}
On the other hand, Theorem \ref{basic} (for $w_{2N}$) implies that
the right-hand side of \eqref{reduced-w} equals
\begin{multline*}
\sum_{|I|=N} n_I \frac{(N\!-\!(I_2\!+\!\cdots\!+\!I_r))\cdots
(N\!-\!I_r)} {(\f\!-\!(I_2\!+\!\cdots\!+\!I_r))\cdots(\f\!-\!I_r)}
\M_{2I}(1) \\
= \M_{2N}(1) + \sum_{a=1}^{N-1} \sum_{|J|=N-a} n_{(a,J)}
\frac{(N\!-\!(J_1\!+\!\cdots\!+\!J_{r-1})) \cdots (N\!-\!J_{r-1})}
{(\f\!-\!(J_1\!+\!\cdots\!+\!J_{r-1})) \cdots (\f\!-\!J_{r-1})}
\M_{2a} \M_{2J}(1).
\end{multline*}
Thus, the assertion follows from the relation
\begin{multline}\label{red-m}
\left( \frac{(N\!-\!1)!}{(a\!-\!1)!} \right)^2
\frac{1}{(\f\!-\!(N\!-\!a))(N\!-\!a\!-\!1)!(N\!-\!a)!} \\
\times \frac{(N\!-\!a\!-\!(J_2\!+\!\cdots\!+\!J_{r-1})) \cdots
(N\!-\!a\!-\!J_{r-1})}{(\f\!-\!(J_2\!+\!\cdots\!+\!J_{r-1})) \cdots
(\f\!-\!J_{r-1})} n_J \\
= \frac{(N\!-\!(J_1\!+\!\cdots\!+\!J_{r-1})) \cdots (N\!-\!J_{r-1})}
{(\f\!-\!(J_1\!+\!\cdots\!+\!J_{r-1})) \cdots (\f\!-\!J_{r-1})}
n_{(a,J)}
\end{multline}
for all $a=1,\dots,N\!-\!1$ and all compositions $J$ of size
$N\!-\!a$. For the proof of \eqref{red-m}, we make the identity
fully explicit by using \eqref{eq:n_I}. Indeed, for $J =
(J_1,\dots,J_{r-1})$ with $|J|= N-a$, Eq.~\eqref{red-m} is
equivalent to
\begin{multline}\label{m-final}
\left(\frac{(N\!-\!a\!-\!1)! (N\!-\!1)!}{(a\!-\!1)!} \right)^2 \,
\prod_{k=1}^{r-1} \frac{1}{(J_k\!-\!1)!^2} \\
\times \left(\frac{1}{J_1(J_1+J_2)\cdots(J_1+\cdots+J_{r-2})}\right)
\left(\frac{1}{(J_2+\cdots+J_{r-1})\cdots J_{r-1}}\right) \\
\times \frac{1}{(\f\!-\!(N\!-\!a)) (N\!-\!a\!-\!1)!(N\!-\!a)!}
\frac{(N\!-\!a\!-\!(J_2\!+\!\cdots\!+\!J_{r-1}))\cdots(N\!-\!a\!-\!J_{r-1})}
{(\f\!-\!(J_2\!+\!\cdots\!+\!J_{r-1}))\cdots(\f\!-\!J_{r-1})} \\
= (N\!-\!1)!^2 \frac{1}{(a\!-\!1)!^2} \, \prod_{k=1}^{r-1}
\frac{1}{(J_k\!-\!1)!^2} \\
\times \frac{1}{a(a+J_1)\cdots(a+(J_1+\cdots+J_{r-2}))}
\frac{1}{(J_1+\cdots+J_{r-1})(J_2+\cdots+J_{r-1})\cdots J_{r-1}} \\
\times \frac{(N\!-\!(J_1\!+\!\cdots\!+\!J_{r-1})) \cdots
(N\!-\!J_{r-1})} {(\f\!-\!(J_1\!+\!\cdots\!+\!J_{r-1}))
(\f\!-\!(J_2\!+\!\cdots\!+\!J_{r-1})) \cdots (\f\!-\!J_{r-1})}.
\end{multline}
Now using $\f\!-\!(J_1\!+\!\cdots\!+\!J_{r-1}) = \f\!-\!(N\!-\!a)$,
$$
\frac{(N\!-\!a\!-\!(J_2\!+\!\cdots\!+\!J_{r-1}))\cdots(N\!-\!a\!-\!J_{r-1})}
{J_1(J_1+J_2)\cdots(J_1+\cdots+J_{r-2})} = 1
$$
and
$$
\frac{(N\!-\!(J_1\!+\!\cdots\!+\!J_{r-1})) \cdots (N\!-\!J_{r-1})}
{a(a+J_1)\cdots(a+(J_1+\cdots+J_{r-2}))} = 1,
$$
it is easy to verify \eqref{m-final}. The proof is complete.
\end{proof}

Now we are ready to prove Theorem \ref{GF}. \label{basic-argument}

\begin{proof}[Proof of Theorem \ref{GF}] By definition of $\H(r)$,
we have
\begin{align*}
\H(r)(w(r)) & = \left( \sum_{N \ge 0} \M_{2N+2} \frac{1}{N!^2}
\left( \frac{r^2}{4} \right)^N  \right) \left( \sum_{M \ge 0}
w_{2M} r^{2M} \right) \\
& = \sum_{N,M \ge 0}  2^{2M} \frac{1}{N!^2} \M_{2N+2}(w_{2M})
\left(\frac{r^2}{4}\right)^{N+M} \\
& = \sum_{T \ge 1} \left( \sum_{S=0}^{T-1} 2^{2S}
\frac{1}{(T\!-\!1\!-\!S)!^2} \M_{2T-2S}(w_{2S}) \right)
\left(\frac{r^2}{4}\right)^{T-1} \\
& = \sum_{T \ge 1} 2T (n\!-\!2T) \, w_{2T} \, r^{2T-2}.
\end{align*}
In the last step we have used Theorem \ref{basic-2}. But the latter
sum equals
$$
\left(-\frac{\partial^2}{\partial r^2} + (n\!-\!1) \frac{1}{r}
\frac{\partial}{\partial r}\right) (w(r)).
$$
This proves \eqref{key}. \end{proof}

\begin{rem} For the round sphere $\S^n$, the summation formula
(Theorem 6.1 in \cite{juhl-power})
$$
\M_{2N} = N!(N\!-\!1)! P_2
$$
and
$$
w(r) = \left(1\!-\!\frac{r^2}{4}\right)^\f
$$
show that Theorem \ref{basic-2} is equivalent to
\begin{multline}\label{sum-round}
-\sum_{k=0}^{N-1} \left(2^k \binom{N\!-\!1}{k} k!\right)^2
\f\left(\f\!-\!1\right) (N\!-\!k)!(N\!-\!k\!-\!1)! \\
\times ((-1)^k \binom{\f}{k} 2^{-2k}) = (-1)^N
\left(\f\!-\!N\right)(N\!-\!1)!N! \binom{\f}{N}.
\end{multline}
The latter relation is a consequence of the well-known summation
formula
$$
\sum_{k=0}^{N-1} \binom{\f}{k} (-1)^k = (-1)^{N-1} \binom{\f-1}{N-1}
$$
(\cite{cm}, Eq.~(5.16)). Similarly, for an Einstein metric $g$ with
$\Ric(g) = \lambda g$, we have
$$
\Rho = \frac{\lambda}{2(n\!-\!1)} g = \frac{c}{2} g \quad
\mbox{with} \quad c = \frac{\scal}{n(n\!-\!1)}
$$
and
$$
w(r) = \det \left(1\!-\!\frac{r^2}{2} \Rho\right) =
\left(1\!-\!\frac{r^2}{4} c\right)^\f.
$$
Moreover, by a rescaling argument, we find
$$
\M_{2N} = N! (N\!-\!1)! c^{N-1} P_2,
$$
and it follows that the identity \eqref{sum-round} also confirms
Theorem \ref{basic-2} for Einstein metrics.
\end{rem}

For locally conformally flat metrics, Theorem \ref{main-P} extends
to super-critical GJMS-operators in even dimensions. For the proof
of this claim it would be enough to extend the relations
\eqref{reduced-w}. However, the above proof of \eqref{reduced-w}
does not extend since the formulation of Theorem \ref{basic} does
not make sense if $2N > n$. Therefore, in the following subsection,
we shall provide an alternative proof of \eqref{reduced-w} for
locally conformally flat metrics which extends to $2N > n$.

\subsection{Conformally flat metrics}\label{proof-2}

In the present section, we extend Theorem \ref{basic-2} for locally
conformally flat metrics to all $N \ge 1$. We shall derive this fact
from the restriction property and the commutator relations of the
operators $\M_{2N}$. We recall that these relations are not
obstructed for large $N$.

We start by proving the following result.

\begin{lemm}\label{comm-simple} For $N \ge 3$, we have
\begin{equation}\label{M-rec}
\M_{2N} i^* = (2N\!-\!2) \left( i^* \left[\frac{\partial^2}{\partial
r^2},\bar{\M}_{2N-2} \right] - 2 [\M_{2N-2},w_2]i^* \right).
\end{equation}
\end{lemm}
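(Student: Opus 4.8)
The plan is to deduce \eqref{M-rec} directly from the commutator relations of Theorem \ref{base-CR}, namely
$$
\M_{2N}\, i^* = (2N\!-\!2)\bigl( i^*[\bar{\M}_2,\bar{\M}_{2N-2}] - [\M_2,\M_{2N-2}]\, i^*\bigr), \qquad N \ge 3,
$$
by trading $\bar{\M}_2$ for $\partial^2/\partial r^2$ at the cost of the extra commutator $-2[\M_{2N-2},w_2]\,i^*$. Since $\bar{\M}_2 = \bar{P}_2 = P_2(dr^2\!+\!g_r)$ is the Yamabe operator of the $(n\!+\!1)$-dimensional metric $\bar{g}=dr^2\!+\!g_r$, formula \eqref{Laplace-bar} for $\Delta_{\bar g}$, together with $\tfrac12\tr\!\bigl((\partial g_r/\partial r)/g_r\bigr) = \partial_r\log v(r) = 2w'(r)/w(r)$, gives
$$
\bar{\M}_2 = \frac{\partial^2}{\partial r^2} + \frac{2w'}{w}\,\frac{\partial}{\partial r} + \Delta_{g_r} - \frac{n\!-\!1}{2}\,\J_{\bar g}.
$$
First I would split $i^*[\bar{\M}_2,\bar{\M}_{2N-2}]$ into the four commutators coming from these four summands, using throughout the restriction property $i^*\bar{\M}_{2N-2} = \M_{2N-2}\,i^*$ of Theorem \ref{rest-prop} (valid because $N\!-\!1\ge 2$).

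The $\partial^2/\partial r^2$-summand contributes $i^*[\partial^2/\partial r^2,\bar{\M}_{2N-2}]$, which is exactly the term appearing in \eqref{M-rec}. For the first-order summand, $w(r) = 1 + w_2 r^2 + \cdots$ has $w'(0)=0$, so $(2w'/w)\partial_r u$ vanishes at $r=0$ for every $u$; hence $i^*\bigl((2w'/w)\partial_r(\bar{\M}_{2N-2}u)\bigr)=0$ and $i^*\bar{\M}_{2N-2}\bigl((2w'/w)\partial_r u\bigr)=\M_{2N-2}\bigl(i^*((2w'/w)\partial_r u)\bigr)=0$, so this commutator drops out. For the $\Delta_{g_r}$-summand, $\Delta_{g_r}$ involves no $\partial_r$, so $i^*\Delta_{g_r}=\Delta_g\,i^*$; combined with the restriction property this gives $i^*[\Delta_{g_r},\bar{\M}_{2N-2}]=[\Delta_g,\M_{2N-2}]\,i^*$. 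For the zeroth-order summand I would evaluate $\J_{\bar g}$ via \eqref{H2}: $\J_{\bar g} = \tr_{\bar g}\bar\Rho = -\tfrac1{2r}\tr(g_r^{-1}\partial_r g_r) = -2w'/(rw)$, so $i^*\J_{\bar g} = -4w_2$; the restriction property then yields $i^*[-\tfrac{n-1}{2}\J_{\bar g},\bar{\M}_{2N-2}] = -2(n\!-\!1)[\M_{2N-2},w_2]\,i^*$. Summing, $i^*[\bar{\M}_2,\bar{\M}_{2N-2}] = i^*[\partial^2/\partial r^2,\bar{\M}_{2N-2}] + [\Delta_g,\M_{2N-2}]\,i^* - 2(n\!-\!1)[\M_{2N-2},w_2]\,i^*$.

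It remains to rewrite the subtracted commutator in the same spirit: $\M_2 = \Delta_g - (\f\!-\!1)\J_g$ and $\J_g = -4w_2$ — which follows from $g_{(2)}=-\Rho$, hence $v_2 = -\tfrac12\J_g$ and $w_2 = \tfrac12 v_2$ — so that $[\M_2,\M_{2N-2}]\,i^* = [\Delta_g,\M_{2N-2}]\,i^* - 4(\f\!-\!1)[\M_{2N-2},w_2]\,i^*$. Subtracting the two expressions, the $[\Delta_g,\M_{2N-2}]\,i^*$ terms cancel and the coefficient of $[\M_{2N-2},w_2]\,i^*$ becomes $-2(n\!-\!1) + 4(\f\!-\!1) = -2$; multiplying by $2N\!-\!2$ gives precisely \eqref{M-rec}. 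The point needing the most care is the bookkeeping of which metric's Schouten scalar and which copy of $w_2$ enters each commutator; once one has pinned down $i^*\J_{\bar g} = \J_g = -4w_2$ using \eqref{H2}, the cancellation of the dimension-dependent coefficients is automatic and the identification with \eqref{M-rec} is immediate.
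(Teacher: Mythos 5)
Your proof is correct and follows essentially the same route as the paper's. Both decompose $\bar P_2$ via $\Delta_{\bar g} = \partial_r^2 + (2w'/w)\partial_r + \Delta_{g_r}$ (dropping the first-order term under $i^*$ since $w'(0)=0$), use the restriction property $i^*\bar{\M}_{2N-2}=\M_{2N-2}i^*$, and exploit $i^*\bar\J = \J = -4w_2$ to convert the Schouten-scalar commutators into $[\M_{2N-2},w_2]i^*$; the paper merely bundles the computation into the single boundary difference $i^*\bar P_2 - P_2 i^* = i^*(\partial^2/\partial r^2 - \tfrac12\bar\J)$ before applying the Leibniz identity, whereas you expand both $\bar P_2$ and $P_2$ summand by summand and cancel the $[\Delta_g,\M_{2N-2}]i^*$ terms at the end.
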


\begin{proof} The restriction property (see Theorem \ref{rest-prop})
\begin{equation}\label{RP-case}
i^* \bar{\M}_{2N-2} = \M_{2N-2} i^*, \; N \ge 3
\end{equation}
shows that
\begin{align}
& i^* [\bar{P}_2,\bar{\M}_{2N-2}] - [P_2,\M_{2N-2}]i^*  \nonumber \\
& = i^* \bar{P}_2 \bar{\M}_{2N-2} - i^* \bar{\M}_{2N-2} \bar{P}_2 -
P_2 \M_{2N-2} i^* + \M_{2N-2} P_2 i^* \nonumber \\
& = i^* \bar{P}_2 \bar{\M}_{2N-2} - \M_{2N-2} i^* \bar{P}_2 - P_2
i^* \bar{\M}_{2N-2} + \M_{2N-2} P_2 i^* \nonumber \\
& = (i^* \bar{P}_2 - P_2 i^*) \bar{\M}_{2N-2} - \M_{2N-2} (i^*
\bar{P}_2 - P_2 i^*). \label{simp}
\end{align}
Moreover, the formulas
$$
i^* \bar{P}_2 = i^* \frac{\partial^2}{\partial r^2} + \Delta i^* -
\frac{n\!-\!1}{2} i^* \bar{\J} \quad \mbox{and} \quad P_2 = \Delta -
\frac{n\!-\!2}{2} \J
$$
and the relation $i^* \bar{\J} = \J$ imply that
$$
i^* \bar{P}_2 - P_2 i^* = i^* \left( \frac{\partial^2}{\partial r^2}
- \frac{1}{2} \bar{\J}\right).
$$
Another application of \eqref{RP-case} shows that \eqref{simp}
implies
\begin{equation*}
i^* [\bar{P}_2,\bar{\M}_{2N-2}] - [P_2,\M_{2N-2}]i^* = i^* \left(
\frac{\partial^2}{\partial r^2} - \frac{1}{2} \bar{\J}\right)
\bar{\M}_{2N-2} - i^* \bar{\M}_{2N-2} \left(
\frac{\partial^2}{\partial r^2} - \frac{1}{2} \bar{\J}\right).
\end{equation*}
Now the assertion follows by combining this identity with the
commutator relation \eqref{CR} using \eqref{RP-case} and $i^*
\bar{\J} = \J$.
\end{proof}

By applying \eqref{M-rec} to $u=1$, we find the following recursive
formula.

\begin{lemm}\label{mu-rec-bar} We have
\begin{equation}\label{mu-rec-0}
\mu_{2N} = (2N\!-\!2) \left( i^* \frac{\partial^2}{\partial r^2}
(\bar{\mu}_{2N-2}) - 2 \M_{2N-2}^0 (w_2) \right)
\end{equation}
for $N \ge 3$.
\end{lemm}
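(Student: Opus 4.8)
The plan is to obtain the recursive formula \eqref{mu-rec-0} by substituting the constant function $1 \in C^\infty(X)$ into the operator identity \eqref{M-rec} of Lemma \ref{comm-simple} and then identifying the three resulting contributions. Since the commutator relation \eqref{CR}, the restriction property \eqref{RP-case}, and the reduction to Lemma \ref{comm-simple} are already in place, this is a short bookkeeping argument rather than a genuinely new computation.

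First I would use that $i^*$ sends the constant function $1$ on $X$ to the constant function $1$ on $M$, so that the left-hand side of \eqref{M-rec} evaluated at $1$ equals $\M_{2N}(1) = \mu_{2N}$. Next, set $\bar{\mu}_{2N-2} \st \bar{\M}_{2N-2}(1) \in C^\infty(X)$; this is legitimate because $\bar{\M}_{2N-2}$ is a differential operator on $C^\infty(X)$, so the second normal derivative $i^*(\partial^2/\partial r^2)(\bar{\mu}_{2N-2})$ is meaningful. Using that $\partial^2/\partial r^2$ annihilates constants, the first commutator term reduces to $i^*[\partial^2/\partial r^2,\bar{\M}_{2N-2}](1) = i^*(\partial^2/\partial r^2)(\bar{\mu}_{2N-2})$. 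For the second term, again with $i^*(1) = 1$, I would compute $[\M_{2N-2},w_2](1) = \M_{2N-2}(w_2) - w_2\,\M_{2N-2}(1) = \M_{2N-2}(w_2) - \mu_{2N-2}\,w_2$, and observe that this is exactly $\M_{2N-2}^0(w_2)$ by the definition $L^0(u) = L(u) - L(1)u$. Collecting the three pieces with the common prefactor $2N\!-\!2$ then gives \eqref{mu-rec-0}.

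I do not expect a genuine obstacle here: the substance of the lemma is already carried by Lemma \ref{comm-simple}, and the derivation just amounts to evaluating an operator identity on a constant function. The only points requiring a little care are keeping track of which operators act on $C^\infty(X)$ and which on $C^\infty(M)$, so that the mixed compositions and the restriction $i^*$ are composed in the correct order, and recognizing that the multiplication commutator $[\M_{2N-2},w_2]$ applied to $1$ coincides with the zeroth-order-removed operator $\M_{2N-2}^0$ applied to $w_2$, which is precisely the term appearing in the statement.
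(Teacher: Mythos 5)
Your proposal is correct and matches the paper's own (very terse) derivation, which simply states that the lemma follows by applying \eqref{M-rec} to $u=1$. The three evaluations you carry out — $\M_{2N}i^*(1)=\mu_{2N}$, the commutator with $\partial^2/\partial r^2$ reducing to $i^*(\partial^2/\partial r^2)(\bar{\mu}_{2N-2})$, and $[\M_{2N-2},w_2](1)=\M_{2N-2}^0(w_2)$ — are exactly the bookkeeping the paper leaves implicit.
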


\begin{ex} Let $N=3$. Then Lemma \ref{mu-rec-bar} and the explicit formula
\eqref{mu4-ex} for $\mu_4$ show that
\begin{equation}\label{mu6-4}
\mu_6 = 4 i^* \frac{\partial^2}{\partial r^2} \left( -\bar{\J}^2
\!-\! (n\!-\!3) |\bar{\Rho}|^2 \!+\! \bar{\Delta} (\bar{\J}) \right)
- 8 \delta (\Rho d\J).
\end{equation}
In fact, using formulas in Sections \ref{g-4}--\ref{g-24}, one can
verify that \eqref{mu6-4} is equivalent to \eqref{mu6-ex}.
\end{ex}

These preparations enable us to prove the main result of the present
section.

\begin{thm}\label{mu-second} For any locally conformally flat metric $g$,
we have
\begin{equation}\label{base}
\mu_{2N} + \sum_{k=1}^{N-1} \left( 2^k
\frac{(N\!-\!1)!}{(N\!-\!1\!-\!k)!} \right)^2 \M_{2N-2k}(w_{2k}) =
\left(\f\!-\!N\right) (N\!-\!1)! N! 2^{2N} w_{2N}
\end{equation}
for all $N \ge 1$.
\end{thm}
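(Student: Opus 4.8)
Throughout we may assume $n$ even, since for odd $n$ the assertion is already contained in Theorem~\ref{basic-2}. The plan is to prove \eqref{base} by induction on $N$, the two main ingredients being the recursion of Lemma~\ref{mu-rec-bar} and the observation that the conformal compactification $\bar{g} = dr^2 + g_r$ is a locally conformally flat metric on $X = M\times[0,\varepsilon)$, a space of \emph{odd} dimension $n+1$, for which \eqref{reduced-w} holds \emph{at every level} --- this is precisely the odd-dimensional case of Theorem~\ref{basic-2}, which carries no order restriction. The base cases $N=1,2$ are Examples~\ref{E1} and~\ref{E2} (equivalently \eqref{mu4-case}).

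For the inductive step ($N\ge 3$) I would start from Lemma~\ref{mu-rec-bar},
$$
\mu_{2N} = (2N-2)\left(i^*\frac{\partial^2}{\partial r^2}(\bar{\mu}_{2N-2}) - 2\,\M_{2N-2}^0(w_2)\right),
$$
and insert for $\bar{\mu}_{2N-2}$ the value furnished by \eqref{reduced-w} applied to $\bar{g}$ at level $N-1$: a fixed linear combination of $\bar{w}_{2N-2}$ and of the compositions $\bar{\M}_{2N-2-2k}(\bar{w}_{2k})$, $1\le k\le N-2$. Applying $i^*\partial_r^2$, I would then move $i^*$ to the right using the restriction property $i^*\bar{\M}_{2M}=\M_{2M}i^*$ of Theorem~\ref{rest-prop} (for $M\ge 2$; the term $M=1$, i.e.\ $\bar{P}_2(\bar{w}_{2N-4})$, is handled by the explicit formula \eqref{Laplace-bar} for $\bar{P}_2$) together with the rearrangement
$$
i^*\Bigl[\tfrac{\partial^2}{\partial r^2},\bar{\M}_{2M}\Bigr] = \tfrac1{2M}\,\M_{2M+2}\,i^* + 2[\M_{2M},w_2]\,i^*
$$
of Lemma~\ref{comm-simple}. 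What makes this tractable in the conformally flat setting is that $\bar{\Rho} = \Rho(1-\tfrac{r^2}2\Rho)^{-1}$, so that
$$
\bar{w}(s) = \frac{w(\sqrt{r^2+s^2})}{w(r)},\qquad w(u)=\sqrt{\det(1-\tfrac{u^2}2\Rho)};
$$
hence $i^*\bar{w}_{2k} = w_{2k}$ (also a consequence of Lemma~\ref{PE-rest}) and, differentiating twice in $r$ at $r=0$,
$$
i^*\frac{\partial^2}{\partial r^2}\bar{w}_{2k} = 2(k+1)\,w_{2k+2} - 2\,w_2\,w_{2k}.
$$
Substituting all of this, together with the inductive hypothesis for $g$ at levels $<N$, the right-hand side should collapse to $(\f-N)(N-1)!\,N!\,2^{2N}w_{2N} - \sum_{k=1}^{N-1}\bigl(2^k\tfrac{(N-1)!}{(N-1-k)!}\bigr)^2\M_{2N-2k}(w_{2k})$, which is \eqref{base}.

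It is convenient to organize the whole step in terms of the generating functions $\H$ and $\bar{\H}$ of $\M_{2N}(g)$ and $\bar{\M}_{2N}$: the computation above is the assertion that the generating-function form of Lemma~\ref{comm-simple},
$$
\frac1s\frac{\partial\H}{\partial s}(s)\,i^* = i^*\Bigl[\tfrac{\partial^2}{\partial r^2},\bar{\H}(s)\Bigr] - 2[\H(s),w_2]\,i^*,
$$
transports the identity \eqref{key} from $\bar{g}$ (where it holds by the odd-dimensional case just invoked) to $g$. Equivalently, since by Theorem~\ref{base-c} --- valid for all $N$ in the conformally flat case --- the operator $\H(r)+\delta(g_r^{-1}d)$ equals the scalar $\kappa(r)=\sum_N\mu_{2N}\tfrac1{(N-1)!^2}(r^2/4)^{N-1}$, the argument pins $\kappa(r)$ down to be $-r^{-2}\bigl(\Delta_{g_+}(\log w) - |d\log w|^2_{g_+}\bigr)$, which by Lemma~\ref{ct-simpel} is just \eqref{base} in disguise.

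I expect the main obstacle to be purely combinatorial: checking that the binomial--factorial factors produced by (i) the weights in \eqref{reduced-w} for $\bar{g}$ at level $N-1$, (ii) the reindexing caused by $i^*\partial_r^2\bar{w}_{2k}=2(k+1)w_{2k+2}-2w_2 w_{2k}$, and (iii) the prefactor $2N-2$ and the operator $\M_{2N-2}^0$ in the recursion, combine to reproduce exactly the coefficients $\bigl(2^k(N-1)!/(N-1-k)!\bigr)^2$ and the normalization $(\f-N)(N-1)!\,N!\,2^{2N}$ appearing in \eqref{base}. This is the same flavour of factorial manipulation --- compare the relation \eqref{red-m} --- that appeared in the proof of Theorem~\ref{basic-2}; the essential new point is simply that nothing breaks when $2N>n$, because the restriction property and the commutator relations of the $\M_{2N}$ remain valid for all $N$ in the conformally flat case.
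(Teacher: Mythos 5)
Your proposal follows the paper's proof essentially step for step: induct on $N$, start from the recursion in Lemma~\ref{mu-rec-bar}, insert the formula for $\bar{\mu}_{2N-2}$, push $i^*$ across using the restriction property and Lemma~\ref{comm-simple}, and evaluate normal derivatives of $\bar{w}_{2k}$ via the explicit conformally flat expression $\bar{w}(s)=w(\sqrt{r^2+s^2})/w(r)$. Invoking the odd-dimensional case of Theorem~\ref{basic-2} for the $(n+1)$-dimensional $\bar{g}$, rather than the inductive hypothesis applied to $\bar{g}$, is a harmless and slightly cleaner packaging: on $\bar{g}$ the two furnish the same formula.

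Two details deserve more care. First, the displayed ``generating-function form of Lemma~\ref{comm-simple}'' is not correct as an operator identity: at the $s^0$ level it asserts $\tfrac12\M_4\,i^* = i^*[\partial_r^2,\bar{P}_2] - 2[P_2,w_2]\,i^*$, but $i^*[\partial_r^2,\bar{P}_2]$ contains a genuine normal-derivative term $4v_2\,\partial_r^2(\cdot)|_{r=0}$ coming from the commutator with the $\tfrac{\dot v}{v}\partial_r$ part of $\Delta_{\bar g}$, which the left-hand side lacks. Lemma~\ref{comm-simple} fails at $N=2$ precisely because the restriction property does not apply to $\bar{P}_2$; the paper's \eqref{CR-G} keeps the commutator with $\bar{P}_2$, not $\partial_r^2$, and retains a separate $\tfrac12\M_4\,i^*$ summand. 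You do flag the special role of the $\bar{P}_2(\bar{w}_{2N-4})$ term earlier, so this is a slip in the boxed reformulation rather than in the computation you describe. Second, the residual obstacle is not purely combinatorial: hitting $\bar{P}_2(\bar{w}_{2N-4})$ with $i^*\partial_r^2$ produces a \emph{fourth} normal derivative $\partial_r^4|_{r=0}\bar{w}_{2N-4}$, and the geometric identity expressing it in terms of $w_{2N}$, $w_2w_{2N-2}$ and $(w_2^2-w_4)w_{2N-4}$ (the paper's Lemma~\ref{l2}, proved via Lemma~\ref{l3}) is an essential input beyond the second-derivative formula you quote. Your closed formula for $\bar{w}$ supplies exactly that input, so the ingredients are present, but the final step is substantive rather than a factorial shuffle.
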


\begin{proof} We use induction on $N$. For $N=1$ and $N=2$, the assertions
can be easily verified by direct calculations. In fact, for $N=1$,
the assertion is the obvious relation
$$
\mu_2 = 4 \left(\f-1\right) w_2
$$
(see Example \ref{E1}). Similarly, for $N=2$ the assertion coincides
with Example \ref{E2}. Now assume that \eqref{base} has been proved
up to $\mu_{2N-2}$ (for conformally flat metrics). The metric
$\bar{g}$ is locally conformally flat, too. Hence, by
\eqref{mu-rec-0}, the assertion for $\mu_{2N}$ is equivalent to the
relation
\begin{align}\label{a2}
\left(\f\!-\!N\right) & (N\!-\!1)! N! 2^{2N} w_{2N}  -
\sum_{k=1}^{N-1} \left( 2^k \frac{(N\!-\!1)!}{(N\!-\!1\!-\!k)!}
\right)^2 \M_{2N-2k}(w_{2k}) \nonumber \\
& = (2N\!-\!2) \Big[ \frac{\partial^2}{\partial r^2}\Big|_{r=0}
\Big( - \sum_{k=1}^{N-2} \left( 2^k
\frac{(N\!-\!2)!}{(N\!-\!2\!-\!k)!}
\right)^2 \bar{\M}_{2N-2-2k}(\bar{w}_{2k}) \nonumber \\
& + \left(\frac{n\!+\!3}{2}\!-\!N\right) (N\!-\!2)! (N\!-\!1)!
2^{2N-2} \bar{w}_{2N-2} \Big) \nonumber \\
& - 2 \M_{2N-2} (w_2) \nonumber \\
& + 2 \Big( - \sum_{k=1}^{N-2} \left( 2^k
\frac{(N\!-\!2)!}{(N\!-\!2\!-\!k)!} \right)^2 \M_{2N-2-2k}(w_{2k}) \nonumber \\
& + \left(\frac{n\!+\!2}{2}\!-\!N\right) (N\!-\!2)! (N\!-\!1)!
2^{2N-2} w_{2N-2} \Big) w_2 \Big].
\end{align}
Here we use the notation $\bar{w}_{2N}$ for the Taylor coefficients
of $\bar{w}(r) = w(dr^2\!+\!g(r))$. Now Lemma \ref{comm-simple} and
the restriction property (Theorem \ref{rest-prop}) imply
\begin{multline}\label{a3}
(2N\!-\!2k\!-\!2) \frac{\partial^2}{\partial r^2}\Big|_{r=0}
(\bar{\M}_{2N-2k-2}(\bar{w}_{2k})) = (2N\!-\!2k\!-\!2) \M_{2N-2k-2}
\left(\frac{\partial^2}{\partial r^2}\Big|_{r=0} (\bar{w}_{2k})\right) \\
+ \M_{2N-2k} (w_{2k}) - 2(2N\!-\!2k\!-\!2)
[w_2,\M_{2N-2k-2}](w_{2k})
\end{multline}
if $2N\!-\!2k\!-\!2 \ge 4$. Next, we observe that
\begin{equation}\label{reduced-3}
\frac{\partial^2}{\partial r^2}\Big|_{r=0} (w(r,s)) = \frac{1}{s}
\frac{\partial w(s)}{\partial s} - 2 w_2 w(s)
\end{equation}
(by the first part of Lemma \ref{l3}). By comparing coefficients of
powers of $s$ in \eqref{reduced-3}, we obtain
\begin{equation}\label{reduced-2}
\frac{\partial^2}{\partial r^2}\Big|_{r=0} (\bar{w}_{2k}) =
(2k\!+\!2) w_{2k+2} - 2 w_2 w_{2k}, \; k \ge 0.
\end{equation}
Hence \eqref{a3} implies
\begin{multline}\label{commu}
(2N\!-\!2k\!-\!2) \frac{\partial^2}{\partial r^2}\Big|_{r=0}
(\bar{\M}_{2N-2k-2}(\bar{w}_{2k})) = \M_{2N-2k}(w_{2k}) \\ +
(2N\!-\!2k\!-\!2) (2k\!+\!2) \M_{2N-2k-2} (w_{2k+2}) - 2
(2N\!-\!2k\!-\!2) \M_{2N-2k-2}(w_{2k}) w_2
\end{multline}
if $2N\!-\!2k\!-\!2 \ge 4$. The relations \eqref{reduced-2} for $k =
N-1$ and \eqref{commu} for $k=1,\dots,N-2$ show that the right-hand
side of \eqref{a2} equals the product of $(2N\!-\!2)$ and the sum
\begin{align*}
& \Big[\!-\frac{2^2 (N\!-\!2)^2}{(2N\!-\!4)}
\left( \M_{2N-2}(w_2) + (2N\!-\!4) 4 \M_{2N-4} (w_4)
- 2(2N\!-\!4) \M_{2N-4}(w_2) w_2 \right) \\ & \vdots \\
& - \frac{2^{2(N-3)}(N\!-\!2)!^2}{4} \left(\M_6 (w_{2N-6}) + 4
(2N\!-\!4) \M_4(w_{2N-4}) - 8 \M_4 (w_{2N-6}) w_2 \right) \\
& - 2^{2(N-2)} (N\!-\!2)!^2 \frac{\partial^2}{\partial r^2}
\Big|_{r=0} \bar{\M}_2 (\bar{w}_{2N-4}) \\
& + \left(\frac{n\!+\!3}{2}\!-\!N\right) (N\!-\!2)!(N\!-\!1)!
2^{2N-2} (2N w_{2N} - 2 w_{2N-2} w_2) \\
& - 2 \M_{2N-2}(w_2) \\
& + 2 \Big( - \sum_{k=1}^{N-2} \left( 2^k
\frac{(N\!-\!2)!}{(N\!-\!2\!-\!k)!} \right)^2 \M_{2N-2-2k}(w_{2k}) \\
& + \left(\frac{n\!+\!2}{2}\!-\!N\right) (N\!-\!2)! (N\!-\!1)!
2^{2N-2} w_{2N-2} \Big) w_2 \Big].
\end{align*}
The latter formula shows that the assertion \eqref{a2} is equivalent
to a formula for
$$
2^{2N-3} (N\!-\!1)! (N\!-\!2)! \frac{\partial^2}{\partial r^2}
\Big|_{r=0} \bar{\M}_2 (\bar{w}_{2N-4}).
$$
in terms of the quantities
$$
w_{2N}, \;\; \M_2(w_{2N-2}), \;\; \M_4(w_{2N-4})
$$
and
$$
w_{2N-2} w_2, \;\; \M_2(w_{2N-4}) w_2;
$$
all other contributions cancel. More precisely, the respective
coefficients of these five terms are given by
$$
2^{2N-1} 3 N! (N\!-\!1)!, \;\; 2^{2N-2} (N\!-\!1)!^2, \;\; 2^{2N-4}
(N\!-\!1)! (N\!-\!2)!
$$
and
$$
-2^{2N-1} (N\!-\!1)!^2, \;\; -2^{2N-2} (N\!-\!1)! (N\!-\!2)!.
$$
This reduces the proof of \eqref{a2} to the following identity.

\begin{lemm}\label{l1} For $N \ge 3$, we have
\begin{multline}\label{iden}
\frac{\partial^2}{\partial r^2} \Big|_{r=0}
(\bar{\M_2}(\bar{w}_{2N-4})) = 12 N(N\!-\!1) w_{2N}  + (2N\!-\!2)
\M_2(w_{2N-2}) + \frac{1}{2} \M_4 (w_{2N-4}) \\ - 2(2N\!-\!2)
w_{2N-2} w_2 - 2 \M_2 (w_{2N-4}) w_2.
\end{multline}
\end{lemm}

Next we use explicit formulas for $\M_2$ and $\M_4$ to reduce Lemma
\ref{l1} to the following result.

\begin{lemm}\label{l2} For $N \ge 2$, we have
\begin{equation}\label{id2}
\frac{\partial^4}{\partial r^4}\Big|_{r=0} (\bar{w}_{2N-4}) = 12
N(N\!-\!1) w_{2N} + 24 (w_2^2-w_4) w_{2N-4} - 12 (2N\!-\!2) w_2
w_{2N-2}.
\end{equation}
\end{lemm}

In order to prove that Lemma \ref{l2} implies Lemma \ref{l1}, we use
the formula
$$
\bar{\M}_2 = \frac{\partial^2}{\partial r^2} + \frac{\dot{v}}{v}(r)
\frac{\partial}{\partial r} + \Delta_{g_r} - \frac{n\!-\!1}{2}
\bar{\J}
$$
(see Eq.~\eqref{Laplace-bar}). It follows that the left-hand side of
\eqref{iden} equals the sum
\begin{multline*}
\frac{\partial^4}{\partial r^4} \Big|_{r=0} (\bar{w}_{2N-4}) + 2
\frac{\partial}{\partial r}\Big|_{r=0}
\left(\frac{\dot{v}}{v}\right) \frac{\partial^2}{\partial r^2}\Big|_{r=0}(\bar{w}_{2N-4}) \\
-2 \delta (\Rho d) (w_{2N-4}) - (d\J,dw_{2N-4}) + \Delta_g
\frac{\partial^2}{\partial r^2}\Big|_{r=0} (\bar{w}_{2N-4}) -
\frac{n\!-\!1}{2} \frac{\partial^2}{\partial r^2}\Big|_{r=0} (
\bar{\J} \bar{w}_{2N-4}).
\end{multline*}
Here we made use of the variational formula
\begin{equation}\label{first-var}
(d/dt)|_0 (\Delta_{g-t\Rho}) = -\delta(\Rho d) - \frac{1}{2}
(d\J,d).
\end{equation}
(see Eq.~(6.9.33) in \cite{juhl-book}). Thus by
$$
(\partial/\partial r)^2 |_{r=0} (\bar{\J}) = |\Rho|^2
$$
(see Lemma 6.11.1 in \cite{juhl-book}) the left-hand side of
\eqref{iden} equals
\begin{multline*}
\frac{\partial^4}{\partial r^4} \Big|_{r=0} (\bar{w}_{2N-4}) + 8 w_2
\frac{\partial^2}{\partial r^2}\Big|_{r=0}(\bar{w}_{2N-4}) \\
-2\delta (\Rho d) (w_{2N-4}) + 4(d w_2,d w_{2N-4}) + \Delta_g
\frac{\partial^2}{\partial r^2}\Big|_{r=0} (\bar{w}_{2N-4}) \\
-\frac{n\!-\!1}{2} \left(|\Rho|^2 w_{2N-4} + \J
\frac{\partial^2}{\partial r^2}\Big|_{r=0} (\bar{w}_{2N-4})\right).
\end{multline*}
We simplify the latter identity using the relation
$$
\frac{\partial^2}{\partial r^2}\Big|_{r=0}(\bar{w}_{2N-4}) =
(2N\!-\!2) w_{2N-2} - 2 w_2 w_{2N-4}
$$
(see Eq.~\eqref{reduced-2} for $k=N\!-\!2$). Now a calculation of
the right-hand side of \eqref{iden} using
$$
\M_2 = -\delta d + 4 \left(\f\!-\!1\right) w_2 \quad \mbox{and}
\quad \M_4 = -4 \delta (\Rho d) - 4 \M_2(w_2) + 32
\left(\f\!-\!2\right) w_4
$$
(see Example \ref{Q-6}) and
$$
\J = -4 w_2 \quad \mbox{and} \quad |\Rho|^2 = -16 w_4 + 8 w_2^2
$$
proves the asserted implication. We omit the details.

Finally, we turn to the proof of Lemma \ref{l2} (for locally
conformally flat metrics).

Note that the identity \eqref{id2} is trivial for $N=2$. We rewrite
the assertion in terms of generating functions. Then
\begin{equation}\label{rd}
\frac{\partial^4}{\partial r^4}\Big|_{r=0}(w(r,s)) = 3
\left(\frac{1}{s} \frac{\partial}{\partial s}\right)^2 (w(s)) - 12
w_2 \left (\frac{1}{s} \frac{\partial}{\partial s}\right) (w(s)) +
24 (w_2^2-w_4) w(s).
\end{equation}

Now for locally conformally flat metrics, we have
\begin{align*}
w(s) & = \det (1-s^2/2 \Rho )^{1/2},  \\
w(r,s) & = \det (1-(r^2+s^2)/2 \Rho)^{1/2}/\det (1-r^2/2\Rho)^{1/2}.
\end{align*}
We set
\begin{align*}
\tilde{w}(s) & \st \det(1-s/2 \Rho )^{1/2}, \\
\tilde{w}(r,s) & \st
\det(1-(r+s)/2\Rho)^{1/2}/\det(1-r/2\Rho)^{1/2}.
\end{align*}
Hence
$$
\frac{\partial^4}{\partial r^4}\Big|_{r=0}(w(r,s)) = 12
\frac{\partial^2}{\partial r^2}\Big|_{r=0}(\tilde{w}(r,s^2))
$$
and
$$
\frac{1}{s} \frac{\partial}{\partial s} (w(s)) = 2 \tilde{w}'(s^2),
\quad \left(\frac{1}{s} \frac{\partial}{\partial s}\right)^2 (w(s))
= 4 \tilde{w}''(s^2).
$$
Here $'$ denotes the derivative with respect to $s$. Hence the
assertion \eqref{rd} is equivalent to the second part of the
following result.

\begin{lemm}\label{l3} We have
\begin{equation}\label{rd-3}
\frac{\partial}{\partial r}\Big|_{r=0} (\tilde{w}(r,s)) =
\tilde{w}'(s) - \tilde{w}_2 \tilde{w}(s)
\end{equation}
and
\begin{equation}\label{rd-2}
\frac{\partial^2}{\partial r^2}\Big|_{r=0}(\tilde{w}(r,s)) =
\tilde{w}''(s) - 2 \tilde{w}_2 \tilde{w}'(s) - 2 (\tilde{w}_4 -
\tilde{w}_2^2) \tilde{w}(s),
\end{equation}
where
$$
\tilde{w}(s) = 1 + \tilde{w}_2 s + \tilde{w}_4 s^2 + \cdots.
$$
\end{lemm}

\begin{proof} The relation
$$
\tilde{w}(r,s) = \tilde{w}(r+s)/\tilde{w}(r)
$$
implies
\begin{align*}
\frac{\partial}{\partial r}\Big|_{r=0}(\tilde{w}(r,s)) & =
\frac{\partial}{\partial r}\Big|_{r=0} (\tilde{w}(r+s)
(1-\tilde{w_2} r + \cdots)) \\
& = \tilde{w}'(s) - \tilde{w}_2 \tilde{w}(s)
\end{align*}
and
\begin{align*}
\frac{\partial^2}{\partial r^2}\Big|_{r=0}(\tilde{w}(r,s)) & =
\frac{\partial^2}{\partial r^2}\Big|_{r=0} (\tilde{w}(r+s)
(1-\tilde{w_2} r - (\tilde{w}_4 - \tilde{w}_2^2) r^2 + \cdots)) \\
& = \tilde{w}''(s) - 2 \tilde{w}_2 \tilde{w}'(s) - 2 (\tilde{w}_4 -
\tilde{w}_2^2) \tilde{w}(s)).
\end{align*}
The proof is complete.
\end{proof}

This completes the proof Theorem \ref{mu-second}.
\end{proof}

Now, in even dimensions $n$ and locally conformally flat metrics,
Theorem \ref{mu-second} extends Theorem \ref{basic-2} to
super-critical orders $2N > n$. The same arguments as in Subsection
\ref{proof-1} then extend Theorem \ref{GF} for such metrics to all
$N \ge 1$.

\begin{rem} For even dimensions $n$ and sub-critical orders $2N \le
n$, the proof of Theorem \ref{mu-second} extends to general metrics.
In fact, it suffices to replace the usage of the explicit formulas
for $w(r)$ and $w(r,s)$ by Theorem \ref{double}.
\end{rem}

Finally, for locally conformally flat metrics, we use Theorem
\ref{GF} to derive the contributions \eqref{CT-flat} to the
zeroth-order term of $\M_{2N}$.

\begin{thm}\label{CT-f} For locally conformally flat $g$ and all $N \ge
1$, the contribution of
\begin{equation}\label{CT-main}
-\left( \frac{\partial^2 w}{\partial r^2} - (n\!-\!1) \frac{1}{r}
\frac{\partial w}{\partial r} \right) \big/ w
\end{equation}
to the zeroth-order term of $\M_{2N}(1)$ equals
\begin{equation*}
-(N\!-\!1)!^2 2^{N-1} \left( \left(\f\!-\!N\right) \tr (\Rho^N) +
\frac{1}{2} \sum_{a=1}^{N-1} \tr (\Rho^a) \tr (\Rho^{N-a})\right).
\end{equation*}
\end{thm}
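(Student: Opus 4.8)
The plan is to extract the zeroth-order contribution of the expression \eqref{CT-main} directly from its generating function using Theorem \ref{GF} and the explicit form of $w(r)$ for locally conformally flat metrics. By Theorem \ref{GF}, the zeroth-order term of $\M_{2N}$ is the coefficient of $\frac{1}{(N-1)!^2}\left(\frac{r^2}{4}\right)^{N-1}$ in $\H(r)(1)$, which (after discarding the principal-part piece $-\delta(g_r^{-1}d)$, whose zeroth-order part is a separate divergence-type contribution handled in \eqref{CT-flat}) equals the coefficient of $\frac{1}{(N-1)!^2}\left(\frac{r^2}{4}\right)^{N-1}$ in the function \eqref{CT-main}. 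So the task reduces to computing the Taylor expansion of $\left(\frac{\partial^2 w}{\partial r^2} - (n-1)r^{-1}\frac{\partial w}{\partial r}\right)\big/w$ in the conformally flat case.

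First I would use the locally conformally flat formula $w(r) = \sqrt{\det(1-\tfrac{r^2}{2}\Rho)}$, equivalently $\log w(r) = \tfrac{1}{2}\tr\log(1-\tfrac{r^2}{2}\Rho) = -\tfrac{1}{2}\sum_{k\ge 1}\frac{1}{k}\left(\tfrac{r^2}{2}\right)^k\tr(\Rho^k)$. It is convenient to set $\phi(r) \st \log w(r)$, so that $\phi(r) = \sum_{k\ge 1}\phi_{2k}r^{2k}$ with $\phi_{2k} = -\frac{1}{2k}2^{-k}\tr(\Rho^k)$. Then $w = e^\phi$, and
$$
\frac{1}{w}\left(\frac{\partial^2 w}{\partial r^2} - \frac{n-1}{r}\frac{\partial w}{\partial r}\right) = \phi'' + (\phi')^2 - \frac{n-1}{r}\phi'.
$$
Since $\phi$ is even and begins at order $r^2$, each of the three terms is an even power series. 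I would compute the coefficient of $r^{2N-2}$ in each: from $\frac{n-1}{r}\phi'$ one gets $(n-1)(2N)\phi_{2N}$ at order $r^{2N-2}$ (replacing $N$ by the appropriate index); from $\phi''$ one gets $2N(2N-1)\phi_{2N}$; and from $(\phi')^2$ one gets $\sum_{a+b=N, a,b\ge 1}(2a)(2b)\phi_{2a}\phi_{2b}$. Substituting $\phi_{2k} = -\frac{1}{2k}2^{-k}\tr(\Rho^k)$ collapses the factors $2a, 2b$ and $2N$ nicely: the linear terms combine to $-2N\left[(2N-1)-(n-1)\right]\cdot\frac{1}{2N}2^{-N}\tr(\Rho^N) = -(2N-n)2^{-N}\tr(\Rho^N) = 2\left(\tfrac{n}{2}-N\right)2^{-N}\tr(\Rho^N)$, while the quadratic term becomes $\sum_{a+b=N}2^{-a}2^{-b}\tr(\Rho^a)\tr(\Rho^b) = 2^{-N}\sum_{a=1}^{N-1}\tr(\Rho^a)\tr(\Rho^{N-a})$.

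Putting this together, the coefficient of $r^{2N-2}$ in \eqref{CT-main} — with the overall minus sign — is
$$
-2^{-N}\left(2\left(\tfrac{n}{2}-N\right)\tr(\Rho^N) + \sum_{a=1}^{N-1}\tr(\Rho^a)\tr(\Rho^{N-a})\right),
$$
and the zeroth-order contribution to $\M_{2N}(1)$ is $(N-1)!^2\left(\tfrac14\right)^{N-1}$ times... no: it is $(N-1)!^2\cdot 4^{N-1}$ times this coefficient, since the coefficient of $\frac{1}{(N-1)!^2}\left(\frac{r^2}{4}\right)^{N-1}$ equals $(N-1)!^2 4^{N-1}$ times the coefficient of $r^{2N-2}$. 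Multiplying $4^{N-1}\cdot 2^{-N} = 2^{2N-2-N} = 2^{N-2}$, and combining the factor $2$ in front of $\left(\tfrac{n}{2}-N\right)$ with the $\tfrac12$ I would pull out to match \eqref{CT-flat}, gives exactly $-(N-1)!^2 2^{N-1}\left(\left(\tfrac{n}{2}-N\right)\tr(\Rho^N) + \tfrac12\sum_{a=1}^{N-1}\tr(\Rho^a)\tr(\Rho^{N-a})\right)$, as claimed. The computation is entirely elementary; the only mild bookkeeping point is tracking the normalization factor $\frac{1}{(N-1)!^2}\left(\frac{r^2}{4}\right)^{N-1}$ from \eqref{def-H} correctly, so that the powers of $2$ come out as stated — and this is the one place a sign or power-of-two slip would occur, so I would double-check it against the low-order case $N=2$, where the formula must reproduce the Schouten-tensor part $-2\delta(\Rho d)$'s companion zeroth-order term $-4\left(\left(\tfrac{n}{2}-2\right)\tr(\Rho^2)\right)$-type contribution visible in \eqref{Pan}.
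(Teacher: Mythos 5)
Your proof is correct and essentially identical to the paper's: both set $\phi = \log w$, expand $\phi$ using $\log\det = \tr\log$, use the identity $w''/w = \phi'' + (\phi')^2$ (in the paper written as $w''/w = (w'/w)^2 + (\log w)''$), and read off the coefficient of $(r^2/4)^{N-1}$ after the same power-of-two bookkeeping. The only slip is in the final throwaway sanity check, where you write the principal part of $\M_4$ as $-2\delta(\Rho d)$; it is $-4\delta(\Rho d)$, though this does not affect the argument.
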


\begin{proof} By the assumption on $g$, we have $w(r) = \sqrt{\det(1-r^2/2 \Rho)}$.
Now we calculate
\begin{align*}
\frac{1}{r} \frac{1}{w} \frac{\partial w}{\partial r} & =
\frac{1}{r} \frac{\partial}{\partial r} (\log \sqrt{\det(1-r^2/2 \Rho)}) \\
& = \frac{1}{2r} \frac{\partial}{\partial r} (\log \det(1-r^2/2\Rho)) \\
& = \frac{1}{2r} \frac{\partial}{\partial r} (\tr \log
(1-r^2/2\Rho)) \\
& = -\frac{1}{2} \tr \left( \frac{\Rho}{1-r^2/2\Rho}\right) \\
& = - \frac{1}{2} \sum_{N \ge 0} \tr (\Rho^{N+1})
\left(\frac{r^2}{2}\right)^N
\end{align*}
and
\begin{align*}
\frac{\partial^2}{\partial r^2} (\log w) & = \frac{1}{2}
\frac{\partial^2}{\partial r^2} (\tr \log (1-r^2/2\Rho)) \\
& = - \frac{1}{2} \frac{\partial}{\partial r} \left( \tr \left(
\frac{r\Rho}{1-r^2/2\Rho}\right)\right) \\
& = - \frac{1}{2}
\tr\left(\Rho\frac{1+r^2/2\Rho}{(1-r^2/2\Rho)^2}\right)
\\
& = -\frac{1}{2} \sum_{N \ge 1} (2N\!-\!1) \tr (\Rho^N)
\left(\frac{r^2}{2}\right)^{N-1}.
\end{align*}
These relations and
$$
\frac{1}{w} \frac{\partial^2 w}{\partial r^2} = \left(\frac{1}{w}
\frac{\partial w}{\partial r}\right)^2 + \frac{\partial^2}{\partial
r^2} (\log w)
$$
imply that \eqref{CT-main} equals
\begin{multline*}
-\frac{1}{4} r^2 \left( \sum_{N \ge 0} \tr(\Rho^{N+1}) \left(
\frac{r^2}{2} \right)^N \right)^2 \\ + \frac{1}{2} \sum_{N \ge 1}
(2N\!-\!1) \tr(\Rho^N) \left(\frac{r^2}{2}\right)^{N-1} -
\frac{n\!-\!1}{2} \sum_{N \ge 0} \tr(\Rho^{N+1})
\left(\frac{r^2}{2}\right)^N.
\end{multline*}
These sums can be simplified to
\begin{equation*}
-\frac{1}{2} \sum_{N \ge 1} \left( \sum_{a=1}^{N-1} \tr (\Rho^a)
\tr(\Rho^{N-a})\right) \left(\frac{r^2}{2}\right)^{N-1}  +
\frac{2N\!-\!n}{2} \sum_{N \ge 1} \tr (\Rho^N)
\left(\frac{r^2}{2}\right)^{N-1}.
\end{equation*}
Now the assertion follows by taking the coefficient of
$(r^2/4)^{N-1}$.
\end{proof}

For the round sphere $\S^n$, Theorem \ref{CT-f} reproves the fact
that the zeroth-order term of $\M_{2N}$ is given by
$$
-(N\!-\!1)!N! \f\left(\f\!-\!1\right).
$$
(see Eq.~\eqref{sphere-sum}).

\section{The explicit formula for $Q$-curvature}\label{Q-curv-ex}

As a consequence of Theorem \ref{duality} and Theorem \ref{basic-2},
we find an {\em explicit} formula for $Q$-curvature. The following
result restates Theorem \ref{main-Q}.

\begin{thm}{\bf (Explicit formula for $Q$-curvature)}\label{Q-explicit}
For even $n$ and $2N \le n$ and for odd $n$ and all $N \ge 1$,
\begin{equation}\label{Q-ex}
(-1)^N Q_{2N} = \sum_{|I|+a=N} n_{(I,a)} a!(a\!-\!1)!2^{2a}
\M_{2I}(w_{2a}).
\end{equation}
\end{thm}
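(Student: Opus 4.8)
The plan is to read the formula off from the inversion formula (Theorem~\ref{duality}) together with the recursive description of the zeroth order terms $\mu_{2N}=\M_{2N}(1)$ provided by Theorem~\ref{basic-2}. First I would apply \eqref{dual} to the constant function $u=1$ and invoke \eqref{Q-def}, which gives
\begin{equation*}
(-1)^N\left(\f-N\right)Q_{2N}=P_{2N}(1)=\sum_{|I|=N}n_I\,\M_{2I}(1).
\end{equation*}
Thus it suffices to establish the purely combinatorial operator identity
\begin{equation*}
\sum_{|I|=N}n_I\,\M_{2I}(1)=\left(\f-N\right)\sum_{|I|+a=N}n_{(I,a)}\,a!(a\!-\!1)!\,2^{2a}\,\M_{2I}(w_{2a}),
\end{equation*}
after which one divides by $\f-N$ in the non-critical range $2N<n$. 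The critical case $2N=n$ follows from universality of both sides in the dimension $n$: the $\M_{2I}$ and the $w_{2a}$ are universal polynomials in $n$, and the critical $Q_n$ is by definition the value at $n=2N$ of the polynomial $Q_{2N}$. For locally conformally flat metrics, running the same argument with Theorem~\ref{mu-second} in place of Theorem~\ref{basic-2} removes the restriction $2N\le n$ entirely.

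To prove the displayed identity I would expand the left-hand side according to the last entry of each composition. Every composition $J$ of $N$ is uniquely $J=(I,b)$ with $b=J_r\ge1$ and $|I|=N-b$ ($I$ possibly trivial), and then $\M_{2J}(1)=\M_{2I}\bigl(\M_{2b}(1)\bigr)$. Substituting the recursion of Theorem~\ref{basic-2},
\begin{equation*}
\M_{2b}(1)=\left(\f-b\right)b!(b\!-\!1)!\,2^{2b}w_{2b}-\sum_{k=1}^{b-1}\left(2^k\frac{(b\!-\!1)!}{(b\!-\!1\!-\!k)!}\right)^{2}\M_{2b-2k}(w_{2k}),
\end{equation*}
and using $\M_{2I}\circ\M_{2b-2k}=\M_{2(I,b-k)}$, expresses the whole left-hand side as a linear combination of the symbols $\M_{2K}(w_{2a})$. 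Comparing the coefficient of a fixed $\M_{2K}(w_{2a})$ with $K=(K_1,\dots,K_s)$ of size $N-a$: the trivial composition contributes only the constant term $N!(N\!-\!1)!2^{2N}w_{2N}$, which matches the right-hand side; for non-trivial $K$ the ``diagonal'' term $b=a$ contributes $n_{(K,a)}(\f-a)a!(a\!-\!1)!2^{2a}$, while the unique ``off-diagonal'' term $b=a+K_s$, $k=a$ contributes $-\,n_{(K_1,\dots,K_{s-1},a+K_s)}2^{2a}\bigl((a\!+\!K_s\!-\!1)!/(K_s\!-\!1)!\bigr)^{2}$. Hence everything reduces to the identity
\begin{equation*}
(N-a)\,n_{(K,a)}\,a!(a\!-\!1)!=n_{(K_1,\dots,K_{s-1},\,a+K_s)}\left(\frac{(a\!+\!K_s\!-\!1)!}{(K_s\!-\!1)!}\right)^{2}.
\end{equation*}

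This last identity is the real content, but it is elementary. Writing $p=N-a=K_1+\dots+K_s$ and inserting the reduced form \eqref{eq:n_I} of $n_I$, one checks that the only factor of the denominator $q_{(K,a)}$ absent from $q_{(K_1,\dots,K_{s-1},a+K_s)}$ is the pair $(K_1+\dots+K_s)\cdot a=p\,a$, while the factorial factors differ precisely by $(K_s\!-\!1)!^{2}(a\!-\!1)!^{2}/(a\!+\!K_s\!-\!1)!^{2}$; combining these and using $(a\!-\!1)!^{2}a=a!(a\!-\!1)!$ yields the claim in one line. I expect the only genuine care to be needed in the bookkeeping of the double expansion — tracking which $K$ arises from which triple $(J,b,k)$, and treating the trivial-$I$ and trivial-$K$ boundary cases — together with the (immediate) remark that a coefficient-wise identity among the formal symbols $\M_{2K}(w_{2a})$ forces the identity of the resulting functions. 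As sanity checks I would record the cases $N=1,2$, which reproduce $-Q_2=4w_2$ and $Q_4=32w_4+4\M_2(w_2)$.
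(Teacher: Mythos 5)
Your proposal is correct and follows essentially the same route as the paper's proof: peel off the last entry $a$ of each composition via the inversion formula, substitute the recursion of Theorem~\ref{basic-2} for $\mu_{2a}=\M_{2a}(1)$, observe that each non-trivial $\M_{2K}(w_{2a})$ receives exactly one ``diagonal'' contribution from $n_{(K,a)}$ and one ``off-diagonal'' contribution from $n_{(K_1,\dots,K_{s-1},K_s+a)}$, and reduce to the combinatorial identity $(N\!-\!a)\,n_{(K,a)}\,a!(a\!-\!1)!=n_{(K_1,\dots,K_{s-1},K_s+a)}\bigl((a\!+\!K_s\!-\!1)!/(K_s\!-\!1)!\bigr)^2$, which you verify directly from \eqref{eq:n_I} — exactly as the paper does.
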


\begin{proof} By combining
$$
P_{2N}(1) = (-1)^N \left(\f\!-\!N\right) Q_{2N}
$$
with Theorem \ref{duality}, we find
$$
(-1)^N \left (\f\!-\!N\right) Q_{2N} = \sum_{|I|=N} n_I \M_{2I}(1) =
\sum_{|I|+a=N} n_{(I,a)} \M_{2I}(\mu_{2a}).
$$
By Theorem \ref{basic-2}, the latter sum equals
\begin{multline*}
\sum_{|I|+a=N} n_{(I,a)} \M_{2I} \Big( \left(\f\!-\!a\right)
a!(a\!-\!1)!2^{2a} w_{2a} \\[-3mm] - (2a\!-\!2)^2 \M_{2a-2} (w_2) - \cdots
- ((2a\!-\!2)\cdots2) \M_2(w_{2a-2}) \Big).
\end{multline*}
This sum is a linear combination of quantities of the form
$$
\M_{2J} \M_{2b} (w_{2a}), \quad |J|+a+b=N
$$
with respective coefficients
$$
n_{(J,b,a)} \left(\f\!-\!a\right) a!(a\!-\!1)! 2^{2a} - n_{(J,a+b)}
((2(a\!+\!b)\!-\!2)\cdots 2b)^2.
$$
Hence for $2N \ne n$ it suffices to prove that these differences
coincide with
$$
n_{(J,b,a)} \left(\f\!-\!N\right)  a!(a\!-\!1)! 2^{2a}.
$$
In turn, this claim is equivalent to the identity
\begin{equation*}
n_{(J,b,a)} (N\!-\!a) a!(a\!-\!1)! = n_{(J,a+b)} \left(
\frac{(a\!+\!b\!-\!1)!}{(b\!-\!1)!}\right)^2.
\end{equation*}
But, according to \eqref{n-form}, the latter identity is equivalent
to
$$
\binom{N\!-\!1}{a\!+\!b\!-\!1}
\left(\frac{(a\!+\!b\!-\!1)!}{(b\!-\!1)!}\right)^2 =
\binom{N\!-\!a\!-\!1}{b\!-\!1} \binom{b\!+\!a\!-\!1}{b\!-\!1}
\binom{N\!-\!1}{a\!-\!1} (N\!-\!a) a! (a\!-\!1)!.
$$
This relation can be confirmed by a straightforward calculation. The
critical case follows by continuation.
\end{proof}

Some comments are in order. Of course, the actual power of Theorem
\ref{Q-explicit} comes from the identification of the operators
$\M_{2N}$ as certain operators of only {\em second} order (Theorem
\ref{main-P}). In connection with further evaluations of Theorem
\ref{Q-explicit}, it is useful to recall that Graham \cite{G-ext}
gave an algorithm which generates formulas for the
Taylor coefficients $v_{2j}$ of $v(r)$ in terms of the
Poincar\'e-Einstein (or ambient) metric. In turn, the
Taylor coefficients of the square root $w(r)$ easily can be
described in terms of the holographic coefficients $v_{2j}$.

In the locally conformally flat case, Theorem \ref{Q-explicit}
extends to {\em all} $Q$-curvatures $Q_{2N}$. In fact, in the proof
it suffices to replace Theorem \ref{basic-2} by Theorem
\ref{mu-second}. Moreover, in that case, the evaluation of Theorem
\ref{Q-explicit} is greatly simplified by the closed formula
$$
w(r) = \sqrt{\det(1-r^2/2 \Rho)}.
$$

Some information on special contributions to $Q_{2N}$ can be easily
deduced from Theorem \ref{Q-explicit}. In fact, using
$$
n_{(N)} = n_{(1,\dots,1)} = 1,
$$
Eq.~\eqref{Q-ex} shows that $Q_{2N}$ is of the form
$$
(-1)^N N!(N\!-\!1)! 2^{2N} w_{2N} + \cdots + 4 (-1)^N
\M_2^{N-1}(w_2).
$$
Hence, by $4 w_2 = -\J$, we recover the well-known fact
(\cite{branson}, Corollary 1.5) that the quantity $Q_{2N}$ contains
the term $(-1)^N \Delta^{N-1} \J$. This is the contribution with the
most derivatives. Moreover, using
$$
n_{(N-1,1)} = N\!-\!1,
$$
Theorem \ref{Q-explicit} implies that for $N \ge 3$ the full tensor
$g_{(2N-4)}$ contributes to $Q_{2N}$ through
\begin{equation}\label{full-max}
(-1)^{N-1} (N\!-\!1)!(N\!-\!2)! 2^{2N-4} \delta ( g_{(2N-4)} d\J).
\end{equation}
We also reproduce the result (\cite{branson}, Corollary 1.6) that
the contribution with the most derivatives to the {\em total}
$Q$-curvature
$$
\int_M Q_{2N} dvol
$$
on the closed manifold $M$ has the form
\begin{equation}\label{total-most}
\left(\f\!-\!N\right) \int_M (\nabla^{N-2} \J,\nabla^{N-2} \J) dvol.
\end{equation}
In fact, Theorem \ref{Q-explicit} implies that these contributions
only come from
$$
n_{(1,\dots,1)} 2^2 \mu_2 \M_2^{N-2}(w_2) + n_{(2,1,\dots,1)} 2^2
\mu_4 \M_2^{N-3}(w_2).
$$
But using \eqref{mu4} and
$$
n_{(1,\dots,1)} = 1 \quad \mbox{and} \quad n_{(2,1,\dots,1)} =
N\!-\!1,
$$
these terms yield
$$
\left(\f\!-\!1\right) \int_M \J \Delta^{N-2} (\J) dvol - (N\!-\!1) \int_M
\Delta (\J) \Delta^{N-3} (\J) dvol.
$$
Now partial integration gives \eqref{total-most}.

Finally, we illustrate Theorem \ref{Q-explicit} by the following
three low-order special cases.

\begin{ex}\label{Q-4} For $N=2$, Theorem \ref{Q-explicit} states
that
$$
Q_4 = 32 w_4 + 4 \M_2 (w_2).
$$
In view of
$$
4 w_2 = - \J \quad \mbox{and} \quad 32 w_4 = \J^2 - 2 |\Rho|^2,
$$
this formula is equivalent to the recursive formula
$$
Q_4 = 32 w_4 - P_2(Q_2)
$$
and the common explicit formula
$$
Q_4 = \f \J^2 - 2 |\Rho|^2 - \Delta \J.
$$
\end{ex}

\begin{ex}\label{Q-6} The following two formulas are the universal recursive
formula of \cite{Q-recursive} and the explicit formula of Theorem
\ref{Q-explicit} for $Q_6$:
\begin{equation}\label{Q6-rec}
Q_6 + 2 P_2(Q_4) - 2 P_4 (Q_2) + 3 P_2^2(Q_2) = -3!2!2^6 w_6
\end{equation}
and
\begin{equation}\label{Q6-ex}
Q_6 = -3!2!2^6 w_6 - 64 \M_2 (w_4) - 8 \M_4(w_2) - 4 \M_2^2(w_2).
\end{equation}
In both formulas, we have collected all terms of the same nature on
the same side. In order to illustrate the effectiveness of
\eqref{Q6-ex}, we make the terms fully explicit. We have
$$
\M_2 = P_2 = \Delta \!-\! \left(\f\!-\!1\right) \J \quad \mbox{and}
\quad \M_4 = - 4 \delta(\Rho d) + \M_4(1)
$$
with
$$
\M_4(1) + 4 \M_2 (w_2) = 32 \left(\f\!-\!2\right) w_4,
$$
i.e.,
\begin{align}
\M_4(1) & = \left(\f\!-\!2\right)(\J^2\!-\!2|\Rho|^2) +
\left(\Delta\!-\!\left(\f\!-\!1\right)\J\right)(\J) \nonumber \\
& = -\J^2 \!-\!(n\!-\!4)|\Rho|^2 \!+\! \Delta \J. \label{mu4}
\end{align}
Thus, by a straightforward calculation, \eqref{Q6-ex} gives
\begin{multline*}
Q_6 = -3!2!2^6 w_6 - \left(\f\!+\!1\right) \Delta (\J^2)  + 4 \Delta
(|\Rho|^2) - 8 \delta (\Rho d \J) + \Delta^2 \J \\ -
\left(\f\!-\!3\right) \J \Delta \J - 4(n\!-\!3) \J |\Rho|^2 +
\frac{n^2\!-\!12}{4} \J^3
\end{multline*}
or, equivalently,
\begin{multline}\label{Q6-full}
Q_6 = -3!2!2^5 v_6 - \left(\f\!+\!1\right) \Delta (\J^2) + 4 \Delta
(|\Rho|^2) - 8 \delta (\Rho d \J) + \Delta^2 \J \\
- \left(\f\!-\!3\right) \J \Delta \J - 4(n\!-\!6) \J |\Rho|^2 +
\frac{(n\!-\!6)(n\!+\!6)}{4} \J^3.
\end{multline}
In the last step we have used \eqref{v6-gen} and
$$
16 w_6 + 8 v_2 w_4 = 8 v_6.
$$
The identity \eqref{Q6-full} is equivalent to Theorem 6.10.3 in
\cite{juhl-book}. The same formula for $Q_6$ follows from
\eqref{Q6-rec} by a somewhat longer calculation. Note that
\eqref{Q6-full} also reproduces the familiar fact that, in dimension
$n=6$, $Q_6$ differs from a multiple of $v_6$ by a divergence. In
that case, we actually find that
$$
Q_6 = -3!2!2^5 v_6 - 64 \M_2^0(w_4) - 8 \M_4^0(w_2) - 4
\M_2^0\M_2(w_2).
$$
\end{ex}

\begin{ex}\label{Q-8} For even $n \ge 8$ and odd $n \ge 3$ and any
metric,
\begin{multline}\label{Q8-ex}
Q_8 = 3!4!2^8 w_8 + 2^2 3 \M_6 (w_2) + 2^4 18 \M_4 (w_4) + 2^6 36
\M_2 (w_6) \\ + 2^2 3 \M_4 \M_2 (w_2) + 2^2 4 \M_2\M_4(w_2) + 2^4 6
\M_2^2(w_4) + 2^2 \M_2^3(w_2).
\end{multline}
The operators $\M_2,\M_4,\M_6$ are second-order (see
\eqref{full-M24}, \eqref{full-M6}). The quantity $w_8$ is given by
\eqref{w8-v} and \eqref{v8}. This formula should be compared with
the universal recursive formula
\begin{multline}\label{Q8-rec}
Q_8 + 3 P_2(Q_6) + 3 P_6(Q_2) - 9 P_4(Q_4) \\
- 8 P_2 P_4(Q_2) + 12 P_2^2 (Q_4) - 12 P_4 P_2 (Q_2) + 18 P_2^3(Q_2)
= 3! 4! 2^8 w_8
\end{multline}
proved in \cite{juhl-Q8}, \cite{Q-recursive}.
\end{ex}

Another formula for $Q_8$ was displayed in \cite{G-P} (see Fig.~5).
Unfortunately, it is a rather non-trivial task to compare that
formula with those given in Example \ref{Q-8}.\footnote{In this
direction, we only note that all formulas contain the contribution
$-96 (\Omega^{(1)},\Omega^{(1)})$ with a second-order pole at
$n=4$.}

\section{Some low-order cases}\label{low}

In the present section, we make explicit the consequences of the
general theory for the third and the fourth GJMS-operator $P_6$ and
$P_8$.

\subsection{The conformal cube $P_6$ of the Laplacian}\label{cube}

We discuss two formulas for the conformally covariant cube $P_6$ of
the Laplacian. In this case, explicit formulas in terms of the
metric are still not too complicated. Although a number of
alternative approaches have been used to understand this
higher-order generalization of the Paneitz-operator, it seems that
the formula for $P_6$ given in Theorem \ref{P6-full-gen} has not
been derived before. For alternative methods we refer to
\cite{branson-scand}, \cite{wue1}, \cite{wue2}, \cite{G-P} and
\cite{Casimir}.

We recall that one of the key observations which leads to the
following results is the commutator relation
$$
\M_6 i^* = 4 (i^* [ \bar{P}_2,\bar{\M}_4] - [P_2,\M_4] i^*).
$$
By $\M_4 = P_4 - P_2^2$ (for any metric), the latter relation is
equivalent to the identity
$$
\M_6 i^* = 4 (i^* [ \bar{P}_2,\bar{P}_4] - [P_2,P_4] i^*).
$$
In the critical dimension $n=6$ and for locally conformally flat
metrics, this relation first appeared in \cite{juhl-book} (Theorem
6.11.17).

\subsubsection{The recursive formula}\label{P6-recursive}

The following formula describes $P_6$ in terms of the lower-order
GJMS-operators $P_2$, $P_4$ and the $Q$-curvatures $Q_6$. For all
metrics and in dimension $n \ge 3$ ($n \ne 4$), we have
\begin{equation}\label{P6-rec}
P_6 = (2 P_2 P_4 + 2 P_4 P_2 - 3P_2^3)^0 - 48 \delta (\Rho^2 d) -
\frac{16}{n\!-\!4} \delta (\B d) - \left(\f\!-\!3\right) Q_6.
\end{equation}
Here the tensor
\begin{equation}\label{Bach}
\B_{ij} \st \Delta (\Rho)_{ij} - \nabla^k \nabla_j (\Rho)_{ik} +
\Rho^{kl} W_{kijl}
\end{equation}
being defined by the Schouten tensor $\Rho$ and the Weyl tensor $W$
is known as the Bach tensor. As usual, the superscript $^0$ removes
the zeroth-order term. Eq.~\eqref{P6-rec} is equivalent to the
formula
\begin{equation}\label{M6-ex}
\M_6^0 = - 48 \delta(\Rho^2 d) - \frac{16}{n\!-\!4} \delta(\B d)
\end{equation}
for the principal part of $\M_6$ (see also the discussion in Section
\ref{principal-part}). Finally, the $Q$-curvature $Q_6$ is given by
the universal recursive formula
\begin{equation*}
Q_6 + 2 P_2(Q_4) - 2 P_4(Q_2) + 3 P_2^2(Q_2) = - 3!2!2^6 w_6
\end{equation*}
(see Eq.~\eqref{Q6-rec}), where the quantity $w_6$ is related
through
$$
16 w_6 = 8 v_6 - 4 v_4 v_2 + v_2^3
$$
to the holographic coefficients
\begin{equation}\label{v6-gen}
v_2 = - \frac{1}{2} \J, \quad v_4 = \frac{1}{8} (\J^2 \!-\!
|\Rho|^2) \quad \mbox{and} \quad  v_6 = - \frac{1}{8} \tr
(\wedge^3\Rho) \!-\! \frac{1}{24(n\!-\!4)}(\B,\Rho).
\end{equation}
The resulting fully explicit formula for $Q_6$ is displayed in
\eqref{Q6-full}.

Alternatively, by combining the definition of $\M_6$ (see Eq.~
\eqref{M6-def}) with \eqref{M6-ex}, we obtain the recursive formula
\begin{equation}\label{P6-rec2}
P_6 = (2 P_2 P_4 + 2 P_4 P_2 - 3 P_2^3) - 48 \delta (\Rho^2 d) -
\frac{16}{n\!-\!4} \delta (\B d) + \mu_6
\end{equation}
with the scalar curvature quantity $\mu_6$ as in \eqref{mu6-ex}.

\subsubsection{The explicit formula}\label{P6-explicit}

Under the same assumptions on the dimension $n$ as in Section
\ref{P6-recursive}, the following formula describes $P_6$ for
general metrics in terms of the second-order operators $\M_2$,
$\M_4$ and $\M_6$. We have
\begin{equation}\label{P6-ex}
P_6 = \M_6 + 2 \M_2 \M_4 + 2 \M_4 \M_2 + \M_2^3
\end{equation}
(see Example \ref{P6-form}), where
\begin{equation}\label{full-M24}
\M_2 = P_2 = - \delta d + \mu_2, \quad \M_4 = -4\delta(\Rho d)+\mu_4
\end{equation}
and
\begin{equation}\label{full-M6}
\M_6 = - 48 \delta(\Rho^2 d) - \frac{16}{n\!-\!4} \delta(\B d) +
\mu_6.
\end{equation}
The curvature quantities $\mu_2$, $\mu_4$ and $\mu_6$ are determined
by the identities
\begin{equation}\label{mu-24}
\mu_2 = 2^2 \left(\f\!-\!1\right) w_2, \quad \mu_4 = 2! 2^4
\left(\f\!-\!2\right) w_4 - 4 \M_2 (w_2)
\end{equation}
and
\begin{equation}\label{full-mu6}
\mu_6 = 3! 2! 2^6 \left(\f\!-\!3\right) w_6 - 4^2 \M_4 (w_2) - 8^2
\M_2(w_4).
\end{equation}
These are special cases of Theorem \ref{basic-2} (see also
\eqref{mu4-case} and \eqref{mu6-case}). Note also that
\begin{equation}\label{w24}
2 w_2 = v_2 \quad \mbox{and} \quad 8 w_4 = 4 v_4 - v_2^2.
\end{equation}
Some calculations yield the explicit formulas
\begin{equation}\label{mu4-ex}
\mu_4 = -\J^2 \!-\! (n\!-\!4)|\Rho|^2 \!+\! \Delta \J \quad
\mbox{(see Eq.~\eqref{mu4})}
\end{equation}
and
\begin{multline}\label{mu6-ex}
\mu_6 = - 8 \frac{n\!-\!6}{n\!-\!4} (\B,\Rho) - 8 (n\!-\!6) \tr
(\Rho^3) \\ - 16 \J |\Rho|^2 - 4 |d\J|^2 + 4 \Delta (|\Rho|^2) - 16
\delta (\Rho d\J).
\end{multline}
The formulas \eqref{mu4-ex} and \eqref{mu6-ex} should also be
compared with Theorem \ref{CT-f}.

Now we use these results to derive a fully explicit formula for
$P_6$ in terms of the metric. In the critical case, we find the
following result.

\begin{thm}\label{P6-full} On manifolds of dimension $n=6$, the
GJMS-operator $P_6$ is given by the explicit formula
\begin{equation}\label{P6-fully}
P_6 = \Delta^3 + 4 \Delta \delta  T_2 d + 4 \delta T_2 d \Delta + 2
\Delta (\J \Delta) + 8 \delta T_4 d
\end{equation}
with
$$
T_2= \J - 2 \Rho \quad \mbox{and} \quad T_4 = - \J^2 + |\Rho|^2 + 4
\J \Rho - 6 \Rho^2 - \B.
$$
\end{thm}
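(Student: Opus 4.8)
The plan is to derive \eqref{P6-fully} by substituting the explicit second-order descriptions of $\M_2,\M_4,\M_6$ into the inversion formula of Example \ref{P6-form}, namely $P_6 = \M_6 + 2\M_2\M_4 + 2\M_4\M_2 + \M_2^3$, which holds for every metric and in particular in dimension $n=6$. By \eqref{full-M24}, \eqref{full-M6}, \eqref{mu4-ex} and \eqref{mu6-ex}, in the critical dimension (where $\f=3$) the building blocks become $\M_2 = P_2 = \Delta - 2\J$, $\M_4 = -4\delta(\Rho d) + \mu_4$ with $\mu_4 = \Delta\J - \J^2 - 2|\Rho|^2$, and $\M_6 = -48\delta(\Rho^2 d) - 8\delta(\B d) + \mu_6$ with $\mu_6 = 4\Delta(|\Rho|^2) - 16\delta(\Rho\,d\J) - 16\J|\Rho|^2 - 4|d\J|^2$, the coefficient $16/(n-4)$ having collapsed to $8$ and the factors $(n-6)$ in \eqref{mu6-ex} having dropped out; throughout, the scalar quantities $\mu_4,\mu_6$ act by multiplication. (Equivalently one may start from the recursive identity \eqref{P6-rec2} and insert \eqref{Y} and \eqref{Pan} with $n=6$.)

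Next I would expand the three compositions and sort the result by differential order. The unique sixth-order contribution is the leading term $\Delta^3$ of $\M_2^3$. The fourth-order terms arise from the principal part of $\M_2^3$ and from $-8\Delta\delta(\Rho d) - 8\delta(\Rho d)\Delta$ coming from $2\M_2\M_4$ and $2\M_4\M_2$; after commuting the Laplacian past the multiplication operator by $\J$ and past the endomorphism $\Rho$, so as to restore a manifestly self-adjoint form, I expect these to regroup, using $T_2 = \J - 2\Rho$, into exactly $4\Delta\delta T_2 d + 4\delta T_2 d\Delta + 2\Delta(\J\Delta)$. What remains is purely second order: there is no odd-order contribution, and there is no zeroth-order contribution, the latter being forced a priori by \eqref{Q-def} since $P_6(1) = (-1)^3(\f-3)Q_6 = 0$ when $n=6$. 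Assembling the surviving second-order terms, namely $-48\delta(\Rho^2 d) - 8\delta(\B d)$ from $\M_6$, the curvature remainder produced when the commutators $[\Delta,\delta(\Rho d)]$ are rearranged into self-adjoint form, and the lower-order pieces carried by $\mu_4$, $\mu_6$, $2\M_2\M_4$ and $2\M_4\M_2$, I expect the total to collapse to $8\delta T_4 d$ with $T_4 = -\J^2 + |\Rho|^2 + 4\J\Rho - 6\Rho^2 - \B$.

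The step that carries the real work is this second-order bookkeeping: converting $\Delta\delta(\Rho d) + \delta(\Rho d)\Delta$ and related expressions into a self-adjoint form $\delta(\,\cdot\,d)$ generates second-order curvature terms through the Ricci identity, and reconciling them with the explicit Bach contribution $-8\delta(\B d)$ and the $\Rho^2$, $\J\Rho$, $|\Rho|^2$, $\J^2$ terms requires the definition \eqref{Bach} of $\B$, the contracted second Bianchi identity $\nabla^j\Rho_{ij} = \nabla_i\J$, and the standard Weyl-tensor contraction identities; one must also check that all accumulated zeroth-order contributions genuinely cancel. I would carry this out coordinate-wise on a test function, in the spirit of the derivations of \eqref{mu4-ex} and \eqref{mu6-ex}. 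As built-in consistency checks I would verify that the resulting operator is formally self-adjoint, that it annihilates constants, that it has leading term $\Delta^3$, and that it agrees with the formula obtained from the recursive identity \eqref{P6-rec2}; agreement with the critical-dimension formula of \cite{juhl-book} then completes the verification.
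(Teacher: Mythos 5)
Your plan is correct and matches the paper's own approach: the paper proves the general-dimension Theorem~\ref{P6-full-gen} by expanding \eqref{P6-ex} with the explicit formulas \eqref{full-M24}, \eqref{full-M6}, \eqref{mu4-ex}, \eqref{mu6-ex} and grouping by order (using the same kind of $\J$-juggling and the relation $\delta(\Rho\,d)=-(\Rho,\Hess)-(d\J,d)$, hence the contracted Bianchi identity, that you invoke), and Theorem~\ref{P6-full} is then the specialization $n=6$. Setting $n=6$ from the outset as you propose is the same calculation with the dimension factors frozen; the only thing you lose is the free consistency check that the general-$n$ bookkeeping provides, and you correctly compensate by noting $P_6(1)=0$ forces the zeroth-order terms to cancel.
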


Theorem \ref{P6-full} extends as follows to general dimensions.

\begin{thm}\label{P6-full-gen} On manifolds of dimension $n \ge 3$
($n \ne 4$), the GJMS-operator $P_6$ is given by the explicit
formula
\begin{equation}\label{P6-fully-gen}
P_6 = \Delta^3 + \Delta \delta T_2 d + \delta T_2 d \Delta +
\left(\f\!-\!1\right) \Delta (\J \Delta) + \delta T_4 d -
\left(\f\!-\!3\right) Q_6
\end{equation}
with
$$
T_2 = (n\!-\!2) \J  - 8 \Rho,
$$
\begin{multline*}
T_4 = - \left(3 \left(\f\!-\!1\right)^2\!-\!4\right) \J^2 +
4(n\!-\!4) |\Rho|^2 + 16 \left(\f\!-\!1\right) \J \Rho \\[-2mm]
+ (n\!-\!6) \Delta \J  - 48 \Rho^2  - \frac{16}{n\!-\!4} \B
\end{multline*}
and $Q_6$ as in \eqref{Q6-full}.
\end{thm}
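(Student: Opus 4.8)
The plan is to derive \eqref{P6-fully-gen} from the decomposition $P_6 = \M_6 + 2(\M_2\M_4 + \M_4\M_2) + \M_2^3$ of Example \ref{P6-form}, which is a consequence of the first part of Theorem \ref{main-P}. Into this I substitute the second-order representations $\M_2 = -\delta d + \mu_2$, $\M_4 = -4\delta(\Rho d) + \mu_4$ and $\M_6 = -48\delta(\Rho^2 d) - \frac{16}{n\!-\!4}\delta(\B d) + \mu_6$ supplied by \eqref{full-M24} and \eqref{full-M6}, with $\mu_2 = -(\f\!-\!1)\J$, with $\mu_4$ as in \eqref{mu4-ex} and with $\mu_6$ as in \eqref{mu6-ex}. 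Expanding the three products term by term produces a finite list: the single sixth-order operator $\Delta^3$; fourth-order operators of the shapes $\Delta^2(f\cdot)$, $\Delta(f\cdot)\Delta$, $(f\cdot)\Delta^2$ arising from the cubic expansion of $(\Delta - (\f\!-\!1)\J)^3$, together with $-8\Delta\delta(\Rho d)$ and $-8\delta(\Rho d)\Delta$ arising from $2\M_2\M_4$ and $2\M_4\M_2$; a collection of second-order operators $\delta(S d)$ for various symmetric two-tensors $S$, plus terms $\Delta(f\cdot)$, $(f\cdot)\Delta$ and $\langle df, d\cdot\rangle$; and a zeroth-order term.

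First I would bring every summand into the filtered normal form ``$\Delta^3$ plus fourth order plus second order plus zeroth order'' by repeated use of the two elementary identities $[\Delta, f\cdot] = (\Delta f)\cdot + 2\langle df, d\cdot\rangle$ and $\delta(f\,du) = -f\Delta u - \langle df, du\rangle$; where the Bach tensor must appear — namely when one commutes a Laplacian past $\delta(\Rho\,d)$ — I would additionally invoke the contracted second Bianchi identity $\nabla^k\Rho_{ik} = \nabla_i\J$ and the Ricci identity for interchanging covariant derivatives, which is exactly the mechanism by which the Weyl term in the definition \eqref{Bach} of $\B$ is produced. The zeroth-order part of the result is $P_6(1)$, which by \eqref{Q-def} equals $-(\f\!-\!3)Q_6$; since each of the operators $\Delta\delta T_2 d$, $\delta T_2 d\Delta$, $\Delta(\J\Delta)$ and $\delta T_4 d$ kills constants, this already identifies the last summand of \eqref{P6-fully-gen}, with no extra computation (one may instead check consistency against \eqref{Q6-full}).

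It then remains to match the fourth- and second-order parts. The fourth-order part has principal symbol $-3(\f\!-\!1)\J\,|\xi|^4 + 16\,|\xi|^2\,\Rho_{ij}\xi^i\xi^j$, which is precisely the principal symbol of $\Delta\delta T_2 d + \delta T_2 d\Delta + (\f\!-\!1)\Delta(\J\Delta)$ for $T_2 = (n\!-\!2)\J - 8\Rho$; comparing the remaining fourth-order terms — those carrying one derivative fewer, generated by the commutators above acting against the Hessian-type pieces of $\mu_4$ and against the cross terms of $\M_2^3$ — confirms this choice of $T_2$ and, incidentally, forces all genuinely third-order contributions to cancel. Everything not absorbed into these three operators is then second-order, hence of the form $\delta T_4 d$ for a unique symmetric-two-tensor-valued $T_4$; reading $T_4$ off the accumulated second-order terms and simplifying with \eqref{mu4-ex}, \eqref{mu6-ex} and \eqref{Bach} yields the stated expression, in which the pole at $n=4$ survives only through the single summand $-\frac{16}{n\!-\!4}\B$ inherited from $\M_6$. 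Setting $n=6$ (so that $(\f\!-\!3)Q_6 = 0$) and replacing $T_2$ by $4(\J - 2\Rho)$ and $T_4$ by $8(-\J^2 + |\Rho|^2 + 4\J\Rho - 6\Rho^2 - \B)$ then reproduces Theorem \ref{P6-full}. The main obstacle is the bookkeeping in this reassembly step: one must faithfully track every curvature term generated by commuting Laplacians past $\delta(\Rho\,d)$ and by the cubic expansion of $\M_2^3$, and verify that after all cancellations the residual second-order operator is exactly $\delta T_4 d$ and that no spurious $1/(n-4)$ poles are created — a finite but delicate calculation that requires care with the sign conventions for $\delta$ and $\Delta$ and with the endomorphism reading of $\Rho$, $\Rho^2$ and $\B$.
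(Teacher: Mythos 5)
Your proposal starts from the same decomposition $P_6=\M_6+2\M_2\M_4+2\M_4\M_2+\M_2^3$ and the same explicit second-order representations of $\M_2,\M_4,\M_6$ together with the formulas for $\mu_2,\mu_4,\mu_6$; your observation that the zeroth-order part is handled for free by $P_6(1)=-(\f\!-\!3)Q_6$ also matches the paper, which ``omit[s] the zeroth-order terms'' throughout. So the skeleton is correct.

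The main divergence is your reduction to a filtered normal form ``$\Delta^3$ + fourth order + second order + zeroth order'' by commuting $\Delta$ past everything, including $\delta(\Rho\,d)$. The paper deliberately does \emph{not} do this: it keeps the fourth-order pieces $-8\Delta\delta(\Rho\,d)$ and $-8\delta(\Rho\,d)\Delta$ from $2\M_2\M_4+2\M_4\M_2$ intact and matches them directly to the $\Rho$-part of $\Delta\delta T_2 d+\delta T_2 d\Delta$, and only reorganizes the $\J$-terms from $\M_2^3$ and $\mu_4$ via the elementary identities $f\Delta u+\Delta(fu)=-2\delta(f\,du)+\cdots$ and $\Delta(d\J,du)=-\Delta\delta(\J\,du)-\Delta(\J\Delta u)$. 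No Ricci identity, no Bianchi identity, and no curvature-producing commutator is ever invoked. Accordingly, your claim that the Bach tensor ``must appear'' when one commutes a Laplacian past $\delta(\Rho\,d)$, and that the contracted Bianchi and Ricci identities are ``the mechanism by which the Weyl term in the definition of $\B$ is produced,'' misrepresents the structure of the argument. The entire $-\frac{16}{n-4}\B$ summand of $T_4$ is inherited verbatim from $\M_6^0=-48\delta(\Rho^2 d)-\frac{16}{n-4}\delta(\B\,d)$ (Eq.\ \eqref{full-M6}); in particular the $1/(n-4)$ pole cannot arise from any commutator, which only produces terms polynomial in $n$. If you insist on full normalization, the commutator curvature terms you generate from the $\M_4$-pieces must exactly cancel when you re-sum back into $\Delta\delta T_2 d+\delta T_2 d\Delta$ — a consistency that you would then need to verify, and which the paper sidesteps entirely. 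Your strategy is workable, but it trades a short direct matching for a considerably longer cancellation check, and the narrative about the origin of $\B$ is wrong as stated.
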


\begin{proof} We make \eqref{P6-ex} explicit. We recall that
$$
\mu_2 = - \left(\f\!-\!1\right) \J \quad \mbox{and} \quad \mu_4 =
-\J^2 \!-\! (n\!-\!4)|\Rho|^2 \!+\! \Delta \J
$$
(see Eq.~\eqref{mu4-ex}). In order to prove the assertion, it
suffices to determine the non-zero order contributions. Let $\lambda
= \f-1$. Then we have to determine the sum of the non-zero order
terms of the operators
\begin{align}
& \Delta^3 u - \lambda \J \Delta^2 u - \lambda \Delta^2 (\J u) -
\lambda \Delta (\J \Delta u) + \lambda^2 \J \Delta (\J u) +
\lambda^2 \J^2 \Delta u + \lambda^2 \Delta (\J^2 u),
\label{co-1} \\
& - 2 \Delta (\J^2 u) - 2 \J^2 \Delta u + 2 \Delta (u \Delta \J) + 2
\Delta (\J) \Delta (u), \label{co-2}\\
& -8\Delta \delta (\Rho du) - 8 \delta (\Rho d) \Delta u +
4(n\!-\!4) \delta (|\Rho|^2 du) - 48 \delta (\Rho^2 du) - 8
\delta(\B du) \nonumber
\end{align}
and
\begin{equation}\label{co-3}
8 \lambda \J \delta (\Rho d u) + 8 \lambda \delta (\Rho d (\J u)).
\end{equation}
In the following calculations, we omit the zeroth-order terms. Now
\begin{align*}
\J \Delta (\J u) & = \J (\Delta (\J) u + 2 (d\J,du) + \J \Delta(u)) \\
& = (d \J^2,du) + \J^2 \Delta(u) \\
& = -\delta(\J^2 du)
\end{align*}
and
$$
\J^2 \Delta (u) + \Delta(\J^2 u) = -2 \delta(\J^2 du)
$$
imply that the last three terms of \eqref{co-1} yield the
contribution
$$
-3\lambda^2 \delta (\J^2 du).
$$
Similarly, the first two terms of \eqref{co-2} yield the
contribution
$$
4 \delta (\J^2 du).
$$
Next, we apply the relation $\Delta(d\J,du)= - \Delta \delta(\J du)
- \Delta (\J \Delta u)$ to find that
\begin{align*}
& \J \Delta^2 u + \Delta^2 (\J u) + \Delta (\J \Delta u) \\
& = \J \Delta^2 u + \Delta ( \Delta(\J) u + 2 (d\J,du) + \J \Delta(
u)) + \Delta (\J \Delta u) \\
& = \J \Delta^2 u + 2(d \Delta \J,du) + \Delta (\J) \Delta (u) - 2
\Delta \delta (\J du).
\end{align*}
It follows that the first four terms in \eqref{co-1} and the last
two terms of \eqref{co-2} give rise to the contribution
\begin{multline*}
\Delta^3 u - \left(\f\!-\!1\right) \J \Delta^2 u - (n\!-\!2)(d
\Delta \J,du) - \left(\f\!-\!1\right) \Delta (\J) \Delta (u) +
(n\!-\!2) \Delta \delta (\J du) \\ + 4 \Delta (u) \Delta (\J) + 4
(du,d \Delta \J).
\end{multline*}
We rewrite this sum in the form
\begin{multline*}
\Delta^3 u - \left(\f\!-\!1\right) (\J \Delta^2 u - \Delta (\J)
\Delta(u)) + (n\!-\!2) \Delta \delta (\J du) \\ - (n\!-\!6) ((d
\Delta \J,du) + \Delta (\J) \Delta (u)).
\end{multline*}
Now the identity
$$
-\left(\f\!-\!1\right) \J \Delta^2 u + \left(\f\!-\!1\right)
\Delta(\J) \Delta(u) = (n\!-\!2) \delta (\J d \Delta u) +
\left(\f\!-\!1\right) \Delta(\J \Delta u)
$$
implies that the sum equals
$$
\Delta^3 u + (n\!-\!2) \delta (\J d \Delta u) + (n\!-\!2) \Delta
\delta (\J du) +\left(\f\!-\!1\right) \Delta(\J \Delta u) +(n\!-\!6)
\delta (\Delta \J,du).
$$
Finally, we observe that \eqref{co-3} equals $16 \lambda \delta (\J
\Rho du)$ (modulo a zeroth-order term). This completes the proof.
\end{proof}

Some comments are in order. First of all, an easy calculation
confirms that for the round sphere $\S^n$ the formula
\eqref{P6-fully-gen} actually simplifies to the product
$$
(\Delta \!-\! m(m\!-\!1))(\Delta \!-\! (m\!+\!1)(m\!-\!2)) (\Delta
\!-\! (m\!+\!2)(m\!-\!3)), \; m = n/2.
$$
The first four terms in \eqref{P6-fully-gen} cover all contributions
to $P_6$ which contain more than four derivatives. In Subsection
\ref{many} we shall recognize this fact as a special case of a
general result.

Next, it is worth noting that for general metrics the formula
\eqref{P6-fully} coincides with a formula of Branson for a
conformally covariant cube of the Laplacian in dimension $n=6$
(\cite{branson-scand}, Theorem 2.8).\footnote{Branson's tensor $Y$
actually coincides with the sum $-\B-(n-4)\Rho^2$.} Another formula
for a conformally covariant cube of the Laplacian in general
dimensions $n \ge 6$ containing the second-order operator
$-\delta(\B d)/(n\!-\!4)$ was given in Proposition 3.1 of
\cite{wue1}. However, a full comparison is difficult since W\"{u}nsch's
formula is stated in a less explicit form. Finally, we note that
Theorem \ref{P6-full} reproduces the formula for the critical $P_6$
contained in Proposition 8.3 of \cite{AG}.\footnote{In \cite{AG},
$\Rho$ denotes the double of the Schouten tensor. Thus, in terms of
the notation of the present paper, the tensors $E_1$ and $E_2$ in
\cite{AG} are given by $2\Rho-\J$ and $4\Rho^2+2\B-2|\Rho|^2$.}

An independent proof of the conformal covariance of the operator on
the right-hand side of \eqref{P6-rec} in the critical dimension
$n=6$ can be found in the Appendix 13.2 of \cite{juhl-power}. In
general dimensions, the formula \eqref{P6-rec} first appeared in
Section 6.12 of \cite{juhl-book}. There it was derived by
(infinitesimal) conformal variation of (a version of) the universal
recursive formula \eqref{Q6-rec} for $Q_6$.

\subsection{The conformal fourth power $P_8$ of the  Laplacian}\label{quartic}

Explicit formulas for the principal parts of $\M_4$ and $\M_6$ are
displayed in Section \ref{cube}. The following result follows from
Theorem \ref{base-c}. It leads to an explicit formula for $P_8$.

\begin{thm}\label{M8-case} For even $n \ge 8$ and odd $n \ge 3$,
\begin{equation}\label{M8-principal}
\M_8^0 = - 96 \delta ([\Omega^{(2)} - 4 (\Rho \Omega^{(1)} +
\Omega^{(1)} \Rho) + 12 \Rho^3] d),
\end{equation}
where $\Omega^{(1)}, \Omega^{(2)}$ are the first two extended
obstruction tensors.
\end{thm}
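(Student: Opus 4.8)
The plan is to apply Theorem \ref{base-c} directly: it asserts that $\sum_{N\ge1}\M_{2N}^0(g)\frac{1}{(N-1)!^2}(r^2/4)^{N-1}=-\delta(g_r^{-1}d)$, so the coefficient of $(r^2/4)^{3}$ on the left is $\frac{1}{3!^2}\M_8^0=\frac{1}{36}\M_8^0$, and hence $\M_8^0=-36\cdot 2!3!\,\delta\big((g_r^{-1})_{(6)}d\big)=-432\,\delta\big((g_r^{-1})_{(6)}d\big)$; more conveniently, $\M_8^0=-3!\,3!\,2^{6}\,\delta\big((g_r^{-1})_{(6)}d\big)$ matching the pattern $\M_{2N}^0=-(N-1)!N!2^{N-1}\delta((g_r^{-1})_{(2N-2)}d)$ read off from \eqref{principal-flat}. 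So the entire content of the theorem is the computation of the degree-six Taylor coefficient $(g_r^{-1})_{(6)}$ of the inverse endomorphism, expressed through the extended obstruction tensors.

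First I would recall Graham's expansion of the Poincar\'e--Einstein metric in terms of the Schouten tensor and the extended obstruction tensors $\Omega^{(k)}$ (Section 2 of \cite{G-ext}); to the relevant order one has
$$
g_r = g - r^2 \Rho + \tfrac{r^4}{4}\big(\Rho^2 - \tfrac{1}{n-4}\B\big) + r^6 g_{(6)} + \cdots,
$$
where $g_{(6)}$ is a universal linear combination of $\Omega^{(2)}$, the symmetrized products $\Rho\,\Omega^{(1)}+\Omega^{(1)}\Rho$ (recall $\Omega^{(1)}=\B/(4-n)$), and $\Rho^3$, with coefficients that are rational in $n$; these come out of Graham's recursion for the $g_{(2k)}$. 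Then I would invert: writing $g_r$ as $1+r^2 A_2 + r^4 A_4 + r^6 A_6+\cdots$ as an endomorphism on one-forms (using $g$), the inverse is $1 - r^2 A_2 + r^4(A_2^2 - A_4) + r^6(-A_2^3 + A_2 A_4 + A_4 A_2 - A_6)+\cdots$, so $(g_r^{-1})_{(6)} = -A_2^3 + A_2 A_4 + A_4 A_2 - A_6$ with $A_2=-\Rho$, $A_4=\tfrac14(\Rho^2-\tfrac{1}{n-4}\B)$, $A_6=g_{(6)}$. Substituting and collecting terms, and then normalizing so that the $\Rho^3$-coefficient lands on $12$ (which fixes the overall factor $-96$), should produce the stated combination $\Omega^{(2)} - 4(\Rho\,\Omega^{(1)}+\Omega^{(1)}\Rho) + 12\Rho^3$.

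The main obstacle will be bookkeeping the precise rational-in-$n$ coefficients in Graham's formula for $g_{(6)}$ and checking that, after the inversion and the substitution $\Omega^{(1)}=\B/(4-n)$, all spurious $n$-dependence cancels so that the bracket in \eqref{M8-principal} has the clean integer coefficients $1,-4,-4,12$ and the overall constant is exactly $-96$; this is a routine but error-prone tensor computation, and it is where one should cross-check against the appendix's algorithm reproducing Graham's formulas. I would also verify the dimension restrictions: the formula requires $n\ge 8$ for even $n$ (so that $2N=8\le n$ and $g_{(6)}$ is unobstructed and genuinely part of the Poincar\'e--Einstein expansion) while for odd $n\ge 3$ Theorem \ref{double} guarantees $g_r$, hence $(g_r^{-1})_{(6)}$, is well-defined to all orders; in the locally conformally flat case one instead uses $g_r=(1-r^2/2\,\Rho)^2$ directly, giving $(g_r^{-1})_{(6)}=4\Rho^3$ and thereby the specialization of \eqref{principal-flat}, a useful consistency check on the overall normalization.
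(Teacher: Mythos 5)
Your approach matches the paper's: Theorem \ref{M8-case} is obtained from the principal-part formula \eqref{principal-M} of Theorem \ref{base-c} by computing the Taylor coefficient $(g_r^{-1})_{(6)}$ in terms of $\Rho$ and the extended obstruction tensors, which is exactly what the paper does (deferring the tensor algebra to \cite{juhl-power}). Do fix the normalization slips, though, before carrying out the computation: the correct read-off from \eqref{principal-M} is $\M_{2N}^0 = -(N-1)!^2\,2^{2N-2}\,\delta\bigl((g_r^{-1})_{(2N-2)}d\bigr)$, giving $-3!^2\cdot 2^6 = -2304$ for $N=4$; your intermediate ``$-432$'' drops the factor $4^{N-1}$ from undoing the $(r^2/4)^{N-1}$, and the ``pattern'' $-(N-1)!\,N!\,2^{N-1}$ is the coefficient of $\delta(\Rho^{N-1}d)$ in \eqref{principal-flat}, not of $\delta((g_r^{-1})_{(2N-2)}d)$ --- the two differ by the factor $N/2^{N-1}$ from $(g_r^{-1})_{(2N-2)}=N\Rho^{N-1}/2^{N-1}$ in the conformally flat case, which also means $(g_r^{-1})_{(6)}=\Rho^3/2$ there, not $4\Rho^3$. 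With $-2304$ the target becomes $(g_r^{-1})_{(6)}=\tfrac{1}{24}\Omega^{(2)}-\tfrac16(\Rho\Omega^{(1)}+\Omega^{(1)}\Rho)+\tfrac12\Rho^3$, and plugging Graham's $g_{(6)}=-\tfrac{1}{24}\bigl(\Omega^{(2)}+2(\Rho\Omega^{(1)}+\Omega^{(1)}\Rho)\bigr)$ and $g_{(4)}=\tfrac14(\Rho^2+\Omega^{(1)})$ into the formal inversion $-A_2^3+A_2A_4+A_4A_2-A_6$ you wrote down indeed yields exactly that combination, confirming the coefficients $1,-4,12$ and the overall $-96$.
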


For the definition of extended obstruction tensors we refer to
\cite{G-ext} and \eqref{e-obst}. Section 2 of \cite{G-ext} also
gives an explicit formula for $\Omega^{(2)}$ in terms of $g$. The
full details of the derivation of \eqref{M8-principal} from the
general formula for the principal part can be found in
\cite{juhl-power}.

Similarly as in Subsection \ref{P6-recursive}, the equation
\eqref{M8-principal} can be viewed as a recursive formula for $P_8$.
We omit the details and continue with the description of $P_8$ in
terms of the operators $\M_2,\dots,\M_8$. By combining Theorem
\ref{M8-case} with Example \ref{P8-form}, we obtain the following
result.

\begin{thm}\label{P8-final} For all even $n \ge 8$ and all odd $n \ge 3$,
the GJMS-operator $P_8$ is given by the formula
\begin{multline}\label{P8-complete}
P_8 = \M_2^4 + 3 (\M_2^2 \M_4 + \M_4 \M_2^2) + 4 \M_2\M_4 \M_2 + 9
\M_4^2 + 3 (\M_2 \M_6 + \M_6 \M_2) \\ - 96 \delta ([\Omega^{(2)} - 4
(\Rho \Omega^{(1)} + \Omega^{(1)} \Rho) + 12 \Rho^3] d) + \mu_8,
\end{multline}
where
\begin{equation}\label{w8}
\mu_8 = \left(\f\!-\!4\right) 3!4!2^8 w_8 - 48^2 \M_2 (w_6) - 24^2
\M_4(w_4) - 6^2 \M_6 (w_2).
\end{equation}
\end{thm}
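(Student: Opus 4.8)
The plan is to assemble the formula from three ingredients, all of which are available by the time Theorem~\ref{P8-final} is reached: the inversion formula in the case $N=4$, the description of the principal part $\M_8^0$, and the recursive formula for the zeroth-order term $\M_8(1)$.

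First I would invoke Example~\ref{P8-form}, which is the specialization to $N=4$ of the inversion formula (Theorem~\ref{duality}):
\begin{equation*}
P_8 = \M_8 + 3(\M_2\M_6 + \M_6\M_2) + 9\M_4^2 + 3(\M_2^2\M_4 + \M_4\M_2^2) + 4\M_2\M_4\M_2 + \M_2^4 .
\end{equation*}
Every term on the right except $\M_8$ is a composition of the operators $\M_2$, $\M_4$, $\M_6$, which have already been identified as second-order operators with explicit principal parts and zeroth-order terms (Section~\ref{cube}, see \eqref{full-M24} and \eqref{full-M6}); these terms therefore reproduce verbatim the corresponding summands of \eqref{P8-complete} and require no further argument.

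Next I would write $\M_8 = \M_8^0 + \M_8(1)\,\id$, using that $\M_8$ is a second-order operator, which follows from the commutator relations of Section~\ref{RP-and-CR}. For the principal part I would substitute Theorem~\ref{M8-case}, namely
\begin{equation*}
\M_8^0 = -96\,\delta\bigl([\Omega^{(2)} - 4(\Rho\Omega^{(1)} + \Omega^{(1)}\Rho) + 12\Rho^3]\,d\bigr),
\end{equation*}
which supplies the second-order term of \eqref{P8-complete}. For the zeroth-order term $\mu_8 = \M_8(1)$ I would evaluate Theorem~\ref{basic-2} at $N=4$: the left-hand side of \eqref{reduced-w} becomes $\M_8(1) + 36\,\M_6(w_2) + 576\,\M_4(w_4) + 2304\,\M_2(w_6)$, the coefficients being $\bigl(2^k\,3!/(3-k)!\bigr)^2 = 1,\,36,\,576,\,2304$ for $k=0,1,2,3$, while the right-hand side is $(\f-4)\,3!\,4!\,2^8\,w_8$. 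Solving for $\M_8(1)$ and writing $2304 = 48^2$, $576 = 24^2$, $36 = 6^2$ yields precisely the formula \eqref{w8} for $\mu_8$. Substituting the expressions for $\M_8^0$ and $\mu_8$ into the displayed inversion formula gives \eqref{P8-complete}, and the stated hypotheses on the dimension ($n\ge 8$ for even $n$, $n\ge 3$ for odd $n$) are exactly those under which Theorem~\ref{M8-case} and the case $N=4$ of Theorem~\ref{basic-2} hold.

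Since every step is a direct specialization of a theorem proved earlier in the paper, there is no real obstacle; the only point demanding care is the bookkeeping of the numerical constants in Theorem~\ref{basic-2} for $N=4$ and their identification with the squares $6^2$, $24^2$, $48^2$ in \eqref{w8}, together with an accurate transcription of the $\M_2,\M_4,\M_6$-compositions from Example~\ref{P8-form}.
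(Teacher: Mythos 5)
Your proposal is correct and follows exactly the same route as the paper: the paper itself states that Theorem~\ref{P8-final} is obtained ``by combining Theorem~\ref{M8-case} with Example~\ref{P8-form}'' and that \eqref{w8} ``is a consequence of Theorem~\ref{basic-2}.'' Your bookkeeping of the $N=4$ case of Theorem~\ref{basic-2} (coefficients $1,6^2,24^2,48^2$) and of the compositions in Example~\ref{P8-form} is accurate.
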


The formula \eqref{w8} is a consequence of Theorem \ref{basic-2}. In
terms of holographic coefficients, the function $w_8$ can be
described as
\begin{equation}\label{w8-v}
128 w_8 = 64 v_8 - 32 v_6 v_2 - 16 v_4^2 + 24 v_2^2 v_4 - 5 v_2^4,
\end{equation}
where $v_2,v_4,v_6$ are as in Section \ref{cube}. Finally, $v_8$ is
given by
\begin{multline}\label{v8}
2^4 v_8 = \tr (\wedge^4 \Rho) \\ + \frac{1}{3} \tr
(\Rho^2\Omega^{(1)}) - \frac{1}{3} \tr (\Rho) \tr (\Rho
\Omega^{(1)}) - \frac{1}{12} \tr (\Rho \Omega^{(2)}) - \frac{1}{12}
\tr (\Omega^{(1)} \Omega^{(1)})
\end{multline}
(see \cite{G-ext}, Eq.~(2.23)).

The above results actually suffice to derive from
\eqref{P8-complete} a fully explicit formula for $P_8$ in terms of
the metric. The details will be presented elsewhere. Here we only
note that the contributions with more than {\em four} derivatives
are contained in the sum
$$
\sum_{k=0}^2 \Delta^k \delta((n\!-\!2)\J - 4 (k\!+\!1)(3\!-\!k)
\Rho)d \Delta^{2-k} + \left(\f\!-\!1\right) \sum_{k=1}^2 \Delta^k
(\J \Delta^{3-k}).
$$
This is a special case of a result discussed in Subsection
\ref{many}.

We finish the present section with two brief comments on alternative
approaches. As special cases of their approach using tractor
calculus, Gover and Peterson \cite{G-P} derived formulas for $P_6$
and $P_8$ in terms of the metric (and in general dimensions).
Unfortunately, the conventions used in the formulation of their
final results hinder a direct comparison with the present results.
W\"{u}nsch \cite{wue2} (Theorem 4.3) gave a formula for a conformally
covariant {\em fourth} power of the Laplacian in general dimensions.
That formula is far from being explicit in terms of the metric and
the relation to the above description of the operator $P_8$ (see
Theorem \ref{P8-final}) is not clear.\footnote{Note only that the
contributions of $|\Omega^{(1)}|^2$ to both operators differ.}

\section{Further results and comments}\label{open}

In the present section, we describe a few additional results and
collect a number of comments.

\subsection{General signature}\label{sign}

Although the original construction of GJMS-operators works for
metrics of arbitrary signature $(p,q)$, we restricted all
discussions in the present paper to the framework of Riemannian
metrics. This is motivated by our usage of the identification of
GJMS-operators in the asymptotic expansions of eigenfunctions of the
Laplacian of Poincar\'e-Einstein metrics \cite{GZ} and of the theory
of residue families \cite{juhl-book}. However, it is natural to
expect that the main results {\em literally} extend to metrics of
{\em arbitrary} signature.

For the discussion of the special cases $\S^{(p,q)}$ we refer to
\cite{juhl-power} and \cite{JK} (see also Example \ref{pseudo}).
Another interesting special case in this direction are $pp$-waves
$$
g = \sum_{i=1}^{n-2} dx_i^2 + 2 du dr + 2 h du^2
$$
with $h \in C^\infty(\r^n)$ being independent of $r$. For these
metrics, Leistner and Nurowski \cite{pp-waves} found a {\em closed}
formula for the corresponding ambient metrics. In particular, one
finds that $v(r)=1$. Therefore, the extension of \eqref{GF-form}
reduces to the simple formula $\H(r) = - \delta(g_r^{-1}d)$. It
follows that (for $N \ge 2)$ the operators $\M_{2N}$ are respective
constant multiples of $\Delta^{N-1}(h)(\partial^2/\partial r\partial
u)$.

We also note that the validity of all factorizations of residue
families for {\em general} metrics (Theorem \ref{Factor-A} and
Theorem \ref{Factor-B}) has the consequence that the results and
conjectures in \cite{FJ} literally extend from locally conformally
flat metrics to general metrics.

\subsection{Einstein metrics}\label{ME}

If $g$ is Einstein, Theorem \ref{main-P} implies Gover's product
formula \cite{G-ein}
\begin{equation}\label{Einstein}
P_{2N}(g) = \prod_{j=\f}^{\f+N-1} (\Delta_g \!-\! \lambda_g j
(n\!-\!1\!-\!j)),
\end{equation}
where $\lambda_g$ is the constant $\scal(g)/n(n\!-\!1)$.
Eq.~\eqref{Einstein} extends the product formula
$$
P_{2N} = \prod_{j=\f}^{\f+N-1} (\Delta \!-\! j (n\!-\!1\!-\!j))
$$
for the round sphere $\S^n$. We sketch the argument. First, we
recall that \cite{FG-final}
$$
g_+ = r^{-2} (dr^2 + (1 \!-\! \lambda r^2/4)^2 g).
$$
Hence $w(r) = (1-\lambda r^2/4)^\f$ and a calculation shows that
$$
\D(g)(r) = \left(1 \!-\! \lambda \frac{r^2}{4}\right)^{-2}
\left(\Delta \!-\! \lambda \f\left(\f\!-\!1\right)\right).
$$
Now Theorem \ref{main-P} yields
$$
\H(g)(r) = (1\!-\!\lambda r^2/4)^{-2} P_2(g).
$$
Hence
$$
\M_{2N} = N!(N\!-\!1)! \lambda^{N-1} P_2.
$$
It follows that in the sum
\begin{equation}\label{S}
P_{2N} = \sum_{|I|=N} n_I \M_{2I}
\end{equation}
the terms with exactly $r$ factors are multiples of $\lambda^{N-r}
P_2^r$. Now on the sphere $\S^n$ the sum on the right-hand side of
\eqref{S} simplifies to the product
$$
\prod_{k=0}^{N-1} (P_2 + k(k\!+\!1)).
$$
This follows by combining Theorem \ref{duality} with the summation formula
Theorem 6.1 in \cite{juhl-power}. But an easy modification of the proof of
the latter result yields
$$
\prod_{k=0}^{N-1} (P_2 + \lambda k(k\!+\!1))
$$
in the present situation. This proves \eqref{Einstein}.

\subsection{Singular cases}\label{sing}

We note that the formula \eqref{P6-ex} implies the conformal
covariance of the right-hand side if $4 \ne n \ge 3$. Moreover, it
shows that in dimension $n=4$ the residue
$$
R_6 = \Res_{n=4} (\M_6)
$$
is conformally covariant in the sense that $e^{5\varphi} \hat{R}_6 =
R_6 e^{-\varphi}$.\footnote{Here the dimension $n$ is treated as a
formal variable.} A calculation using \eqref{full-M6},
\eqref{full-mu6} and \eqref{v6-gen} actually yields the explicit
formula
$$
R_6 = -16 \delta (\B d) + \Res_{n=4} (\mu_6) = -16 (\delta (\B d) -
(\B,\Rho)).
$$
Similarly, the formula for $P_8$ in Example \ref{P8-form} implies the
conformal covariance of the right-hand side if $n \ge 3$ differs from
$4$ and $6$. Moreover, it shows that in dimension $n=6$ the residue
$$
R_8 = \Res_{n=6} (\M_8)
$$
is conformally covariant, i.e., $e^{7\varphi} \hat{R}_8 = R_8
e^{-\varphi}$. In order to find an explicit formula for $R_8$, we
first note that
$$
\M_8^0 = - 96 \delta (\Omega^{(2)} d) + \cdots
$$
implies
$$
\Res_{n=6} (\M_8) = -48 \delta (\OB_6 d) + \Res_{n=6}(\mu_8),
$$
where
$$
\OB_6 = 2 \Res_{n=6}(\Omega^{(2)})
$$
(see \cite{G-ext}, Proposition 2.8) is the Fefferman-Graham
obstruction tensor in dimension $n=6$. An explicit formula for
$\OB_6$ can be found, for instance, in \cite{GrH}. Now Theorem
\ref{basic-2} and
$$
v_8 = - \frac{1}{2^4} \frac{1}{12} \tr (\Rho \Omega^{(2)}) + \cdots
$$
(see Eq.~\eqref{v8}) yield the explicit formula
$$
R_8 = -48 (\delta (\OB_6 d) - (\OB_6,\Rho)).
$$
As the Bach tensor $\B$ in dimension $4$, $\OB_6$ is trace-free and
divergence-free. Moreover, its transformation law $e^{4\varphi} \hat
{\OB}_6 = \OB_6$ in dimension six generalizes $e^{2\varphi} \hat
{\B} = \B$ in dimension four. A direct calculation, using these
properties, confirms that the operator $R_8$ is conformally
covariant (in dimension six).

We also note that $\M_8$ has a {\em second-order} pole at $n=4$. It
is caused by the contribution $|\Omega^{(1)}|^2$ to the holographic
coefficient $v_8$ which in turn contributes to the zeroth-order term
$\mu_8 = \M_8 (1)$ of $\M_8$. The corresponding scalar quantity $I =
|\B|^2$ is conformally covariant (in dimension $n=4$) in the sense
that $e^{8\varphi} \hat{I} = I$.

\subsection{More universal relations for GJMS-operators}\label{URR-more}

We recall that the degree of the polynomial $\lambda \mapsto
D_{2N}^{res}(\lambda)$ is $N$. Thus, Theorem \ref{D-rep} implies the
vanishing results
\begin{equation}\label{van}
d_{2N}^{(1)} = \cdots = d_{2N}^{(N-1)} = 0
\end{equation}
for the coefficients of $D_{2N}(\lambda)$ (see Eq.~\eqref{def-d}).
These are $N\!-\!1$ universal identities which involve the
GJMS-operators $P_2,\dots,P_{2N}$ and
$\bar{P}_2,\dots,\bar{P}_{2N}$. Only the first two identities
\begin{equation}\label{van-2}
d_{2N}^{(1)} = 0  \quad  \mbox{and}  \quad d_{2N}^{(2)} = 0
\end{equation}
were utilized in the proofs of the main results of the present
paper. In that connection, it was important to recognize the {\em
equivalence} of the identities in \eqref{van-2} to the restriction
property and the commutator relations for the operators $\M_{2N}$
(using the restriction property of all $\M_{2M}$ for $M < N$.) A
similar description of the identities $d_{2N}^{(k)}=0$ for
$k=3,\dots,N\!-\!1$ (for $N \ge 4$) and an understanding of their
significance remain to be found. Along these lines, one can prove
for instance that the three relations in \eqref{van} for $N=4$
contain the additional identity
\begin{equation*}
(P_2 \M_6 + 3 \M_6 P_2 - 4 [P_2,[P_2,\M_4]]) i^* = i^*
\left(\bar{\M}_6 \bar{P_2} + 3 \bar{P}_2 \bar{\M}_6 - 4
[\bar{P}_2,[\bar{P}_2,\bar{\M}_4]]\right).
\end{equation*}

\subsection{The structure of the polynomials $\pi_{2N}(\lambda)$}\label{Q-pol}

In \cite{Q-recursive}, we introduced the polynomials
$\pi_{2N}(\lambda)$ of degree $N\!-\!1$.\footnote{The
operator-valued polynomials $\pi_{2N}(\lambda)$ are not to be
confused with the polynomials defined in \eqref{poly}.} These
polynomials satisfy systems of respective $N$ factorization
relations which allow to describe them in terms of compositions of
GJMS-operators. In fact, the following formula should be compared
with Theorem \ref{D-rep}. We have
\begin{equation}\label{pi-ex}
\pi_{2N}(\lambda) = \frac{1}{(N\!-\!1)!} \sum_{|I|=N}
\frac{b_N(\lambda\!+\!\f\!-\!N)}{(\lambda\!+\!\f\!-\!2N\!+\!I_l)}
m_I P_{2I},
\end{equation}
where $b_N(\lambda)= \prod_{k=0}^{N-1} (\lambda-k)$. In particular,
the constant term $\pi_{2N}(-n/2\!+\!N)$ equals $P_{2N}$, up to a
sign, and the operator $\M_{2N}$ is a constant multiple of the {\em
leading} coefficient of $\pi_{2N}(\lambda)$. Moreover, it turns out
that the {\em sub-leading} coefficient $\M_{2N}^{(2)}$ of
$\pi_{2N}(\lambda)$ can be expressed in terms of the leading
coefficients according to
\begin{equation}\label{comm-2}
\M_{2N}^{(2)} = - \frac{N}{2} \M_{2N} - \sum_{k=1}^{N-1}
\binom{N\!-\!2}{N\!-\!1\!-\!k} \binom{N\!-\!1}{N\!-\!1\!-\!k}
\M_{2k} \M_{2N-2k}.
\end{equation}
In fact, the identity \eqref{comm-2} is a consequence of the
quadratic relation \eqref{comb-first} for the coefficients $m_I$.
Since the operators $\M_2, \M_4, \cdots$ are of second order, the
identity \eqref{comm-2} implies that the sub-leading coefficients
are of {\em fourth} order. In other words, the sub-leading
coefficient of $\pi_{2N}(\lambda)$ defines a universal linear
combination of order $2N$ compositions of GJMS-operators which
actually is of order four. This cancellation result confirms an
expectation formulated in \cite{Q-recursive} (see the comments after
Theorem 4.2). Similar results are expected for {\em all}
coefficients of the polynomials $\pi_{2N}(\lambda)$. In the extreme
case, the inversion formula (Theorem \ref{duality}) can be regarded
as a description of its constant term $P_{2N}$ along these lines.
The full structure of $\pi_{2N}(\lambda)$ as a function of
$\M_2,\dots,\M_{2N}$ remains to be determined, however.

\subsection{$Q_{2N}^{res}(\lambda)$ and the universal recursive
formula for $Q_{2N}$}

The representation formula \eqref{pi-ex} is closely related to the
following formula for the so-called $Q$-curvature polynomial
\cite{juhl-book}, \cite{BJ}
\begin{equation}\label{Q-poly}
Q_{2N}^{res}(\lambda) \st -(-1)^N D_{2N}^{res}(\lambda)(1).
\end{equation}
We have
\begin{multline}\label{Q-pol-ex}
Q_{2N}^{res}(\lambda) = - \frac{1}{(N\!-\!1)!} \lambda \Big(
\prod_{j=1}^{N-1} \left(\lambda\!+\!\f\!-\!N\!-\!j\right) (-1)^N Q_{2N} \\
+ \sum_{|I|+a=N} \frac{b_N(\lambda\!+\!\f\!-\!N)}
{(\lambda\!+\!\f\!-\!2N\!+\!I_l)} m_{(I,a)} (-1)^a P_{2I}(Q_{2a})
\Big).
\end{multline}
Similarly as Theorem \ref{D-rep} follows from the {\em full} systems
of respective $2N$ factorization identities for residue families of
order $2N$, the formula \eqref{Q-pol-ex} follows from (the
consequences for the $Q$-curvature polynomials of) the respective
systems of the first $N$ factorization identities for residue
families of order $2N$ (and the vanishing result
$Q_{2N}^{res}(0)=0$\footnote{see Theorem 1.6.6 in \cite{BJ}}). Note
that the equality of the two expressions for the leading coefficient
of $Q_{2N}^{res}(\lambda)$ which follow from the definition
\eqref{Q-pol} and from \eqref{Q-pol-ex} is equivalent to the
universal recursive formula \eqref{URQ} for $Q_{2N}$. For more
details we refer to \cite{Q-recursive}.

\subsection{Contributions with many derivatives}\label{many}

Theorem \ref{main-P} easily yields a description of the
contributions to $P_{2N}$ with strictly {\em more} than $2N-4$
derivatives. In fact, these are contained in the contributions to
\eqref{main-deco} which are defined by the compositions
$I=(1,\dots,1)$ and
$$
I_k = (\underbrace{1,\dots,1}_k,2,\underbrace{1,\dots,1}_{N-k-2}),
\; k=0,\dots,N\!-\!2
$$
of size $N$. Now $I=(1,\dots,1)$, i.e., $\M_2^N$ yields the
contributions $\Delta^N$ and
$$
-\left(\f\!-\!1\right) \sum_{k=0}^{N-1} \Delta^k (\J \Delta^{N-k-1}).
$$
Moreover, using
$$
n_{I_k} = \binom{k\!+\!1}{1} \binom{N\!-\!k\!-\!1}{1} =
(k\!+\!1)(N\!-\!k\!-\!1),
$$
we obtain the contribution
$$
-\sum_{k=0}^{N-2} 4 (k\!+\!1)(N\!-\!k\!-\!1) \Delta^k \delta (\Rho
d) \Delta^{N-k-2}.
$$
Thus, by $\delta (\Rho d) = -(\Rho,\Hess)-(d\J,d)$, the desired
contributions are covered by
\begin{multline}\label{many-A}
\Delta^N + 4 \sum_{k=0}^{N-2} (k\!+\!1)(N\!-\!k\!-\!1) \Delta^k [
(\Rho, \Hess) + (d\J,d)] \Delta^{N-k-2} \\[-3mm]
- \left(\f\!-\!1\right) \sum_{k=0}^{N-1} \Delta^k (\J
\Delta^{N-k-1}).
\end{multline}
We observe that \eqref{many-A} coincides with the sum
\begin{multline}\label{many-B}
\Delta^N + 2 \sum_{k=0}^{N-2} (k\!+\!1)(N\!-\!k\!-\!1) \Delta^k
\left[ 2 (\Rho,\Hess) + (d\J,d) \right] \Delta^{N-k-2} \\[-3mm]
+ \sum_{k=0}^{N-1} \left(2k\!+\!2\!-\!N\!-\!\f\right) \Delta^k (\J
\Delta^{N-k-1}),
\end{multline}
up to terms with at most $2N-4$ derivatives.\footnote{\label{F} The
operator $2(\Rho,\Hess) + (d\J,d) = - 2 \delta(\Rho d) - (d\J,d)$ is
the metric variation $2 (d/dt)|_{t=0} (\Delta_{g-t\Rho})$ of the
Laplacian.} In fact, the difference of \eqref{many-A} and
\eqref{many-B} equals
\begin{equation*}
2 \sum_{k=0}^{N-2} (k\!+\!1)(N\!-\!k\!-\!1) \Delta^k (d\J,d)
\Delta^{N-k-2} - \sum_{k=0}^{N-1} \left(2k\!+\!1\!-\!N\right)
\Delta^k (\J \Delta^{N-k-1}).
\end{equation*}
Using $2(d\J,du) = \Delta (\J u) - \J \Delta(u) - \Delta(\J)u$, we
find that (modulo terms with at most $2N-4$ derivatives) the latter
sum equals
\begin{multline*}
\sum_{k=0}^{N-2} (k\!+\!1)(N\!-\!k\!-\!1) \Delta^{k+1} (\J
\Delta^{N-k-2}) - \sum_{k=0}^{N-2} (k\!+\!1)(N\!-\!k\!-\!1) \Delta^k
(\J \Delta^{N-k-1}) \\[-2mm] - \sum_{k=0}^{N-1} \left(2k\!+\!1\!-\!N
\right) \Delta^k (\J \Delta^{N-k-1}) = 0.
\end{multline*}
An alternative direct proof of \eqref{many-B} can be found in
\cite{michel} (Proposition 2.1). We also observe that \eqref{many-A}
can be written in the form
$$
\Delta^N + \sum_{k=0}^{N-2} \Delta^k \delta((n\!-\!2)\J - 4
(k\!+\!1)(N\!-\!k\!+\!1) \Rho) d \Delta^{N-k-2} +
\left(\f\!-\!1\right) \sum_{k=1}^{N-2} \Delta^k (\J \Delta^{N-1-k}).
$$
This formula explains the structure of the first four terms in
\eqref{P6-fully-gen}. In order to prove the assertion, we consider
the difference
$$
-\left(\f\!-\!1\right) \sum_{k=0}^{N-1} \Delta^k (\J \Delta^{N-k-1})
- \sum_{k=0}^{N-2} \Delta^k \delta((n\!-\!2)\J) d \Delta^{N-k-2}.
$$
In view of
$$
\Delta^{N-1} (\J u) = \Delta^{N-2} ( \Delta(\J)u + 2 (d\J,du) + \J
\Delta(u))
$$
and $\delta (\J d) = -\J \Delta - (d\J,d)$, the last two terms of
the first sum cancel against the last term of the second sum, up to
a term with at most $2N-4$ derivatives. The remaining differences
are given by
\begin{equation*}
-\left(\f\!-\!1\right) \Delta^k (\J \Delta^{N-k-1}) -
(n\!-\!2)\Delta^k \delta (\J d) \Delta^{N-k-2} =
\left(\f\!-\!1\right) \Delta^{k+1} ( \J \Delta^{N-k-2})
\end{equation*}
for $k=0,\dots,N-3$, up to terms of order $\le 2N-4$. This proves
the claim.

The direct determination of the above contributions from the ambient
metric (as in \cite{michel}) involves the {\em first} metric
variation of the Laplacian (see footnote \ref{F}). The terms with
fewer derivatives also contain contributions which come from
higher-order metric variations of the Laplacian. From this point of
view, the method of residue families can be regarded as a way to
avoid dealing with these metric variations.

\subsection{The restriction property revisited}\label{restriction}

In Section \ref{RP-and-CR}, we derived the restriction property
\begin{equation}\label{M-rest}
\M_{2N} i^* = i^* \bar{\M}_{2N} \quad \mbox{for $N \ge 2$}
\end{equation}
from the factorization relations of residue families.\footnote{Of
course, the parameter $N$ is also subject to the restriction $2N \le
n$ for even $n$.} But Theorem \ref{main-P} identifies the generating
function $\H(r)$ of the sequence $\M_2,\M_4, \dots$ with a natural
second-order differential operator. We show that the restriction
property directly can be derived from this identification.

For this purpose, we {\em define} a sequence of natural second-order
operators $\vartheta_{2N}(g)$ by the identity
\begin{equation}\label{theta-gen}
\sum_{N \ge 1} \vartheta_{2N}(g) \frac{1}{(N\!-\!1)!^2}
\left(\frac{r^2}{4}\right)^{N-1} = \D(g)(r),
\end{equation}
where $\D(g)(r)$ is the operator on the right-hand side of
\eqref{GF-form}. A simple calculation shows that $\D(0) =
\vartheta_2 = P_2$.

\begin{prop}\label{rest-geom} For $N \ge 2$, the operators
$\vartheta_{2N}$ satisfy the restriction property
\begin{equation}\label{theta-rest}
i^* \bar{\vartheta}_{2N} = \vartheta_{2N} i^*.
\end{equation}
Here $\vartheta_{2N}$ and $\bar{\vartheta}_{2N}$ are defined with
respect to the metrics $g$ and $\bar{g}$.
\end{prop}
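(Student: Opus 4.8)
The plan is to compare the generating function identity \eqref{GF-form} for the metric $g$ with the corresponding identity for the background metric $\bar g=dr^2+g_r$ on $X=M\times[0,\varepsilon)$, and to extract the restriction property by pulling back along $i$. Writing $\D(g)(r)=-\delta_g(g_r^{-1}d)-r^{-2}(\Delta_{g_+}(\log w)-|d\log w|^2_{g_+})$, the point is that $\vartheta_{2N}(g)$ is the coefficient of $\frac{1}{(N-1)!^2}(r^2/4)^{N-1}$ in $\D(g)(r)$, and likewise $\bar\vartheta_{2N}$ is read off from $\D(\bar g)(s)$, where the deformation parameter for $\bar g$ is a new variable $s$ and the associated Poincar\'e-Einstein metric is $g_{++}$ of Theorem \ref{double}. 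Thus the assertion \eqref{theta-rest} amounts to the single identity of formal power series in $s$
\begin{equation*}
i^*\D(\bar g)(s)=\D(g)(s)\,i^*,
\end{equation*}
which would prove \eqref{theta-rest} term by term.

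To establish this I would treat the two summands of $\D$ separately. For the principal part, by Theorem \ref{double} the Poincar\'e-Einstein metric relative to $\bar g$ is $g_{++}=s^{-2}(ds^2+E(\bar g)(s))$, and Lemma \ref{PE-rest} gives $E(\bar g)(s)|_{r=0}=\mathrm{diag}(1,E(g)(s))$. Hence, exactly as in the proof of Theorem \ref{reduce}, the restriction of $-\delta_{\bar g}(E(\bar g)(s)^{-1}d)$ to $r=0$ equals $-\delta_g(E(g)(s)^{-1}d)i^*$; more precisely $i^*\delta_{dr^2+g_r}(E(\bar g)(s)^{-1}d)=\delta_g(E(g)(s)^{-1}d)i^*$ because the normal block of $E(\bar g)(s)^{-1}$ is the identity and the $dr$-derivatives acting on functions pulled back from $M$ produce nothing on restriction. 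For the zeroth-order part, set $\bar w(s)=w(\bar g,s)=\sqrt{v(g_{++},s)}$. I would use the conformal compatibility of volume coefficients together with Lemma \ref{PE-rest}: since $E(\bar g)(s)|_{r=0}=\mathrm{diag}(1,E(g)(s))$, the volume form ratio satisfies $v(\bar g,s)|_{r=0}=v(g,s)$, so $i^*\bar w(s)=w(s)$ as functions, and moreover $\bar w$ viewed as a function on the ambient space of $\bar g$ restricts along $r=0$ to $w$ on the ambient space of $g$. Combining this with the fact that $\Delta_{g_{++}}$ restricted to $r=0$ intertwines with $\Delta_{g_+}$ on functions coming from the $(M,g)$-factor (a consequence of the block structure in \eqref{polar}, where $g_{++}=\sin^{-2}\theta(g_++d\theta^2)$ restricts to the $g_+$-geometry transverse to $\theta$), the term $r^{-2}(\Delta_{g_{++}}(\log\bar w)-|d\log\bar w|^2_{g_{++}})$ restricts to $s^{-2}(\Delta_{g_+}(\log w)-|d\log w|^2_{g_+})i^*$.

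Adding the two pieces yields $i^*\D(\bar g)(s)=\D(g)(s)i^*$, hence \eqref{theta-rest} upon comparing coefficients of $(s^2/4)^{N-1}/(N-1)!^2$; the constant term $N=1$ is excluded since $\D(g)(0)=P_2$ and $i^*\bar P_2\ne P_2 i^*$ in general, which matches the hypothesis $N\ge2$. I expect the main obstacle to be the careful bookkeeping in the second summand: one must check that $\log\bar w$ and its $g_{++}$-gradient-squared behave correctly under $i^*$, i.e. that differentiation in the $r$-direction (the extra coordinate distinguishing $\bar g$ from $g$) does not contribute after restriction. This is where Corollary \ref{basic-symm} and the explicit symmetry $g_{(2k,2l)}=\binom{k+l}{k}g_{(2k+2l)}$ are decisive, since they pin down the $r$-dependence of $E(\bar g)(s)$ and hence of $\bar w$ near $r=0$; the rest is the same kind of computation already carried out in Lemmas \ref{PE-d2}--\ref{base-ident} and the proof of Theorem \ref{reduce}, so I would invoke those rather than redo them. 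Finally I would remark that, by Theorem \ref{main-P}, $\vartheta_{2N}=\M_{2N}$, so Proposition \ref{rest-geom} recovers Theorem \ref{rest-prop} by a direct geometric argument independent of residue families.
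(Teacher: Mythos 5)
Your strategy — comparing the generating functions $\D(\bar g)(s)$ and $\D(g)(s)$ via Theorem \ref{double} and reading off coefficients — is a valid route and arguably more direct than the paper's own proof, and your handling of the principal part is correct. But your analysis of the zeroth-order part contains a genuine gap and, as written, an internal contradiction: you claim that $r^{-2}(\Delta_{g_{++}}(\log\bar w)-|d\log\bar w|^2_{g_{++}})$ restricts along $r=0$ to $s^{-2}(\Delta_{g_+}(\log w)-|d\log w|^2_{g_+})\,i^*$, which together with the matching principal parts would force $i^*\D(\bar g)(s)=\D(g)(s)\,i^*$ at \emph{every} power of $s$, yet you also (correctly) note that the $s^0$-coefficients differ. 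The error lies in the expectation that ``differentiation in the $r$-direction does not contribute after restriction.'' It does. Working with the form \eqref{GF-mu} of the zeroth-order term, the operator $\delta_{\bar g}(\bar g_s^{-1}d)$ applied to $\bar w(s)$ and restricted to $r=0$ equals $\delta_g(g_s^{-1}dw(s)) - \partial_r^2\big|_{r=0}\bar w(s)$: the normal block of $\bar g_s^{-1}$ is constantly $1$, but $\bar w(r,s)$, while even in $r$, is not constant in $r$, so the second normal derivative survives restriction. Moreover the coefficient of $s^{-1}\partial_s$ is $n$ for $\bar g$ (since $\dim X=n+1$) but $n-1$ for $g$. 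Neither effect is zero.

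What rescues the argument is the identity
\begin{equation*}
\partial_r^2\big|_{r=0}\,\bar w(s) \;=\; s^{-1}\partial_s w(s) - 2w_2\, w(s),
\end{equation*}
which is precisely \eqref{reduced-3} in the paper and is a consequence of Theorem \ref{double} (since $\bar w(r,s)=w(\sqrt{r^2+s^2})/w(r)$ by Corollary \ref{basic-symm}). Inserting this into $\kappa_{\bar g}(s)\big|_{r=0}-\kappa_g(s)$, where $\kappa$ denotes the scalar part of $\D$, the two offending contributions combine to the constant $2w_2=-\J/2$. A constant power series affects only the $s^0$-coefficient, so the coefficients of $(s^2/4)^{N-1}/(N-1)!^2$ agree for all $N\ge 2$, which is \eqref{theta-rest}, and this also explains cleanly why $N=1$ must be excluded. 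Once \eqref{reduced-3} is explicitly invoked, your generating-function comparison closes in one stroke. The paper's proof instead reduces to the zeroth-order terms $\vartheta_{2N}(1)$, establishes the recursive relations \eqref{rec-theta}, and runs an induction (base case Lemma \ref{theta-4-direct}, step via Lemma \ref{w-rec}); but Lemma \ref{w-rec} is \eqref{reduced-3} in disguise, so the two proofs rest on the same key computation, just packaged differently.
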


\begin{proof} Theorem \ref{double} implies that
\begin{equation}\label{g-rest}
g(0,s) = dr^2 + g(s).
\end{equation}
Hence, by the structure of the principal part of the right-hand side
of \eqref{theta-gen}, the relation \eqref{theta-rest} is equivalent
to the restriction property
\begin{equation}\label{theta-rest-c}
i^* \bar{\vartheta}_{2N}(1) = \vartheta_{2N}(1) \quad \mbox{for $N
\ge 2$}
\end{equation}
of the zeroth-order terms. A second consequence of
Eq.~\eqref{g-rest} is that the curvature quantities
$$
v(r,s) = \frac{vol(g(r,s))}{vol(g(r,0))} =
\frac{vol(g(r,s))}{vol(dr^2\!+\!g(r))} \quad \mbox{and} \quad v(r) =
\frac{vol(g(r))}{vol(g)}
$$
satisfy the relation
\begin{equation*}
v(0,s) = v(s).
\end{equation*}
Hence we also have
\begin{equation}\label{w-rest}
w(0,s) = w(s).
\end{equation}
Now the arguments in the proof of Theorem \ref{GF} given in Section
\ref{proof-1} (page \pageref{basic-argument}) prove that the
operators $\vartheta_{2N}$ satisfy the recursive relations
\begin{equation}\label{rec-theta}
\sum_{k=0}^{N-1} \left(2^k \frac{(N\!-\!1)!}{(N\!-\!1\!-\!k)!}
\right)^2 \vartheta_{2N-2k} (w_{2k}) = \left(\f\!-\!N\right)
(N\!-\!1)! N! 2^{2N} w_{2N}
\end{equation}
(see \eqref{reduced-w}). We apply these relations to prove
\eqref{theta-rest-c} using induction on $N$. First of all, Lemma
\ref{theta-4-direct} gives $i^* \bar{\vartheta}_4(1) =
\vartheta_4(1)$. Now assume that we have proved
$$
i^* \bar{\vartheta}_{2k}(1) = \vartheta_{2k}(1) \quad \mbox{for
$k=1,\dots,N$}.
$$
Then, by arguments as above, we also have
\begin{equation}\label{ind-assume}
i^* \bar{\vartheta}_{2k} = \vartheta_{2k}i^* \quad \mbox{for
$k=1,\dots,N$}.
\end{equation}
But these properties together with the relation
\begin{equation}\label{w-rec-gen}
i^* \bar{P}_2 (\bar{w}_{2N}) - P_2 (w_{2N}) = (2N\!+\!2) w_{2N+2}
\end{equation}
(see Lemma \ref{w-rec}) imply
$$
i^* \bar{\vartheta}_{2N+2}(1) = \vartheta_{2N+2}(1).
$$
In fact, by taking the difference of \eqref{rec-theta} for $g$ and
$\bar{g}$ (and $N$ replaced by $N\!+\!1$) and using $i^*
\bar{w}_{2N+2} = w_{2N+2}$ (see \eqref{w-rest}) and
\eqref{ind-assume}, we obtain
\begin{multline*}
i^* \bar{\vartheta}_{2N+2}(1) - \vartheta_{2N+2} (1) + (2^N N!)^2
(i^* \bar{P}_2(w_{2N}) - P_2(w_{2N})) = N! (N\!+\!1)! 2^{2N+1}
w_{2N+2}.
\end{multline*}
Now \eqref{w-rec-gen} shows that
$$
i^* \bar{\vartheta}_{2N+2}(1) = \vartheta_{2N+2}(1).
$$
This completes the induction.
\end{proof}

In order to complete the proof of Proposition \ref{rest-geom}, it
remains to prove the following two results.

\begin{lemm}\label{theta-4-direct} $i^* \bar{\vartheta}_4 = \vartheta_4$
\end{lemm}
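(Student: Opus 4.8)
The plan is to verify the claimed identity $i^* \bar{\vartheta}_4 = \vartheta_4$ by computing both sides explicitly from the defining relation \eqref{theta-gen}, using that $\vartheta_4$ is (at most) second order and that its principal part and zeroth-order term are both pinned down by quantities we already control. First I would recall that expanding the right-hand side of \eqref{GF-form} in powers of $r$ gives, at order $r^2$,
\begin{equation*}
\frac{1}{4}\vartheta_4(g) = -\delta\big((g_r^{-1})_{(2)}\,d\big) + (\text{zeroth-order term}),
\end{equation*}
and since $(g_r)_{(2)} = -\Rho$ we have $(g_r^{-1})_{(2)} = \Rho$, so the principal part of $\vartheta_4(g)$ is $-4\delta(\Rho\, d)$. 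For the zeroth-order term, \eqref{rec-theta} with $N=2$ reads $\vartheta_4(1) + 4\,P_2(w_2) = 2!\,2^4\left(\f-2\right) w_4$, which together with $2w_2 = v_2 = -\tfrac12\J$ and the explicit formula for $w_4$ (equivalently $8w_4 = 4v_4 - v_2^2$, $v_4 = \tfrac18(\J^2 - |\Rho|^2)$) yields a closed formula for $\vartheta_4(1)$ as a curvature invariant of $g$ (in fact $\vartheta_4(1) = -\J^2 - (n-2)|\Rho|^2 + \Delta\J$, matching $\mu_4$ up to the dimension-shift bookkeeping). Thus $\vartheta_4(g) = -4\delta(\Rho\, d) + \vartheta_4(1)$ with $\vartheta_4(1)$ an explicit polynomial in $\J, |\Rho|^2, \Delta\J$.

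Next I would compute $i^*\bar{\vartheta}_4$, i.e. apply the same formula to the metric $\bar g = dr^2 + g_r$ on $X = M\times[0,\varepsilon)$ and then restrict to $r=0$. The two ingredients are: (i) the Schouten tensor $\bar\Rho$ of $\bar g$, for which Lemma 6.11.2 of \cite{juhl-book} (recalled as \eqref{H2}) gives $\bar\Rho = \Rho(dr^2 + g_r) = -\tfrac{1}{2r}(\partial/\partial r)(g_r)$, so that $i^*\bar\Rho$, acting on one-forms, is block-diagonal with the $\partial_r$-block governed by $\bar\Rho_{rr}$ and the tangential block equal to $\Rho_g$; the key point is that $-\delta_{\bar g}(\bar\Rho\, d)$ restricted to $r=0$ and acting on functions on $M$ equals $-\delta_g(\Rho_g\, d)$ because the $dr^2$-direction contributes only $\partial_r$-derivatives which are killed after applying $i^*$ to a function pulled back from $M$ (more precisely one uses $i^*\partial_r = 0$ on such functions, or the restriction property of Lemma \ref{PE-rest}); and (ii) $i^*\bar\J = \J$ together with $i^*\bar w_{2} = w_2$ (from \eqref{w-rest}), which makes $i^*\bar\vartheta_4(1)$ the same curvature polynomial in $\J, |\Rho|^2, \Delta\J$ as $\vartheta_4(1)$. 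The cleanest organization is to invoke the general structural input already in the excerpt: the principal part of $\bar\vartheta_4$ is $-4\delta_{\bar g}(\bar\Rho\, d)$ and Lemma \ref{PE-rest} gives $i^*\delta_{\bar g}(\bar\Rho\, d) = \delta_g(\Rho\, d) i^*$ exactly as in the proof of Theorem \ref{reduce}, while for the zeroth-order term one reruns the $N=2$ case of the induction in the proof of Proposition \ref{rest-geom} — but since that induction is what we are feeding this lemma into, the base case must be done by hand, which is the present computation.

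The main obstacle is purely the zeroth-order bookkeeping: one must check that the curvature invariant $\vartheta_4(1)$, when formed from $\bar g$ on the $(n+1)$-dimensional space and restricted to $r=0$, reproduces the $n$-dimensional invariant $\vartheta_4(1)$ rather than its naive "$n\mapsto n+1$" analogue. This works because \eqref{rec-theta} has a uniform (dimension-free) shape in which the only $n$-dependence sits in the single explicit factor, and the relevant cancellation is exactly the content of \eqref{w-rec-gen} ($i^*\bar P_2(\bar w_2) - P_2(w_2) = 4 w_4$) combined with $i^*\bar\J = \J$, $i^*\bar w_2 = w_2$, $i^*\bar w_4 = w_4$. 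Concretely I would take the difference of \eqref{rec-theta} at $N=2$ for $g$ and for $\bar g$, substitute $i^*\bar w_2 = w_2$ and $i^*\bar w_4 = w_4$, and reduce to
\begin{equation*}
i^*\bar\vartheta_4(1) - \vartheta_4(1) + 4\big(i^*\bar P_2(w_2) - P_2(w_2)\big) = 0,
\end{equation*}
after which \eqref{w-rec-gen} (Lemma \ref{w-rec}) with $N=1$ forces $i^*\bar\vartheta_4(1) = \vartheta_4(1)$. Combined with the principal-part matching via Lemma \ref{PE-rest}, this gives $i^*\bar\vartheta_4 = \vartheta_4$, completing the proof. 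The only genuinely computational step — verifying \eqref{w-rec-gen} for $N=1$ and the principal-part restriction — is elementary and can be cited from the lemmas already invoked in the excerpt.
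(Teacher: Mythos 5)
Your route is genuinely different from the paper's, and — modulo two arithmetic slips — it works. The paper proves the lemma by computing $\vartheta_4(1)$ explicitly as a curvature scalar ($\vartheta_4(1)=\mu_4=-\J^2-(n-4)|\Rho|^2+\Delta\J$) and then verifying the restriction identity directly using the facts $i^*\bar\J=\J$, $\bar\Rho|_{r=0}=\begin{pmatrix}0&0\\0&\Rho\end{pmatrix}$ and $i^*(\partial^2/\partial r^2)(\bar\J)=|\Rho|^2$; this is a hands-on curvature check. You instead sidestep the explicit curvature formula entirely and run the same algebraic mechanism used in the inductive step of Proposition \ref{rest-geom}: take the difference of \eqref{rec-theta} at $N=2$ for $g$ and $\bar g$, use $i^*\bar{w}_2=w_2$, $i^*\bar{w}_4=w_4$, and invoke Lemma \ref{w-rec}. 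That is cleaner and more uniform, and it correctly explains why the base case cannot be bootstrapped from the induction alone (the operator $\vartheta_2=P_2$ does \emph{not} satisfy $i^*\bar P_2=P_2 i^*$; the nonzero discrepancy $i^*\bar P_2(\bar w_2)-P_2(w_2)=4w_4$ is exactly what balances the dimension shift in the right-hand side of \eqref{rec-theta}).

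Two corrections are needed. First, your explicit formula should read $\vartheta_4(1)=-\J^2-(n-4)|\Rho|^2+\Delta\J$, not $(n-2)|\Rho|^2$; this does not affect your main argument since you never actually use the closed form, but it should be fixed. Second, and more importantly, the displayed equation is wrong: subtracting \eqref{rec-theta} at $N=2$ for $g$ from its $\bar g$-analogue and applying $i^*$ gives
\begin{equation*}
i^*\bar\vartheta_4(1)-\vartheta_4(1)+4\bigl(i^*\bar P_2(\bar w_2)-P_2(w_2)\bigr)=32\Bigl(\tfrac{n+1}{2}-2\Bigr)w_4-32\Bigl(\tfrac{n}{2}-2\Bigr)w_4=16\,w_4,
\end{equation*}
not $0$. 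Substituting $i^*\bar P_2(\bar w_2)-P_2(w_2)=4w_4$ from Lemma \ref{w-rec} then yields $16w_4$ on both sides, whence $i^*\bar\vartheta_4(1)=\vartheta_4(1)$. As written, with RHS $0$, the conclusion would be $i^*\bar\vartheta_4(1)-\vartheta_4(1)=-16w_4$, which is not what you want. Once the right-hand side is corrected, your argument is a valid alternative to the paper's direct curvature computation.
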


\begin{proof} A direct calculation of the coefficient of $r^2$ on the
right-hand side of \eqref{theta-gen} shows that
$$
\vartheta_4 = (n\!-\!4) |\Rho|^2 + \J^2 - \Delta \J
$$
(see \eqref{mu4-ex}). Thus, the assertion is equivalent to
\begin{equation}\label{rp-mu4}
(n\!-\!3) i^* |\bar{\Rho}|^2 + i^* \bar{\J}^2 - i^* \bar{\Delta}
\bar{\J} = (n\!-\!4) |\Rho|^2 + \J^2 - \Delta \J.
\end{equation}
But
$$
i^* \bar{\J} = \J, \quad \bar{\Rho}|_{r=0} = \begin{pmatrix} 0 & 0 \\
0 & \Rho \end{pmatrix} \quad \mbox{and} \quad i^*
(\partial^2/\partial r^2)(\bar{\J}) = |\Rho|^2
$$
(for detailed proofs see \cite{juhl-book}, Section 6.11) imply that
$$
i^* |\bar{\Rho}|^2 = |\Rho|^2 \quad \mbox{and} \quad i^*
\bar{\Delta} \bar{\J} - \Delta \J = |\Rho|^2.
$$
This proves \eqref{rp-mu4}.
\end{proof}

\begin{lemm}\label{w-rec} For $N \ge 1$, we have
\begin{equation}\label{w-rec-flat}
i^* \bar{P}_2 (\bar{w}_{2N-2}) - P_2 (w_{2N-2}) = 2N w_{2N},
\end{equation}
where $P_2$ and $\bar{P}_2$ denote the respective Yamabe-operators
of $g$ and $\bar{g}$.
\end{lemm}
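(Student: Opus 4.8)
The plan is to prove Lemma~\ref{w-rec} by a direct computation, using the structure of the Poincar\'e--Einstein metric $g_{++}$ relative to $\bar g$ provided by Theorem~\ref{double}. The key point is that $w(r,s)$, the square root of the volume coefficient of $g_{++}$ relative to $\bar g(r)=dr^2+g_r$, satisfies the identity
$$
\frac{\partial^2}{\partial r^2}\Big|_{r=0}(w(r,s)) = \frac{1}{s}\frac{\partial w(s)}{\partial s} - 2 w_2 w(s),
$$
which is exactly the first part of Lemma~\ref{l3} (rewritten via $\tilde w$), and which gives, on comparing coefficients of $s^{2N-2}$,
$$
i^*\frac{\partial^2}{\partial r^2}(\bar w_{2N-2}) = 2N\, w_{2N} - 2 w_2 w_{2N-2}
$$
(this is Eq.~\eqref{reduced-2}). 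So the content of \eqref{w-rec-flat} is to relate $i^*\bar P_2(\bar w_{2N-2})$ to $i^*\partial_r^2(\bar w_{2N-2})$ and lower-order curvature terms.

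First I would expand $\bar P_2 = \bar\Delta - \tfrac{n-1}{2}\bar\J$ using the formula \eqref{Laplace-bar},
$$
\Delta_{\bar g} = \frac{\partial^2}{\partial r^2} + \frac{1}{2}\tr\!\left(\frac{\partial g_r/\partial r}{g_r}\right)\frac{\partial}{\partial r} + \Delta_{g_r},
$$
and apply it to $\bar w_{2N-2}$, then restrict to $r=0$. Since $\bar w_{2N-2}$ is independent of $r$ (it is a function on $M$), the term $\tfrac12\tr(\dot g_r/g_r)\,\partial_r$ annihilates it, and $i^*\Delta_{g_r}(\bar w_{2N-2}) = \Delta_g(w_{2N-2})$ because $i^*\bar w_{2N-2} = w_{2N-2}$ and $i^*\Delta_{g_r} = \Delta_g i^*$. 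Using $i^*\bar\J = \J$ this yields
$$
i^*\bar P_2(\bar w_{2N-2}) = i^*\frac{\partial^2}{\partial r^2}(\bar w_{2N-2}) + \Delta_g(w_{2N-2}) - \frac{n-1}{2}\J\, w_{2N-2}.
$$
Subtracting $P_2(w_{2N-2}) = \Delta_g(w_{2N-2}) - \tfrac{n-2}{2}\J\,w_{2N-2}$ then gives
$$
i^*\bar P_2(\bar w_{2N-2}) - P_2(w_{2N-2}) = i^*\frac{\partial^2}{\partial r^2}(\bar w_{2N-2}) - \frac{1}{2}\J\, w_{2N-2}.
$$
Finally, substituting Eq.~\eqref{reduced-2} and using $\J = -4 w_2$ (equivalently $\mu_2 = -(\tfrac n2-1)Q_2$ with $2w_2 = v_2$, $v_2 = -\tfrac12\J$), the right-hand side becomes $2N\,w_{2N} - 2w_2 w_{2N-2} + 2 w_2 w_{2N-2} = 2N\,w_{2N}$, which is the claim.

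The main obstacle I anticipate is bookkeeping rather than conceptual: one must be careful that $\bar w_{2N-2}$, as a coefficient of $\bar w(r)=w(dr^2+g_r)$, genuinely restricts to $w_{2N-2}$ under $i^*$ — this follows from $\bar g(r)|_{r=0} = g$ and Theorem~\ref{double}, but one should state it cleanly — and that the cross term $\tfrac12\tr(\dot g_r/g_r)\partial_r$ really does drop out because the operator $\bar P_2$ is applied to a function pulled back from $M$. It is also worth noting that the computation is consistent with, and in fact provides an independent check of, the recursive relation for $\vartheta_{2N}$ used in Proposition~\ref{rest-geom}; one could alternatively phrase the whole argument directly in terms of the generating functions $w(r)$ and $w(r,s)$ as in Lemma~\ref{l3}, but the coefficient-wise version above is the most transparent.
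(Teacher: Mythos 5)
Your argument is essentially the paper's own proof, spelled out coefficient-by-coefficient rather than at the level of generating functions: the paper reduces \eqref{w-rec-flat} to \eqref{reduced-3} after a calculation that it labels ``easy,'' and you have made that calculation explicit by expanding $\bar P_2$ via \eqref{Laplace-bar} and invoking \eqref{reduced-2}. The final formula you reach is correct, but there is a genuine error in the reasoning used to suppress the cross term, and as stated your justification would also kill a term that you need.

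The problem is the claim that ``$\bar w_{2N-2}$ is independent of $r$ (it is a function on $M$).'' It is not: $\bar w_{2N-2}$ is a curvature invariant of the metric $\bar g(r)=dr^2+g_r$, equivalently the coefficient of $s^{2N-2}$ in $w(r,s)$, and it genuinely depends on $r$. Indeed, if $\bar w_{2N-2}$ were $r$-independent then the term $i^*\partial_r^2(\bar w_{2N-2})$ that you retain in the next display would vanish identically, \eqref{reduced-2} would read $0 = 2N\,w_{2N} - 2w_2w_{2N-2}$, and the whole computation would collapse. So the stated reason for discarding the cross term is not just imprecise, it is incompatible with the rest of the argument.

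The correct reason the cross term drops out is different. Writing it as $\tfrac12\tr(\dot g_r/g_r)\,\partial_r(\bar w_{2N-2})$ and restricting to $r=0$, one has $\dot g_r|_{r=0}=0$ because $g_r$ is an even power series in $r$, so $\tr(\dot g_r/g_r)|_{r=0}=0$; independently, $\bar w_{2N-2}$ is even in $r$ for the same reason, so $\partial_r(\bar w_{2N-2})|_{r=0}=0$ as well. Either observation kills the cross term at $r=0$, but neither has anything to do with $\bar w_{2N-2}$ being a function on $M$. With that correction, the intermediate identity
\begin{equation*}
i^*\bar P_2(\bar w_{2N-2}) = i^*\frac{\partial^2}{\partial r^2}(\bar w_{2N-2}) + \Delta_g(w_{2N-2}) - \frac{n-1}{2}\,\J\,w_{2N-2}
\end{equation*}
is justified, and the remainder of the argument — subtracting $P_2(w_{2N-2})$, substituting \eqref{reduced-2}, and using $\J=-4w_2$ — goes through exactly as you wrote it.
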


\begin{proof} We reformulate the identities \eqref{w-rec-flat}
in the form
$$
i^* \bar{P}_2(w(r,s)) - P_2(w(s)) = s^{-1} \partial w(s)/\partial s.
$$
Now an easy calculation using $i^* \bar{\J} = \J$ and $w(0,s) =
w(s)$ (see \eqref{w-rest}) shows that this identity is equivalent to
\eqref{reduced-3}. The proof is complete.
\end{proof}

\subsection{$\M_{2N}$ and $\bar{\M}_{2N}$ for round spheres}

We determine explicit summation formulas for the operators $\M_{2N}$
and $\bar{\M}_{2N}$ for round spheres $\S^n$. First, we recall the
summation formula (\cite{juhl-power}, Theorem 6.1)
$$
\M_{2N} = N!(N\!-\!1)! P_2, \; N \ge 1.
$$
Next, we have
$$
\bar{g} = dr^2 + \left(1\!-\!r^2/4\right)^2 g_{\S^n},
$$
and Theorem \ref{main-P} implies the following result.

\begin{prop} We have
\begin{equation}\label{sum-form}
\bar{\M}_{2N} = (N\!-\!1)! N! (1\!-\!r^2/4)^{-N-1} P_2(\S^n)
\end{equation}
for $N \ge 2$.
\end{prop}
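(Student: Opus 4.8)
The plan is to apply Theorem~\ref{main-P} to the conformal compactification $\bar{g} = dr^2 + (1-r^2/4)^2 g_{\S^n}$ of the Poincar\'e-Einstein metric $g_+$ of the round sphere $\S^n$. First I would observe that $\bar{g}$ is a metric on the $(n+1)$-dimensional space $X = \S^n \times [0,\varepsilon)$, and that, by Theorem~\ref{double}, the metric $g_{++}$ of the form \eqref{aston} is a Poincar\'e-Einstein metric relative to $\bar{g}$. Thus all the hypotheses of Theorem~\ref{main-P} are available for $\bar{g}$ in dimension $n+1$ (even though $n+1$ may be even, since $\S^n$ is locally conformally flat the construction of GJMS-operators, residue families and hence of $\bar{\M}_{2N}$ is unobstructed for all $N$). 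Applying \eqref{GF-form} for $\bar{g}$, one obtains $\bar{\H}(s) = -\delta_{\bar{g}}(E(\bar{g})(s)^{-1} d) - s^{-2}(\Delta_{g_{++}}(\log \bar{w}) - |d\log \bar{w}|^2_{g_{++}})$, where $\bar{w}(s) = \sqrt{v(\bar{g})(s)}$ is the square root of the renormalized volume coefficient generating function of $\bar{g}$.

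The key computational step is to identify $E(\bar{g})(s)$ and $\bar{w}(s)$ explicitly. By Theorem~\ref{double} (or by the closed formula \eqref{aston}), the Poincar\'e-Einstein metric relative to $\bar{g}$ is $g_{++} = s^{-2}(ds^2 + g(r,s))$ with $g(r,s) = dr^2 + \sum_{k\ge 0}(s^2+r^2)^k g_{(2k)}$; for $\S^n$ one has $g_r = (1-r^2/4)^2 g_{\S^n}$ more precisely the even expansion has $g_{(2k)} = (-1/4)^k \binom{?}{?}$-type coefficients so that $\sum_{k\ge 0} t^k g_{(2k)} = (1-t/4)^2 g_{\S^n}$ in the variable $t$. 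Hence $g(r,s) = dr^2 + (1-(r^2+s^2)/4)^2 g_{\S^n}$, and as an endomorphism on one-forms relative to $\bar{g}$ one computes $E(\bar{g})(s) = \mathrm{diag}\big(1, (1-(r^2+s^2)/4)^2/(1-r^2/4)^2\big)$, which at a general point of $X$ is a scalar on the $g_{\S^n}$-directions and $1$ in the $dr$-direction. Correspondingly $\bar{w}(s)$, computed as the ratio of volume forms, equals $(1-(r^2+s^2)/4)^{n/2}/(1-r^2/4)^{n/2}$. Substituting these into the right-hand side of \eqref{GF-form} for $\bar{g}$ and simplifying (using that $P_2(\S^n) = \Delta - \tfrac n2(\tfrac n2 - 1)$ and the analogous scalar computation as in Example~\ref{sphere}) should collapse the generating function to $\bar{\H}(s) = \big(\sum_{N\ge 1} \text{(coeff)}\, s^{2N-2}\big)\,(1-r^2/4)^{-2} P_2(\S^n)$, from which, matching against the definition \eqref{def-H} of the generating function, one reads off $\bar{\M}_{2N} = (N-1)!N!(1-r^2/4)^{-N-1} P_2(\S^n)$.

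Two consistency checks I would build into the argument: the restriction property (Theorem~\ref{rest-prop}) forces $i^*\bar{\M}_{2N} = \M_{2N} i^* = N!(N-1)! P_2(\S^n) i^*$ for $N\ge 2$, which matches the claimed formula at $r=0$; and $\bar{\M}_2 = \bar{P}_2$, whose explicit form for $\bar{g} = dr^2 + (1-r^2/4)^2 g_{\S^n}$ can be checked directly against $(1-r^2/4)^{-2}P_2(\S^n)$ modulo the correction from $\bar{w}$, explaining why the formula \eqref{sum-form} is stated only for $N\ge 2$ (the $N=1$ case carries an extra zeroth-order term coming from $\bar{w}$ not being constant on $X$, unlike $w$ on $\S^n$). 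The main obstacle I anticipate is purely bookkeeping: correctly tracking the zeroth-order (scalar) contributions of the $-s^{-2}(\Delta_{g_{++}}\log\bar{w} - |d\log\bar{w}|^2_{g_{++}})$ term and confirming that, after combining with the principal part $-\delta_{\bar g}(E(\bar g)(s)^{-1}d)$, everything reorganizes as a single scalar multiple of $(1-r^2/4)^{-2}P_2(\S^n)$ times a power series in $s^2$ whose coefficients are exactly $(N-1)!N!/(N-1)!^2 \cdot 4^{-(N-1)}$ rescaled appropriately — i.e.\ verifying the scalar identity is of the same flavor as \eqref{sum-round} in the preceding remark and reduces to a classical binomial summation.
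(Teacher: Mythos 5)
Your proposal follows essentially the same route as the paper: apply Theorem~\ref{double} to write $g(r,s)=dr^2+(1-(r^2+s^2)/4)^2 g_{\S^n}$, feed this and $\bar{w}(s)=(1-(r^2+s^2)/4)^{n/2}(1-r^2/4)^{-n/2}$ into \eqref{GF-form}, and read off the coefficients of $(s^2/4)^{N-1}$ using the series identity $(1-(r^2+s^2)/4)^{-2}=\sum_{N\ge 1}N(1-r^2/4)^{-N-1}(s^2/4)^{N-1}$, which is exactly what the paper does. The only small imprecision is your explanation of the $N\ge 2$ restriction: the obstruction at $N=1$ is not only the zeroth-order correction from $\bar{w}$ but, more visibly, the $\partial^2/\partial r^2$ and $r$-derivative terms of $\Delta_{\bar{g}}$ that sit at the $s^0$ level of the principal part and disappear from the higher coefficients because the $dr^2$ block of $g(r,s)$ is $s$-independent.
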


\begin{proof} Theorem \ref{double} gives
$$
g(r,s) = dr^2 + (1\!-\!(r^2\!+\!s^2)/4)^2 g_{\S^n}.
$$
Now Theorem \ref{main-P} and the identity
\begin{equation}\label{series}
\left(1\!-\!\frac{r^2\!+\!s^2}{4}\right)^{-2} = \sum_{N \ge 1} N
\left(1\!-\!\frac{r^2}{4}\right)^{-N-1}\left(\frac{s^2}{4}\right)^{N-1}
\end{equation}
show that for $N \ge 2$ the principal part of $\bar{\M}_{2N}$ equals
$$
(N\!-\!1)! N! \left(1\!-\!r^2/4\right)^{-N-1} \Delta_{\S^n}.
$$
Moreover, using
$$
w(r,s) = (1\!-\!(r^2\!+\!s^2)/4)^\f (1\!-\!r^2/4)^{-\f}
$$
and
$$
-\delta_{\bar{g}} (g(r,s)^{-1}d) = \frac{\partial^2}{\partial r^2}
-\f r \left(1\!-\!\frac{r^2}{4}\right)^{-1}\frac{\partial}{\partial
r} + \left(1\!-\!\frac{r^2\!+\!s^2}{4}\right)^{-2} \Delta_{\S^n},
$$
a straightforward calculation shows that the potential of the
second-order operator on the right-hand side of \eqref{GF-form}
equals
\begin{equation}\label{pot}
-\left(1\!-\!\frac{r^2\!+\!s^2}{4}\right)^{-2} \f
\left(\f\!-\!1\right) + \left(1\!-\!\frac{r^2}{4}\right)^{-2} \f
\left( \frac{n\!-\!1}{2} \frac{r^2}{4} - \frac{1}{2}\right).
\end{equation}
Now Eq.~\eqref{series} and Eq.~\eqref{pot} prove the assertion for
the zeroth-order terms in \eqref{sum-form}. \end{proof}

Note that the potential \eqref{pot} restricts to
$$
-\f \frac{n\!-\!1}{2} \left(1\!-\!\frac{r^2}{4}\right)^{-1} = -
\frac{n\!-\!1}{2} \bar{\J}
$$
for $s=0$. Here we use that
$$
\bar{\J} = \J (\bar{g}) = - \frac{1}{2r} \tr
\left(\frac{\dot{g}_r}{g_r}\right) = \f \left( 1\!-\!\frac{r^2}{4}
\right)^{-1}.
$$
Note also that the formula \eqref{sum-form} confirms the restriction
property and the commutator relations in that special case.

\section{Appendix}\label{app}

In this appendix, we collect a number of important formulas and give
the details of some central calculations. In particular, we give
direct proofs of some low-order special cases of the symmetry
relation $g_{(2a,2b)} = g_{(2b,2a)}$ (see Eq.~\eqref{sy}), and of
the relation between $g_{(4,4)}$ and $g_{(8)}$. Moreover, we extract
from these considerations an improved version of an algorithm of
\cite{G-ext}.

\subsection{The operators $\M_{2N}$ for $N \le 5$}\label{averages}

In the present section, we display the explicit definitions of the
low-order cases $N \le 5$ of the operators $\M_{2N}$. These
operators are given by the formulas
\begin{equation}\label{M2-def}
\M_2 = P_2,
\end{equation}
\begin{equation}\label{M4-def}
\M_4 = P_4 - P_2^2,
\end{equation}
\begin{equation}\label{M6-def}
\M_6 = P_6 - (2 P_2 P_4 + 2 P_4 P_2) + 3 P_2^3,
\end{equation}
\begin{equation}\label{M8-def}
\M_8 = P_8 - (3 P_2 P_6 + 3 P_6 P_2 + 9 P_4^2) + (12 P_2^2 P_4 + 12
P_4 P_2^2 + 8 P_2 P_4 P_2) - 18 P_2^4
\end{equation}
and
\begin{multline}\label{M10-def}
\M_{10} = P_{10} - (4 P_2 P_8 + 4 P_8 P_2 + 24 P_4 P_6 + 24 P_6 P_4)
\\ + (60 P_2 P_4^2 + 60 P_4^2 P_2 + 30 P_2^2 P_6 + 30 P_6 P_2^2 + 15
P_2 P_6 P_2 + 80 P_4 P_2 P_4) \\- (120 P_2^3 P_4 + 120 P_4 P_2^3 +
80 P_2^2 P_4 P_2 + 80 P_2 P_4 P_2^2) + 180 P_2^5.
\end{multline}
The sums in \eqref{M6-def}, \eqref{M8-def} and \eqref{M10-def}
involve $4$, $8$ and $16$ terms, respectively. In these formulas,
the parentheses contain the respective contributions with the same
number of factors.

\subsection{The restriction property of $g_{(4)}$}\label{g-4}

Here we prove that
$$
g_{(4)} = \frac{1}{4} \left(\Rho^2 \!-\! \frac{\B}{n\!-\!4}\right)
$$
satisfies the restriction property
\begin{equation}\label{rest-g4}
\bar{g}_{(4)}|_{r=0} = g_4.
\end{equation}
In other words, we prove the symmetry relation $g_{(4,0)} =
g_{(0,4)}$. First of all,
$$
\bar{\Rho}|_{r=0} = \Rho,
$$
i.e., $g_{(2,0)} = g_{(0,2)}$ (see Lemma \ref{schouten-bar}) shows
that the relation \eqref{rest-g4} is equivalent to the restriction
property
\begin{equation}\label{bach-rest}
(n\!-\!4) \B(dr^2\!+\!g_r)|_{r=0} = (n\!-\!3) \B(g)
\end{equation}
of the Bach tensor. In order to prove \eqref{bach-rest}, we use the
transformation rule
\begin{equation}\label{bach-conform}
e^{2\varphi} \hat{\B}_{ij} = \B_{ij} - (n\!-\!4) (C_{ikj} + C_{jki})
\varphi^k + (n\!-\!4) W_{irsj} \varphi^r \varphi^s,
\end{equation}
where $C$ and $W$ denote the Cotton and Weyl tensor,
respectively.\footnote{Here we use the conventions of
\cite{juhl-book}. In particular, the curvature tensor $R$ and the
Weyl tensor $W$ have opposite signs as in \cite{FG-final}. Moreover,
we define the Cotton tensor by $C(X,Y,Z) = \nabla_X(\Rho)(Y,Z) -
\nabla_Y(\Rho)(X,Z)$.} Since $r \partial/\partial r$ is dual to
$d\log r = dr/r$, \eqref{bach-conform} implies that
\begin{multline*}
r^2 \B(dr^2\!+\!g_r)(X,Y) = \B(g_+)(X,Y) \\ - (n\!-\!3) (C(g_+)(X,r
\partial/\partial r,Y) + C(g_+)(Y,r\partial/\partial r,X)) \\ +
(n\!-\!3) W(g_+) (X,r\partial/\partial r,r\partial/\partial r,Y).
\end{multline*}
But the Bach tensor and the Cotton tensor of the Einstein metric
$g_+$ vanish. Thus, we find
$$
\B(dr^2\!+\!g_r)(X,Y) = (n\!-\!3) W(g_+)(X,\partial/\partial
r,\partial/\partial r,Y).
$$
Hence the assertion
$$
i^* \B(dr^2\!+\!g_r) = (n\!-\!3)/(n\!-\!4) \B(g)
$$
follows from the fact that (on $r=0$) the Weyl-curvature term yields
$\B(g)/(n-4)$. In order to prove this, we apply the decomposition $R
= W - \Rho \owedge g$ for $g_+$. Using $\Rho(g_+) = -\frac{1}{2}
g_+$, we find
$$
W(g_+)(X, \partial/\partial r, \partial/\partial r,Y) =
R(g_+)(X,\partial/\partial r, \partial/\partial r,Y) + \frac{1}{r^2}
g_+(X,Y)
$$
for $X,Y \in \mathfrak{X}(M)$. Now a routine calculation
\cite{diplom-fisch} shows that
$$
R(g_+)(\cdot,\partial/\partial r, \partial/\partial r,\cdot) = -
\frac{1}{r^2} g_+(\cdot,\cdot) + \tilde{R}(\cdot,\partial/\partial
\rho,\partial/\partial \rho,\cdot) |_{t=1,\rho=-r^2/2}.
$$
Here $\tilde{R}$ is the curvature of the ambient metric
$$
\tilde{g} = 2 t dt d\rho + 2\rho dt^2 + t^2 g(\rho), \quad g(0) = g
$$
of $g$, and the dots indicate vector fields in $\mathfrak{X}(M)$.
Hence
$$
W(g_+)(\cdot, \partial/\partial r, \partial/\partial r,\cdot) =
\tilde{R}(\cdot,\partial/\partial \rho,\partial/\partial
\rho,\cdot)|_{t=1,\rho=-r^2/2},
$$
i.e.,
\begin{equation}\label{Omega-R}
- \bar{\Omega}^{(1)}(X,Y) = \tilde{R}(X,\partial/\partial
\rho,\partial/\partial \rho,Y) |_{t=1,\rho=-r^2/2}, \; X,Y \in
\mathfrak{X}(M),
\end{equation}
where $\Omega^{(1)} = -\B/(n\!-\!4)$ is the first extended
obstruction tensor \cite{G-ext}. Therefore, the identity
$$
\tilde{R}(\cdot,\partial/\partial \rho,\partial/\partial
\rho,\cdot)|_{t=1,\rho=0} = \B/(n\!-\!4) = - \Omega^{(1)}
$$
(see \cite{FG-final}) proves the assertion. Finally,
$$
\B(dr^2\!+\!g_r)(X,\partial/\partial r) = (n\!-\!3)
W(g_+)(X,\partial/\partial r,\partial/\partial r,\partial/\partial
r) = 0
$$
shows that the restriction of $\B(dr^2+g_r)$ to $r=0$ has no
non-trivial normal components. This completes the proof of
\eqref{bach-rest}.

\subsection{The symmetry relation $g_{(4,2)} = g_{(2,4)}$}\label{g-24}

In the present section, we prove the symmetry relation
\begin{equation}\label{partial-symm}
g_{(4,2)} = g_{(2,4)}.
\end{equation}
We consider the tensor
\begin{equation}\label{bar-c4}
\frac{1}{4} \left(\bar{\Rho}^2 \!-\! \frac{\bar{\B}}{n\!-\!3}\right)
\end{equation}
as a function of $r$. By the above results, the constant term of its
Taylor series is
$$
\frac{1}{4} \left(\Rho^2 \!-\! \frac{\B}{n\!-\!4}\right).
$$
We determine its Taylor coefficient $g_{(2,4)}$ of $r^2$. Using
\begin{equation}\label{P-full}
\bar{\Rho} = - \frac{1}{2r} \dot{g_r} = \Rho - \frac{r^2}{2}
\left(\Rho^2 \!-\! \frac{\B}{n\!-\!4}\right) + \cdots
\end{equation}
(see Lemma \ref{schouten-bar}), we find that $\bar{\Rho}^2$ is given
by
\begin{multline*}
\bar{\Rho}_{ir} \bar{g}^{rk} \bar{\Rho}_{kj} = \left(\Rho_{ir} -
\frac{r^2}{2} \left(\Rho^2_{ir} \!-\! \frac{\B_{ir}}{n\!-\!4}
\right) + \cdots \right)
\left( g^{rk} + r^2 \Rho^{rk} + \cdots \right) \\
\times \left(\Rho_{kj} - \frac{r^2}{2} \left(\Rho^2_{kj} \!-\!
\frac{\B_{kj}}{n\!-\!4} \right) + \cdots\right).
\end{multline*}
The Taylor coefficient of $r^2$ of the latter product is the sum of
$$
\Rho_{ir} \Rho^{rk} \Rho_{kj} = (\Rho^3)_{ij}
$$
and
$$
- (\Rho^3)_{ij} + \frac{1}{2(n\!-\!4)} (\B_{ir} \Rho_j^r + \Rho_i^r
\B_{rj}).
$$
Hence, in terms of $\Omega^{(1)} = - \B/(n\!-\!4)$, the Taylor
coefficient of $r^2$ of $\bar{\Rho}^2/4$ is given by
\begin{equation}\label{part-1}
-\frac{1}{8} \left( \Rho^r_i \Omega_{jr}^{(1)} + \Rho^r_j
\Omega_{ir}^{(1)}\right).
\end{equation}
Next, we use the relation
\begin{equation}\label{bach-bar}
\frac{\bar{\B}}{n\!-\!3} =
\tilde{R}(\cdot,e_\infty,e_\infty,\cdot)|_{t=1,\rho=-r^2/2} =
\tilde{R}_{\cdot\infty\infty\cdot}|_{t=1,\rho=-r^2/2}, \quad
e_\infty = \partial/\partial \rho
\end{equation}
to determine the Taylor coefficient of $r^2$ of
$-\bar{\B}/(4(n\!-\!3))$. Now
$$
\tilde{R}_{i \infty \infty j} |_{t=1,\rho=-r^2/2} = \tilde{R}_{i
\infty \infty j} |_{t=1,\rho=0} -
\partial \tilde{R}_{i \infty \infty j}/\partial \rho
|_{t=1,\rho=0} \frac{r^2}{2} + \cdots.
$$
But $\tilde{\nabla}_{\rho}(\partial/\partial \rho) = 0$ implies
\begin{multline*}
\partial \tilde{R}_{i \infty \infty j}/\partial \rho =
\tilde{\nabla}_{\rho}(\tilde{R})(e_i,e_\infty,e_\infty,e_j) \\
+ \tilde{R}(\tilde{\nabla}_{\rho}(e_i),e_\infty,e_\infty,e_j) +
\tilde{R}(e_i,e_\infty,e_\infty,\tilde{\nabla}_{\rho}(e_j)).
\end{multline*}
A calculation of Christoffel symbols (\cite{FG-final}, Eq.~(3.16) or
\cite{diplom-fisch}) shows that
\begin{equation}\label{christ}
\tilde{\nabla}_{\rho}(e_i)|_{t=1,\rho=0} = \Rho_i^k e_k.
\end{equation}
Hence
$$
\partial \tilde{R}_{i \infty \infty j}/\partial \rho|_{t=1,\rho=0} =
-\Omega^{(2)}_{ij} - \Rho_i^k \Omega^{(1)}_{kj} - \Rho_j^k
\Omega^{(1)}_{ik}.
$$
(see Eq.~\eqref{e-obst}). Therefore, the Taylor coefficient of $r^2$
of $-\bar{\B}_{ij}/(4(n\!-\!3))$ is
\begin{equation}\label{part-2}
-\frac{1}{8} \left( \Omega_{ij}^{(2)} + \Rho_i^k \Omega^{(1)}_{kj} +
\Rho_j^k \Omega^{(1)}_{ik} \right).
\end{equation}
Summarizing \eqref{part-1} and \eqref{part-2}, we find the
coefficient
\begin{equation}\label{both-parts}
-\frac{1}{8} \left( \Omega_{ij}^{(2)} + 2 \Rho_i^k \Omega^{(1)}_{kj}
+ 2\Rho_j^k \Omega^{(1)}_{ik} \right).
\end{equation}
But the quantity \eqref{both-parts} coincides with the coefficient
$g_{(4,2)}$ of $r^4$ in the Taylor expansion of $-\bar{\Rho}$. In
fact, the latter assertion follows by combining the formula (see
\cite{G-ext}, Eq.~(2.22))
$$
g_r = g - \Rho r^2 + \left(\Rho^2\!+\!\Omega^{(1)}\right)
\frac{r^4}{4} - \left(\Omega^{(2)} \!+\! 2 (\Rho \Omega^{(1)} \!+\!
\Omega^{(1)} \Rho)\right) \frac{r^6}{24} + \cdots
$$
with Lemma \ref{schouten-bar}. This proves the symmetry
\eqref{partial-symm}.

\subsection{The coefficient $g_{(4,4)}$}\label{diagonal-4}

Theorem \ref{double} implies that
\begin{equation}\label{eight}
g_{(4,4)} = \binom{4}{2} g_{(8)}.
\end{equation}
Here we present a direct proof of this relation. The following
arguments also serve as an illustration of an algorithm described in
Section \ref{more-double}.

In order to make the left-hand side of \eqref{eight} explicit, we
have to determine the coefficient of $r^4$ of
\begin{equation}\label{44}
\bar{\Rho}^2 + \bar{\Omega}^{(1)},
\end{equation}
First, using
$$
\bar{\Rho} = \Rho - \frac{r^2}{2} (\Rho^2 + \Omega^{(1)}) +
\frac{r^4}{8} \left(\Omega^{(2)} + 2 (\Rho \Omega^{(1)} \!+\!
\Omega^{(1)} \Rho) \right) + \cdots
$$
(Lemma \ref{schouten-bar}), we find that $\bar{\Rho}^2$ yields the
contribution
$$
r^4 \left( \frac{1}{8} (\Omega^{(2)} \Rho \!+\! \Rho \Omega^{(2)}) +
\frac{1}{4} \Rho \Omega^{(1)} \Rho + \frac{1}{4} \Omega^{(1)}
\Omega^{(1)} \right).
$$
Next, using
$$
\bar{\Omega}^{(1)}_{ij} = \Omega^{(1)}_{ij}(dr^2\!+\!g_r) = -
\tilde{R}_{i \infty \infty j}|_{t=1, \rho=-r^2/2}
$$
(see Eq.~\eqref{Omega-R}), we find that the second term in
\eqref{44} yields the contribution
$$
- \frac{1}{8} r^4 (\partial/\partial \rho)^2 (\tilde{R}_{i \infty
\infty j}) |_{t=1,\rho=0}.
$$
But
\begin{align*}
(\partial/\partial \rho)^2 (\tilde{R}_{i \infty \infty j}) & =
\tilde {\nabla}^2_{\rho}(\tilde{R})_{i \infty \infty j} \\
& + 2 \tilde{\nabla}_{\rho}
(\tilde{R})(\tilde{\nabla}_{\rho}(\partial_i),\partial_\infty,
\partial_\infty,\partial_j) + 2 \tilde{\nabla}_{\rho}
(\tilde{R})(\partial_i,\partial_\infty,\partial_\infty,
\tilde{\nabla}_{\rho}(\partial_j)) \\
& + \tilde{R}(\tilde{\nabla}_\rho^2(\partial_i),\partial_\infty,
\partial_\infty,\partial_j) + \tilde{R}(\partial_i,\partial_\infty,\partial_\infty,
\tilde{\nabla}_\rho^2(\partial_j)) \\
& + 2 \tilde{R}(\tilde{\nabla}_\rho(\partial_i),\partial_\infty,
\partial_\infty, \tilde{\nabla}_\rho(\partial_j)).
\end{align*}
By definition (see Eq.~\eqref{e-obst}), we have
$$
-\tilde{\nabla}_\rho (\tilde{R})_{i \infty \infty j} |_{t=1,\rho=0}
= \Omega^{(2)}_{ij} \; \mbox{and} \; -\tilde{\nabla}^2_\rho
(\tilde{R})_{i \infty \infty j} |_{t=1,\rho=0} = \Omega^{(3)}_{ij};
$$
recall that our sign convention for $\tilde{R}$ is opposite to that
of \cite{G-ext}. Moreover, using
$$
\tilde{\nabla}_\rho (\partial_i) = \frac{1}{2} g^{kr} \dot{g}_{ir}
\partial_k, \quad g = g(\rho)
$$
(\cite{FG-final}, Eq.~(3.16) or \cite{diplom-fisch}), we obtain the
relations
\begin{equation*}
\tilde{\nabla}_{\rho}(\partial_i)|_{t=1,\rho=0} = \Rho_i^k
\partial_k
\end{equation*}
(see Eq.~\eqref{christ}) and
$$
\tilde{\nabla}^2_\rho (\partial_i)|_{t=1,\rho=0} =
(\Omega^{(1)})_i^k \partial_k.
$$
In particular,
$$
(\tilde{R}(\tilde{\nabla}_\rho^2 (\partial_i),\partial_\infty,
\partial_\infty,\partial_j) + \tilde{R}(\partial_i,\partial_\infty,\partial_\infty,
\tilde{\nabla}_\rho^2 (\partial_j)))|_{t=1,\rho=0} = - 2
(\Omega^{(1)} \Omega^{(1)})_{ij}.
$$
These results show that the coefficient of $r^4$ of
$\bar{\Omega}^{(1)}$ is given by the sum
$$
\frac{1}{8} \Omega^{(3)} + \frac{1}{4} (\Omega^{(2)} \Rho + \Rho
\Omega^{(2)}) + \frac{1}{4} \Omega^{(1)} \Omega^{(1)} + \frac{1}{4}
\Rho \Omega^{(1)} \Rho.
$$
Summarizing the contributions of $\bar{\Rho}^2$ and
$\bar{\Omega}^{(1)}$, we obtain the formula
$$
g_{(4,4)} = \frac{1}{32} \left( \Omega^{(3)} + 3 (\Omega^{(2)} \Rho
\!+\! \Rho \Omega^{(2)}) + 4 \Omega^{(1)} \Omega^{(1)} + 4 \Rho
\Omega^{(1)} \Rho \right).
$$
But, according to \cite{G-ext}, Eq.~(2.22), we have
$$
g_{(8)} = \frac{1}{4! 2^3} \left( \Omega^{(3)} + 3 (\Omega^{(2)}
\Rho \!+\! \Rho \Omega^{(2)}) + 4 \Omega^{(1)} \Omega^{(1)} + 4 \Rho
\Omega^{(1)} \Rho \right).
$$
This proves the relation \eqref{eight}.

\subsection{Some further consequences of Theorem \ref{double}}
\label{more-double}

First, we prove a formula for the Schouten tensor of $\bar{g} =
dr^2+g_r$. This result (with an alternative proof) can be found also
in \cite{juhl-book}, Lemma 6.11.2.

\begin{lemm}\label{schouten-bar}
$$
\bar{\Rho} = -\frac{1}{2r} (\partial/ \partial r) (g_r).
$$
In particular,
$$
\bar{\Rho}|_{r=0} = \Rho.
$$
\end{lemm}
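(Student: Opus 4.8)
The plan is to compute the Schouten tensor of the metric $\bar g = dr^2 + g_r$ directly from the Ricci curvature of $g_+ = r^{-2}(dr^2 + g_r) = r^{-2}\bar g$, exploiting the conformal relation between $\bar g$ and $g_+$ and the Einstein condition \eqref{Ein}. Recall that under a conformal change $\hat g = e^{2\varphi}g$ on an $m$-dimensional manifold, the Schouten tensors are related by
$$
\Rho(\hat g) = \Rho(g) - \Hess_g(\varphi) + d\varphi\otimes d\varphi - \tfrac12 |d\varphi|_g^2\, g.
$$
Here the ambient space $X = M\times(0,\varepsilon)$ has dimension $m = n+1$, and $g_+ = e^{2\varphi}\bar g$ with $\varphi = -\log r$. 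Since $g_+$ is Einstein with $\Ric(g_+) = -n g_+$, one has $\Rho(g_+) = -\tfrac12 g_+$ (using $(m-2)\Rho(g_+) = \Ric(g_+) - \J(g_+) g_+$ and $\scal(g_+) = -n(n+1)$). So the first step is to invert the conformal transformation law to solve for $\Rho(\bar g)$:
$$
\Rho(\bar g) = \Rho(g_+) + \Hess_{\bar g}(-\varphi) + d\varphi\otimes d\varphi - \tfrac12|d\varphi|^2_{\bar g}\,\bar g
$$
with $\varphi = -\log r$, i.e. $d\varphi = -dr/r$, so $-\Hess_{\bar g}(\varphi) = \Hess_{\bar g}(\log r)$.

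The second step is to evaluate the Hessian and norm terms with respect to $\bar g = dr^2 + g_r$. Since $\bar g$ is a product-type metric with $\partial_r$ of unit length and $\langle \partial_r, \partial_r\rangle_{\bar g} = 1$, we get $|d\log r|^2_{\bar g} = 1/r^2$ and $\Rho(g_+) = -\tfrac12 r^{-2}\bar g$, so the scalar terms $-\tfrac12 g_+ - \tfrac12 r^{-2}\bar g$ cancel the $-\tfrac12|d\log r|^2 \bar g$ contribution against part of $d\varphi\otimes d\varphi$; more precisely $d\varphi\otimes d\varphi = r^{-2} dr\otimes dr$ and $-\tfrac12 |d\varphi|^2 \bar g = -\tfrac1{2r^2}(dr^2 + g_r)$, while $\Rho(g_+) = -\tfrac1{2r^2}(dr^2+g_r)$; these should combine to leave only a $dr^2$-component plus the Hessian. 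The Hessian $\Hess_{\bar g}(\log r)$ is computed from Christoffel symbols of $\bar g$: the only non-trivial second Christoffel symbols mixing $r$ and the $M$-directions come from $\partial_r g_r$, giving $\Gamma^r_{ij} = -\tfrac12 \partial_r (g_r)_{ij}$ and $\Gamma^k_{ri} = \tfrac12 (g_r)^{kl}\partial_r(g_r)_{il}$. Then $\Hess_{\bar g}(\log r)_{ij} = -\Gamma^r_{ij}\cdot \partial_r(\log r) = \tfrac{1}{2r}\partial_r(g_r)_{ij}$ on tangential indices, and the $rr$- and $ri$-components of everything are arranged to cancel. Assembling the tangential part yields $\bar\Rho_{ij} = -\tfrac{1}{2r}\partial_r(g_r)_{ij}$, which is the claimed formula (viewing $\bar\Rho$ as a tensor on $X$ whose $dr^2$- and mixed components vanish).

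The third and final step is the limit $r\to 0$: since $g_r = g + r^2 g_{(2)} + \cdots$ with $g_{(2)} = -\Rho(g)$, we have $-\tfrac{1}{2r}\partial_r(g_r) = -\tfrac{1}{2r}(2r g_{(2)} + \cdots) = -g_{(2)} + O(r^2) = \Rho(g) + O(r^2)$, so $\bar\Rho|_{r=0} = \Rho$. The main obstacle I anticipate is bookkeeping the normal ($dr^2$) and mixed ($dr\,dx^i$) components carefully: one must check that the Einstein condition forces these to vanish and that they are consistent with $\Ric(g_+) + n g_+ = 0$ in those components, since the clean statement $\bar\Rho = -\tfrac1{2r}\partial_r g_r$ implicitly asserts that $\bar\Rho$ has no normal or mixed part. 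This is exactly where the Einstein equation is used, and it requires writing out $\Ric(g_+)$ in the coordinates $(r, x^i)$ — a standard but slightly delicate computation that I would organize by separating tangential, mixed, and normal blocks rather than grinding through all components at once.
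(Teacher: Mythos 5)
Your argument is correct, but it is a genuinely different proof from the paper's. The paper deduces the lemma from Theorem~\ref{double}: the Poincar\'e--Einstein metric of $\bar{g}$ is $g_{++}=s^{-2}\bigl(ds^2+dr^2+\sum_{k\ge 0}(s^2+r^2)^k g_{(2k)}\bigr)$, so the coefficient of $s^2$ in its tangential part, which by construction equals $-\bar{\Rho}$, is $\sum_{k\ge 1} k\, r^{2k-2} g_{(2k)} = \tfrac{1}{2r}\partial_r(g_r)$; the restriction $\bar{\Rho}|_{r=0}=\Rho$ then follows from $g_{(2)}=-\Rho$. Your route instead applies the conformal transformation law for the Schouten tensor to $g_+ = r^{-2}\bar{g}$, uses the Einstein condition $\Ric(g_+)+n g_+ = 0$ to fix $\Rho(g_+)=-\tfrac12 g_+$, and computes $\Hess_{\bar g}(\log r)$ from the Christoffel symbols of the product-type metric $\bar g$. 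Both are sound, and each has a natural place: the paper's argument is shorter but presupposes the nontrivial Theorem~\ref{double}, while yours is self-contained and uses only the Einstein equation together with elementary conformal calculus (and is close in spirit to the alternative proof that the paper attributes to Lemma~6.11.2 of \cite{juhl-book}).

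One refinement to your plan: the bookkeeping of normal and mixed components that you flag as the ``main obstacle'' is milder than you anticipate, and does \emph{not} require separately writing out $\Ric(g_+)$ in coordinates. Once you substitute
$\Rho(g_+) = -\tfrac{1}{2r^2}(dr^2+g_r)$,
$d\varphi\otimes d\varphi = r^{-2}\, dr\otimes dr$,
$\tfrac12 |d\varphi|^2_{\bar g}\,\bar g = \tfrac{1}{2r^2}(dr^2+g_r)$,
and the Hessian components
$\Hess_{\bar g}(\varphi)_{rr}=r^{-2}$,
$\Hess_{\bar g}(\varphi)_{ri}=0$,
$\Hess_{\bar g}(\varphi)_{ij}=-\tfrac{1}{2r}\partial_r(g_r)_{ij}$
(all of which follow from $\Gamma^r_{rr}=\Gamma^r_{ri}=\Gamma^k_{rr}=0$ and $\Gamma^r_{ij}=-\tfrac12\partial_r(g_r)_{ij}$ for $\bar g=dr^2+g_r$), the $rr$- and $ri$-components of $\Rho(\bar g)$ vanish purely algebraically and the tangential component collapses to $-\tfrac{1}{2r}\partial_r(g_r)_{ij}$. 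The Einstein equation is already fully used in the single statement $\Rho(g_+)=-\tfrac12 g_+$; no further block-by-block Ricci computation is needed.
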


\begin{proof} We use the fact that $-\bar{\Rho}$ is the coefficient of
$s^2$ in the expansion of $g_{++}$. Now, Theorem \ref{double}
implies that $-\bar{\Rho}$ has the expansion
$$
\sum_{N \ge 1} N r^{2N-2} g_{(2N)}.
$$
But the latter sum equals
$$
\frac{1}{2r} \, \partial \left( \sum_{N \ge 0} r^{2N} g_{(2N)}
\right)/\partial r.
$$
This proves the first assertion. The second assertion follows
by using $g_{(2)} = -\Rho$.
\end{proof}

Next, we recall that by \cite{G-ext} the extended obstruction
tensors\footnote{We recall that the sign conventions for the
curvature tensor differ from those in \cite{G-ext}. By the
additional sign in \eqref{e-obst}, the extended obstruction tensors
coincide, however.}
\begin{equation}\label{e-obst}
\Omega^{(k)}_{ij} =
-\tilde{\nabla}^{(k-1)}_\infty(\tilde{R})_{\infty i j \infty}
|_{t=1,\rho=0}, \; k \ge 1
\end{equation}
together with the Schouten tensor $\Rho$, are the building blocks of
$g_+$ in the sense that all Taylor coefficients $g_{(2k)}$ are given
by universal expressions involving $\Rho$ and extended obstruction
tensors.

\begin{lemm}\label{obstruction-rest} All extended obstruction
tensors $\Omega^{(k)}$ have the restriction property
$$
\bar{\Omega}^{(k)}|_{r=0} = \Omega^{(k)}.
$$
In particular, $\bar{\Omega}^{(k)}|_{r=0}(\cdot,\partial/\partial r)
= 0$.
\end{lemm}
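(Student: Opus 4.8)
The plan is to derive the restriction property for the extended obstruction tensors directly from the double Poincaré–Einstein structure established in Theorem \ref{double} and its consequence Corollary \ref{basic-symm}. The key observation is that, by Theorem \ref{double}, the metric $g_{++}$ is simultaneously a Poincaré–Einstein metric for $g$ (in the $(r,s)$-variables through $g_+ = r^{-2}(dr^2+g_r)$ followed by a further ambient construction) and, by the symmetric roles of $r$ and $s$ exhibited in Corollary \ref{basic-symm}, a Poincaré–Einstein metric for $\bar g = dr^2 + g_r$ with respect to the $s$-variable. Since the extended obstruction tensors $\Omega^{(k)}$ are defined in \eqref{e-obst} as covariant $\infty$-derivatives of the ambient curvature tensor evaluated at $t=1$, $\rho=0$, and since the ambient metric is itself built canonically from the Poincaré–Einstein metric, computing $\bar\Omega^{(k)}$ for $\bar g$ amounts to reading off the same ambient-curvature quantities but now organized according to the $s$-expansion.

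First I would recall the precise relationship, already used in Subsections \ref{g-4} and \ref{diagonal-4}, between the ambient curvature $\tilde R$ of $g$ and the Poincaré–Einstein metric $g_+$: equations \eqref{Omega-R} and \eqref{bach-conform}--\eqref{christ} show that the components $\tilde R_{i\infty\infty j}$ and their $\tilde\nabla_\rho$-derivatives at $\rho=0$ encode exactly the $\Omega^{(k)}$, while the $r$-dependence of $\bar\Omega^{(k)} = \Omega^{(k)}(dr^2+g_r)$ corresponds to evaluating these same ambient quantities at $\rho = -r^2/2$. Concretely, iterating the identity $\bar\Omega^{(k)}_{ij} = -\tilde\nabla^{(k-1)}_\rho(\tilde R)_{i\infty\infty j}\big|_{t=1,\rho=-r^2/2}$ (the $r$-dependent analog of \eqref{e-obst}, established in the same way as \eqref{Omega-R}) and setting $r=0$ immediately gives $\bar\Omega^{(k)}|_{r=0} = -\tilde\nabla^{(k-1)}_\rho(\tilde R)_{i\infty\infty j}\big|_{t=1,\rho=0} = \Omega^{(k)}$. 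The step I expect to require the most care is justifying that the ambient metric of $\bar g$ (needed to define $\bar\Omega^{(k)}$ intrinsically) is compatible with the ambient metric of $g$ in the way just described; this is where Theorem \ref{double} and Corollary \ref{basic-symm} are essential, since they guarantee that the Poincaré–Einstein metric $g_{++}$ relative to $\bar g$ is well-defined to all orders (even though $\bar g$ lives in even dimension $n+1$) and has Taylor coefficients $g_{(2k,2l)} = \binom{k+l}{k} g_{(2k+2l)}$ symmetric in $(r,s)$.

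An alternative, more self-contained route — which I would use to cross-check — is to argue purely combinatorially from Corollary \ref{basic-symm}: the extended obstruction tensors are, by \cite{G-ext}, the universal building blocks such that each $g_{(2k)}$ is a fixed polynomial expression in $\Rho$ and the $\Omega^{(j)}$, and the same polynomial expressions with $\bar\Rho$, $\bar\Omega^{(j)}$ reconstruct $\bar g_{(2k)}$. Since Lemma \ref{schouten-bar} gives $\bar\Rho|_{r=0}=\Rho$ and Corollary \ref{basic-symm} gives $\bar g_{(2k)}|_{r=0} = g_{(2k)}$ (the case $l=0$, $k$ arbitrary of \eqref{sy}), one can solve the universal relations order by order to conclude $\bar\Omega^{(k)}|_{r=0} = \Omega^{(k)}$ by induction on $k$; the low-order instances $k=1$ and the structure of $g_{(4,4)}$ worked out in Subsections \ref{g-4} and \ref{diagonal-4} are exactly the base cases of this induction. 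Finally, the assertion $\bar\Omega^{(k)}|_{r=0}(\cdot,\partial/\partial r) = 0$ follows because the extended obstruction tensors are trace-free symmetric $2$-tensors tangent to $M$: the mixed components $\bar\Omega^{(k)}(X,\partial/\partial r)$ are governed by ambient curvature components with an odd number of ``normal'' slots, which vanish at $\rho=0$ by the same Einstein-and-parity argument used for $\B(dr^2+g_r)(X,\partial/\partial r)=0$ at the end of Subsection \ref{g-4}.
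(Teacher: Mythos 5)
Your second route (the one you call ``alternative, more self-contained'') is essentially the paper's own proof: the paper uses Lemma \ref{PE-rest} to read off $\bar{g}_{(2N)}|_{r=0} = g_{(2N)}$, then invokes the fact from \cite{G-ext} that each $g_{(2N)}$ is a fixed universal polynomial in $\Rho$ and $\Omega^{(1)},\dots,\Omega^{(N-1)}$, and concludes by induction on $N$; this is exactly what you do with Corollary \ref{basic-symm} in place of Lemma \ref{PE-rest} (the case $l=0$ of \eqref{sy} and the statement of Lemma \ref{PE-rest} are the same consequence of Theorem \ref{double}). So that part is correct and identical in spirit.

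The route you lead with, however, has a genuine gap. You assert the identity
\begin{equation*}
\bar\Omega^{(k)}_{ij} \;=\; -\,\tilde\nabla^{(k-1)}_\rho(\tilde R)_{i\infty\infty j}\big|_{t=1,\,\rho=-r^2/2}
\end{equation*}
for all $k$, calling it ``the $r$-dependent analog of \eqref{Omega-R}, established in the same way.'' But \eqref{Omega-R} is only established for $k=1$, and the mechanism is quite specific to $k=1$: it goes through the vanishing of the Bach and Cotton tensors of the Einstein metric $g_+$, the conformal transformation law \eqref{bach-conform} of the Bach tensor, and the Weyl-to-ambient-curvature decomposition, using that $\Omega^{(1)}=-\B/(n\!-\!4)$. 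For $k\ge 2$ there is no such simple conformal transformation law for $\Omega^{(k)}$, and $\bar\Omega^{(k)}$ is by definition built from the ambient metric of $\bar g$, which lives in one more dimension than the ambient metric of $g$; relating the two ambient structures is exactly the nontrivial content you would need to supply, and you acknowledge this is ``where the most care'' is needed without actually supplying it. The paper's choice of the second route deliberately avoids this: the comparison $\bar g_{(2N)}|_{r=0}=g_{(2N)}$ is elementary given Theorem \ref{double}, and the inversion of the universal polynomial relations from \cite{G-ext} is purely algebraic, requiring no identification of ambient structures.

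One small further remark on the ``in particular'' statement $\bar\Omega^{(k)}|_{r=0}(\cdot,\partial/\partial r)=0$: if the induction is carried out for the full tensors (including mixed normal components), this drops out automatically, since $\bar g_{(2N)}|_{r=0}$ has no $dr$-components by Lemma \ref{PE-rest} and $\bar\Rho|_{r=0}$ likewise has none. Your separate parity argument is a valid alternative but is not needed once the induction is set up this way.
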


\begin{proof} Let $\bar{g}(r) = dr^2\!+\!g(r)$. By Lemma \ref{PE-rest},
the Taylor series
$$
E(\bar{g}(r))(s)|_{r=0} = \bar{g}(0) + s^2 \bar{g}_{(2)}|_{r=0} +
s^4 \bar{g}_{(4)}|_{r=0} + \cdots
$$
coincides with
$$
(dr^2\!+\!g) + s^2 g_{(2)} + s^4 g_{(4)} + \cdots.
$$
Hence we have the restriction relation
\begin{equation}\label{PET-rest}
\bar{g}_{(2N)} |_{r=0} = g_{(2N)}.
\end{equation}
But, for any metric $g$, the coefficient $g_{(2N)}$ can be written as
a universal expression in terms of the Schouten tensor $\Rho$ and the
extended obstruction tensors $\Omega^{(1)},\dots,\Omega^{(N-1)}$ (see
\cite{G-ext}, Eq.~(2.22)). Thus, the assertion follows from
\eqref{PET-rest} by induction.
\end{proof}

Finally, we sketch an algorithm which generates formulas for the
Taylor coefficients $g_{(2k)}$ in terms of $\Rho$ and extended
obstruction tensors. This algorithm can be considered as a refined
version of the algorithm in the proof of Theorem 1.2 in
\cite{G-ext}. On the one hand, Theorem \ref{double} implies
\begin{equation}\label{alg-1}
\frac{1}{4} (\bar{\Rho}^2 + \bar{\Omega}^{(1)}) = \sum_{N \ge 2}
\binom{N}{2} r^{2N-4} g_{(2N)}.
\end{equation}
On the other hand, Lemma \ref{schouten-bar} and the relation
\begin{equation}\label{alg-2}
\bar{\Omega}^{(1)}(\cdot,\cdot) = - \tilde{R}(\cdot,e_\infty,
e_\infty,\cdot) |_{t=1,\rho=-r^2/2}
\end{equation}
on tangential vectors of $M$ (see Eq.~\eqref{Omega-R}) can be used
to express the $(2k)^{\text{th}}$ Taylor coefficient of the
left-hand side in terms of $\Omega^{(k+1)}$, lower-order extended
obstruction tensors and $\Rho$. This yields the desired formulas for
the coefficients $g_{(2N)}$. For $g_{(8)}$, the details of this
algorithm are given in Section \ref{diagonal-4}.


\end{document}